\numberwithin{equation}{section}
\numberwithin{figure}{section}
\theoremstyle{plain}
\newtheorem{thm}{Theorem}[section]
\newtheorem{prop}[thm]{Proposition}
\newtheorem{lem}[thm]{Lemma}
\newtheorem{cor}[thm]{Corollary}
\newcommand\nt@name{Theorem}
\newtheorem*{nt@thm}{\nt@name}
\newenvironment{namedthm}[1][Theorem]
{\renewcommand\nt@name{#1}
  \begin{nt@thm}}
  {\end{nt@thm}}
\theoremstyle{definition}
\newtheorem{defn}[thm]{Definition}
\newtheorem{example}[thm]{Example}
\newtheorem{fact}[thm]{Fact}
\newtheorem{question}[thm]{Question}
\theoremstyle{remark}
\newtheorem{rem}[thm]{Remark}
\newtheorem{notation}[thm]{Notation}
\DeclareFontFamily{OMX}{MnSymbolE}{}
\DeclareSymbolFont{MnLargeSymbols}{OMX}{MnSymbolE}{m}{n}
\DeclareFontShape{OMX}{MnSymbolE}{m}{n}{
    <-6>  MnSymbolE5
   <6-7>  MnSymbolE6
   <7-8>  MnSymbolE7
   <8-9>  MnSymbolE8
   <9-10> MnSymbolE9
  <10-12> MnSymbolE10
  <12->   MnSymbolE12
}{}
\DeclareFontShape{OMX}{MnSymbolE}{b}{n}{
    <-6>  MnSymbolE-Bold5
   <6-7>  MnSymbolE-Bold6
   <7-8>  MnSymbolE-Bold7
   <8-9>  MnSymbolE-Bold8
   <9-10> MnSymbolE-Bold9
  <10-12> MnSymbolE-Bold10
  <12->   MnSymbolE-Bold12
}{}
\let\llangle\@undefined
\let\rrangle\@undefined
\DeclareMathDelimiter{\llangle}{\mathopen}{MnLargeSymbols}{'164}{MnLargeSymbols}{'164}
\DeclareMathDelimiter{\rrangle}{\mathclose}{MnLargeSymbols}{'171}{MnLargeSymbols}{'171}
\newcommand{\no}{\mathbf{No}}
\newcommand{\on}{\mathbf{On}}
\newcommand{\J}{\mathbb{J}}
\newcommand{\li}{\mathbb{L}}
\newcommand{\M}{\mathfrak{M}}
\newcommand{\N}{\mathbb{N}}
\newcommand{\Q}{\mathbb{Q}}
\newcommand{\R}{\mathbb{R}}
\newcommand{\T}{\R^{*}\M}
\newcommand{\Z}{\mathbb{Z}}
\newcommand{\nin}{\notin}
\newcommand{\eps}{\varepsilon}
\DeclareMathOperator{\supp}{Supp}
\DeclareMathOperator{\term}{Term}
\newcommand{\bracket}[1]{\R\llangle#1\rrangle}
\newcommand{\m}{\mathfrak{m}}
\newcommand{\n}{\mathfrak{n}}
\renewcommand{\o}{\mathfrak{o}}
\newcommand{\vell}{\log^{\uparrow}}
\newcommand{\vless}{\prec}
\newcommand{\vleq}{\preceq}
\newcommand{\vgreater}{\succ}
\newcommand{\vgeq}{\succeq}
\newcommand{\veq}{\asymp}
\newcommand{\lequal}{\asymp^{{\scriptscriptstyle L}}}
\newcommand{\suchthat}{\,:\,}
\newcommand{\tree}[3]{\langle#1,#2,#3\rangle}
\DeclareMathOperator{\admtree}{A}
\DeclareMathOperator{\admtreeinf}{A^{\circ}}
\newcommand{\ctree}{\overline{c}}
\DeclareMathOperator{\treeroot}{R}
\DeclareMathOperator{\size}{size}
\DeclareMathOperator{\LM}{LM}
\DeclareMathOperator{\LT}{LT}
\DeclareMathOperator{\erank}{ER_{\Delta}}
\newcommand{\remove}[2]{#1\setminus#2}
\newcommand{\replace}[3]{#1[#2/#3]}
\newcommand{\prune}[2]{#1^{#2}}
\renewcommand{\big}{\uparrow}
\newcommand{\bigeq}{\uparrow=}
\renewcommand{\small}{\downarrow}
\newcommand{\sml}{\mathrm{small}}
\title{Transseries as germs of surreal functions}
\author{Alessandro Berarducci}
\thanks{A.B.\ was partially supported by PRIN 2012
  ``\foreignlanguage{italian}{Logica, Modelli e Insiemi}'' and by
  \foreignlanguage{italian}{Progetto di Ricerca d'Ateneo} 2015
  ``\foreignlanguage{italian}{Connessioni fra dinamica olomorfa,
    teoria ergodica e logica matematica nei sistemi dinamici}''. }
\address{\foreignlanguage{italian}{Università di Pisa, Dipartimento di
    Matematica, Largo Bruno Pontecorvo 5, 56127 Pisa, PI,} Italy}
\email{alessandro.berarducci@unipi.it}
\author{Vincenzo Mantova}
\address{School of Mathematics, University of Leeds, Leeds LS2 9JT,
  United Kingdom}
\thanks{V.M.\ was supported by ERC AdG ``Diophantine Problems'' 267273
  and partially supported by the research group INdAM GNSAGA.}
\email{V.L.Mantova@leeds.ac.uk}
\subjclass[2010]{03C64, 16W60, 04A10, 26A12, 13N15.}
\keywords{surreal numbers, transseries, composition}
\date{March 6th, 2017. Revised on September 20th, 2017.}
\begin{document}

\begin{abstract}
  We show that Écalle's transseries and their variants (LE and
  EL-series) can be interpreted as functions from positive infinite
  surreal numbers to surreal numbers. The same holds for a much larger
  class of formal series, here called omega-series. Omega-series are
  the smallest subfield of the surreal numbers containing the reals,
  the ordinal omega, and closed under the exp and log functions and
  all possible infinite sums.  They form a proper class, can be
  composed and differentiated, and are surreal analytic. The surreal
  numbers themselves can be interpreted as a large field of
  transseries containing the omega-series, but, unlike omega-series,
  they lack a composition operator compatible with the derivation
  introduced by the authors in an earlier paper.
\end{abstract}

\maketitle

\tableofcontents{}

\section{Introduction}

Fields of transseries are an important tool in asymptotic analysis and
played a crucial role in Écalle's approach to the problem of Dulac
\cite{Dulac1923, Ecalle1992}. They appear in various versions, see for
instance \cite{Dahn1987, Dries1997, VanderHoeven1997, Kuhlmann2000,
  DriesMM2001, Schmeling2001, Kuhlmann2005, VanderHoeven2006,
  VanderHoeven2009} and the bibliography therein. In
\cite{Berarducci2015} we proved that Conway's field $\no$ of surreal
numbers \cite{Conway1976} admits the structure of a field of
transseries (in the sense of \cite{Schmeling2001}) and a compatible
derivation (in fact more than one). We also proved the existence of
``integrals'', in the sense of anti-derivatives, for the ``simplest''
surreal derivation on $\no$. This makes $\no$ into a Liouville closed
H-field in the sense of \cite{Aschenbrenner2002}.  We recall that an
H-field is an ordered differential field with some compatibility
properties between the derivation $\partial$ and the order; in
particular if $f$ is greater than any constant, then $\partial f>0$.
A basic example is the field of rational functions $\R(x)$, ordered by
$x>\R$, with constant field $\R=\ker\partial$ and $\partial x=1$.  The
notion of H-field arises as an attempt to axiomatize some of the
properties of Hardy fields, where a Hardy field is a field of germs at
$+\infty$ of eventually $C^{1}$-functions $f:\R\to\R$ closed under
derivation. Such fields have been studied since the 70's, see for
instance \cite{Bourbaki1976, Rosenlicht1983, Rosenlicht1983a,
  Rosenlicht1987}.  Any o-minimal structure on the reals gives rise to
an H-field, namely the field of germs at $+\infty$ of its definable
unary functions.  In \cite{Aschenbrenner2015a} van den Dries,
Aschenbrenner and van der Hoeven proved that, with the ``simplest''
derivation $\partial$ introduced in \cite{Berarducci2015}, the
surreals are a universal H-field; more precisely, every H-field with
``small derivations'' and constant field $\R$ embeds in $\no$ as a
differential field.  Moreover, they proved that $(\no,\partial)$
satisfies the complete first order theory of the
logarithmic-exponential series of \cite{Dries1997, DriesMM2001} and
therefore, by the model completeness of the theory
\cite{Aschenbrenner2015a}, it admits solutions to all the differential
equations that can be solved in a bigger model.

Another approach to derivation and integration on the surreal numbers
was taken by Costin, Ehrlich and Friedman \cite{Costin2015} in a more
analytic vein, possibly suitable for asymptotic analysis, namely they
consider derivatives and definite integrals of functions, rather than
derivatives of ``numbers'' (elements of $\no$).

This paper is a first attempt to reconcile the algebraic and the
analytic approach to surreal derivation and integration through a
notion of composition. The special session on surreal numbers at the
joint AMS-MAA meeting in Seattle (6-9 Jan. 2016) was a timely occasion
to discuss these developments and some of the results of this paper
were presented during that meeting.

To discuss our contribution in more detail, we need some definitions.
We recall that in $\no$, as in any Hahn field, there is a formal
notion of summability, and one can associate to each summable family
$(x_{i})_{i\in I}$ its ``sum'' $\sum_{i\in I}x_{i}\in\no$. We can thus
define the field of \textbf{omega-series} $\bracket{\omega}$ as the
smallest subfield of $\no$ containing $\R(\omega)$ and closed under
$\exp$, $\log$ and sums of summable families. Here $\omega$ is the
first infinite ordinal and plays the role of a formal variable with
derivative $1$. It turns out that $\bracket{\omega}$ is a very big
exponential field (in fact a proper class) properly containing an
isomorphic copy of the logarithmic-exponential series of
\cite{Dries1997, DriesMM2001} (LE-series) and their variants, such as
the exponential-logarithmic series of \cite{Kuhlmann2000,
  Kuhlmann2012d} (EL-series). More precisely, we can isolate two
subfields $\R((\omega))^{LE}\subset\R((\omega))^{EL}$ of
$\bracket{\omega}$ which are isomorphic to the LE and EL-series
respectively. The field $\R((\omega))^{LE}$ is a countable union
$\bigcup_{n\in\N}X_{n}\subseteq\no$, where $X_{0}:=\R(\omega)$ and
$X_{n+1}$ is the set of all sums of summable sequences of elements in
$X_{n}\cup\exp(X_{n})\cup\log(X_{n})$. In other words, a surreal number
is a LE-series if it can be obtained from $\R(\omega)$ by finitely
many applications of $\sum,\exp,\log$ (\prettyref{thm:LE}).  This
remarkably simple characterization of the LE-series, which should be
compared with the original definition, is made possible by working
inside the surreals, with its notion of summability and exponential
structure. The EL-series admit a similar characterization
(\prettyref{prop:characterization-EL}).

We show that each omega-series $f\in\bracket{\omega}$, hence in
particular each LE or EL-series, can be interpreted as a function from
positive infinite surreal numbers to surreal numbers
(\prettyref{cor:substitution-omega-series}).  The idea is simply to
substitute $\omega$ with a positive infinite surreal and evaluate the
resulting expression, but the proof of summability
(\prettyref{lem:summability}) is rather long and technical and it is
carried out in \prettyref{sec:induction}. Similar problems were
tackled in \cite{Schmeling2001} and in some of the cited works by van
der Hoeven, although not in the context of surreal numbers. We shall
borrow from those papers the idea of isolating the contributions
coming from different ``trees'', but with enough differences to
warrant an independent treatment. This will give rise to a natural
composition operator $\circ:\bracket{\omega}\times\no^{>\R}\to\no$
(\prettyref{thm:composition-omega-series}) which restricts to a
composition
$\circ:\bracket{\omega}\times\bracket{\omega}^{>\R}\to\bracket{\omega}$
extending the usual composition of ordinary power series. Formally, we
define a \textbf{composition} on $\bracket{\omega}$ to be a function
$\circ:\bracket{\omega}\times\no^{>\R}\to\no$ satisfying the following
conditions for all $f,g\in\bracket{\omega}$ and $x\in\no^{>\R}$:
\begin{enumerate}
\item if $f=\sum_{i<\alpha}r_{i}e^{\gamma_{i}}$, then
  $f\circ x=\left(\sum_{i<\alpha}r_{i}e^{\gamma_{i}}\right)\circ
  x=\sum_{i<\alpha}r_{i}e^{\gamma_{i}\circ x}$;
\item $f\circ g\in\bracket{\omega}$ and
  $(f\circ g)\circ x=f\circ(g\circ x)$;
\item $f\circ\omega=f$, $\omega\circ x=x$.
\end{enumerate}
We then prove the following.

\begin{namedthm}[\prettyref{thm:composition-omega-series}]
  There is a (unique) composition
  $\circ:\bracket{\omega}\times\no^{>\R}\to\no$.
\end{namedthm}

In the last part of the paper we study the interaction between the
derivation $\partial:\no\to\no$ introduced in \cite{Berarducci2015}
and the composition on $\bracket{\omega}$. Let us recall that in
\cite{Berarducci2015} we proved the existence of several ``surreal
derivations'' $\partial:\no\to\no$ and we studied in detail the
``simplest'' such derivation \cite[Def.\ 6.21]{Berarducci2015}.  It is
easy to see that all surreal derivations coincide on the subfield
$\bracket{\omega}$, so the latter admits a unique surreal derivation
$\partial:\bracket{\omega}\to\bracket{\omega}$. The derivation
$\partial$ on $\bracket{\omega}$ makes it into a H-field, although not
a Liouville closed one because
$\partial:\bracket{\omega}\to\bracket{\omega}$ is not
surjective. There are, however, many subfields of $\bracket{\omega}$
which are Liouville closed, among which $\R((\omega))^{LE}$.

We will show that the formal derivative $\partial f$ of an
omega-series $f\in\bracket{\omega}$ can be interpreted as the
derivative of the function $\hat{f}:\no^{>\R}\to\no$ defined by
$\hat{f}(x)=f\circ x$, namely we have
\[
  \partial f\circ x=\lim_{\eps\to0}\frac{f\circ(x+\eps)-f\circ
    x}{\eps},
\]
where $x$ and $\eps$ range in $\no$ (\prettyref{cor:limit-def}).
Since $\partial f\circ\omega=\partial f$, this shows in particular
that the derivative can be defined in terms of the composition:
$\partial
f=\lim_{\eps\to0}\frac{f\circ(\omega+\eps)-f\circ\omega}{\eps}$.  Other
compatibility conditions then follow, such as the chain rule
$\partial(f\circ g)=(\partial f\circ g)\cdot\partial g$
(\prettyref{cor:chain-rule}).

These results tells us that any omega-series $f\in\bracket{\omega}$,
hence in particular every logarithmic-exponential series, can be
interpreted as a differentiable function $\hat{f}:\no^{>\R}\to\no$ from
positive infinite surreal numbers to surreal numbers. We shall prove
that all such functions are surreal analytic in the following sense.

\begin{namedthm}[\prettyref{thm:analytic}]
  Every $f\in\bracket{\omega}$ is surreal analytic, namely for every
  $x\in\no^{>\R}$ and every sufficiently small $\eps\in\no$ we have
  \[
    f\circ(x+\eps)=\sum_{n\in\N}\frac{1}{n!}(\partial^{n}\!f\circ
    x)\cdot\eps^{n}.
  \]
\end{namedthm}

It is tempting to raise the conjecture that the exponential field
$\no$, enriched with all the functions $\hat{f}:\no^{>\R}\to\no$ for
$f\in\bracket{\omega}$ (possibly restricted to some interval
$(a,+\infty)$) has a good model theory. For instance, the restricted
version could yield an o-minimal structure on $\no$. Indeed, note that
the family of all functions $\hat{f}:\no^{>\R}\to\no$ (for
$f\in\bracket{\omega}$) yields a sort of non-standard Hardy field on
$\no$, namely a field of functions closed under differentiation (it is
also closed under $\exp$, $\log$ and composition).

We do not know up to what extent the above results can be extended
beyond $\bracket{\omega}$, namely whether we can introduce a
composition operator on the whole of $\no$, thus giving a functional
interpretation to all surreal numbers. Concerning this problem, we
have a negative result. Say that a derivation $\partial$ and a
composition $\circ$ are \textbf{compatible} if the function
$x\mapsto f\circ x$ is constant when $\partial f=0$ and strictly
increasing when $\partial f>0$, and if the chain rule
$\partial(f\circ g)=(\partial f\circ g)\cdot\partial g$ holds for all
$f,g\in\no$ (see \prettyref{def:compatible}).

\begin{namedthm}[\prettyref{thm:negative}]
  The simplest derivation $\partial:\no\to\no$ of
  \cite{Berarducci2015} cannot be compatible with a composition on
  $\no$.
\end{namedthm}

We conclude with some questions. The first is to study possible
notions of compositions and compatible derivations on the whole of
$\no$ (see \prettyref{que:compatible}). This is also connected with
the long-standing question of the existence of trans-exponential
o-minimal structures; a good composition on $\no$ may provide a
non-archimedean example. Another related question is to understand
whether $\no$ has non-trivial field automorphisms preserving infinite
sums and the function $\exp$.

\section{Preliminaries}

In this section, we recall a few well known constructions and facts
regarding ordered fields and surreal numbers, and above all, we shall
establish some of the notations that will be used throughout the rest
of the paper. Since surreal numbers form a proper class, we implicitly
work in a set theoretic framework which allows to talk about classes
as first class objects, such as NBG. Therefore, in the following
definitions all objects are allowed to be proper classes, unless
specified otherwise.  Given a class $C$, we shall say that $C$ is
\textbf{small} if it is a set and not a proper class.

\subsection{Hahn fields}

\begin{defn}
  \label{def:Rdomination}Let $K$ be an ordered field, $R\subseteq K$ a
  subfield, and $f,g\in K$. We let:
  \begin{enumerate}
  \item $f\preceq_{R}g$, or $f\in\mathcal{O}_{R}(g)$, if there is
    $c\in R$ such that $|f|\leq c|g|$, and we say that $f$ is
    \textbf{$R$-dominated} by $g$;
  \item $f\prec_{R}g$, or $f\in o_{R}(g)$, if $c|f|<|g|$ for every
    $c\in R$, and we say that $f$ is \textbf{$R$-strictly dominated}
    by $g$;
  \item $f$ is \textbf{$R$-finite} (or $R$\textbf{-bounded}) if
    $f\preceq_{R}1$;
  \item $f$ is\textbf{ $R$}-\textbf{infinitesimal} if $f\prec_{R}1$;
  \item $f\veq_{R}g$ if $f\preceq_{R}g$ and $g\preceq_{R}f$, namely
    $f/g$ is $R$-finite and not $R$-infinitesimal, and we say that $f$
    is \textbf{$R$-comparable} to $g$;
  \item $f\sim_{R}g$ if $f-g\prec_{R}g$, and we say that $f$ is
    \textbf{$R$}-\textbf{asymptotic} to $g$.
  \end{enumerate}
  When $R\subseteq\R$ we suppress the ``$R$''. For instance we write
  $f\preceq g$ if there is $c\in\Q$ such $|f|\leq c|g|$ and we say
  that $f$ is \textbf{dominated} by $g$, or we write
  $f\in\mathcal{O}(1)$ if $f$ is \textbf{finite}, namely $f$ is
  dominated by $1$. We say that $f$ and $g$ are in the \textbf{same
    Archimedean class }if $f\veq g$, namely $f\preceq g$ and
  $g\preceq f$.

  Finally, we say that $\Gamma\subseteq K^{>0}$ is a \textbf{group of
    monomials for $K$} if it is a multiplicative subgroup and for
  every $x\in K$ there is a unique $\m\in\Gamma$ such that
  $x\asymp\m$.  It can be proved that any real closed field admits a
  group of monomials.
\end{defn}

\begin{example}
  The field of Laurent series $\R((x^{\Z}))$ consists of all formal
  series of the form $\sum_{n\geq n_{0}}a_{n}x^{n}$, where
  $a_{n}\in\R$ and $n_{0}\in\Z$, ordered according to the sign of the
  leading coefficient $a_{n_{0}}$. The multiplicative subgroup
  $x^{\Z}:=\{x^{n}\suchthat n\in\Z\}$ is a group of monomials for
  $\R((x^{\Z}))$.
\end{example}

\begin{rem}
  Given two monomials $\m,\n$, we have $\m<\n$ if and only if
  $\m\prec\n$.
\end{rem}

\begin{defn}
  \label{def:terms}Let $(\Gamma,\cdot,<)$ be an ordered abelian group
  written in multiplicative notation. Let $R$ be an ordered field.
  The \textbf{Hahn field} $R((\Gamma))$ consists of all formal sums
  $x=\sum_{\m\in\Gamma}x_{\m}\m$ with \textbf{coefficients}
  $x_{\m}\in R$, whose \textbf{support}
  $\supp(x):=\{\m\in\Gamma\suchthat x_{\m}\neq0\}$ is \textbf{reverse
    well-ordered}, namely every non-empty subset of the support has a
  maximal element. If $x_{\m}\neq0$ we say that $x_{\m}\m$ is a
  \textbf{term} of $x$. We denote by
  $R((\Gamma))_{\mathrm{small}}\subseteq R((\Gamma))$ the subclass of
  all formal sums $x=\sum_{\m\in\Gamma}x_{\m}\m$ whose support is small
  (it coincides with $R((\Gamma))$ when $\Gamma$ is small).

  The addition in $R((\Gamma))$ is defined component-wise and the
  multiplication is given by the usual convolution formula:
  $\left(\sum_{\mathfrak{\m}}x_{\m}\mathfrak{\m}\right)
  \left(\sum_{\n}y_{\n}\mathfrak{\n}\right) =
  \left(\sum_{\o}z_{\o}\o\right)$ where
  $z_{\o}=\sum_{\m\n=\o}x_{\m}y_{\n}\in R$. The fact that the supports are
  reverse well-ordered ensures that the latter sum is finite.

  The \textbf{leading monomial} $\LM(x)$ of $x$ is the maximal
  monomial in $\supp(x)$. The \textbf{leading term} $\LT(x)$ is the
  leading monomial multiplied by its coefficient, and the
  \textbf{leading coefficient }is the coefficient of the leading
  monomial. $R((\Gamma))$ is ordered as follows: $x$ is positive if
  and only if its leading coefficient is positive. We denote by
  $\term(x):=\{x_{\m}\m\suchthat\m\in\supp(x)\}$ the class of the terms
  of $x$.
\end{defn}

\begin{fact}
  Both $R((\Gamma))$ and $R((\Gamma))_{\mathrm{small}}$ are ordered
  fields.
\end{fact}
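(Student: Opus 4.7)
The plan is to verify the field axioms and order axioms for $R((\Gamma))$, then show that $R((\Gamma))_{\mathrm{small}}$ is closed under the operations.

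First I would deal with addition and the order. If $x,y \in R((\Gamma))$, then $\supp(x+y) \subseteq \supp(x) \cup \supp(y)$, and the union of two reverse well-ordered subsets of the ordered group $\Gamma$ is again reverse well-ordered, so $x+y$ is well defined. Order-compatibility of addition, and the fact that the set of positives is closed under addition, follow at once from the definition of the sign via the leading coefficient, because $\LM(x+y) \leq \max(\LM(x),\LM(y))$ with equality (and matching sign) unless the leading terms cancel.

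The crux is multiplication. Here the key ingredient is Neumann's lemma: if $S,T \subseteq \Gamma$ are reverse well-ordered, then (i) for every $\o \in \Gamma$ there are only finitely many pairs $(\m,\n) \in S\times T$ with $\m\n = \o$, so each coefficient $z_{\o} = \sum_{\m\n=\o} x_{\m}y_{\n}$ is a finite sum in $R$, and (ii) the product set $ST = \{\m\n \suchthat \m\in S,\, \n\in T\}$ is itself reverse well-ordered. Granting Neumann's lemma, the convolution product is well defined on $R((\Gamma))$, and associativity, distributivity and commutativity reduce to formal manipulations of finite sums of coefficients, which is the same calculation as in the polynomial ring. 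Order-compatibility follows because $\LT(xy) = \LT(x)\LT(y)$ as monomials with coefficients.

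The most delicate axiom is the existence of multiplicative inverses. For $x \neq 0$, write $x = c\m(1+\eps)$ where $c$ is the leading coefficient, $\m = \LM(x)$ and $\eps := x/(c\m) - 1$ has all support strictly below $1$, i.e.\ $\eps$ is infinitesimal. I would then define
\[
  x^{-1} := c^{-1}\m^{-1} \sum_{n \geq 0}(-\eps)^{n}.
\]
The nontrivial point is to check that this sum is summable, i.e.\ that $\bigcup_{n\geq 0} \supp(\eps^{n})$ is reverse well-ordered and each element of $\Gamma$ lies in only finitely many of the $\supp(\eps^{n})$; this is a second (classical) application of Neumann's lemma, exploiting that every monomial in $\supp(\eps)$ is $<1$. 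Once summability is established, a formal term-by-term calculation shows $x \cdot x^{-1} = 1$.

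Finally, for $R((\Gamma))_{\mathrm{small}}$ I would track cardinalities in each of the steps above. If $\kappa$ is an uncountable regular cardinal bounding $|\supp(x)|$ and $|\supp(y)|$, then $|\supp(x+y)| < \kappa$ is immediate, and $|\supp(xy)| \leq |\supp(x)|\cdot|\supp(y)| < \kappa$. For the inverse, one checks that $|\supp(\eps^{n})| < \kappa$ for each $n$ and hence $|\supp(x^{-1})| \leq \aleph_{0}\cdot\kappa = \kappa$. The main obstacle in the whole argument is thus Neumann's lemma; everything else is bookkeeping.
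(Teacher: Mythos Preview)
Your outline is correct and follows the standard argument; the paper itself does not prove this statement but records it as a known fact without proof, so there is nothing to compare against. One minor simplification for your last paragraph: in this paper ``small'' simply means ``is a set rather than a proper class,'' so no cardinal arithmetic with a regular $\kappa$ is needed---it suffices to observe that $\supp(x+y)$, $\supp(xy)$, and $\bigcup_{n\in\N}\supp(\eps^{n})$ are each contained in sets built from $\supp(x)$ and $\supp(y)$ by finite unions, images of finite products under the multiplication map $\Gamma\times\Gamma\to\Gamma$, and a countable union, all of which remain sets.
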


\begin{rem}
  Note that $\Gamma$ is a multiplicative subgroup of $R((\Gamma))$,
  where we identify $\m\in\Gamma$ with $1\m\in R((\Gamma))$. It
  follows from the definitions that $\Gamma\subseteq R((\Gamma))$
  contains one and only one representative for each equivalence class
  modulo $\veq_{R}$. In particular, taking $R=\R$, we have that
  $\Gamma$ is a group of monomials for $\R((\Gamma))$. The same is
  true for $\R((\Gamma))_{\mathrm{small}}$.
\end{rem}

\subsection{Surreal numbers}

We denote by $\no$ the ordered field of surreal numbers
\cite{Conway1976,Gonshor1986}.  A minimal introduction to $\no$,
containing all the prerequisites for this paper, is contained in
\cite{Berarducci2015}. However, there is no need to assume a prior
knowledge of the surreal numbers (the definition itself will not be
needed), if one is willing to take for granted the following fact.

\begin{fact}
  \label{fact:normal-form}We have:
  \begin{enumerate}
  \item $\no$ is an ordered real closed field equipped with an
    exponential function $\exp:\no\to\no$, $x\mapsto e^{x}:=\exp(x)$,
    making it into an elementary extension of $(\R,<,+,\cdot,\exp)$
    \cite{DriesE2001}; in particular, $\exp:\no\to\no$ is an
    increasing isomorphism from the additive to the positive
    multiplicative group.
  \item $\no$ contains an isomorphic copy of the ordered class $\on$
    of all ordinal numbers (hence $\no$ is a proper class). The
    addition and multiplication restricted to $\on$ coincide with the
    Hessenberg sum and product.
  \item There is a representation of surreal numbers as binary
    sequences of any ordinal length. The relation of being an initial
    segment, called \textbf{simplicity}, is well founded and makes
    $\no$ into a binary tree. This gives us a canonical choice for a
    group $\M\subseteq\no^{>0}$ of monomials: the \textbf{monomials}
    are the simplest positive representatives of the Archimedean
    classes (they form a proper class).
  \item The ordinal $\omega$ belongs to $\M$ (it will later play the
    role of a formal variable with derivative $1$). If
    $1\prec\m\in\M$, then $e^{\m}\in\M$. In particular $e^{\omega}$ and
    $e^{-\omega}$ are monomials, but $e^{1/\omega}$ is not.
  \item There is a canonical isomorphism (written as an
    identification)
    \[
      \no=\R((\M))_{\mathrm{small}}\subset\R((\M)).
    \]
  \item A surreal number $\sum_{m\in\M}x_{\m}\m$ is \textbf{purely
      infinite} if all the monomials $\m$ in its support are infinite,
    namely $\m\succ1$.  Letting $\J\subseteq\no$ be the class of all
    purely infinite surreal numbers, there is a direct sum
    decomposition of $\R$-vector spaces
    \[
      \no=\J\oplus\R\oplus o(1).
    \]
  \item We have
    $\M=\exp(\J)=\left\{ e^{\gamma}\suchthat\gamma\in\J\right\} $, so
    we can write
    \[
      \no=\R((e^{\J}))_{\mathrm{small}}.
    \]
    In other words, every surreal number $x\in\no$ can be uniquely
    written in the form
    \[
      x=\sum_{i<\alpha}r_{i}e^{\gamma_{i}}
    \]
    where $\alpha\in\on$, $r_{i}\in\R^{*}$, and
    $(\gamma_{i})_{i<\alpha}$ is a decreasing sequence in $\J$ indexed
    by an ordinal $\alpha\in\on$.  We call this the \textbf{Ressayre
      normal form} of $x$.
  \item The exponential function on $o(1)$ can be calculated using the
    Taylor series of $\exp$, namely
    \[
      \exp(\varepsilon)=\sum_{n=0}^{\infty}\frac{\varepsilon^{n}}{n!}
    \]
    for all $\varepsilon\in o(1)$ (see \prettyref{subsec:Summability}
    for the meaning of the above infinite sum). Likewise, the inverse
    $\log$ satisfies
    \[
      \log(1+\varepsilon)=\sum_{n=1}^{\infty}(-1)^{n+1}\frac{\varepsilon^{n}}{n}.
    \]
  \end{enumerate}
\end{fact}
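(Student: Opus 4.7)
The plan is to assemble Fact~\ref{fact:normal-form} item-by-item from the established theory of surreal numbers, consistently with the paper's stated intent to take the fact for granted rather than reprove it. Items (1)--(4) are foundational: the ordered real-closed-field structure of $\no$, the definition of $\exp$, and the embedding $\on \hookrightarrow \no$ come from Conway \cite{Conway1976} and Gonshor \cite{Gonshor1986}, with the Hessenberg identification on $\on$ being a direct computation from the inductive definition of the surreal $+$ and $\cdot$. The elementary-extension assertion in (1) is the main theorem of van den Dries and Ehrlich \cite{DriesE2001}. The binary-sign-expansion presentation in (3) is Gonshor's; with it, simplicity is manifestly well-founded (it is initial-segment inclusion on ordinal-indexed sign sequences), and one defines $\M$ by selecting, inside each Archimedean class of $\no^{>0}$, the element of shortest sign expansion. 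For (4), $\omega$ has the shortest sign expansion among positive infinite surreals and is therefore a monomial; for $\m \in \M$ with $\m \succ 1$, Gonshor's formula for $\exp$ yields that $e^{\m}$ is simplest in its Archimedean class, whereas $e^{1/\omega} = 1 + 1/\omega + \frac{1}{2}\omega^{-2} + \ldots$ has a nontrivial expansion over $\M$ and therefore cannot itself be a monomial.

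For items (5)--(7), the strategy is to exploit the general Hahn-series structure of a real closed field equipped with a group of monomials. Given $\M$ from (3) and the summability structure in $\no$, one obtains the canonical identification $\no = \R((\M))_{\sml}$ of (5) by writing each surreal as the sum over its support in $\M$ with real coefficients; the restriction to ``small'' supports is forced by the fact that $\no$ is a proper class. Item (6) then follows by partitioning $\supp(x)$ into the monomials $\succ 1$, the monomial $1$, and the monomials $\prec 1$, giving the $\R$-vector-space direct sum $\no = \J \oplus \R \oplus o(1)$. Item (7) is Gonshor's $\omega$-map theorem: the restriction of $\gamma \mapsto e^{\gamma}$ to $\J$ is a bijection onto $\M$, so every monomial has a unique expression $e^{\gamma}$ with $\gamma$ purely infinite; substituting into (5) rewrites every $x \in \no$ in the Ressayre normal form $\sum_{i<\alpha} r_i e^{\gamma_i}$ with $(\gamma_i)$ a decreasing sequence in $\J$.

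Item (8) is already part of Gonshor's construction of $\exp$ on $\no$: on infinitesimals $\eps$ the definition of $\exp(\eps)$ is designed to agree with the formal Taylor series $\sum_{n\ge 0} \eps^{n}/n!$, whose summability in $\no$ is verified as part of that construction; the expansion of $\log(1+\eps)$ then follows by formally inverting the series. The hardest part, were one to try to reconstruct the fact from first principles rather than cite it, would be the elementary-equivalence statement in (1): one needs a Ressayre-style axiomatisation of $(\R,\exp)$ together with a verification that the surreal exponential satisfies it, which is the substantive content of \cite{DriesE2001} and not a step one would redo inside a proof sketch. Everything else reduces either to a direct inspection of sign expansions or to a structural property of Hahn fields applied to $\no = \R((\M))_{\sml}$.
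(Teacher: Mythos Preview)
Your proposal is correct and matches the paper's approach: the paper does not prove Fact~\ref{fact:normal-form} at all but explicitly invites the reader to take it for granted, supplying only the citations \cite{Conway1976,Gonshor1986,DriesE2001}, and your item-by-item attribution to these sources (with the Hahn-field structural observations for (5)--(7)) is exactly the intended unpacking. One small attribution quibble: the identification $\M=\exp(\J)$ in (7) is not quite ``Gonshor's $\omega$-map theorem'' but rather the combination of Conway's $\omega$-map $\M=\{\omega^{x}:x\in\no\}$ with Gonshor/Kruskal's result that $\exp$ restricted to $\J$ agrees with $\gamma\mapsto\omega^{\gamma'}$ for a suitable bijection $\gamma\mapsto\gamma'$ onto $\no$; this does not affect the correctness of your sketch.
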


\begin{rem}
  For infinite $x$, the equality
  $\exp(x)=\sum_{n=0}^{\infty}\frac{x^{n}}{n!}$ does \emph{not} hold. In
  fact, the right-hand side does not even represent a surreal number
  (see \prettyref{subsec:Summability}). Likewise for $\log(1+x)$.
\end{rem}

\begin{defn}
  By the decomposition $\no=\J\oplus\R\oplus o(1)$, for every surreal
  number $x\in\no$ we can write uniquely
  \[
    x=x^{\big}+x^{=}+x^{\small}
  \]
  where $x^{\big}\in\J$, $x^{=}\in\R$ and $x^{\small}\prec1$. We also
  write $x^{\bigeq}$ for $x^{\big}+x^{=}$.
\end{defn}

\begin{defn}
  \label{def:ell}Thanks to \prettyref{fact:normal-form}(5) we can
  apply to $\no$ the definitions already introduced for Hahn fields
  (support, leading term, etc.). In particular, if
  $x=\sum_{i<\alpha}r_{i}e^{\gamma_{i}}$ is in normal form, its leading
  monomial is $e^{\gamma_{0}}$ and its leading term is
  $r_{0}e^{\gamma_{0}}$; in this case we define
  \[
    \vell(x):=\gamma_{0}.
  \]
\end{defn}

Note that $\vell(x)=\log(x)^{\big}$, as in fact
$\log(x) = \log(r_{0}e^{\gamma_{0}}(1+\varepsilon)) = \gamma_{0} +
\log(r_{0}) + \sum_{n=1}^{\infty}(-1)^{n+1}\frac{\varepsilon^{n}}{n}$ where
$\varepsilon\prec1$. Moreover, $x\vless y$ if and only if
$\vell(x)<\vell(y)$ (so $-\vell$ is a Krull valuation).

\begin{defn}
  \label{def:truncation}If $x=\sum_{i<\alpha}r_{i}e^{\gamma_{i}}$ and
  $\beta\leq\alpha$, the number $\sum_{i<\beta}r_{i}e^{\gamma_{i}}$ is
  called a \textbf{truncation} of $x$. A subclass $A\subseteq\no$ is
  \textbf{truncation closed} if for every $x$ in $A$, all truncations
  of $x$ are also in $A$.
\end{defn}

Note that $x^{\big}$ is a truncation of $x$ and it coincides with the
sum of all the terms $r_{i}e^{\gamma_{i}}$ of $x$ with $\gamma_{i}>0$ (if
there are no such terms, then $x^{\big}=0$).

\begin{notation}
  \label{nota:set-notations}Given $A,B\subseteq\no$ we shall use some
  self-explanatory notations like the following:
  \begin{itemize}
  \item $A^{>0}$ is the set of positive elements of $A$;
  \item $A^{\succ1}$ is the set of elements $a\in A$ satisfying
    $a\succ1$;
  \item $A<B$ means $a<b$ for all $a\in A$ and $b\in B$;
  \item $\exp(A):=\{\exp(x)\suchthat x\in A\}$ and
    $\log(A):=\{\log(x)\suchthat x\in A^{>0}\}$, where
    $\log:\no^{>0}\to\no$ is the inverse of $\exp$.
  \end{itemize}
\end{notation}

\begin{example}
  Since $\M=\exp(\J)$, we have $\M^{\succ1}=\exp(\J^{>0})$ and
  $\M^{\prec1}=\exp(\J^{<0})$.
\end{example}

\subsection{\label{subsec:Summability}Summability}

Any Hahn field, and in particular $\no$ by
\prettyref{fact:normal-form}(5), admits a natural notion of infinite
sum, as follows.

\begin{defn}
  \label{def:summability}Let $I$ be a set (not a proper class) and
  $(x_{i}\suchthat i\in I)$ be an indexed family of elements of $\no$.

  We say that $(x_{i}\suchthat i\in I)$ is \textbf{summable} if
  $\bigcup_{i\in I}\supp(x_{i})$ is reverse well-ordered and for each
  $\m\in\bigcup_{i\in I}\supp(x_{i})$, there are only finitely many
  $i\in I$ such that $\m\in\supp(x_{i})$.  In this case, the
  \textbf{sum} $\sum_{i\in I}x_{i}$ is the unique surreal number
  $y=\sum_{\m}y_{\m}\m$ such that
  $\supp(y)\subseteq\bigcup_{i\in I}\supp(x_{i})$ and, for every
  $\m\in\M$, $y_{\m}=\sum_{i\in I}(x_{i})_{\m}$ (note that there are
  finitely many $i\in I$ with $x_{i}\neq0$ by the hypothesis of
  summability). Similar definitions apply replacing $\no$ with any
  field of the form $R((\Gamma))_{\mathrm{small}}$.

  We shall also say that $\sum_{i\in I}x_{i}$ \textbf{exists }to mean
  that $(x_{i})_{i\in I}$ is summable.
\end{defn}

\begin{rem}
  \label{rem:summability-criterion}A family $(x_{i}\suchthat i\in I)$
  is summable if and only if there are no injective sequences
  $(i_{n})_{n\in\N}$ in $I$ and monomials $\m_{n}\in\supp(x_{i_{n}})$ (not
  necessarily distinct) such that $\m_{n}\preceq\m_{n+1}$ for each
  $n\in\N$ (where $\N$ is the set of non-negative
  integers). Equivalently, for every injective sequence
  $(i_{n})_{n\in\N}$ in $I$ and for any choice of monomials
  $\m_{n}\in\supp(x_{i_{n}})$, there is a subsequence
  $(i_{f(n)})_{n\in\N}$ such that $\m_{i_{f(n)}}\succ\m_{i_{f(n+1)}}$ for
  every $n\in\N$.
\end{rem}

\subsection{Hahn fields embedded in $\protect\no$}

Given a subfield $R$ of $\no$ and a multiplicative subgroup $\Gamma$
of the monomials $\M=e^{\J}$, we will sometimes be interested in the
class of all surreal numbers that can be written as a sum
$\sum r_{\m}\m$ for $r_{\m}\in R$ and $\m\in\Gamma$. Under suitable
assumptions on $R$ and $\Gamma$, this subclass of $\no$ can be
identified with the Hahn field $R((\Gamma))$.

\begin{prop}
  \label{prop:Hahn-subfield-No}Let $\Gamma$ be a small multiplicative
  subgroup of $\M=e^{\J}$ and $R$ be a truncation closed subfield of
  $\no$. If $R<\Gamma^{>1}$, there is a unique field embedding
  $R((\Gamma))\to\no$ sending $r\m$ (as an element of $R((\Gamma))$)
  to $r\m$ (as an element of $\no$) and preserving infinite sums.
\end{prop}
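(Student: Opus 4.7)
The plan is to define $\phi\colon R((\Gamma))\to\no$ by $\phi(x):=\sum_{\m\in\Gamma}x_{\m}\m$, where the right-hand side is evaluated as an infinite sum in $\no$. The main technical content is to show that the family $(x_{\m}\m)_{\m\in\Gamma}$ is summable in $\no$; once that is done, the facts that $\phi$ is a field embedding, sends $r\m$ to $r\m$, preserves infinite sums, and is uniquely characterised by these properties will follow from routine manipulations with Hahn sums, together with one small extra argument for injectivity.

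The summability step is the main obstacle. Using \prettyref{rem:summability-criterion}, I would argue by contradiction: suppose there exist an injective sequence $(\m_{n})_{n\in\N}$ in $\Gamma$ and monomials $\n_{n}\in\supp(x_{\m_{n}})$ with $\n_{n}\m_{n}\preceq\n_{n+1}\m_{n+1}$ for all $n$. Since $\{\m_{n}\suchthat n\in\N\}\subseteq\supp(x)$ is infinite and reverse well-ordered, a standard extraction argument yields an increasing $(k_{n})$ with $\m_{k_{n}}$ strictly decreasing in $\Gamma$; relabelling, assume $\m_{n}>\m_{n+1}$, so that $\m_{n}/\m_{n+1}\in\Gamma^{>1}$ and $\n_{n+1}/\n_{n}\succeq\m_{n}/\m_{n+1}$. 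Here truncation closedness of $R$ is crucial: the Ressayre tail of $x_{\m_{n}}$ starting at the term with monomial $\n_{n}$ is the difference of two elements of $R$, so it lies in $R$, and it has leading monomial $\n_{n}$; hence $\n_{n}\asymp r_{n}$ for some $r_{n}\in R\setminus\{0\}$, and $\n_{n+1}/\n_{n}\asymp r_{n+1}/r_{n}\in R$. On the other hand, $R<\Gamma^{>1}$ together with $\Q\subseteq R$ implies $r\prec\gamma$ for every $r\in R$ and $\gamma\in\Gamma^{>1}$ (apply the order inequality to all integer multiples of $r$), so $r_{n+1}/r_{n}\prec\m_{n}/\m_{n+1}$, contradicting $\n_{n+1}/\n_{n}\succeq\m_{n}/\m_{n+1}$.

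With summability in hand, additivity of $\phi$ is immediate and multiplicativity follows from the convolution formula, which holds for summable families in $\no$. For injectivity, given $x\neq 0$ with $\m_{0}:=\max\supp(x)$, let $e^{\delta_{0}}:=\LM(x_{\m_{0}})$; the same truncation-plus-$R<\Gamma^{>1}$ argument shows that every pair $(\m,\n)$ with $\m\in\supp(x)$, $\n\in\supp(x_{\m})$ and $(\m,\n)\neq(\m_{0},e^{\delta_{0}})$ satisfies $\n\m\prec e^{\delta_{0}}\m_{0}$, so the coefficient of $e^{\delta_{0}}\m_{0}$ in $\phi(x)$ equals the nonzero leading coefficient of $x_{\m_{0}}$ and in particular $\phi(x)\neq 0$. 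Preservation of infinite sums is then a routine check from the definition of summability. Uniqueness follows because any embedding $\phi'$ with the stated properties must send $x=\sum_{\m\in\Gamma}x_{\m}\m$ (a summable family in $R((\Gamma))$) to $\sum_{\m}\phi'(x_{\m}\m)=\sum_{\m}x_{\m}\m=\phi(x)$.
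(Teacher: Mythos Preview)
Your proposal is correct and follows essentially the same approach as the paper's proof: both reduce the problem to summability of $(x_{\m}\m)_{\m}$, extract a strictly decreasing subsequence of the $\m_{n}$'s from the support of $x$, and then use truncation closedness of $R$ together with $R<\Gamma^{>1}$ to derive the contradiction via a ratio comparison. The only cosmetic difference is that the paper first adjoins $\R$ to $R$ so that the monomial $\o_{n}\in\supp(x_{\m_{n}})$ itself lies in $R$, whereas you instead take the tail of $x_{\m_{n}}$ at $\n_{n}$ to produce an element $r_{n}\in R$ with $r_{n}\asymp\n_{n}$; both devices yield the same inequality $\n_{n+1}/\n_{n}\prec\m_{n}/\m_{n+1}$. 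The paper is also terser, leaving injectivity, multiplicativity, preservation of sums, and uniqueness implicit, while you spell these out.
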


\begin{proof}
  Suppose that $R<\Gamma^{>1}$. It suffices to check that the embedding
  exists. Without loss of generality, we may assume that
  $\R\subseteq R$, as the compositum $\R\cdot R$ is clearly truncation
  closed and it also satisfies $\R\cdot R<\Gamma^{>1}$.

  Let $\sum r_{\m}\m$ be an element of $R((\Gamma))$. We wish to prove
  that $(r_{\m}\m\in\no\,:\,r_{\m}\neq0)$ is summable. Take an injective
  sequence $(r_{\m_{n}}\m_{n})_{n\in\N}$ and a choice of
  $\n_{n}\in\supp(r_{\m_{n}}\m_{n})$.  We can write
  $\n_{n}=\m_{n}\o_{n}$, where $\o_{n}\in\supp(r_{\m_{n}})$.  Note that
  $\o_{n}\in R$, since $R$ contains $\R$ and is closed under
  truncation.

  After extracting a subsequence, we may assume that
  $(\m_{n})_{n\in\N}$ is strictly decreasing. We can now easily check
  that $(\n_{n})_{n\in\N}$ is also strictly decreasing: indeed,
  \[
    \frac{\n_{n}}{\n_{n+1}}=\frac{\m_{n}}{\m_{n+1}}\cdot\frac{\o_{n}}{\o_{n+1}}>1,
  \]
  as $\frac{\o_{n+1}}{\o_{n}}\in R<\Gamma^{>1}$.
\end{proof}

\begin{notation}
  \label{nota:Hahn-subfield-No-1}By \prettyref{prop:Hahn-subfield-No},
  given a small multiplicative group $\Gamma$ of $\M=e^{\J}$ (the class
  of monomials of $\no$) and a truncation closed subfield
  $R\subseteq\no$ such that $R<\Gamma^{>1}$, we can identify the field
  $R((\Gamma))$ with the class of surreal numbers that are of the form
  $\sum r_{\m}\m$ with $r_{\m}\in R$ and $\m\in\Gamma$.
\end{notation}

\begin{lem}
  \label{lem:Hahn-lexicographic}Let $\Gamma_{1}$ and $\Gamma_{2}$ be
  subgroups of a given ordered abelian multiplicative group. Suppose
  $\Gamma_{1}<\Gamma_{2}^{>1}$. Then $\Gamma_{1}\Gamma_{2}$ is naturally
  isomorphic, as an ordered group, to the direct product
  $\Gamma_{1}\times\Gamma_{2}$ with the reverse lexicographic order.
\end{lem}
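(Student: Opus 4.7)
The plan is to exhibit the obvious multiplication map $\mu \colon \Gamma_1 \times \Gamma_2 \to \Gamma_1 \Gamma_2$, $(\gamma_1, \gamma_2) \mapsto \gamma_1 \gamma_2$, check that it is a group isomorphism, and then verify it is order preserving when the source is equipped with the reverse lexicographic order.

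First I would dispose of the group-theoretic content. Since the ambient group is abelian, $\mu$ is a homomorphism, and it is surjective by the definition of $\Gamma_1\Gamma_2$. For injectivity it suffices to show $\Gamma_1 \cap \Gamma_2 = \{1\}$: if $\gamma \in \Gamma_1 \cap \Gamma_2$ with $\gamma > 1$, then $\gamma \in \Gamma_2^{>1}$ while $\gamma \in \Gamma_1 < \Gamma_2^{>1}$, a contradiction; and $\gamma < 1$ yields the same contradiction applied to $\gamma^{-1}$.

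Next I would show $\mu$ is an order isomorphism. Because $\mu$ is a group homomorphism between ordered groups, it is enough to show it sends the positive cone of the reverse lexicographic order on $\Gamma_1\times \Gamma_2$ onto the positive cone of $\Gamma_1\Gamma_2$, i.e.\ that $\gamma_1\gamma_2 > 1$ if and only if either $\gamma_2 > 1$, or $\gamma_2 = 1$ and $\gamma_1 > 1$. The case $\gamma_2 = 1$ is trivial. If $\gamma_2 > 1$, then $\gamma_1^{-1} \in \Gamma_1 < \Gamma_2^{>1} \ni \gamma_2$, so $\gamma_1^{-1} < \gamma_2$, hence $\gamma_1\gamma_2 > 1$. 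If $\gamma_2 < 1$, then $\gamma_2^{-1} \in \Gamma_2^{>1}$ and $\gamma_1 \in \Gamma_1 < \Gamma_2^{>1}$, so $\gamma_1 < \gamma_2^{-1}$, hence $\gamma_1\gamma_2 < 1$. This exhausts the cases.

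There is no real obstacle here; the only substantive point is the observation that the hypothesis $\Gamma_1 < \Gamma_2^{>1}$ forces each $\Gamma_1$-element to be dominated in absolute value by every element of $\Gamma_2^{>1}$ (apply the inequality to both $\gamma_1$ and $\gamma_1^{-1}$), which is exactly what makes the $\Gamma_2$-component take precedence in the order on the product.
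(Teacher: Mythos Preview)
Your proof is correct and follows essentially the same approach as the paper's: both establish $\Gamma_1\cap\Gamma_2=\{1\}$ to get the group isomorphism and then use the hypothesis $\Gamma_1<\Gamma_2^{>1}$ to verify order preservation. The only cosmetic differences are that the paper works with the inverse map $ab\mapsto(a,b)$ and checks the order directly on a pair of elements rather than via the positive cone, and it leaves the triviality of $\Gamma_1\cap\Gamma_2$ implicit where you spell it out.
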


\begin{proof}
  Clearly, $\Gamma_{1}\cap\Gamma_{2}=\{1\}$, so the map sending
  $ab\in\Gamma_{1}\Gamma_{2}$ to $(a,b)\in\Gamma_{1}\times\Gamma_{2}$ is a
  well-defined isomorphism of abelian groups. We can easily verify
  that it preserves the ordering.  Indeed, let $a,a'\in\Gamma_{1}$ and
  $b,b'\in\Gamma_{2}$ be such that $b<b'$. It suffices to show that
  $ab<a'b'$. This can be rewritten as $a/a'<b'/b$. Since $b'/b>1$, the
  desired result follows by the hypothesis $\Gamma_{1}<\Gamma_{2}^{>1}$.
\end{proof}

Using the above notation, \prettyref{prop:Hahn-subfield-No}, and
\prettyref{lem:Hahn-lexicographic}, we can then deduce the following
well-known result (see for instance \cite[1.4]{DriesMM2001}). However,
note that the result contains an equality rather than just an
isomorphism, thanks to the identifications of
\prettyref{nota:Hahn-subfield-No-1}.

\begin{cor}
  \label{cor:Hahn-lexicographic-1}Let $\Gamma_{1}$, $\Gamma_{2}$ be
  small subgroups of $\M$. If $\Gamma_{1}<\Gamma_{2}^{>1}$, then we have
  $\R((\Gamma_{1}))((\Gamma_{2})) = \R((\Gamma_{1}\Gamma_{2})) \cong
  \R((\Gamma_{1}\times\Gamma_{2}))$.
\end{cor}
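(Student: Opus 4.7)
The plan is to chain the two preceding results together. The isomorphism part $\R((\Gamma_{1}\Gamma_{2})) \cong \R((\Gamma_{1}\times\Gamma_{2}))$ is essentially immediate from \prettyref{lem:Hahn-lexicographic}: since the Hahn field construction $R((-))$ is functorial in its ordered group argument, the order-group isomorphism $\Gamma_{1}\Gamma_{2} \cong \Gamma_{1}\times\Gamma_{2}$ lifts coefficient-wise to a Hahn field isomorphism. So the real content lies in the \emph{equality} $\R((\Gamma_{1}))((\Gamma_{2}))=\R((\Gamma_{1}\Gamma_{2}))$ as subclasses of $\no$, which requires the identifications coming from \prettyref{nota:Hahn-subfield-No-1}.

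First I would apply \prettyref{prop:Hahn-subfield-No} with $R=\R$ and $\Gamma=\Gamma_{1}$: the hypothesis $\R<\Gamma_{1}^{>1}$ holds since elements of $\Gamma_{1}^{>1}\subseteq\M^{>1}$ are infinite monomials (by the remark that $\m<\n$ iff $\m\prec\n$ for monomials). This identifies $\R((\Gamma_{1}))$ with a truncation-closed subfield of $\no$. Next I would verify the hypothesis $\R((\Gamma_{1}))<\Gamma_{2}^{>1}$ needed to iterate: for $f\in\R((\Gamma_{1}))$ we have $|f|\preceq\LM(f)\in\Gamma_{1}$, and by hypothesis $\Gamma_{1}<\Gamma_{2}^{>1}$, which sharpens to $\Gamma_{1}\prec\Gamma_{2}^{>1}$ (again using monomial comparison), so $|f|<g$ for every $g\in\Gamma_{2}^{>1}$. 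A second application of \prettyref{prop:Hahn-subfield-No}, now with $R=\R((\Gamma_{1}))$ and $\Gamma=\Gamma_{2}$, identifies $\R((\Gamma_{1}))((\Gamma_{2}))$ with the class of surreals of the form $\sum_{\n\in\Gamma_{2}}r_{\n}\n$ with $r_{\n}\in\R((\Gamma_{1}))$. In parallel, $\R((\Gamma_{1}\Gamma_{2}))$ is identified directly with the class of surreals of the form $\sum_{\o\in\Gamma_{1}\Gamma_{2}}s_{\o}\o$ with $s_{\o}\in\R$, since $\Gamma_{1}\Gamma_{2}\subseteq\M$ and $\R<\M^{>1}$.

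The remaining step is to match these two descriptions. In one direction, expanding each coefficient $r_{\n}=\sum_{\m}s_{\m,\n}\m$ rewrites an element as $\sum_{\m,\n}s_{\m,\n}(\m\n)$; by \prettyref{lem:Hahn-lexicographic}, the map $(\m,\n)\mapsto\m\n$ is an order bijection from $\Gamma_{1}\times\Gamma_{2}$ with reverse lexicographic order onto $\Gamma_{1}\Gamma_{2}$, so this is precisely a Hahn series over $\Gamma_{1}\Gamma_{2}$. In the other direction, given $\sum_{\o}s_{\o}\o\in\R((\Gamma_{1}\Gamma_{2}))$, the unique factorization $\o=\m\n$ afforded by $\Gamma_{1}\cap\Gamma_{2}=\{1\}$ lets me regroup the terms according to their $\Gamma_{2}$-component.

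The main obstacle is verifying summability so that both descriptions define the \emph{same} surreal via the ambient notion of sum in $\no$. Concretely, after grouping an element of $\R((\Gamma_{1}\Gamma_{2}))$ by the $\n\in\Gamma_{2}$ factor, I need each inner family to be summable in $\R((\Gamma_{1}))$ (so that $r_{\n}$ really lies in $\R((\Gamma_{1}))$) and the outer family $(r_{\n}\n)_{\n\in\Gamma_{2}}$ to be summable in $\no$ with the expected overall sum. Both points follow from the reverse lexicographic structure given by \prettyref{lem:Hahn-lexicographic}: any reverse well-ordered subset of $\Gamma_{1}\Gamma_{2}$ has reverse well-ordered projection onto $\Gamma_{2}$ and reverse well-ordered fibers in $\Gamma_{1}$. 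The summability criterion of \prettyref{rem:summability-criterion} then applies using $\R((\Gamma_{1}))<\Gamma_{2}^{>1}$ essentially as in the proof of \prettyref{prop:Hahn-subfield-No}, completing the identification.
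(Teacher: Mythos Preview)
Your proposal is correct and follows essentially the same route as the paper: verify $\R((\Gamma_{1}))<\Gamma_{2}^{>1}$, invoke \prettyref{prop:Hahn-subfield-No} to get the inclusion $\R((\Gamma_{1}))((\Gamma_{2}))\subseteq\R((\Gamma_{1}\Gamma_{2}))$, and then use the unique factorization $\o=\n\m$ from \prettyref{lem:Hahn-lexicographic} to regroup an arbitrary element of $\R((\Gamma_{1}\Gamma_{2}))$ by its $\Gamma_{2}$-component for the reverse inclusion. You spell out the summability checks and the hypothesis verifications in more detail than the paper does, but the argument is the same.
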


\begin{proof}
  We first note that $\R((\Gamma_{1}))<\Gamma_{2}^{>1}$, from which it
  follows at once that
  $\R((\Gamma_{1}))((\Gamma_{2}))\subseteq\R((\Gamma_{1}\Gamma_{2}))$ by
  \prettyref{prop:Hahn-subfield-No}. On the other hand, let
  $x=\sum_{\m\in\Gamma_{1}\Gamma_{2}}r_{\m}\m$ be an element of
  $\R((\Gamma_{1}\Gamma_{2}))$. Since
  $\Gamma_{1}\Gamma_{2}\cong\Gamma_{1}\times\Gamma_{2}$, each
  $\m\in\Gamma_{1}\Gamma_{2}$ decomposes uniquely as a product
  $\m=\n\o$ with $\n\in\Gamma_{1}$ and $\o\in\Gamma_{2}$. But then it is
  easy to verify that
  \[
    x = \sum_{\m}r_{\m}\m = \sum_{\o\in\Gamma_{2}}
    \left(\sum_{\n\in\Gamma_{1}}r_{\n\o}\n\right)\o \in
    \R((\Gamma_{1}))((\Gamma_{2})).
  \]
\end{proof}

\begin{rem}
  If one drops the assumption that $\Gamma$ is small, then the
  conclusion of \ref{prop:Hahn-subfield-No} holds with
  $R((\Gamma))_{\mathrm{small}}$ in place of $R((\Gamma))$. In
  particular, we may canonically identify
  $R((\Gamma))_{\mathrm{small}}$ with a subfield of $\no$, as in
  \prettyref{nota:Hahn-subfield-No-1}.  As a special case, one
  recovers the already mentioned identification
  $\no=\R((\M))_{\mathrm{small}}$ of \prettyref{fact:normal-form}(5).
  The conclusion of \prettyref{cor:Hahn-lexicographic-1} also holds,
  provided one uses $R((\Gamma_{i}))_{\mathrm{small}}$ instead of
  $R((\Gamma_{i}))$ for $i=1,2$.
\end{rem}

\section{Surreal analytic functions}

A real function is analytic at a point in its domain if there is a
neighborhood of the point in which it coincides with the limit of a
power series. Such notion does not generalize directly to surreal
numbers, as $\no$ does not have a good notion of limit for series.
However, we can replace the limit with the natural notion of infinite
sum from \prettyref{def:summability}. This leads to a theory of
``surreal analytic function'' developed in \cite{Alling1987}. In this
section we isolate and extend some of those results in a form suitable
for our goals.

Infinite sum bears some resemblance with the usual notion of absolute
convergence. On the one hand, like absolute convergence, it enjoys
some good algebraic properties, such as being independent on the
``order'' in which we sum the elements of the family. On the other
hand, it is not related to the order topology; for instance, even if a
family $(x_{i})_{i\in I}$ is summable, and $(y_{i})_{i\in I}$ is such that
$|y_{i}|\leq|x_{i}|$, it does not necessarily follow that
$(y_{i})_{i\in I}$ is summable.

\begin{lem}
  Let $(a_{i}\,:\,i\in I)$ be a summable family of surreal numbers.
  Then for any partition $I=\bigsqcup_{j\in J}I_{j}$ of the set $I$,
  each sum $\sum_{i\in I_{j}}a_{i}$ exists, the family
  $(\sum_{i\in I_{j}}a_{i}\,:\,j\in J)$ is summable, and
  \[
    \sum_{j\in J}\sum_{i\in I_{j}}a_{i}=\sum_{i\in I}a_{i}.
  \]
\end{lem}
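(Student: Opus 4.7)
The plan is to verify each of the three assertions directly from the definition of summability (Definition~\ref{def:summability}), using only the facts that (a)~unions of subfamilies of a reverse well-ordered set are reverse well-ordered, and (b)~subfamilies of finite sets are finite. No delicate convergence argument is needed; the difficulty is purely bookkeeping.

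First, I would show that each inner sum $\sum_{i\in I_{j}}a_{i}$ exists. Since $\bigcup_{i\in I_{j}}\supp(a_{i})\subseteq\bigcup_{i\in I}\supp(a_{i})$ is reverse well-ordered, and for each monomial $\m$ the set $\{i\in I_{j}\suchthat\m\in\supp(a_{i})\}\subseteq\{i\in I\suchthat\m\in\supp(a_{i})\}$ is finite, the family $(a_{i}\suchthat i\in I_{j})$ satisfies the summability conditions. Denote the corresponding sum by $b_{j}:=\sum_{i\in I_{j}}a_{i}$, and recall that $\supp(b_{j})\subseteq\bigcup_{i\in I_{j}}\supp(a_{i})$ with coefficient $(b_{j})_{\m}=\sum_{i\in I_{j}}(a_{i})_{\m}$.

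Next, I would verify that $(b_{j}\suchthat j\in J)$ is summable. Clearly $\bigcup_{j\in J}\supp(b_{j})\subseteq\bigcup_{i\in I}\supp(a_{i})$, so reverse well-orderedness is automatic. For any fixed $\m$ in this union, if $\m\in\supp(b_{j})$ then there must exist some $i\in I_{j}$ with $\m\in\supp(a_{i})$ (else the coefficient $(b_{j})_{\m}$ would be a sum of zeros). Since the $I_{j}$ are pairwise disjoint and only finitely many $i\in I$ satisfy $\m\in\supp(a_{i})$, only finitely many indices $j$ can satisfy $\m\in\supp(b_{j})$. This gives the second summability condition.

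Finally, I would compute both sides term by term. Fix a monomial $\m$ and let $S_{\m}:=\{i\in I\suchthat\m\in\supp(a_{i})\}$, which is finite by hypothesis. The coefficient of $\m$ in $\sum_{i\in I}a_{i}$ is $\sum_{i\in S_{\m}}(a_{i})_{\m}$. On the other hand, the coefficient of $\m$ in $\sum_{j\in J}b_{j}$ is $\sum_{j\in J}(b_{j})_{\m}=\sum_{j\in J}\sum_{i\in I_{j}\cap S_{\m}}(a_{i})_{\m}$, and since $S_{\m}=\bigsqcup_{j\in J}(I_{j}\cap S_{\m})$ is a finite disjoint union, this reorders to $\sum_{i\in S_{\m}}(a_{i})_{\m}$. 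The two surreal numbers have the same coefficients on every monomial, hence are equal.

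The only step that requires a moment's care is the observation in the second paragraph that $\m\in\supp(b_{j})$ forces some $i\in I_{j}$ with $\m\in\supp(a_{i})$; this is immediate, but it is the precise point where the disjointness of the partition is used to transfer the finiteness condition from $(a_{i})$ to $(b_{j})$. Everything else is a direct unpacking of definitions.
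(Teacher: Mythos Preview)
Your proof is correct and follows exactly the same approach as the paper's own proof, simply spelling out in detail what the paper dismisses with ``clearly'' and ``easily follows''. The three steps---summability of each subfamily, summability of the family of partial sums via the observation that $\m\in\supp(b_{j})$ forces $\m\in\supp(a_{i})$ for some $i\in I_{j}$, and equality by comparing coefficients monomial-by-monomial---match the paper's argument line for line.
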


\begin{proof}
  Clearly, since $(a_{i}\,:\,i\in I)$ is summable, so is each
  $(a_{i}\,:\,i\in I_{j})$ for $j\in J$. Moreover, it also follows
  easily that $(\sum_{i\in I_{j}}a_{i}\,:\,j\in J)$ is summable, as each
  monomial $\m$ in $\supp(\sum_{i\in I_{j}}a_{i})$ must appear in
  $\supp(a_{i})$ for some $i\in I_{j}$. To check that its sum is indeed
  equal to $\sum_{i\in I}a_{i}$, for a given monomial $\m$, let
  $a_{i,\m}$be the coefficient of $\m$ in $a_{i}$. Then the coefficient
  of $\m$ in $\sum_{j\in J}\sum_{i\in I_{j}}a_{i}$ is
  $\sum_{j\in J}\sum_{i\in I_{j}}a_{i,\m}=\sum_{i\in I}a_{i,\m}$, which in
  turn is the coefficient of $\m$ in $\sum_{i\in I}a_{i}$, proving the
  conclusion.
\end{proof}

\begin{cor}
  \label{cor:exchange-of-sums}Let
  $(a_{i,j}\suchthat(i,j)\in I\times J)$ be a summable family of
  surreal numbers. Then both $\sum_{i\in I}\sum_{j\in J}a_{i,j}$ and
  $\sum_{j\in J}\sum_{i\in I}a_{i,j}$ exist and
  \[
    \sum_{i\in I}\sum_{j\in J}a_{i,j}=\sum_{j\in J}\sum_{i\in
      I}a_{i,j}=\sum_{(i,j)\in I\times J}a_{i,j}.
  \]
\end{cor}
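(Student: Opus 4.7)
The plan is to obtain this corollary as an immediate application of the preceding lemma, choosing appropriate partitions of the index set $I \times J$. Since both iterated sums have symmetric roles, the same argument will work for each of them, and both will be compared to the unrestricted sum $\sum_{(i,j) \in I \times J} a_{i,j}$.

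More precisely, first I would consider the partition
\[
  I \times J \;=\; \bigsqcup_{i \in I} \bigl(\{i\} \times J\bigr).
\]
Since $(a_{i,j})_{(i,j) \in I \times J}$ is summable by hypothesis, the preceding lemma applies to this partition. It gives at once that each inner family $(a_{i,j})_{j \in J}$ is summable (so $\sum_{j \in J} a_{i,j}$ exists for every $i \in I$), that the family of partial sums $\bigl(\sum_{j \in J} a_{i,j}\bigr)_{i \in I}$ is itself summable, and that
\[
  \sum_{i \in I} \sum_{j \in J} a_{i,j} \;=\; \sum_{(i,j) \in I \times J} a_{i,j}.
\]

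Next I would repeat the argument verbatim using the symmetric partition $I \times J = \bigsqcup_{j \in J} (I \times \{j\})$ to obtain the analogous equality for the other iterated sum. Chaining the two identities through the common value $\sum_{(i,j) \in I \times J} a_{i,j}$ yields the conclusion.

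I do not expect any real obstacle here: the content of the corollary is exactly Fubini for the block decomposition proved in the previous lemma, and nothing about the surreal structure beyond summability and the lemma itself is used. The only point worth a line of comment is that summability of $(a_{i,j})_{(i,j) \in I \times J}$ is genuinely needed as a hypothesis, since (as noted in the paragraph preceding the lemma) summability of all rows and columns separately is not enough in this setting.
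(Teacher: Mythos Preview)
Your proof is correct and is exactly the intended argument: the paper states this as a corollary without proof, immediately after the partition lemma, and applying that lemma to the two partitions $I\times J=\bigsqcup_{i\in I}(\{i\}\times J)$ and $I\times J=\bigsqcup_{j\in J}(I\times\{j\})$ is precisely how one is meant to read it off. One cosmetic inaccuracy: the cautionary remark about summability of the double family being genuinely necessary appears \emph{after} the corollary in the paper, not before the lemma.
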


\begin{rem}
  The assumption of summability of
  $(a_{i,j}\suchthat(i,j)\in I\times J)$ is necessary, or the equality
  may not hold. For instance, take $a_{i,i}=\omega$,
  $a_{i,i+1}=-\omega$, and $a_{i,j}=0$ otherwise for $i,j\in\N$, which
  is clearly not summable. Then $\sum_{i\in\N}\sum_{j\in\N}a_{i,j}=0$
  while $\sum_{j\in\N}\sum_{i\in\N}a_{i,j}=\omega$. Moreover, one of the
  two sums may not even exists; for instance,
  $\sum_{i\in\N}\sum_{j=0}^{1}(-1)^{j}\omega$ clearly exists and is equal
  to $0$, while $\sum_{j=0}^{1}\sum_{i\in\N}(-1)^{j}\omega$ does not
  exist. It can also happen that the two sums $\sum_{i}\sum_{j}$ and
  $\sum_{j}\sum_{i}$ exists and are equal, but the sum
  $\sum_{(i,j)\in I\times J}$ does not exists: take $a_{i,i}=2\omega$
  and $a_{i+1,i}=a_{i,i+1}=-\omega$, with all other terms $a_{i,j}$ being
  zero.
\end{rem}

\subsection{Products and powers of summable families}

The following is well known.

\begin{rem}
  \label{rem:product}If $(x_{i})_{i\in I}$ and $(y_{j})_{i\in J}$ are
  summable, then so is $(x_{i}y_{j}\suchthat(i,j)\in I\times J)$.  Its
  sum $\sum_{(i,j)\in I\times J}x_{i}y_{j}$ coincides with the product
  $(\sum_{i\in I}x_{i})(\sum_{j\in J}y_{j})$.
\end{rem}

Using \prettyref{rem:product}, one can easily express the $n$-th power
of a sum as follows.

\begin{prop}
  \label{prop:n-th power}Let $(x_{i})_{i\in I}$ be a summable family of
  surreal numbers and let $n\in\N$. Then the family
  $\left(\prod_{m<n}x_{\tau(m)}\suchthat\tau:n\to I\right)$ is summable
  and
  \[
    \left(\sum_{i\in I}x_{i}\right)^{n}=\sum_{\tau:n\to
      I}\text{ }\prod_{m<n}x_{\tau(m)}.
  \]
\end{prop}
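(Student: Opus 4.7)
The plan is to prove the statement by induction on $n$, using \prettyref{rem:product} at each step as the engine for summability and for the distributive identity.

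For the base case $n=0$, there is a unique map $\tau\colon 0\to I$ (the empty map), whose associated empty product equals $1$, matching $\left(\sum_{i\in I}x_{i}\right)^{0}=1$. The family indexed by this single $\tau$ is trivially summable.

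For the inductive step, assume the statement holds for some $n$. Write
\[
  \left(\sum_{i\in I}x_{i}\right)^{n+1} = \left(\sum_{i\in I}x_{i}\right)^{n}\cdot\left(\sum_{j\in I}x_{j}\right)
  = \left(\sum_{\tau\colon n\to I}\prod_{m<n}x_{\tau(m)}\right)\cdot\left(\sum_{j\in I}x_{j}\right),
\]
where the last equality uses the induction hypothesis. The family $\left(\prod_{m<n}x_{\tau(m)}\right)_{\tau\colon n\to I}$ is summable by hypothesis, and $(x_{j})_{j\in I}$ is summable by assumption, so by \prettyref{rem:product} the family $\left(\bigl(\prod_{m<n}x_{\tau(m)}\bigr)\cdot x_{j}\right)_{(\tau,j)}$ is summable and its sum equals the product above. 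Finally, I would invoke the natural bijection between pairs $(\tau,j)\in I^{n}\times I$ and maps $\tau'\colon n+1\to I$ given by $\tau'\!\restriction\! n=\tau$ and $\tau'(n)=j$: under this identification, $\bigl(\prod_{m<n}x_{\tau(m)}\bigr)\cdot x_{j}=\prod_{m<n+1}x_{\tau'(m)}$, yielding both summability and the equality at stage $n+1$.

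There is essentially no obstacle: \prettyref{rem:product} already packages the hard part (summability of Cauchy-type products of two summable families), and the induction reduces the $n$-fold case to iterated binary products. The only thing to be careful about is bookkeeping the reindexing bijection $I^{n}\times I\cong I^{n+1}$, but this is purely combinatorial and requires no analytic input.
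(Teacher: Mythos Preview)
Your proposal is correct and matches the paper's own proof, which is simply ``By induction on $n\in\N$ based on \prettyref{rem:product}.'' You have merely spelled out the details of that induction (the base case, the reindexing bijection $I^{n}\times I\cong I^{n+1}$), all of which are routine.
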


\begin{proof}
  By induction on $n\in\N$ based on \prettyref{rem:product}.
\end{proof}

\begin{cor}
  \label{cor:power-of-series}If $(a_{i}\eps^{i})_{i\in\N}$ is summable,
  then for every $n\in\N$,
  \[
    \left(\sum_{i\in\N}a_{i}\eps^{i}\right)^{n} =
    \sum_{k\in\N}\left(\sum_{i_{1}+\ldots+i_{n}=k}a_{i_{1}}a_{i_{2}} \ldots
      a_{i_{n}}\right)\eps^{k}.
  \]
\end{cor}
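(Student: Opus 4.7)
The plan is to reduce the statement directly to \prettyref{prop:n-th power} applied with $x_i := a_i \eps^i$, and then regroup the resulting terms according to the total exponent of $\eps$. Since the hypothesis gives summability of $(a_i \eps^i)_{i \in \N}$, \prettyref{prop:n-th power} applies and yields that the family $\bigl(\prod_{m<n} a_{\tau(m)} \eps^{\tau(m)} \suchthat \tau : n \to \N\bigr)$ is summable, with sum equal to $\left(\sum_{i \in \N} a_i \eps^i\right)^n$. Collecting the powers of $\eps$, each term is of the form $a_{\tau(0)} \cdots a_{\tau(n-1)} \, \eps^{\tau(0) + \cdots + \tau(n-1)}$.

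Next I would partition the index set of maps $\tau : n \to \N$ into the fibers
\[
I_k := \bigl\{\tau : n \to \N \suchthat \tau(0) + \cdots + \tau(n-1) = k\bigr\}, \qquad k \in \N.
\]
Each $I_k$ is finite, since it is in bijection with the set of compositions $(i_1, \ldots, i_n) \in \N^n$ with $i_1 + \cdots + i_n = k$. Applying the partition-of-sums lemma stated just before \prettyref{cor:exchange-of-sums}, the sum over all $\tau : n \to \N$ can be rewritten as $\sum_{k \in \N} \sum_{\tau \in I_k}(\cdot)$. Because $I_k$ is finite, each inner sum reduces to an ordinary finite sum and factors as $\bigl(\sum_{i_1 + \cdots + i_n = k} a_{i_1} \cdots a_{i_n}\bigr) \eps^k$, yielding the displayed identity.

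There is essentially no obstacle here: once \prettyref{prop:n-th power} has been invoked, the argument is a direct combinatorial regrouping, justified by the partition lemma. The only point worth double-checking is that the finiteness of each $I_k$ indeed makes the inner partition sums reduce to finite combinatorial sums, which is immediate.
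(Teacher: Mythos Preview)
Your proposal is correct and follows essentially the same approach as the paper: apply \prettyref{prop:n-th power} with $x_i = a_i\eps^i$ and then regroup the terms by the total exponent $\tau(0)+\cdots+\tau(n-1)=k$. The paper's proof is simply a terser version of yours, writing ``setting $\tau(m)=i_m$ and isolating the coefficient of $\eps^k$'' where you explicitly invoke the partition lemma and note the finiteness of each fiber $I_k$.
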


\begin{proof}
  By \prettyref{prop:n-th power},
  $\left(\sum_{i\in\N}a_{i}\eps^{i}\right)^{n} =
  \sum_{\tau:n\to\N}\prod_{m<n}a_{\tau(m)}\eps^{\tau(m)}$, and the result
  follows by setting $\tau(m)=i_{m}$ and isolating the coefficient of
  $\eps^{k}$ in the second member.
\end{proof}

\subsection{Sums of power series}

We shall now define how to evaluate a surreal power series on a
surreal number, and the corresponding notion of surreal analytic
function.  This is similar to how real analytic functions are extended
to $\no$, with the difference that we now allow power series to have
surreal coefficients.

\begin{defn}
  \label{def:surreal-analytic}Given a surreal power series
  $P(X)=\sum_{i=0}^{\infty}a_{i}X^{i}\in\no[[X]]$, we define
  \[
    P(\varepsilon):=\sum_{i\in\N}a_{i}\varepsilon^{i}
  \]
  for any $\varepsilon\in\no$ such that the sum on the right hand side
  exists.

  Given a function $f:U\to\no$ from an open subset $U$ of $\no$, we
  say that $f$ is \textbf{surreal analytic at $x$} if there are a
  neighborhood $V\subseteq U$ of $x$ and a power series
  $P(X)\in\no[[X]]$ such that $f(y)=P(y-x)$ for all $y\in V$.
\end{defn}

Unlike the case of real analytic functions, in which some power series
are not convergent and thus do not yield analytic functions, we shall
now verify that \emph{every} power series with surreal coefficients
induces a surreal analytic function.

By Neumann's lemma \cite{Neumann1949}, if $(a_{i})_{i\in\N}$ is a
sequence of real coefficients and $\varepsilon\prec1$, then
$(a_{i}\eps^{i})_{i\in\N}$ is summable. Therefore, for every power series
$P(X)\in\R[[X]]$, $P(\varepsilon)$ is well defined for any
$\varepsilon\vless1$. We can easily extend this result to series with
surreal coefficients.  We start with the following variant of
Neumann's lemma. Its proof is an adaptation of a similar argument in
\cite[p.\,52]{Gonshor1986}.

\begin{lem}
  \label{lem:Neumann}Let $R$ be a subfield of $\no$ and
  $\varepsilon\vless_{R}1$.  Let $(\n_{i})_{i\in\N}$,
  $(\m_{i,j})_{i\in\N,j\leq k_{i}}$ be sequences of monomials in
  respectively $R$ and $\supp(\varepsilon)$, where $(k_{i})_{i\in\N}$ is
  a sequence of natural numbers with $\lim_{i\to\infty}k_{i}=\infty$.
  Then the sum $\sum_{i\in\N}\n_{i}\m_{i,0}\ldots\m_{i,k_{i}}$ exists.
\end{lem}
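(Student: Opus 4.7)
The plan is to use the summability criterion from \prettyref{rem:summability-criterion}: since each $x_{i}:=\n_{i}\m_{i,0}\cdots\m_{i,k_{i}}$ is itself a monomial (a product of monomials), summability of $(x_{i})_{i\in\N}$ reduces to ruling out the existence of an infinite subsequence $(x_{i_{n}})_{n\in\N}$ that is $\preceq$-weakly increasing. I argue by contradiction: after relabeling, assume $x_{i}\preceq x_{i+1}$ for every $i\in\N$, and, exploiting $\lim k_{i}=\infty$, also that $k_{i}$ is strictly increasing.

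The combinatorial heart is Higman's lemma. Since $\supp(\varepsilon)$ is reverse well-ordered as a subset of $\M$, it is well-quasi-ordered under $\geq$, so by Higman's lemma the set of finite words over $\supp(\varepsilon)$ is well-quasi-ordered under the embedding order that compares letters with $\geq$. Viewing each tuple $w_{i}:=(\m_{i,0},\ldots,\m_{i,k_{i}})$ as such a word, a standard Ramsey-type argument yields an infinite subsequence (which I still index by $i$) such that, for each $i$, there is a strictly increasing embedding $f_{i}\colon\{0,\ldots,k_{i}\}\to\{0,\ldots,k_{i+1}\}$ with $\m_{i,m}\geq\m_{i+1,f_{i}(m)}$ for every $m$. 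Setting $\mu_{i}:=\m_{i,0}\cdots\m_{i,k_{i}}$ and $\mathfrak{e}:=\LM(\varepsilon)$, this gives
\[
\mu_{i+1}=\prod_{m}\m_{i+1,f_{i}(m)}\cdot\prod_{p\notin\mathrm{im}\,f_{i}}\m_{i+1,p}\leq\mu_{i}\cdot\mathfrak{e}^{k_{i+1}-k_{i}}\leq\mu_{i}\,\mathfrak{e},
\]
since every element of $\supp(\varepsilon)$ is $\leq\mathfrak{e}$ and there is at least one leftover position ($k_{i+1}-k_{i}\geq 1$).

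Combining $\mu_{i+1}\leq\mu_{i}\mathfrak{e}$ with $\n_{i}\mu_{i}\leq\n_{i+1}\mu_{i+1}$ yields $\n_{i+1}/\n_{i}\geq\mu_{i}/\mu_{i+1}\geq\mathfrak{e}^{-1}$. The hypothesis $\varepsilon\prec_{R}1$ implies $\mathfrak{e}\prec_{R}1$ (one checks this by observing that $\varepsilon=a\mathfrak{e}(1+o(1))$ with $a\in\R^{*}$, and that every subfield of $\no$ contains $\Q$ and hence arbitrarily large rationals, which absorbs the leading-coefficient factor). Therefore $\mathfrak{e}^{-1}\succ_{R}1$, i.e.\ $\mathfrak{e}^{-1}$ exceeds every positive element of $R$. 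But $\n_{i+1}/\n_{i}$ is itself a positive element of $R$, so $\n_{i+1}/\n_{i}\geq\mathfrak{e}^{-1}>\n_{i+1}/\n_{i}$, a contradiction. The main obstacle is the combinatorial step: extracting the Higman subsequence and turning it into the quantitative comparison $\mu_{i+1}\leq\mu_{i}\mathfrak{e}$. Once this is in hand, the rest is the transparent observation that the ``decay rate'' of $\mu_{i+1}$ relative to $\mu_{i}$ is too fast to be absorbed by an $R$-monomial ratio.
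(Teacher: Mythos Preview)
Your argument is correct, but it takes a genuinely different route from the paper's proof. The paper (following Gonshor) argues by a direct diagonalization: after passing to a weakly increasing subsequence and sorting each tuple $(\m_{i,0},\ldots,\m_{i,k_i})$ into weakly increasing order, it extracts a further subsequence so that, for each fixed column index $k$, the sequence $(\m_{i,k})_{i\geq k}$ becomes weakly decreasing. Comparing rows $i$ and $j>i$ and using $\m_{j,k_i+1}\cdots\m_{j,k_j}\prec_R 1$ then forces the existence of some $k$ with $i<k\leq k_i$ and $\m_{i,k}<\m_{j,k}$; iterating with $j=k_i$ produces an infinite strictly increasing chain along the diagonal entries $\m_{l,l}$, contradicting the reverse well-ordering of $\supp(\varepsilon)$.

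Your approach replaces this hands-on combinatorics with an invocation of Higman's lemma on words over the reverse well-ordered alphabet $\supp(\varepsilon)$, extracting a chain in the embedding order and turning it into the clean quantitative estimate $\mu_{i+1}\leq\mu_i\mathfrak{e}$. This is shorter and more conceptual once Higman is taken as known, and the endgame (the ratio $\n_{i+1}/\n_i\in R$ cannot dominate $\mathfrak{e}^{-1}\succ_R 1$) is very transparent. The paper's argument, by contrast, is entirely self-contained and in effect reproves the relevant special case of Higman's lemma from scratch; it also makes the contradiction appear directly as a bad chain inside $\supp(\varepsilon)$ rather than via an $R$-magnitude comparison. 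One minor point worth making explicit in your write-up: the step $\mathfrak{e}\prec_R 1$ follows because $\mathfrak{e}\leq q|\varepsilon|$ for some $q\in\Q\subseteq R$, so $c\mathfrak{e}\leq(cq)|\varepsilon|<1$ for every $c\in R$.
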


\begin{proof}
  Suppose by contradiction that there are two family as in the
  hypothesis such that $\sum_{i\in\N}\n_{i}\m_{i,0}\ldots\m_{i,k_{i}}$ does
  not exist. By taking a subsequence, we may assume that
  $(\n_{i}\m_{i,0}\ldots\m_{i,k_{i}})_{i\in\N}$ is weakly increasing. We
  may picture $\m_{i,j}$ as the $(i,j)$-entry of an infinite table,
  where $i$ is the row index and $j$ is the column index. Rearranging
  the terms, we can assume that each row is weakly increasing, namely
  $\m_{i,0}\leq\m_{i,1}\leq\ldots\leq\m_{i,k_{i}}$ for all $i\in\N$.

  Taking a subsequence we may further assume that $(k_{i})_{i\in\N}$ is
  strictly increasing, so in particular $k_{i}\geq i$. Choosing a
  further subsequence we can assume that the first column
  $(\m_{i,0})_{i\geq0}$ is weakly decreasing, since all these monomials
  are in the support of $\varepsilon$. Similarly we can assume that
  $(\m_{i,1})_{i\geq1}$ is weakly decreasing. Continuing in this
  fashion, by a diagonalization argument we can assume that, for any
  fixed $k$, the $k$-th column $(\m_{i,k})_{i\in\N}$ becomes weakly
  decreasing after its $k$-th entry, namely
  $\m_{k,k}\geq\m_{k+1,k}\geq\m_{k+2,k}\geq\ldots$. Note that these terms
  exist since $k_{i}\geq k$ for all $i\geq k$.

  Now fix $i\in\N$ and let $j>i$ (so $k_{j}>k_{i}$). By construction,
  $\n_{i}\m_{i,0} \ldots \m_{i,k_{i}} \leq \n_{j}\m_{j,0} \ldots
  \m_{j,k_{i}}\m_{j,k_{i}+1} \ldots \m_{j,k_{j}}$.  Since
  $\m_{j,k_{i}+1}\ldots\m_{j,k_{j}}\prec_{R}1$, we must have
  $\n_{i}>\n_{j}\m_{j,k_{i}+1}\ldots\m_{j,k_{j}}$. It follows that
  $\m_{i,0}\ldots\m_{i,k_{i}}<\m_{j,0}\ldots\m_{j,k_{i}}$, so in particular
  there is some $k\leq k_{i}$ with $\m_{i,k}<\m_{j,k}$. Now recall that
  the $k$-th column is weakly decreasing after its $k$-th entry, hence
  necessarily $i<k$. We have thus proved that for each $i\in\N$ and
  $j>i$ there is some $k$ with $i<k\leq k_{i}$ such that
  $\m_{i,k}<\m_{j,k}$.

  Taking $j=k_{i}$, and recalling that all the rows are weakly
  increasing, we obtain
  $\m_{i,i}\leq\m_{i,k}<\m_{k_{i},k}\leq\m_{k_{i},k_{i}}$ for all
  $i\in\N$. Iterating we obtain an infinite increasing chain of
  elements of the form $\m_{l,l}$, contradicting the fact that
  $\{\m_{i,j}\,:\,i\in\N,j\leq k_{i}\}$ is in $\supp(\varepsilon)$.
\end{proof}

\begin{cor}
  \label{cor:power-series}Let $R$ be a truncation closed subfield of
  $\no$ and $\varepsilon\prec_{R}1$. Let $(a_{i})_{i\in\N}$ be a sequence
  of coefficients in $R$. Then $(a_{i}\varepsilon^{i})_{i\in\N}$ is
  summable.
\end{cor}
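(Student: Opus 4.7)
The approach is to combine \prettyref{lem:Neumann} with the summability criterion of \prettyref{rem:summability-criterion}, and argue by contradiction. As a preliminary reduction I would replace $R$ by the compositum $\R\cdot R$: this is still a truncation closed subfield of $\no$, still contains every $a_{i}$, and still satisfies $\varepsilon\prec_{\R\cdot R}1$, since any positive element of $\R\cdot R$ is bounded above by $N\cdot s$ for some $s\in R^{>0}$ and $N\in\N$, and then $Ns\cdot|\varepsilon|<1$ by the hypothesis $\varepsilon\prec_{R}1$. So without loss of generality $\R\subseteq R$. Under this assumption, whenever $a=\sum_{j<\alpha}r_{j}e^{\gamma_{j}}\in R$ is in Ressayre normal form, truncation closure gives every partial sum in $R$, hence by differences $r_{j}e^{\gamma_{j}}\in R$, and dividing by $r_{j}\in\R\subseteq R$ every monomial $e^{\gamma_{j}}\in\supp(a)$ lies in $R$.

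Next I would suppose, toward a contradiction, that $(a_{i}\varepsilon^{i})_{i\in\N}$ is not summable. By \prettyref{rem:summability-criterion} there exist an injective sequence $(i_{n})_{n\in\N}$ in $\N$ and monomials $\mu_{n}\in\supp(a_{i_{n}}\varepsilon^{i_{n}})$ with $\mu_{n}\preceq\mu_{n+1}$ for every $n$. After passing to a subsequence I may assume that $(i_{n})_{n\in\N}$ is strictly increasing, so $i_{n}\geq n$ and $i_{n}\to\infty$; by discarding at most one index I may further assume $i_{n}\geq1$.

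Using the convolution formula for the multiplication in $\R((\M))_{\mathrm{small}}$, each such $\mu_{n}$ factors as $\mu_{n}=\n_{n}\m_{n,0}\m_{n,1}\cdots\m_{n,i_{n}-1}$ for suitable $\n_{n}\in\supp(a_{i_{n}})$ and $\m_{n,j}\in\supp(\varepsilon)$. By the preliminary reduction, $\n_{n}\in R$. Setting $k_{n}:=i_{n}-1\to\infty$ and applying \prettyref{lem:Neumann}, the sum $\sum_{n\in\N}\n_{n}\m_{n,0}\cdots\m_{n,k_{n}}=\sum_{n\in\N}\mu_{n}$ exists, i.e.\ the family of monomials $(\mu_{n})_{n\in\N}$ is summable. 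But this is absurd: since the $\mu_{n}$ form a weakly increasing infinite sequence of monomials, reverse well-orderedness of $\{\mu_{n}\,:\,n\in\N\}$ forces it to take only finitely many distinct values, so some monomial occurs in infinitely many $\supp(\mu_{n})=\{\mu_{n}\}$, contradicting the finite-multiplicity clause of \prettyref{def:summability}.

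The main obstacle is the bookkeeping in the decomposition: one must verify that each weakly increasing monomial $\mu_{n}\in\supp(a_{i_{n}}\varepsilon^{i_{n}})$ admits exactly the shape required by \prettyref{lem:Neumann}, with $\n_{n}$ lying in $R$ (which uses truncation closure, via the reduction to $\R\subseteq R$) and with $i_{n}-1$ factors drawn from $\supp(\varepsilon)$ (which uses the subsequence extraction that forces $i_{n}\to\infty$). Once the decomposition is set up, \prettyref{lem:Neumann} furnishes the contradiction directly.
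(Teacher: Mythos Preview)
Your proof is correct and follows essentially the same approach as the paper: reduce to $\R\subseteq R$ so that $\supp(a_{i})\subseteq R$, factor each monomial in $\supp(a_{i}\varepsilon^{i})$ as a monomial from $R$ times $i$ monomials from $\supp(\varepsilon)$, and invoke \prettyref{lem:Neumann}. Your write-up is simply more explicit about extracting the strictly increasing subsequence of indices (to guarantee $k_{n}\to\infty$) and about deriving the contradiction from summability of a weakly increasing sequence of monomials, points the paper leaves implicit.
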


\begin{proof}
  Without loss of generality, we may assume that $\R\subseteq
  R$. Indeed, we may replace $R$ with the compositum $\R\cdot R$,
  which is also closed under truncation, as $\varepsilon\vless_{R}1$
  trivially implies $\varepsilon\vless_{\R\cdot R}1$. In particular, we
  may assume that $\supp(a_{i})\subseteq R$ for all $a_{i}\in R$. Note
  that for all $i\in\N$, any monomial in the support of
  $a_{i}\eps^{i}$ has the form $\n_{i}\m_{i,0}\ldots\m_{i,i-1}$ where
  $\n_{i}\in\supp(a_{i})\subseteq R$ and $\m_{i,j}\in\supp(\eps)$ for
  $j\leq i-1$. The conclusion then follows easily from
  \prettyref{lem:Neumann}.
\end{proof}

\begin{cor}
  \label{cor:power-series-are-surreal-analytic}For every power series
  $P(X)\in\no[[X]]$, the partial function
  $\varepsilon\mapsto P(\varepsilon)$ is surreal analytic at $0$.
\end{cor}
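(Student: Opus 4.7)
The plan is to apply \prettyref{cor:power-series} with a suitably chosen coefficient field $R$ in order to exhibit an open neighborhood of $0$ on which the sum $\sum_{i\in\N} a_{i}\varepsilon^{i}$ exists, and then invoke \prettyref{def:surreal-analytic} taking the ambient power series $Q = P$ itself.

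First I would construct a truncation closed subfield $R\subseteq\no$ that is small (i.e., a set) and contains every coefficient $a_{i}$ of $P$. Starting from the countable set $\{a_{i}\suchthat i\in\N\}$, one alternately closes under truncation (which preserves being a set, since each $\supp(a_{i})$ is small and thus admits only set-many truncations) and under the field operations (which likewise preserves being a set). Iterating countably many times and taking the union yields a small truncation closed subfield $R$ of $\no$ containing all $a_{i}$.

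Since $R$ is small, so is $R^{>0}$, and therefore there is an ordinal $\alpha\in\on\subseteq\no$ strictly greater than every element of $R^{>0}$. Setting $\delta := 1/\alpha \in \no^{>0}$ and $V := (-\delta,\delta)$, I claim $V$ is contained in $\{\varepsilon\in\no\suchthat\varepsilon\prec_{R}1\}$: for any $\varepsilon\in V$ and any $c\in R^{>0}$ we have $c|\varepsilon|<c/\alpha<1$, which gives $\varepsilon\prec_{R}1$. Then \prettyref{cor:power-series} yields that $(a_{i}\varepsilon^{i})_{i\in\N}$ is summable for every $\varepsilon\in V$, so $V$ is an open subset of $\no$ lying inside the domain of $\varepsilon\mapsto P(\varepsilon)$. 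Since $V$ is an open neighborhood of $0$ and $P(\varepsilon)=P(\varepsilon-0)$ identically on $V$, \prettyref{def:surreal-analytic} is satisfied with the power series $P$ itself as witness.

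The only potentially delicate step is the construction of the small truncation closed subfield $R$; once this is in hand, the argument is essentially a routine unpacking of definitions combined with \prettyref{cor:power-series}. Even that construction is straightforward, since truncation closure and field closure each add at most set-many new elements at each iteration, so the alternation stabilizes within a set-sized ordinal stage. The real mathematical content has already been absorbed by \prettyref{lem:Neumann} and \prettyref{cor:power-series}; the role of this corollary is just to package it as an open-neighborhood statement.
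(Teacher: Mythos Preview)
Your proposal is correct and follows essentially the same approach as the paper: build a small subfield $R$ containing the coefficients and apply \prettyref{cor:power-series} so that $P(\varepsilon)$ is defined on $o_{R}(1)$, which contains an open neighborhood of $0$. The paper is terser---it takes $R$ to be the ring generated by the monomials in the supports $\supp(a_{i})$ rather than iterating truncation and field closure---but the idea is identical.
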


\begin{proof}
  Given a power series $P(X)=\sum_{i=0}^{\infty}a_{i}X^{i}$, it suffices
  to apply \prettyref{cor:power-series} with the ring $R$ generated by
  the monomials in the supports $\supp(a_{i})$. The function
  $\varepsilon\mapsto P(\varepsilon)$ is then defined at least on
  $o_{R}(1)$, which is a nonempty convex subclass containing $0$ as
  $R$ is necessarily small.
\end{proof}

\begin{prop}
  \label{prop:surreal-analytic-is-differentiable}Suppose that $f$ is a
  surreal analytic function at some $x\in\no$. Then $f$ is infinitely
  differentiable at $x$ and
  \[
    f(x+\varepsilon)=\sum_{i=0}^{\infty}\frac{f^{(i)}(x)}{i!}\varepsilon^{i}.
  \]
\end{prop}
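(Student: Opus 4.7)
The plan is to reduce the statement to a formal Taylor identity for the power series $P(X) = \sum_{i} a_i X^{i}$ representing $f$ near $x$, and then read off infinite differentiability and the Taylor formula by induction.

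Let $R \subseteq \no$ be a small truncation closed subfield containing all the coefficients $a_i$, so that $P(\eps)$ is summable whenever $\eps \prec_R 1$ by \prettyref{cor:power-series}. Write $P^{(n)}(X) := \sum_{i \geq n} \frac{i!}{(i-n)!} a_i X^{i-n}$ for the formal $n$-th derivative of $P$. The central step is to prove the Taylor identity
\[
  P(\eps + \delta) = \sum_{n=0}^{\infty} \frac{P^{(n)}(\eps)}{n!}\,\delta^{n},
\]
valid for all $\eps, \delta \prec_R 1$ for which $x+\eps$ and $x+\eps+\delta$ lie in the neighborhood of agreement between $f$ and $P(\,\cdot - x)$. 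I would derive it from the field-theoretic binomial expansion $(\eps+\delta)^{i} = \sum_{j=0}^{i} \binom{i}{j}\eps^{i-j}\delta^{j}$ combined with a rearrangement via \prettyref{cor:exchange-of-sums}, recognizing the inner sum over $i$ in $\sum_{i,j} a_i \binom{i}{j}\eps^{i-j}\delta^{j}$ as $P^{(j)}(\eps)/j!$.

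The main obstacle is establishing summability of the double family $\bigl(a_i \binom{i}{j}\eps^{i-j}\delta^{j} : i \geq 0,\, 0 \leq j \leq i\bigr)$ needed to apply \prettyref{cor:exchange-of-sums}. Each monomial in the support of a given term is the product of one monomial from $\supp(a_i) \subseteq R$ with $i$ monomials drawn from $\supp(\eps) \cup \supp(\delta)$; since this count grows without bound in $i$, a diagonalization argument modeled on the proof of \prettyref{lem:Neumann} should exclude any weakly increasing injective subsequence of monomials, and summability will then follow from \prettyref{rem:summability-criterion}.

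Granted the Taylor identity, the induction is essentially mechanical. Rewriting it yields
\[
  \frac{f(x+\eps+\delta) - f(x+\eps)}{\delta} - P'(\eps) = \delta \sum_{n \geq 2} \frac{P^{(n)}(\eps)}{n!}\,\delta^{n-2},
\]
and the sum on the right is $R(\eps)$-finite, so the difference tends to $0$ as $\delta \to 0$. Hence $f'$ exists on the neighborhood of $x$ with $f'(x+\eps) = P'(\eps)$, and $f'$ is itself surreal analytic at $x$ with power series $P'$. Iterating, $f^{(n)}(x+\eps) = P^{(n)}(\eps)$; specializing to $\eps = 0$ gives $f^{(n)}(x) = n!\,a_n$, and substituting back into $f(x+\eps) = P(\eps) = \sum_n a_n \eps^n$ yields the desired expansion.
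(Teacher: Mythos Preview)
Your proposal is correct and follows essentially the same approach as the paper: both expand $P(\eps+\delta)$ via the binomial theorem, rearrange to isolate the term $P'(\eps)$, show the remainder is $O(\delta)$, and then induct to get $f^{(n)}(x)=n!\,a_n$. The paper computes the difference quotient directly rather than first stating the full Taylor identity, and it is less explicit than you are about the summability of the double family $\bigl(a_i\binom{i}{j}\eps^{i-j}\delta^j\bigr)$ needed to justify the rearrangement, but the underlying argument is the same.
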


\begin{proof}
  Let $f$ be surreal analytic at $x$, with power series
  $P(X)=\sum_{i=0}^{\infty}a_{i}X^{i}$.  Then for every sufficiently small
  $\delta$ we have
  \begin{align*}
    f'(x+\delta)= & \lim_{\varepsilon\to0}\frac{f(x+\delta+\varepsilon)-f(x+\delta)}{\varepsilon} = \lim_{\varepsilon\to0}\sum_{i=0}^{\infty}a_{i}\frac{(\delta+\varepsilon)^{i}-\delta^{i}}{\varepsilon}\\
    = & \lim_{\varepsilon\to0}\sum_{i=0}^{\infty}a_{i}\cdot\frac{\delta^{i}+i\delta^{i-1}\varepsilon+\binom{i}{2}\delta^{i-2}\varepsilon^{2}+\dots+i\delta\varepsilon^{i-1}+\varepsilon^{i}-\delta^{i}}{\varepsilon}\\
    = & \sum_{i=1}^{\infty}ia_{i}\delta^{i-1}+\lim_{\varepsilon\to0}\varepsilon\cdot\sum_{i=2}^{\infty}\left(\binom{i}{2}\delta^{i-2}+\dots+\varepsilon^{i}\right)=\sum_{i=1}^{\infty}ia_{i}\delta^{i-1}.
  \end{align*}
  Therefore, $f$ is differentiable at $x$ and its derivative $f'$ is
  surreal analytic at $x$. Moreover, the above equation also shows
  that $f'(x)=a_{1}$. By induction, it follows that $f$ is infinitely
  differentiable, and that $a_{i}=\frac{f^{(i)}(x)}{i!}$, as desired.
\end{proof}

Moreover, we also observe that Neumann's lemma, already in its
original formulation, implies the following statement for power series
with real coefficients, which will prove useful later on.

\begin{cor}
  \label{cor:power-series-summable}Let $(\varepsilon_{i})_{i\in I}$ be a
  summable family such that $\varepsilon_{i}\prec1$ for all $i\in\N$.
  Let $P_{i}(X)=\sum_{n=1}^{\infty}a_{i,n}X^{n}\in\R[[X]]$ be real power
  series for $i\in I$. Then the family
  $(P_{i}(\eps_{i})\suchthat i\in I)$ is summable.
\end{cor}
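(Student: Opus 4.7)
The plan is to argue by contradiction using the summability criterion of \prettyref{rem:summability-criterion}. If $(P_i(\eps_i))_{i\in I}$ were not summable, there would exist an injective sequence $(i_k)_{k\in\N}$ in $I$ and a choice of monomials $\mathfrak{p}_k\in\supp(P_{i_k}(\eps_{i_k}))$ with $\mathfrak{p}_k\preceq\mathfrak{p}_{k+1}$ for all $k$. Since each coefficient $a_{i,n}$ is real, $\mathfrak{p}_k$ must actually lie in $\supp(\eps_{i_k}^{n_k})$ for some $n_k\geq1$, and hence factors as $\mathfrak{p}_k=\n_{k,1}\cdots\n_{k,n_k}$ with each $\n_{k,j}\in\supp(\eps_{i_k})$. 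I would then split into two cases depending on whether $(n_k)$ is bounded.

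If $(n_k)$ is bounded, pass to a subsequence on which $n_k=n$ is constant. By iterating \prettyref{rem:product}, the family $(\eps_{j_1}\cdots\eps_{j_n})_{(j_1,\ldots,j_n)\in I^n}$ is summable; restricting to the diagonal $\{(i,\ldots,i):i\in I\}$ (a subfamily of a summable family is obviously summable) yields that $(\eps_i^n)_{i\in I}$ is summable. But then the triple consisting of the injective sequence $(i_k)$, the monomials $\mathfrak{p}_k\in\supp(\eps_{i_k}^n)$, and the relation $\mathfrak{p}_k\preceq\mathfrak{p}_{k+1}$ directly contradicts \prettyref{rem:summability-criterion} applied to $(\eps_i^n)_{i\in I}$.

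If instead $(n_k)$ is unbounded, extract a further subsequence with $n_k\to\infty$ and reduce to \prettyref{lem:Neumann}. The trick is to absorb all of the $\eps_i$ into a single infinitesimal. Let $S:=\bigcup_{i\in I}\supp(\eps_i)$. The set $S$ is small (a small union of small sets) and, I claim, reverse well-ordered: any strictly increasing sequence in $S$ cannot lie in a single $\supp(\eps_i)$, so after passing to a subsequence the witnesses $i\in I$ may be taken pairwise distinct, contradicting the summability of $(\eps_i)_{i\in I}$ via \prettyref{rem:summability-criterion}. Setting $\eps:=\sum_{\n\in S}\n\in\no$ therefore gives an element with $\supp(\eps)=S$ and $\eps\prec_{\R}1$. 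Applying \prettyref{lem:Neumann} with this $\eps$, $R=\R$, trivial $\n_k:=1\in\R$, factors $\m_{k,j}:=\n_{k,j+1}\in\supp(\eps)$ for $0\leq j\leq n_k-1$, and lengths $n_k-1\to\infty$, we conclude that the family of monomials $(\mathfrak{p}_k)_{k\in\N}$ is summable. But $\supp(\mathfrak{p}_k)=\{\mathfrak{p}_k\}$, so the weakly increasing sequence $\mathfrak{p}_k$ once again violates \prettyref{rem:summability-criterion}.

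The main obstacle I anticipate is the bookkeeping required to verify that the auxiliary set $S$ is small and reverse well-ordered, and to align the indexing convention of \prettyref{lem:Neumann} with our decomposition of $\mathfrak{p}_k$; the substantive content is already packaged in \prettyref{lem:Neumann} for the unbounded case and in \prettyref{rem:product} (iterated) for the bounded one.
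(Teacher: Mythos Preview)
Your proof is correct and follows essentially the same strategy as the paper's: argue by contradiction, extract exponents $n_k$, and split into the cases $n_k\to\infty$ (handled via \prettyref{lem:Neumann}) versus $n_k$ constant. Two minor differences are worth noting: in the unbounded case you make explicit the auxiliary infinitesimal $\eps$ with support $S=\bigcup_i\supp(\eps_i)$ needed to invoke \prettyref{lem:Neumann}, which the paper leaves implicit; and in the constant-degree case you deduce summability of $(\eps_i^n)_{i\in I}$ from iterated \prettyref{rem:product} restricted to the diagonal, whereas the paper instead factors each $\m_n$ and extracts subsequences coordinate-by-coordinate to force each factor sequence to be strictly decreasing. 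Both routes are equally short and yield the same contradiction.
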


\begin{proof}
  Suppose by contradiction that there is a weakly increasing sequence
  of monomials $(\m_{n})_{n\in\N}$ such that
  $\m_{n}\in\supp(P_{i_{n}}(\varepsilon_{i_{n}}))$.  Then for all
  $n\in\N$ there is a positive integer $k_{n}$ such that
  $\m_{n}\in\supp(a_{i_{n},k_{n}}\eps_{i_{n}}^{k_{n}})$. After extracting a
  subsequence, we may either assume that
  $\lim_{n\to\infty}k_{n}=\infty$, and we reach a contradiction by
  \prettyref{lem:Neumann}, or we may assume that the sequence
  $(k_{n})_{n\in\N}$ is constant, so that
  $\m_{n}\in\supp(\eps_{i_{n}}^{k})$ for some fixed $k\in\N$ and all
  $n\in\N$.

  In the latter case, write $\m_{n}=\n_{n,1}\cdot\dots\cdot\n_{n,k}$ with
  $\n_{n,j}\in\supp(\varepsilon_{i_{n}})$. Since
  $(\varepsilon_{i})_{i\in I}$ is summable, we may extract a subsequence
  and assume that $(\n_{n,j})_{n\in\N}$ is strictly decreasing for each
  $j=1,\dots,k$. But then $(\m_{n})_{n\in\N}$ is strictly decreasing, a
  contradiction.
\end{proof}

\begin{rem}
  Since $\no$ is totally disconnected, the present notion of surreal
  analyticity does not have a good theory of analytic continuation.
  For instance, one can define a surreal analytic function on all
  finite numbers by choosing a power series $P_{r}(X)\in\R[[X]]$ for
  each $r\in\R$ and defining $f(r+\varepsilon)=P_{r}(\varepsilon)$ for
  each $r\in\R$ and $\varepsilon\vless1$. Moreover, one can choose the
  series $P_{r}$ such that the restriction of $f$ to $\R$ is itself a
  real analytic function, but with yet other Taylor expansions.  It
  would be interesting to develop an analogous of rigid analytic
  geometry for surreal numbers that prevents such pathological
  behavior.
\end{rem}

\subsection{Composition of power series}

By \prettyref{cor:power-series-are-surreal-analytic}, there is a
morphism from $\no[[X]]$ to germs at zero of surreal functions defined
by evaluating a formal power series
$P(x)=\sum_{i\in\N}a_{i}X^{i}\in\no[[X]]$ at $X=\eps$ for any
sufficiently small $\eps\in\no$. As for traditional power series, we
can show that this morphism behaves well with respect to composition
of power series.

\begin{defn}
  \label{def:composition-formal-power-series}Let $R$ be a subfield of
  $\no$. Given two formal power series
  $P(X):=\sum_{n=0}^{\infty}a_{n}X^{n}$ and
  $Q(X):=\sum_{m=1}^{\infty}b_{m}X^{m}$ in $R[[X]]$, where $Q(X)$ has no
  constant term, their composition $(P\circ Q)(X)$ is defined as the
  power series $\sum_{k\in\N}c_{k}X^{k}\in R[[X]]$ where $c_{0}=a_{0}$ and,
  for $k>0$,
  \[
  c_{k}=\sum_{n=1}^{k}a_{n}\sum\limits _{m_{1}+\ldots+m_{n}=k}b_{m_{1}}\cdots
  b_{m_{n}}.
\]
\end{defn}

\begin{lem}
  \label{lem:double-index-sum}Let $R$ be a truncation closed subfield
  of $\no$ and $\varepsilon\prec_{R}1$. Let
  $(a_{i,j}\suchthat(i,j)\in I\times J)$ be a family of surreal numbers
  in $R$ such that, for any fixed $j\in J$, $\sum_{i\in I}a_{i,j}$
  exists. Then $\sum_{(i,j)\in I\times J}a_{i,j}\eps^{j}$ exists.
\end{lem}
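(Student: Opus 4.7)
The plan is to verify the summability criterion of \prettyref{rem:summability-criterion} directly. Fix an injective sequence $((i_n,j_n))_{n\in\N}$ in $I\times J$ and any choice of monomials $\mu_n\in\supp(a_{i_n,j_n}\eps^{j_n})$; the goal is to extract a strictly decreasing subsequence of $(\mu_n)_{n\in\N}$. As in the proof of \prettyref{cor:power-series}, I may assume $\R\subseteq R$, which ensures that every monomial appearing in the support of an element of $R$ itself lies in $R$. Hence each $\mu_n$ factors as $\mu_n=\nu_n\rho_{n,1}\cdots\rho_{n,j_n}$ with $\nu_n\in\supp(a_{i_n,j_n})\subseteq R$ and $\rho_{n,k}\in\supp(\eps)$.

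The argument then splits according to whether the exponents $(j_n)_{n\in\N}$ are bounded. If they are unbounded, I pass to a subsequence with $j_n\to\infty$ and apply \prettyref{lem:Neumann} to $(\nu_n;\rho_{n,1},\ldots,\rho_{n,j_n})$ to conclude that $(\mu_n)_{n\in\N}$, viewed as a family of single-term surreals, is summable; the summability criterion then produces a further subsequence along which $(\mu_n)$ is strictly decreasing. If $(j_n)$ is bounded, I pass to a subsequence on which $j_n=j$ is constant, so that the injectivity of the original indexing forces $(i_n)$ to be injective in $I$. The hypothesis that $\sum_{i\in I}a_{i,j}$ exists, combined once more with \prettyref{rem:summability-criterion}, yields a subsequence on which $(\nu_n)$ is strictly decreasing, and a final extraction using the reverse well-orderedness of $\supp(\eps^j)$ makes the products $\tau_n:=\rho_{n,1}\cdots\rho_{n,j}\in\supp(\eps^j)$ weakly decreasing. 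Then $\nu_n>\nu_{n+1}$ and $\tau_n\geq\tau_{n+1}$ force $\mu_n=\nu_n\tau_n>\nu_{n+1}\tau_{n+1}=\mu_{n+1}$ in $\M$, as required.

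The only genuinely non-routine point is the case split. In the unbounded case one needs the full strength of \prettyref{lem:Neumann}, as no control from the family $(a_{i,j})_i$ is available when $j$ varies. In the bounded case one needs the summability hypothesis on $(a_{i,j})_{i\in I}$ for the fixed value of $j$, since Neumann's lemma by itself cannot control infinitely many rows sharing the same exponent. No appeal to \prettyref{cor:exchange-of-sums} is needed — such an appeal would in fact be circular at this stage, as Fubini-type results for surreal sums presuppose the very summability we are trying to establish.
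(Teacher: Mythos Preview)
Your proof is correct and follows essentially the same route as the paper: reduce to $\R\subseteq R$, split according to whether $(j_n)$ is bounded, invoke \prettyref{lem:Neumann} (the paper cites the repackaged \prettyref{cor:power-series}) in the unbounded case, and use the summability of $(a_{i,j})_{i\in I}$ in the constant-$j$ case. The only cosmetic difference is that in the bounded case you explicitly extract decreasing subsequences of the $\nu_n$ and $\tau_n$, whereas the paper observes more directly that $\eps^{j}\cdot\sum_{i\in I}a_{i,j}=\sum_{i\in I}a_{i,j}\eps^{j}$ is summable because multiplication by a fixed element preserves summability.
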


\begin{proof}
  As in the proof of \prettyref{cor:power-series}, we may assume that
  $\supp(a_{i,j})\subseteq R$ for all $(i,j)\in I\times J$. For a
  contradiction, suppose that there is an injective sequence of pairs
  $(i_{n},j_{n})_{n\in\N}$ and a weakly increasing sequence of monomials
  $\m_{n}\in\supp(a_{i_{n},j_{n}}\eps^{j_{n}})$. After extracting a
  subsequence, we may assume that either $\lim_{n\in\N}j_{n}=+\infty$,
  in which case we reach a contradiction by
  \prettyref{cor:power-series}, or the sequence $(j_{n})_{n\in\N}$ is
  constant, so that there is some $j\in J$ such that
  $\m_{n}\in a_{i_{n},j}\eps^{j}$ for every $n\in\N$. In this case, it
  follows that $(a_{i,j}\eps^{j})_{i\in I}$ is not summable, which is
  absurd since $\sum_{i\in I}a_{i,j}$ exists, hence so does
  $\eps^{j}(\sum_{i\in I}a_{i,j})=$ $\sum_{i\in I}a_{i,j}\eps^{j}$.
\end{proof}

\begin{prop}
  \label{prop:composition-of-power-series}Let $R$ be a truncation
  closed subfield of $\no$ and $\varepsilon\prec_{R}1$. Let
  $P(X):=\sum_{n=0}^{\infty}a_{n}X^{n}$ and
  $Q(X):=\sum_{m=1}^{\infty}b_{m}X^{m}$ be two power series in
  $R[[X]]$ (where $Q(X)$ has no constant term). Then
  $(P\circ Q)(\eps)=P(Q(\eps))$.
\end{prop}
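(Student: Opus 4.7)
The plan is to expand $P(Q(\varepsilon))$ as a double sum in powers of $\varepsilon$ and then swap the order of summation to recognize the coefficients $c_k$ of $(P\circ Q)(\varepsilon)$.

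First I would check that $P(Q(\varepsilon))$ makes sense as a sum. As in the proof of \prettyref{cor:power-series}, we may assume $\R\subseteq R$ and $\supp(a_n),\supp(b_m)\subseteq R$. Since every $b_m\in R$ and $\varepsilon\prec_R 1$, every monomial in $\supp(Q(\varepsilon))$ is $R$-dominated by $\LM(\varepsilon)$, so $Q(\varepsilon)\prec_R 1$. A second application of \prettyref{cor:power-series} (this time to $P$ at $Q(\varepsilon)$) then shows that $P(Q(\varepsilon))=\sum_{n\in\N}a_nQ(\varepsilon)^n$ exists. By \prettyref{cor:power-of-series}, each summand expands as $a_nQ(\varepsilon)^n=\sum_{k\in\N}A_{n,k}\varepsilon^k$, where
\[
A_{n,k}:=a_n\sum_{m_1+\cdots+m_n=k}b_{m_1}\cdots b_{m_n}\in R.
\]
Because $Q$ has no constant term, $A_{n,k}=0$ whenever $n>k$, so for each fixed $k$ only finitely many $n$ contribute and $\sum_{n\in\N}A_{n,k}$ reduces to the finite sum which equals $c_k$ by \prettyref{def:composition-formal-power-series}.

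The main obstacle is then to legitimately interchange the two outer summations in $\sum_n\sum_k A_{n,k}\varepsilon^k$. This is precisely what \prettyref{lem:double-index-sum} was designed for: since $\sum_n A_{n,k}$ exists in $R$ for every $k$, the double family $(A_{n,k}\varepsilon^k)_{(n,k)\in\N\times\N}$ is summable. Applying \prettyref{cor:exchange-of-sums} then gives
\[
P(Q(\varepsilon))=\sum_{n}\sum_{k}A_{n,k}\varepsilon^k=\sum_{k}\sum_{n}A_{n,k}\varepsilon^k=\sum_{k\in\N}c_k\varepsilon^k=(P\circ Q)(\varepsilon),
\]
which is the desired identity. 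Everything apart from the summability of the double family is essentially bookkeeping with the summability lemmas proved earlier in the section.
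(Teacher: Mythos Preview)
Your proof is correct and follows essentially the same route as the paper: expand $P(Q(\varepsilon))$ via \prettyref{cor:power-of-series}, observe that for each fixed $k$ only finitely many $n$ contribute so the inner sums exist, invoke \prettyref{lem:double-index-sum} to obtain summability of the double family, and then swap using \prettyref{cor:exchange-of-sums}. The only cosmetic differences are that the paper separates out the constant term $a_{0}$ explicitly and writes $a_{n}d_{n,k}$ for your $A_{n,k}$, and that you spell out the justification of $Q(\varepsilon)\prec_{R}1$ in a bit more detail than the paper does.
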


\begin{proof}
  The three sums $P(\varepsilon)$, $Q(\eps)$ and
  $(P\circ Q)(\varepsilon)$ exist by
  \prettyref{cor:power-series}. Since $Q(\eps)\prec_{R}1$,
  $P(Q(\eps))$ exists as well. Let
  $d_{n,k}=\sum\limits _{m_{1}+\ldots+m_{n}=k}b_{m_{1}}\cdots b_{m_{n}}$ for
  $k\in\N^{*}$. By \prettyref{cor:power-of-series},
  \[
    P(Q(\varepsilon)) = \sum_{n=0}^{\infty}a_{n}\left(\sum_{m=1}^{\infty}b_{m}\eps^{m}\right)^{n} = a_{0}+\sum_{n=1}^{\infty}a_{n}\sum_{k=1}^{\infty}d_{n,k}\varepsilon^{k}.
  \]
  Note that $d_{n,k}=0$ for $k<n$, so the family
  $(a_{n}d_{n,k}\,:\,n\in\N)$ is summable for any $k\in\N^{*}$. By
  \prettyref{lem:double-index-sum}, the family
  $(a_{n}d_{n,k}\varepsilon^{k}\,:\,(n,k)\in\N\times\N^{*})$ is
  summable. Therefore, by \prettyref{cor:exchange-of-sums} we have
  \[
    a_{0} + \sum_{n=1}^{\infty}a_{n}\sum_{k=1}^{\infty}d_{n,k}\varepsilon^{k}
    = a_{0}+\sum_{k=1}^{\infty}\sum_{n=1}^{\infty}a_{n}d_{n,k}\varepsilon^{k}
    = (P\circ Q)(\varepsilon).
  \]
\end{proof}

\section{Transseries\label{sec:Transseries}}

With the help of the surreal numbers we shall attempt a general
definition of ``field of transseries''.

\begin{defn}
  \label{def:transseries}We say that $T$ is a \textbf{transserial
    subfield }of $\no$ if $T$ is a truncation closed subfield of $\no$
  (\prettyref{def:truncation}) containing $\R$ and such that
  $\log(T^{>0})\subseteq T$.

  More generally, let $F$ be an ordered logarithmic field (not
  necessarily included in $\no$) containing $\R$ and endowed with a
  partial operator $\sum$ from small indexed families of elements of
  $F$ to $F$. We say that $F$ is a \textbf{field of transseries} if it
  is isomorphic to a transserial subfield $T$ of $\no$ through a field
  isomorphism $f:F\to T$ preserving $\R$, $\log$ and $\sum$ (the
  latter condition means that $(x_{i}:i\in I)$ is the domain of $\sum$
  if and only if $(f(x_{i}))_{i\in I}$ is summable in $\no$ and
  $\sum_{i\in I}f(x_{i})=f(\sum_{i\in I}x_{i})$).  We shall call $f$ an
  \textbf{isomorphism of transseries}.
\end{defn}

In \cite{Schmeling2001} an axiomatic definition of transseries field
is given. The critical axiom, there called ``T4'', is rather
technical.  One of the main results in \cite{Berarducci2015} is that
$\no$ satisfies T4, hence it is a field of transseries in the sense of
\cite{Schmeling2001}.  More generally, since T4 is inherited by taking
subfields, it follows that a field of transseries in the sense of
\prettyref{def:transseries} is also a field of transseries in the
sense of \cite{Schmeling2001} (we also expect the converse to be true,
but it is beyond the scope of this paper).

\subsection{Log-atomic numbers }

We write $\log_{n}(x)$ for the $n$-fold iterate of $\log(x)$, namely
$\log_{0}(x)=x$, $\log_{n+1}(x)=\log(\log_{n}(x))$. Likewise, we write
$\exp_{0}(x)=x$, $\exp_{n+1}(x)=\exp(\exp_{n}(x))$.

\begin{defn}
  A positive infinite surreal number $x\in\no$ is \textbf{log-atomic}
  if for every $n\in\N$, $\log_{n}(x)$ is an infinite monomial. We call
  $\li$ the class of all log-atomic numbers. Note that
  $\log(\li)=\exp(\li)=\li$.
\end{defn}

A subclass of the log-atomic numbers, the so called $\kappa$-numbers,
was isolated by \cite{Kuhlmann2014}. The ordinal $\omega$ is a
$\kappa$-number, hence in particular it is log-atomic. In
\cite{Berarducci2015} we gave a parametrization
$\{\lambda_{x}\suchthat x\in\no\}$ of $\li$ and we proved that there is
exactly one log-atomic numbers in each ``level'' of $\no$.

\begin{defn}
  Given $x,y>\R$ we write $x\lequal y$, and we say that $x,y$ are in
  the same \textbf{level} if for some $n\in\N$ we have
  $\log_{n}(x)\veq\log_{n}(y)$.
\end{defn}

\begin{rem}
  \label{rem:one-more-log}For all $x,y>\R$, $x\veq y$ implies
  $\log(x)\sim\log(y)$, so in the above definition we can equivalently
  require $\log_{n}(x)\sim\log_{n}(y)$.
\end{rem}

\begin{fact}[\cite{Berarducci2015}]
  We have:
  \begin{enumerate}
  \item for each $x\in\no$ with $x>\R$, there are $n\in\N$ and
    $\lambda\in\li$ such that $\log_{n}(x)\veq\lambda$ \cite[Prop.\
    5.8]{Berarducci2015}; in particular, every level contains a
    log-atomic number;
  \item for each $\lambda,\mu\in\li$, if $\lambda\lequal\mu$, then
    $\lambda=\mu$; in particular, every level contains a \emph{unique}
    log-atomic number;
  \item for every $x>\R$ and every positive $n\in\N$, we have
    $x\veq^{L}x^{n}$, but $x\not\veq^{L}e^{x}$;
  \item in particular, for $\lambda,\mu\in\li$, if $\lambda<\mu$, then
    $\lambda^{n}<\mu$ for every $n\in\N$;
  \item there are log-atomic numbers strictly between $\omega$ and
    $e^{\omega}$; there are also log-atomic numbers smaller than
    $\log_{n}(\omega)$ for every $n\in\N$ or bigger than
    $\exp_{n}(\omega)$ for every $n\in\N$, such as the ordinal
    $\varepsilon_{0}$.
  \end{enumerate}
\end{fact}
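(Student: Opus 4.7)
Since the fact aggregates five structural results all credited to \cite{Berarducci2015}, my plan addresses each item in turn, leveraging earlier ones where possible. The real content lies in (1), so I start there.

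For (1), the natural plan is to analyze the equivalence classes of $\lequal$ (the ``levels'') on the class of positive infinite surreals and to show that each level contains a log-atomic representative. The strategy I would pursue is: given a level $L$, use the well-foundedness of simplicity (\prettyref{fact:normal-form}(3)) to pick the simplest $\lambda\in L$, and then argue that $\log(\lambda)$ is in turn the simplest element of \emph{its} level, by noting that any strictly simpler $\mu\lequal\log(\lambda)$ would produce $e^{\mu}\lequal\lambda$ strictly simpler than $\lambda$, contradicting the choice of $\lambda$. Since simplest elements of Archimedean classes are monomials by the very definition of $\M$, iterating gives $\log_{n}(\lambda)\in\M$ for all $n$, i.e.\ $\lambda\in\li$. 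Then any $x>\R$ shares a level with some $\lambda\in\li$, so $\log_{n}(x)\veq\log_{n}(\lambda)$ for some $n$, and $\log_{n}(\lambda)$ is itself log-atomic, proving (1). The main obstacle is the simplicity bookkeeping---establishing the monotonicity of $\log$ with respect to the simplicity order on levels---which is the technical heart imported from \cite{Berarducci2015}.

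Item (2) falls out of the definitions: if $\log_{n}(\lambda)\veq\log_{n}(\mu)$ with $\lambda,\mu\in\li$, both sides are infinite monomials, and since $\M$ contains exactly one representative per Archimedean class, $\log_{n}(\lambda)=\log_{n}(\mu)$; applying $\exp_{n}$ gives $\lambda=\mu$. Item (3) reduces to the asymptotic $\log(y)\prec y$ for $y>\R$, inherited from $\R$ via the elementary equivalence in \prettyref{fact:normal-form}(1): this yields $\log_{k}(e^{x})=\log_{k-1}(x)\not\veq\log_{k}(x)$ for every $k\geq1$, hence $x\not\lequal e^{x}$, while $\log(x^{n})=n\log(x)\veq\log(x)$ immediately gives $x\lequal x^{n}$. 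Item (4) combines (2) and (3): if $\lambda<\mu$ are log-atomic and $\lambda^{n}\geq\mu$, then $\log\lambda\leq\log\mu\leq n\log\lambda$ forces $\log\lambda\veq\log\mu$, i.e.\ $\lambda\lequal\mu$, and (2) gives $\lambda=\mu$, contradicting $\lambda<\mu$.

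For (5), I would exhibit log-atomic numbers in each target range by a mixture of iteration and fixed-point constructions. Iterating $\log$ starting from $\omega$ produces a strictly decreasing sequence of log-atomic numbers $(\log_{n}\omega)_{n\in\N}$, and a log-atomic number below all of them arises as a ``limit''-style construction via the parametrization from (1); iterating $\exp$ yields the symmetric construction above, with the ordinal $\varepsilon_{0}$ (satisfying $\omega^{\varepsilon_{0}}=\varepsilon_{0}$ in ordinal arithmetic) as an explicit example dominating every $\exp_{n}(\omega)$. A log-atomic number strictly between $\omega$ and $e^{\omega}$ is obtained analogously from a fixed-point construction at the appropriate intermediate level. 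On the whole, the proof is routine modulo (1), which is the genuine technical core inherited from \cite{Berarducci2015}.
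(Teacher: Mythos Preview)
The present paper gives no proof of this statement: it is recorded as a \emph{Fact} with pointers to \cite{Berarducci2015}, so there is no in-paper argument to compare your proposal against. That said, a few remarks on your sketch are in order.

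Items (2)--(4) are fine. For (2) you use exactly the right observation: two infinite monomials in the same Archimedean class are equal, and $\exp_n$ is injective. For (3) the key input is $\log(y)\prec y$ for $y>\R$, which the paper makes available via \prettyref{fact:normal-form}(1); your deduction that $\log_n(e^x)=\log_{n-1}(x)\not\asymp\log_n(x)$ for every $n\geq 1$ then rules out $x\lequal e^x$. Item (4) follows cleanly from (2) and (3) as you indicate.

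For (1) your strategy is natural, and you are right that the simplicity bookkeeping is the crux; but the specific step you sketch has a gap you should be aware of. You argue: if $\mu$ is strictly simpler than $\log(\lambda)$ and $\mu\lequal\log(\lambda)$, then $e^\mu$ is strictly simpler than $\lambda$. Even after reducing to $\mu\in\J$ (by passing to $\mu^{\uparrow}$, which is a truncation of $\mu$ and still in the same level), what you need is that $\exp:\J\to\M$ \emph{preserves simplicity}. This is not automatic: the simplicity structure on $\M$ is governed by Conway's $\omega$-map $y\mapsto\omega^y$, not directly by $\exp$, and the two do not coincide on the nose. Establishing the required compatibility is precisely the kind of ``technical heart'' you defer to \cite{Berarducci2015}; just be aware that the argument there is not a one-liner and does not literally proceed by the contrapositive you wrote down.

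For (5) your outline is correct in spirit but purely gestural; the actual constructions in \cite{Berarducci2015} go through the explicit parametrization $x\mapsto\lambda_x$ of $\li$ by $\no$, which is what furnishes log-atomic numbers at prescribed positions (e.g.\ between $\omega$ and $e^\omega$, or below every $\log_n\omega$).
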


\subsection{Omega-series, LE-series, EL-series \label{subsec:LE-series}}

In this section we shall introduce three subfields
$\R((\omega))^{LE} \subset \R((\omega))^{EL} \subset\bracket{\omega}$ of
$\no$. We shall see that first two are naturally isomorphic to the
exponential fields of respectively the LE-series of \cite{Dries1997,
  DriesMM2001} and the EL-series generated by logarithmic words of
\cite{Kuhlmann2000, Kuhlmann2012d}, while the third one is a very big
field properly containing both (the ordinal $\omega$ plays the role of
a formal variable $>\R$).

\begin{defn}
  \label{def:set-operations}Given a subclass $X$ of $\no$, we write
  \textbf{$\sum X$} for the family of all surreal numbers $x\in\no$
  which can be written in the form $x=\sum_{i\in I}y_{i}$ for some
  summable family $(y_{i})_{i\in I}$ of elements of $X$ indexed by a set
  $I$. Note that $\sum$ is a closure operator, as
  $X\subseteq\sum X=\sum\sum X$.
\end{defn}

\begin{defn}
  \label{def:omega-series}We define $\bracket{\omega}$, the field of
  \textbf{omega-series}, as the smallest subfield of $\no$ containing
  $\R\cup\{\omega\}$ and closed under $\sum$, $\exp$ and $\log$.
\end{defn}

We shall prove later that $\bracket{\omega}$ is a proper class.

\begin{defn}
  \label{def:newLE}Let $\R((\omega))^{LE}\subset\bracket{\omega}$ be
  the union $\bigcup_{n\in\N}X_{n}$, where $X_{0}=\R\cup\{\omega\}$ and
  $X_{n+1}=\sum(X_{n}\cup\exp(X_{n})\cup\log(X_{n}))$. In other words, a
  surreal number $x$ belongs to $\R((\omega))^{LE}$ if and only if $x$
  can be obtained in finitely many steps starting from
  $\R\cup\{\omega\}$ and using the set-operations $\sum,\exp,\log$.
\end{defn}

\begin{defn}
  Let $\R((\omega))^{EL}$ be defined as $\R((\omega))^{LE}$ but starting
  with $X_{0}'=\R\cup\{\omega,\log(\omega),\log_{2}(\omega),\ldots\}$
  instead of $X_{0}=\R\cup\{\omega\}$. In other words, a surreal number
  belongs to $\R((\omega))^{EL}$ if and only if it can be obtained in
  finitely many steps from $X_{0}'$ using $\sum,\exp,\log$ (in this
  case it turns out that $\log$ is not actually necessary).
\end{defn}

\begin{rem}
  Unlike $\bracket{\omega}$, the subfields $\R((\omega))^{LE}$ and
  $\R((\omega))^{EL}$ are not closed under $\sum$; for instance
  $\sum_{n\in\N}\log_{n}(\omega)$ belongs to $\bracket{\omega}$ but not
  to $\R((\omega))^{LE}$. Indeed, one needs $k$ steps to generate
  $\log_{k}(\omega)$ starting from $\R\cup\{\omega\}$, so the whole sum
  $\sum_{n\in\N}\log_{n}(\omega)$ cannot be generated in finitely many
  steps. The same example witnesses that the inclusion
  $\R((\omega))^{LE}\subset\R((\omega))^{EL}$ is proper, as the latter
  field does contain $\sum_{n\in\N}\log_{n}(\omega)$.  Finally note that
  $\sum_{n\in\N}1/\exp_{n}(\omega)$ belongs to $\bracket{\omega}$ but
  not to $\R((\omega))^{EL}$.

  Both $\R((\omega))^{LE}$ and $\R((\omega))^{EL}$ are elementary
  extensions of the real exponential field $(\R,+,\cdot,\exp)$, but
  they are no longer elementary equivalent if we add the differential
  operator $\partial$ of \cite{Berarducci2015} to the language (see
  \prettyref{subsec:Transserial-derivations}): indeed in
  $\R((\omega))^{LE}$ (and in $\no$ itself) the derivation $\partial$
  is surjective, while in $\R((\omega))^{EL}$ it is not. For instance
  one can show that $\exp(-\sum_{n\in\N}\log_{n}(\omega))$ is an element
  of $\R((\omega))^{EL}$ without anti-derivative in
  $\R((\omega))^{EL}$, and in fact not even in
  $\bracket{\omega}$. Indeed, for the simplest surreal derivation
  $\partial$ (\cite[Def.\ 6.7]{Berarducci2015}), which has
  anti-derivatives, we have
  $\partial\kappa_{-1}=\exp(-\sum_{n\in\N}\log_{n}(\omega))$, where
  $\kappa_{-1}\in\no$ is the simplest log-atomic number smaller than
  $\log_{n}(\omega)$ for each $n\in\N$. Such a number cannot belong to
  $\bracket{\omega}$, and since $\ker\partial=\R$, there cannot be any
  $x\in\bracket{\omega}$ with
  $\partial x=\exp(-\sum_{n\in\N}\log_{n}(\omega))$.
\end{rem}

There are many interesting subfields between $\R((\omega))^{LE}$ and
$\bracket{\omega}$ whose domain is a set, for instance the series in
$\bracket{\omega}$ with hereditarily countable support.

The definition of $\R((\omega))^{LE}$ as a union
$\bigcup_{n\in\N}X_{n}$ suggests the possibility of prolonging the
sequence $X_{n}$ along the transfinite ordinals, setting
$X_{0}=\R\cup\{\omega\}$,
$X_{\alpha+1}=\sum(X_{\alpha}\cup\exp(X_{\alpha})\cup\log(X_{\alpha}))$
and $X_{\lambda}=\bigcup_{i<\lambda}X_{i}$ for each limit ordinal
$\alpha$. One can verify that the union
$\bigcup_{\alpha\in\on}X_{\alpha}$ along all the ordinals would then
coincide with $\bracket{\omega}$.

\subsection{Isomorphism with classical LE-series}

It is well known that there is a unique embedding of the field of
LE-series into $\no$ sending $x$ to $\omega$ \footnote{In
  \cite{DriesMM2001}, the field of logarithmic-exponential series is
  denoted either by $\R((x^{-1}))^{LE}$ or by $\R((t))^{LE}$, where
  $x>\R$ and $t=x^{-1}$ is infinitesimal. We prefer here to use the
  notation $\R((x))^{LE}$ for the LE-series, with $x>\R$, as in
  \cite{Aschenbrenner2015}, to better match the notation
  $\R((\omega))^{LE}$.}, $\R$ to $\R$, and preserving $\exp$ and
infinite sums (see \cite{Aschenbrenner2015a}).  This subsection will
be devoted to the long, but straightforward proof that
$\R((\omega))^{LE}$ is naturally isomorphic to the field of LE-series,
so in particular it is the image of such embedding. This provides a
simple characterization of the LE-series, which should be compared
with the original definition.

\begin{thm}
  \label{thm:LE}$\R((\omega))^{LE}$ is a field of transseries and it is
  isomorphic to the field of logarithmic-exponential series
  $\R((x))^{LE}$ of \cite{Dries1997,DriesMM2001}; the isomorphism
  sending $\omega$ to $x$ is unique.
\end{thm}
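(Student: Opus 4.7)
The plan is to leverage the embedding $\iota: \R((x))^{LE} \to \no$ from \cite{Aschenbrenner2015a}, which is the unique field embedding sending $x$ to $\omega$, fixing $\R$ pointwise, and preserving $\exp$ together with all summable sums. The theorem then reduces to showing that the image $\iota(\R((x))^{LE})$ coincides with $\R((\omega))^{LE}$: once this is established, $\iota$ itself (restricted and co-restricted) gives the desired isomorphism in the statement, and the transserial properties of $\R((\omega))^{LE}$ — being a field, containing $\R$, closed under $\log$ of positives, truncation closedness — follow by transferring the corresponding classical properties of $\R((x))^{LE}$, using that $\iota$ preserves Ressayre normal forms.

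Both inclusions proceed by a coupled induction matching the stages of the classical construction $\R((x))^{LE} = \bigcup_k L_k$ (where each $L_{k+1}$ is obtained from $L_k$ by Hahn summation, $\exp$, and $\log$) with the filtration $\R((\omega))^{LE} = \bigcup_n X_n$ from \prettyref{def:newLE}. For $\iota(\R((x))^{LE}) \subseteq \R((\omega))^{LE}$, I would induct on $k$ to exhibit an $m(k) \in \N$ with $\iota(L_k) \subseteq X_{m(k)}$: each element of $L_{k+1}$ is a Hahn-sum of terms obtained by applying $\exp$ or $\log$ to elements of $L_k$, and $\iota$ sends such a presentation to a summable family inside $X_{m(k)} \cup \exp(X_{m(k)}) \cup \log(X_{m(k)})$, so its sum lands in $X_{m(k)+1}$. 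For the reverse inclusion, I would symmetrically induct on $n$ to show $X_n \subseteq \iota(L_{k(n)})$: at each stage $\iota(L_{k(n)})$ is closed under one further application of $\exp$ and $\log$, and — since each $L_k$ is a Hahn-like field — closed under sums of its own summable families, so $X_{n+1}$ lands in $\iota(L_{k(n)+c})$ for a constant offset $c$.

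The main technical point inside this induction is verifying that $\iota$ \emph{reflects} summability: if $(\iota(z_i))_{i\in I}$ is summable in $\no$ with every $z_i \in L_k$, then $(z_i)_{i\in I}$ is summable in $L_k$ and its sum there maps to $\sum_i \iota(z_i)$. This holds because $\iota$ preserves monomial supports, so the reverse well-ordering and local finiteness conditions of \prettyref{def:summability} in $\no$ transport back to $L_k$. Care is needed because summability in the smaller field $L_k$ is \emph{a priori} more restrictive than in $\no$, but preservation of supports makes the two notions coincide on the image.

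Finally, uniqueness follows from the observation that $\R((x))^{LE}$ is generated from $\R \cup \{x\}$ by finitely many applications of $\sum, \exp, \log$ — this is the content of the classical construction — while any isomorphism of transseries, by \prettyref{def:transseries}, preserves $\R$, $\sum$, and $\log$ (and hence $\exp$ too, as its inverse). Fixing its value on $\R \cup \{x\}$ with $x \mapsto \omega$ therefore forces its value on every element of $\R((x))^{LE}$. The main obstacles in the argument are the careful bookkeeping of the offsets $k(n)$ and $m(k)$ linking the two filtrations, and the reflection-of-summability step described above; the rest is routine once the dictionary between the two constructions is set up.
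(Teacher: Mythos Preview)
Your approach differs from the paper's: you take the embedding $\iota$ from \cite{Aschenbrenner2015a} as given and then try to identify its image, whereas the paper reconstructs the classical LE-tower explicitly inside $\no$. Concretely, the paper defines Hahn-field towers $K_{n,\lambda}$ for each $\lambda\in\li$ (\prettyref{def:LE}), checks they are well-posed subfields of $\no$ (\prettyref{lem:Kn}), shows that $\bigcup_{k}\R((\log_{k}(\omega)))^{E}$ is isomorphic to the classical $\R((x))^{LE}$ by matching the constructions step by step (Propositions~\ref{prop:E-is-E} and~\ref{prop:LE-is-LE}), and finally proves $\bigcup_{k}\R((\log_{k}(\omega)))^{E}=\R((\omega))^{LE}$ via the inclusions $X_{n}\subseteq K_{n,\log_{n}(\omega)}$ and the observation that every element of $K_{n,\log_{n}(\omega)}$ arises from $X_{0}$ in finitely many steps (\prettyref{prop:LE}).

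There is a genuine gap in your proposal. Your description of the classical filtration --- ``each $L_{k+1}$ is obtained from $L_{k}$ by Hahn summation, $\exp$, and $\log$'' --- is not how $\R((x))^{LE}$ is built in \cite{DriesMM2001}; you have essentially re-described the $X_{n}$ filtration of \prettyref{def:newLE} and attributed it to the classical side. The actual construction is a two-parameter tower of \emph{full} Hahn fields $K_{n,\log_{m}(x)}$, with $n$ indexing exponential levels and $m$ the log-iterates adjoined via a direct limit. This distinction matters precisely at your reflection-of-summability step: you need each $L_{k}$ to be a full Hahn field so that any family meeting the support conditions of \prettyref{def:summability} actually has its sum in $L_{k}$. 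The pieces $K_{n,\log_{m}(x)}$ have this property, but $\R((x))^{LE}$ itself does \emph{not} --- for instance $\sum_{n}\log_{n}(x)$ has reverse-well-ordered support in the LE-monomial group yet is not an LE-series --- so working with a loosely specified $L_{k}$ is dangerous. Once you replace your $L_{k}$ by the genuine $K_{n,\log_{m}(x)}$ and verify that $\iota$ sends their monomial groups into $\M$, your argument becomes essentially the paper's; indeed, that verification is exactly what \prettyref{lem:Kn} together with \prettyref{prop:Hahn-subfield-No} accomplish.
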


Similarly we have:

\begin{prop}
  \label{prop:characterization-EL}The field $\R((\omega))^{EL}$ is
  naturally isomorphic to the field of EL-series generated by
  logarithmic words \cite[Def.\ 6.2, Example 4.6]{Kuhlmann2012d} (see
  also \prettyref{rem:LE-delta-n}).
\end{prop}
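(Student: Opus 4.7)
The plan is to adapt the structure of the proof of \prettyref{thm:LE} to the EL-setting, accounting for the richer starting set $X_{0}'$ and the fact that EL-series are not closed under arbitrary logarithms. First I would invoke (or adapt from \cite{Kuhlmann2012d}) the universal property of Kuhlmann's EL-series generated by logarithmic words: such a field embeds uniquely into any field of transseries once one fixes the images of the logarithmic words $x,\log x,\log_{2}x,\ldots$ in a manner compatible with the exponential and summation structure. Since $\log_{n}(\omega)\in\li$ for every $n\in\N$, the assignment $\log_{n}(x)\mapsto\log_{n}(\omega)$ extends uniquely to an embedding $\varphi$ of the EL-series into $\no$ preserving $\R$, $\exp$, and infinite sums.

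The heart of the argument is to identify the image of $\varphi$ with $\R((\omega))^{EL}$. The inclusion $\mathrm{im}(\varphi)\subseteq\R((\omega))^{EL}$ is routine: $\varphi$ maps the generators into $X_{0}'$, and preservation of $\exp$ and $\sum$ propagates the inclusion through every layer of Kuhlmann's construction. The reverse inclusion is where the parenthetical remark that ``$\log$ is not actually necessary'' needs justification. Given a positive element $y$ built from $X_{0}'$ using only $\sum$ and $\exp$, write the Ressayre normal form $y=re^{\gamma}(1+\varepsilon)$ with $r\in\R^{>0}$, $\gamma=\vell(y)\in\J$ and $\varepsilon\prec1$. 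Then
\[
\log(y)=\log(r)+\gamma+\sum_{n\geq1}\frac{(-1)^{n+1}}{n}\varepsilon^{n},
\]
where the tail series is summable by \prettyref{cor:power-series}. It thus suffices to show inductively on the layer of $y$ that $\gamma$ already lies in an earlier EL-layer, i.e.\ can be produced from $X_{0}'$ using only $\sum$ and $\exp$. The point of placing every $\log_{n}(\omega)$ into $X_{0}'$ is precisely to preempt the need for any future application of $\log$.

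The main obstacle is this last inductive step: controlling the purely infinite exponent $\gamma$ appearing in the leading monomial of every element generated so far. The argument is a structural analysis parallel to, but more delicate than, the corresponding step in the proof of \prettyref{thm:LE}. One tracks how $\exp$ and $\sum$ interact layer by layer: each new monomial $e^{\gamma}$ produced by an application of $\exp$ has exponent $\gamma$ already in the previous layer by construction, while monomials introduced through $\sum$ lie in the $\R$-linear span of monomials from previous layers and therefore carry no new purely infinite exponents. The base case is handled by the explicit presence of $\log_{n}(\omega)$ in $X_{0}'$, which supplies all the ``ground'' exponents that would otherwise require a fresh logarithm. Once this is in place, injectivity of $\varphi$ follows from order preservation (the image is a truncation-closed subfield containing $\R$ and each $\log_{n}(\omega)$), and the desired natural isomorphism $\R((\omega))^{EL}\cong\R((x))^{EL}$ is established.
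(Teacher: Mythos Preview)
The paper does not give its own proof here: it explicitly writes ``We leave the verification of \prettyref{prop:characterization-EL} to the reader,'' pointing to \prettyref{rem:LE-delta-n} as the intended tool. So there is nothing to compare your argument against line by line; the implicit route is to identify $\R((\omega))^{EL}$ with $\Delta_{\omega}=\bigcup_{n\in\N}\Delta_{n}$ for $\Delta=\{\log_{n}(\omega):n\in\N\}$, and then match the $\Delta_{n}$ hierarchy with the stages of Kuhlmann--Tressl's construction.

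Your overall strategy is sound, but the inductive step justifying that ``$\log$ is not actually necessary'' has a gap. You write that ``each new monomial $e^{\gamma}$ produced by an application of $\exp$ has exponent $\gamma$ already in the previous layer by construction.'' This is not quite what happens: applying $\exp$ to $w$ in layer $n$ produces a number whose leading monomial is $e^{w^{\uparrow}}$, and the exponent you must control is $w^{\uparrow}$, a \emph{truncation} of $w$, not $w$ itself. For your induction to go through you therefore need the without-$\log$ layers to be truncation closed (or at least closed under $w\mapsto w^{\uparrow}$), and you never verify this. The sentence about $\sum$ introducing no new exponents is fine, but it does not repair the $\exp$ step.

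This is exactly why the paper's hint is useful. The $\Delta_{\alpha}$ hierarchy of \prettyref{def:erank} is truncation closed at every stage (\prettyref{rem:Delta3-field}) and is built from $\Delta\cup\{0\}$ using only $\exp_{\restriction\J}$ and infinite sums (\prettyref{cor:bracket-delta-is-closure}); moreover $\log(\Delta_{\alpha}^{>0})\subseteq\Delta_{\alpha}$ for $\alpha\geq 3$ by \prettyref{prop:unionDelta-exp-log-closed}. Combining these with \prettyref{rem:LE-delta-n} gives the ``$\log$ is not needed'' claim cleanly and simultaneously supplies the truncation closure your argument is missing. If you prefer to keep your direct induction on the $X_{n}'$ layers, you should insert and prove the lemma that each such layer is truncation closed; otherwise, route the argument through the $\Delta_{n}$ and the proof closes.
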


We leave the verification of \prettyref{prop:characterization-EL} to
the reader, but we shall give a detailed proof of \prettyref{thm:LE}.
To this aim we shall first give an equivalent description of
$\R((\omega))^{LE}$ (recall from \prettyref{nota:Hahn-subfield-No-1}
that we are identifying Hahn fields $R((\Gamma))$ with subfields of
$\no$).

\begin{defn}
  \label{def:LE}Let $\lambda\in\li$ (a log-atomic number). We define:

  \begin{tabular*}{10cm}{@{\extracolsep{\fill}}clll}
    (1) & $\M_{0,\lambda}:=\lambda^{\R}$, & $K_{0,\lambda}:=\R((\M_{0,\lambda}))$, & $\J_{0,\lambda}:=\R((\M_{0,\lambda}^{\succ1}))$;\\
    (2) & $\M_{n+1,\lambda}:=e^{\J_{n,\lambda}}$, & $K_{n+1,\lambda}=K_{n,\lambda}((\M_{n+1,\lambda}))$, & $\J_{n+1,\lambda}:=K_{n,\lambda}((\M_{n+1,\lambda}^{\succ1}))$;\\
    (3) & \multicolumn{2}{l}{$\R((\lambda))^{E}:=\bigcup_{n\in\N}K_{n,\lambda}$.} &
  \end{tabular*}
\end{defn}

The next Lemma shows that the above definition is well posed, namely
at each step $\M_{n+1,\lambda}$ is a subgroup of $\M$ and
$K_{n,\lambda}<\M_{n+1,\lambda}^{>1}$, so that under the conventions of
\prettyref{nota:Hahn-subfield-No-1}, each $K_{n+1,\lambda}$ is again in
$\no$; in particular, in clause (2) we are allowed to use the
exponential function of $\no$ to define $e^{\J_{n+1,\lambda}}$. Note
moreover that $\J_{n,\lambda}\subseteq\J$, as we shall verify in a
moment.

\begin{lem}
  \label{lem:Kn}For each $n\in\N$, $\M_{n,\lambda}$ is a well defined
  divisible subgroup of $\M$ and moreover
  $\M_{n+1,\lambda}^{\succ1}>K_{n,\lambda}$.
\end{lem}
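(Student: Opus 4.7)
The plan is to prove the claim by induction on $n$, strengthening the invariant to track that $K_{n,\lambda}$ is a truncation-closed subfield of $\no$ (needed for iterated applications of \prettyref{prop:Hahn-subfield-No}) and that it is identified with $\R((\M_{0,\lambda}\cdots\M_{n,\lambda}))$ via repeated use of \prettyref{cor:Hahn-lexicographic-1}; smallness and divisibility of each $\M_{n,\lambda}$ are straightforward bookkeeping.

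For the base case, I would verify $\lambda^{r}=e^{r\log\lambda}\in\M$ for every $r\in\R$: since $\lambda\in\li$, $\log\lambda$ is again log-atomic, in particular a monomial $\succ 1$, so $r\log\lambda\in\J$ and \prettyref{fact:normal-form}(7) applies. \prettyref{prop:Hahn-subfield-No} then embeds $K_{0,\lambda}=\R((\lambda^{\R}))$ as a truncation-closed subfield of $\no$. For the inequality, any $\gamma\in\J_{0,\lambda}^{>0}$ has leading term $c\lambda^{r}$ with $r>0$, and the log-atomic fact $(\log\lambda)^{n}<\lambda$ from \prettyref{subsec:LE-series} yields $\log\lambda\prec\lambda^{r}$; since $\log x$ of any positive $x\in K_{0,\lambda}$ is dominated by a real multiple of $\log\lambda$ plus a real constant, we obtain $\gamma>\log x$, hence $e^{\gamma}>x$.

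For the inductive step, I would first establish $\M_{n+1,\lambda}\subseteq\M$ by showing $\J_{n,\lambda}\subseteq\J$: each monomial in the $\no$-support of an element of $K_{n-1,\lambda}((\M_{n,\lambda}^{\succ 1}))$ has the form $\n\m$, where $\n$ is a monomial in the support of some coefficient, hence (by truncation closure together with $\R\subseteq K_{n-1,\lambda}$) $\n\in K_{n-1,\lambda}$, while $\m\in\M_{n,\lambda}^{\succ 1}>K_{n-1,\lambda}$ by induction; thus $\m>\n^{-1}$ and $\n\m\succ 1$. Divisibility of $\M_{n+1,\lambda}=e^{\J_{n,\lambda}}$ follows from $\J_{n,\lambda}$ being an $\R$-vector space. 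Then \prettyref{prop:Hahn-subfield-No} provides $K_{n+1,\lambda}$ as a truncation-closed subfield (truncation closure is inherited by $R((\Gamma))$ when $R$ is truncation closed), and \prettyref{cor:Hahn-lexicographic-1} yields the identification with $\R((\M_{0,\lambda}\cdots\M_{n+1,\lambda}))$.

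The main work lies in proving $\M_{n+2,\lambda}^{\succ 1}>K_{n+1,\lambda}$. For $x\in K_{n+1,\lambda}^{>0}$, its leading monomial in $\no$ factors uniquely as $\ell_{0}\ell_{1}\cdots\ell_{n+1}$ with $\ell_{i}\in\M_{i,\lambda}$, so $\log x = s + t_{0}\log\lambda + \log\ell_{1} + \cdots + \log\ell_{n+1} + \eta$ for some $s\in\R$, $t_{0}\in\R$ with $\ell_{0}=\lambda^{t_{0}}$, and $\eta\prec 1$; each $\log\ell_{i}$ for $i\ge 1$ lies in $\J_{i-1,\lambda}\subseteq K_{n,\lambda}$. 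The obstacle is that $\log\lambda$ is generally not in $K_{n,\lambda}$, so one cannot simply place $\log x$ in $K_{n,\lambda}$. The key observation is $\log\lambda<\lambda$ by elementary equivalence with $(\R,<,+,\cdot,\exp)$ (\prettyref{fact:normal-form}(1)), whence $|t_{0}\log\lambda|\le|t_{0}|\lambda\in K_{0,\lambda}\subseteq K_{n,\lambda}$; combining with the other bounds shows $|\log x|\le M$ for some $M\in K_{n,\lambda}$. Finally, any $\gamma\in\J_{n+1,\lambda}^{>0}$ has leading term $c_{0}\m_{0}$ with $c_{0}\in K_{n,\lambda}^{>0}$ and $\m_{0}\in\M_{n+1,\lambda}^{\succ 1}>K_{n,\lambda}$, so $\gamma\succ K_{n,\lambda}$, in particular $\gamma>M\ge\log x$ and $e^{\gamma}>x$.
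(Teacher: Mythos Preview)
Your argument is essentially correct and follows the paper's strategy. The one real issue is an indexing mismatch inside the inductive step: you establish $\J_{n,\lambda}\subseteq\J$ (hence $\M_{n+1,\lambda}\subseteq\M$) using the hypothesis $\M_{n,\lambda}^{\succ1}>K_{n-1,\lambda}$, but then invoke the Hahn-field embedding for $K_{n+1,\lambda}$ and state the target inequality as $\M_{n+2,\lambda}^{\succ1}>K_{n+1,\lambda}$, both of which rely on $\M_{n+1,\lambda}^{\succ1}>K_{n,\lambda}$ --- precisely the inequality not yet established at this stage. The fix is simply to realign: at step $n$, after showing $\M_{n+1,\lambda}\subseteq\M$, prove $\M_{n+1,\lambda}^{\succ1}>K_{n,\lambda}$ (your $\log x$ bound, now taken for $x\in K_{n,\lambda}$ with $M\in K_{n-1,\lambda}$), and only then construct $K_{n+1,\lambda}$.

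Once realigned, your argument and the paper's differ only in emphasis. The paper obtains $\gamma>K_{n-1,\lambda}$ in one stroke by writing the leading $\no$-monomial of $\gamma\in\J_{n,\lambda}^{>0}$ as $\n\o$ with $\n\in\M_{n,\lambda}^{\succ1}$, $\o\in K_{n-1,\lambda}$, and observing $\gamma>\n^{1/2}>K_{n-1,\lambda}$ via divisibility of $\M_{n,\lambda}$; the remaining passage to $e^\gamma>K_{n,\lambda}$ is left brief. Your explicit bound $|\log x|\le M\in K_{n-1,\lambda}$, obtained from the factorization of the leading monomial of $x$ and the inequality $\log\lambda<\lambda$, makes that passage transparent, at the cost of carrying extra invariants (truncation closure and the identification $K_{n,\lambda}=\R((\M_{0,\lambda}\cdots\M_{n,\lambda}))$) through the induction, which the paper's shorter argument does not need at this point.
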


\begin{proof}
  We proceed by induction on $n$. Trivially, $\M_{0,\lambda}$ is a well
  defined divisible subgroup of $\M$. Now fix $n$ and assume that
  $\M_{n,\lambda}$ is well defined and that
  $\M_{n,\lambda}^{\vgreater1}>K_{n-1,\lambda}$ (an empty condition if
  $n=0$). Then $\J_{n,\lambda}$ is a well defined subset of $\no$ by
  \prettyref{prop:Hahn-subfield-No}, and in particular it is a
  divisible additive subgroup of $K_{n,\lambda}$.

  We claim that $\J_{n,\lambda}$ is consists only of purely infinite
  numbers. Indeed, let $\m$ be a monomial in the support of
  $\J_{n,\lambda}$.  Then $\m=\n\o$ for some
  $\n\in\M_{n,\lambda}^{\vgreater1}$ and $\o\in K_{n-1,\lambda}$ (with
  $\o=1$ if $n=0$). By inductive hypothesis,
  $\o^{-1}\in K_{n-1,\lambda}<\n$, so $\m>1$, proving the claim. It
  follows that $\M_{n+1,\lambda}$ is a divisible multiplicative
  subgroup of $\M$.

  Finally, let $e^{\gamma}\in\M_{n+1,\lambda}^{\vgreater1}$. We wish to
  prove that $e^{\gamma}>\M_{n,\lambda}$. Let $\m$ be the leading
  monomial of $\gamma$. As before, we can write $\m=\n\o$ for some
  $\n\in\M_{n,\lambda}^{\vgreater1}$ and $\o\in K_{n-1,\lambda}$ (with
  $\o=1$ if $n=0$). By inductive hypothesis, we also know that
  $\n^{\frac{1}{2}}>K_{n-1,\lambda}$.  Since
  $\gamma>\n^{\frac{1}{2}}$, it follows that
  $\m=e^{\gamma}>e^{K_{n-1,\lambda}}$, so in particular,
  $\m>e^{\J_{n-1,\lambda}}=\M_{n,\lambda}$, as desired.
\end{proof}

\begin{rem}
  By \prettyref{cor:Hahn-lexicographic-1} we have
  $K_{n,\lambda}=\R((\M_{0,\lambda}\M_{1,\lambda}\dots\M_{n,\lambda}))$.
\end{rem}

\begin{lem}
  \label{lem:LE-exp-log-inclusions}For all $n\in\N$ we have:
  \begin{enumerate}
  \item $\exp(K_{n,\lambda})\subseteq K_{n+1,\lambda}$;
  \item $K_{n,\lambda}\subseteq K_{n+1,\log(\lambda)}$;
  \item $\log(K_{n,\lambda}^{>0})\subseteq K_{n+1,\log(\lambda)}$.
  \end{enumerate}
  In particular, $\R((\lambda))^{E}$ is closed under $\exp$ and
  $\log(\R((\lambda))^{E})\subseteq\R((\log(\lambda)))^{E}$.
\end{lem}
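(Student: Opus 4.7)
My plan is to prove the three inclusions simultaneously by induction on $n$, with (2) and (3) depending on a side induction on the tower of monomial groups $\M_{k,\lambda}$.

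First I would dispose of (2). The structural content is that $\M_{k,\lambda}\subseteq\M_{k+1,\log(\lambda)}$ for every $k\geq 0$; this I would prove by induction on $k$. The base case $k=0$ amounts to $\lambda^{r}=e^{r\log(\lambda)}\in e^{\J_{0,\log(\lambda)}}=\M_{1,\log(\lambda)}$. For the inductive step, the inclusions $K_{k-1,\lambda}\subseteq K_{k,\log(\lambda)}$ and $\M_{k,\lambda}\subseteq\M_{k+1,\log(\lambda)}$ (the latter being the inductive hypothesis) yield $\J_{k,\lambda}\subseteq\J_{k+1,\log(\lambda)}$ by the very definitions of these classes as Hahn fields, and then $\M_{k+1,\lambda}=e^{\J_{k,\lambda}}\subseteq e^{\J_{k+1,\log(\lambda)}}=\M_{k+2,\log(\lambda)}$. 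Passing to the Hahn fields generated by $\M_{0,\lambda},\dots,\M_{n,\lambda}$ via \prettyref{cor:Hahn-lexicographic-1} gives (2).

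For (1), pick $x\in K_{n,\lambda}$ and write it via the Hahn decomposition $K_{n,\lambda}=K_{n-1,\lambda}((\M_{n,\lambda}))$ as $x=y+c_{1}+\delta$, where $y:=\sum_{\m\in\M_{n,\lambda}^{\succ 1}}c_{\m}\m$, $c_{1}\in K_{n-1,\lambda}$ is the coefficient of $1\in\M_{n,\lambda}$, and $\delta:=\sum_{\m\in\M_{n,\lambda}^{\prec 1}}c_{\m}\m$. The key observation, using \prettyref{lem:Kn}, is that $\M_{n,\lambda}^{\succ 1}>K_{n-1,\lambda}$ forces $y\in\J_{n,\lambda}$ (so all its monomials in $\no$ are $\succ 1$) and symmetrically forces $\delta\prec 1$. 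Consequently $e^{x}=e^{y}\cdot e^{c_{1}}\cdot e^{\delta}$; the first factor lies in $\M_{n+1,\lambda}\subseteq K_{n+1,\lambda}$ by definition, the second in $K_{n,\lambda}\subseteq K_{n+1,\lambda}$ by the inductive hypothesis (1) applied to $n-1$ (the base case $n=0$ reducing to $e^{r}\in\R$), and the third in $K_{n,\lambda}$ by summing the Taylor series of $\exp$, whose convergence at the infinitesimal $\delta\in K_{n,\lambda}$ is guaranteed by \prettyref{cor:power-series} since $K_{n,\lambda}$ is truncation closed.

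For (3) I would argue similarly. Given $x\in K_{n,\lambda}^{>0}$, let $r\in\R^{>0}$ be its leading coefficient and $\m$ its leading monomial; by \prettyref{cor:Hahn-lexicographic-1}, $\m$ factors uniquely as $\m_{0}\m_{1}\cdots\m_{n}$ with $\m_{k}\in\M_{k,\lambda}$. Writing $x=r\m(1+\eps)$ with $\eps\prec 1$ in $K_{n,\lambda}$, we get
\[
\log(x)=\log(r)+\sum_{k=0}^{n}\log(\m_{k})+\log(1+\eps).
\]
Here $\log(\m_{0})\in\R\cdot\log(\lambda)\subseteq K_{0,\log(\lambda)}$; for $k\geq 1$, $\m_{k}=e^{\gamma_{k}}$ with $\gamma_{k}\in\J_{k-1,\lambda}\subseteq K_{k-1,\lambda}$, so $\log(\m_{k})=\gamma_{k}\in K_{n-1,\lambda}\subseteq K_{n,\log(\lambda)}$ by (2); and the Taylor sum $\log(1+\eps)$ lies in $K_{n,\lambda}\subseteq K_{n+1,\log(\lambda)}$ by \prettyref{cor:power-series} together with (2). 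The ``in particular'' clauses follow immediately from the inclusions $K_{n+1,\lambda}\subseteq\R((\lambda))^{E}$ and $K_{n+1,\log(\lambda)}\subseteq\R((\log(\lambda)))^{E}$.

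The main obstacle is the first point: one must correctly identify where the purely infinite part of an element of $K_{n,\lambda}$ lives, since $\J_{n,\lambda}$ does not absorb the purely infinite contribution coming from the coefficient $c_{1}$ of the trivial monomial in $\M_{n,\lambda}$. The decomposition $x=y+c_{1}+\delta$ dodges this by separating the three factors before exponentiating, routing $c_{1}$ through the inductive hypothesis rather than trying to force $x^{\uparrow}$ itself into $\J_{n,\lambda}$.
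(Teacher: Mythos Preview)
Your argument is correct and mirrors the paper's proof almost exactly: your decomposition $x=y+c_{1}+\delta$ in (1) is precisely the paper's $x=\gamma+r+\varepsilon$ with $\gamma\in\J_{n,\lambda}$, $r\in K_{n-1,\lambda}$, $\varepsilon\prec_{K_{n-1,\lambda}}1$, and your treatment of (2) is identical to the paper's simultaneous induction on $\M_{k,\lambda}\subseteq\M_{k+1,\log(\lambda)}$, $\J_{k,\lambda}\subseteq\J_{k+1,\log(\lambda)}$, $K_{k,\lambda}\subseteq K_{k+1,\log(\lambda)}$. The only cosmetic difference is in (3): the paper writes $x=\m\cdot r\cdot(1+\varepsilon)$ with $\m\in\M_{n,\lambda}$ and $r\in K_{n-1,\lambda}^{>0}$, dispatching $\log(r)$ via the inductive hypothesis on (3) itself, whereas you take $\m$ to be the full leading monomial in $\no$, factor it as $\m_{0}\cdots\m_{n}$, and compute each $\log(\m_{k})$ directly---this merely unrolls the same induction.
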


\begin{proof}
  We work by induction on $n$.

  For (1), let $x\in K_{n,\lambda}$. We can write uniquely
  $x=\gamma+r+\varepsilon$ where $\gamma\in\J_{n,\lambda}$, and if
  $n>0$, $r\in K_{n-1,\lambda}$ and
  $\varepsilon\vless_{K_{n-1,\lambda}}1$, otherwise simply $r\in\R$ and
  $\varepsilon\prec1$. In any case,
  $e^{x}=e^{\gamma}\cdot
  e^{r}\cdot\sum_{i=0}^{\infty}\frac{\varepsilon^{i}}{i!}$.  But then it
  suffices to note that
  $e^{\gamma}\in\M_{n+1,\lambda}\subseteq K_{n+1,\lambda}$ by definition,
  while $e^{r}$ is either already in $\R$ or in $K_{n,\lambda}$ by
  inductive hypothesis, and the remaining sum is in $K_{n,\lambda}$
  because $K_{n,\lambda}$ is a Hahn field. Therefore,
  $e^{x}\in K_{n+1,\lambda}$, as desired.

  Concerning (2), note that
  $\M_{0,\lambda}=\lambda^{\R}=e^{\R\log(\lambda)}\subseteq
  e^{\J_{0,\log(\lambda)}}=\M_{1,\log(\lambda)}$.  It follows that
  $\J_{0,\lambda}\subseteq\J_{1,\log(\lambda)}$ and
  $K_{0,\lambda}\subseteq K_{1,\log(\lambda)}$. By a straightforward
  induction, it follows that
  $\M_{n,\lambda}\subseteq\M_{n+1,\log(\lambda)}$,
  $\J_{n,\lambda}\subseteq\J_{n+1,\log(\lambda)}$ and
  $K_{n,\lambda}\subseteq K_{n+1,\log(\lambda)}$, proving the desired
  conclusion.

  Finally, for (3), let $x\in K_{n,\lambda}^{>0}$. We can write uniquely
  $x=\m\cdot r\cdot(1+\varepsilon)$ where $\m\in\M_{n,\lambda}$, and if
  $n>0$, $r\in K_{n-1,\lambda}^{>0}$ and
  $\varepsilon\vless_{K_{n-1,\lambda}}1$, otherwise simply $r\in\R$ and
  $\varepsilon\prec1$. We have
  $\log(x)=\log(\m)+\log(r)+\sum_{i=1}^{\infty}(-1)^{i+1}\frac{\varepsilon^{i}}{i}$.
  Since $K_{n,\lambda}$ is a Hahn field, the rightmost sum is in
  $K_{n,\lambda}$, which is contained in $K_{n+1,\log(\lambda)}$ by
  (2), while $\log(r)$ is either already in $\R$ or in
  $K_{n,\log(\lambda)}$ by inductive hypothesis. For $\log(\m)$, we
  simply note that if $n=0$, then
  $\log(\m)=s\cdot\log(\lambda)\in K_{0,\log(\lambda)}$ for some
  $s\in\R$, otherwise $\log(\m)\in K_{n-1,\lambda}$, which is
  contained in $K_{n,\log(\lambda)}$ by (2). Therefore,
  $\log(x)\in K_{n+1,\log(\lambda)}$, as desired.
\end{proof}

\begin{prop}
  \label{prop:E-is-E}For each $\lambda\in\li$, $\R((\lambda))^{E}$ is
  (uniquely) isomorphic to the exponential field $\R((x))^{E}$ defined
  in \cite{Dries1997,DriesMM2001} through an isomorphism sending
  $\lambda$ to $x$ and preserving $\exp$, $\sum$ and $\R$.
\end{prop}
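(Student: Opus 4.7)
The plan is to mirror the tower construction used in \prettyref{def:LE}: we build the isomorphism level-by-level along the filtration $K_{n,\lambda}$ and then take the union. The classical $\R((x))^{E}$ of \cite{Dries1997,DriesMM2001} is itself obtained as a union $\bigcup_{n\in\N}E_{n}$ starting with $E_{0}=\R((x^{\R}))$ and, writing $J_{n}$ for the purely infinite part of $E_{n}$, setting $E_{n+1}=E_{n}((e^{J_{n}}))$. The data $\M_{n,\lambda},\J_{n,\lambda},K_{n,\lambda}$ were engineered to match this construction with $\lambda$ in place of $x$, so the isomorphism is produced by exchanging the two towers.

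For the base case, the ordered group $\lambda^{\R}$ is isomorphic to $(\R,+)$ via $\lambda^{r}\mapsto r$, and likewise for $x^{\R}$. The unique ordered-group isomorphism $\lambda^{\R}\to x^{\R}$ sending $\lambda\mapsto x$ extends uniquely to an $\R$-linear, sum-preserving Hahn-field isomorphism $\phi_{0}\colon K_{0,\lambda}\to E_{0}$. For the inductive step, assume $\phi_{n}\colon K_{n,\lambda}\to E_{n}$ is a field isomorphism preserving $\R$, $\sum$, and $\exp$ on its domain. As an isomorphism of Hahn fields over $\R$, $\phi_{n}$ restricts to an ordered-group isomorphism of the monomial groups and in particular sends $\J_{n,\lambda}$ onto $J_{n}$. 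We then define $\phi_{n+1}$ on $\M_{n+1,\lambda}=e^{\J_{n,\lambda}}$ by the forced rule $e^{\gamma}\mapsto\exp(\phi_{n}(\gamma))$, and extend $K_{n,\lambda}$-linearly (with $\phi_{n}$ acting on the coefficients) to $K_{n+1,\lambda}=K_{n,\lambda}((\M_{n+1,\lambda}))$; preservation of summable sums is automatic from the Hahn-field structure.

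The critical check that $\phi_{n+1}$ preserves $\exp$ on all of $K_{n,\lambda}$ follows from the decomposition $x=\gamma+r+\eps$ used in the proof of \prettyref{lem:LE-exp-log-inclusions}(1): each factor of $e^{x}=e^{\gamma}\cdot e^{r}\cdot\exp(\eps)$ is handled in turn, using the inductive hypothesis for $e^{r}$ and the preservation of power-series sums (\prettyref{cor:power-series} and \prettyref{prop:composition-of-power-series}) for the Taylor expansion of $\exp(\eps)$. Taking the direct limit of the $\phi_{n}$ gives the desired isomorphism $\phi\colon\R((\lambda))^{E}\to\R((x))^{E}$ preserving $\R$, $\sum$, and $\exp$. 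Uniqueness is an easy induction: $\phi_{0}$ is forced by $\lambda\mapsto x$ together with preservation of the ordered exponential field structure and of summable sums; once $\phi_{n}$ is fixed, $\phi_{n+1}$ is forced by preservation of $\exp$ on $\J_{n,\lambda}$.

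The main obstacle is bookkeeping: one must match the original definition of $\R((x))^{E}$ from \cite{Dries1997,DriesMM2001} with the tower $E_{n}$ described above. The published construction proceeds via an alternating closure of $\R((x^{\R}))$ under $\exp$ and $\log$ on the way to $\R((x))^{LE}$, and identifying its pure $\exp$-closure with the tower above is a routine but tedious unwinding of definitions. Once this identification is in place, the rest is formal.
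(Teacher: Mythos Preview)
Your proposal is correct and takes essentially the same approach as the paper: the paper's proof simply observes that \prettyref{def:LE} is \emph{formally identical} to the tower definition of $\R((x))^{E}$ in \cite{DriesMM2001}, with the Hahn fields realized inside $\no$ via \prettyref{nota:Hahn-subfield-No-1} and $\lambda$ playing the role of $x$; you have just spelled out the level-by-level isomorphism $\phi_{n}\colon K_{n,\lambda}\to E_{n}$ that this observation encodes. One small remark: your final paragraph's worry is unnecessary, since in \cite{DriesMM2001} the field $\R((x))^{E}$ is \emph{defined} precisely as the tower $\bigcup_{n}E_{n}$ you describe (the $\log$ closure only enters when passing to $\R((x))^{LE}$), so there is no separate ``pure $\exp$-closure'' to identify.
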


\begin{proof}
  It suffices to note that \prettyref{def:LE} is formally identical to
  the definition of $\R((x))^{E}$, except that in our case the various
  Hahn fields are identified with subfields of $\no$
  (\prettyref{nota:Hahn-subfield-No-1}) and the role of the formal
  variable is taken by $\lambda$. The uniqueness follows trivially.
\end{proof}

\begin{prop}
  \label{prop:LE-is-LE}For each $\lambda\in\li$,
  $\bigcup_{k\in\N}\R((\log_{k}(\lambda)))^{E}$ is (uniquely) isomorphic
  to the exponential field $\R((x))^{LE}$ defined in
  \cite{Dries1997,DriesMM2001} through an isomorphism sending
  $\lambda$ to $x$ and preserving $\exp$, $\sum$ and $\R$.
\end{prop}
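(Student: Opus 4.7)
The plan is to realize the classical field $\R((x))^{LE}$ of \cite{Dries1997,DriesMM2001} as a directed union $\bigcup_{k\in\N}\R((\log_k(x)))^E$, where at stage $k$ one treats $\log_k(x)$ as the ``primary variable'' (with $\log_j(x)=\exp_{k-j}(\log_k(x))$ for $j<k$). Granting this identification --- which is essentially the classical construction, since LE-series are built precisely by iteratively adjoining iterated logarithms of the variable to an $E$-closure in the sense of \prettyref{def:LE} --- the proposition becomes a direct limit of the stage-wise isomorphisms provided by \prettyref{prop:E-is-E}.

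The first step is to check that the left-hand side $L_\lambda:=\bigcup_{k\in\N}\R((\log_k(\lambda)))^E$ is well-posed and forms a transserial subfield of $\no$. Since $\log(\li)=\li$, each $\log_k(\lambda)$ is again log-atomic, so \prettyref{def:LE} applies at every stage. By \prettyref{lem:LE-exp-log-inclusions}(2) the chain $\R((\log_0(\lambda)))^E\subseteq\R((\log_1(\lambda)))^E\subseteq\dots$ is increasing, and by parts (1) and (3) of the same lemma the union is closed under $\exp$ and $\log$; together with the truncation closure inherited from each stage (each $K_{n,\mu}$ is a Hahn field, hence truncation closed), this makes $L_\lambda$ a transserial subfield of $\no$ in the sense of \prettyref{def:transseries}.

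Next I would apply \prettyref{prop:E-is-E} to each $\log_k(\lambda)\in\li$ to obtain a unique isomorphism
\[
  \phi_k\colon\R((\log_k(\lambda)))^E\to\R((\log_k(x)))^E
\]
sending $\log_k(\lambda)$ to $\log_k(x)$ and preserving $\R$, $\exp$ and $\sum$. Since $\log_k=\exp\circ\log_{k+1}$ on both sides, the uniqueness clause forces the restriction of $\phi_{k+1}$ to $\R((\log_k(\lambda)))^E$ to coincide with $\phi_k$, so the family is coherent and its direct limit is a field isomorphism $\phi\colon L_\lambda\to\R((x))^{LE}$ preserving $\R$, $\exp$, $\sum$ and sending $\lambda$ to $x$. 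Because both sides are closed under $\log$ and $\phi$ commutes with $\exp$ on each piece, it automatically commutes with $\log$, so $\phi$ is an isomorphism of transseries. Uniqueness of $\phi$ follows at once from uniqueness at each stage $k$.

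The main obstacle is not in any of these steps themselves but in the initial identification of the classical $\R((x))^{LE}$ with the directed union $\bigcup_k\R((\log_k(x)))^E$: one needs to cross-check the bookkeeping between the construction in \cite{DriesMM2001} and \prettyref{def:LE}. Once this identification is in place, the rest of the argument is a routine colimit of the isomorphisms supplied by \prettyref{prop:E-is-E}.
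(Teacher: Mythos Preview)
Your proposal is correct and follows essentially the same route as the paper: both arguments realize $\R((x))^{LE}$ as a direct limit/union indexed by $k\in\N$ and match each stage to $\R((\log_k(\lambda)))^E$ via \prettyref{prop:E-is-E}, checking coherence by uniqueness. The only difference is packaging: the paper works directly with the classical definition of $\R((x))^{LE}$ as the direct limit of the self-embeddings $\Phi_k:\R((x))^E\to\R((x))^E$, $x\mapsto\exp_k(x)$, and observes that composing $\Phi_k$ with the isomorphism $\R((x))^E\cong\R((\log_k(\lambda)))^E$ of \prettyref{prop:E-is-E} yields the inclusion system whose union is $\bigcup_k\R((\log_k(\lambda)))^E$; this sidesteps your ``main obstacle'' of first rewriting $\R((x))^{LE}$ as $\bigcup_k\R((\log_k(x)))^E$ with formal iterated-log variables. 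Your verification that $L_\lambda$ is a transserial subfield is correct but not needed for the isomorphism claim itself.
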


\begin{proof}
  In \cite{DriesMM2001}, $\R((x))^{LE}$ is defined as a direct limit of
  a suitable system of self-embeddings
  $\Phi_{k}:\R((x))^{E}\to\R((x))^{E}$.  The embedding $\Phi_{k}$ sends
  $x$ to $\exp_{k}(x)$. In turn, when composed with the isomorphism
  $\R((x))^{E}\cong\R((\log_{k}(\lambda)))^{E}$ of
  \prettyref{prop:E-is-E}, it gives the embedding of $\R((x))^{E}$ into
  $\R((\log_{k}(\lambda)))^{E}$ sending $x$ to $\lambda$. Therefore, the
  image of such direct limit is the directed union
  $\bigcup_{k\in\N}\R((\log_{k}(\lambda)))^{E}$, as desired. The
  uniqueness follows trivially.
\end{proof}

\begin{prop}
  \label{prop:LE}$\bigcup_{k\in\N}\R((\log_{k}(\omega)))^{E}$ is equal to
  $\R((\omega))^{LE}$. In particular, there is a unique isomorphism of
  transseries from $\bigcup_{k\in\N}\R((\log_{k}(\omega)))^{E}$ to
  $\R((x))^{LE}$ sending $\omega$ to $x$.
\end{prop}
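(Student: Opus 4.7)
The plan is to prove the equality $\bigcup_{k \in \N} \R((\log_k(\omega)))^E = \R((\omega))^{LE}$; once this is established, the ``in particular'' clause follows from \prettyref{prop:LE-is-LE}, which already produces a (unique) field isomorphism preserving $\R$, $\exp$ and $\sum$ and sending $\omega$ to $x$. Since such an isomorphism is a field map preserving $\exp$ and $\R$, it automatically preserves $\log$ (as the inverse of $\exp$ on positives), so it is an isomorphism of transseries in the sense of \prettyref{def:transseries}.

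For the inclusion $\R((\omega))^{LE} \subseteq \bigcup_k \R((\log_k(\omega)))^E$, I would prove by induction on $n$ the sharper statement $X_n \subseteq K_{2n, \log_n(\omega)}$. The base case is immediate, since $X_0 = \R \cup \{\omega\} \subseteq \R((\omega^{\R})) = K_{0, \omega}$. For the step, any element $y$ of $X_n \cup \exp(X_n) \cup \log(X_n)$ lies in $K_{2n+2, \log_{n+1}(\omega)}$ by the inductive hypothesis together with \prettyref{lem:LE-exp-log-inclusions}: part (2) (once or twice) handles $y \in X_n$; part (1) followed by (2) handles $y = \exp(z)$; and part (3) handles $y = \log(z)$. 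Because $K_{2n+2, \log_{n+1}(\omega)}$ is a Hahn field embedded in $\no$, any summable family in $\no$ whose terms all lie in this Hahn field has its sum there as well---the union of supports is reverse well-ordered inside the ambient group of monomials, and real coefficients sum to real coefficients. Therefore $X_{n+1} \subseteq K_{2(n+1), \log_{n+1}(\omega)}$, closing the induction.

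For the reverse inclusion $\bigcup_k \R((\log_k(\omega)))^E \subseteq \R((\omega))^{LE}$, I would show by induction on $n$ that $K_{n, \log_k(\omega)} \subseteq \R((\omega))^{LE}$ for every $k \in \N$. Since $\log_k(\omega) \in X_k$ and $\R((\omega))^{LE}$ is a subfield of $\no$ closed under $\exp$, $\log$ and $\sum$, each monomial $\log_k(\omega)^s = \exp(s \log_{k+1}(\omega))$ lies at a depth in the $X$-hierarchy that is uniformly bounded in $s \in \R$ (realizing $s \log_{k+1}(\omega)$ for $s > 0$ as $\exp(\log s + \log_{k+2}(\omega))$ and handling signs via the subfield structure). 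The base case $K_{0, \log_k(\omega)} = \R((\log_k(\omega)^{\R}))$ then follows, because any such Hahn-field sum is a single summable family of elements of a fixed $X_N$, hence lies in $X_{N+1}$. The inductive step is analogous, using the recursive description $K_{n+1, \log_k(\omega)} = K_{n, \log_k(\omega)}((\exp(\J_{n, \log_k(\omega)})))$: by induction the purely infinite exponents $\gamma \in \J_{n, \log_k(\omega)}$ already lie in $\R((\omega))^{LE}$, so the new monomials $e^{\gamma}$ do too, and Hahn-field combinations thereof stay in $\R((\omega))^{LE}$ by the same uniform-depth argument. The main technical obstacle is precisely this depth bookkeeping for scalar multiplication and signs within the $X_n$-hierarchy, which tacitly relies on reading $\sum$ in \prettyref{def:set-operations} as permitting $\R$-linear summable combinations---an interpretation already implicit in the assertion in \prettyref{def:newLE} that $\R((\omega))^{LE}$ is a subfield.
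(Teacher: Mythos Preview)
Your approach coincides with the paper's: prove $X_n\subseteq K_{m,\log_k(\omega)}$ for suitable $m,k$ via \prettyref{lem:LE-exp-log-inclusions}, and for the converse observe that every element of $K_{n,\log_k(\omega)}$ is reached from $X_0$ by finitely many applications of $\sum,\exp,\log$. Two remarks are worth making. First, the paper actually asserts $X_n\subseteq K_{n,\log_n(\omega)}$, which is slightly too optimistic (already $e^{\omega}\in X_1$ but $e^{\omega}\notin K_{1,\log\omega}$, since $\omega\notin\J_{0,\log\omega}$); your bound $X_n\subseteq K_{2n,\log_n(\omega)}$ is the correct one, and of course either suffices for the inclusion into the union. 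Second, the gap you flag at the end is real, not merely bookkeeping: with $\sum$ read literally as in \prettyref{def:set-operations}, an easy induction shows every element of each $X_n$ is either real or positive infinite (the class $\R\cup\no^{>\R}$ is closed under $\exp$, $\log$, and summable sums of its elements), so $-\omega\notin\bigcup_n X_n$ and the equality with $\bigcup_k\R((\log_k(\omega)))^{E}$ fails outright. The paper's proof glosses over this and simply declares the converse ``clear''. Your proposed fix---reading $\sum X$ as all summable $\R$-linear combinations of elements of $X$---is the natural repair and makes both directions go through exactly as you outline; it is also what the paper must intend, given that $\R((\omega))^{LE}$ is treated as a subfield throughout.
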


\begin{proof}
  Note that each $K_{n,\lambda}$ is closed under infinite sums, while
  by \prettyref{lem:LE-exp-log-inclusions},
  $\exp(K_{n,\lambda})\subseteq K_{n+1,\lambda}$ and
  $\log(K_{n,\lambda}^{>0})\subseteq K_{n+1,\log(\lambda)}$. Since
  $X_{0}\subseteq K_{0,\omega}$, it follows at once that
  $X_{n}\subseteq K_{n,\log_{n}(\omega)}$ for all $n\in\N$, so in
  particular
  $\R((\omega))^{LE}\subseteq\bigcup_{k\in\N}\R((\log_{k}(\omega)))^{E}$.

  Conversely, it is clear that each element of
  $K_{n,\log_{n}(\omega)}$ can be obtained from
  $X_{0}=\R\cup\{\omega\}$ by finitely many applications of $\exp$,
  $\log$ and infinite sums. It follows at once that
  $\bigcup_{k\in\N}\R((\log_{k}(\omega)))^{E}\subseteq\R((\omega))^{LE}$.
\end{proof}

\prettyref{thm:LE} then follows at once by Propositions \ref{prop:LE}
and \ref{prop:LE-is-LE}.

\begin{rem}
  If we modify \prettyref{def:LE} putting
  $\M_{0,\lambda}:=\lambda^{\Z}$ instead of $\lambda^{\R}$, the union
  $\bigcup_{k\in\N}\R((\log_{k}(\omega)))^{E}$ will be the same, since
  $\lambda^{\R}=\exp(\R\log\lambda)$. So in the definition of the
  LE-series in \cite{DriesMM2001} one may start with $x^{\Z}$ instead
  of $x^{\R}$.
\end{rem}

\subsection{Adding more log-atomic numbers\label{subsec:L-series}}

\begin{defn}
  \label{def:Delta-series}Consider the class $\li\subseteq\no$ of
  log-atomic numbers and let $\bracket{\li}$ be the smallest subfield
  of $\no$ containing $\R\cup\li$ and closed under $\exp$, $\log$ and
  $\sum$ (in the sense of \prettyref{def:set-operations}).
\end{defn}

In \cite[Thm.\ 8.6]{Berarducci2015} we showed that $\bracket{\li}$ is
the largest subfield of transseries satisfying axiom ELT4 of
\cite[Def.\ 5.1]{Kuhlmann2014}.  We also showed that $\no$ itself does
not satisfy ELT4, hence $\bracket{\li}\neq\no$ \cite[Thm.\
8.7]{Berarducci2015}. The derivative $\partial:\no\to\no$ introduced
in \cite[Def.\ 6.21]{Berarducci2015} can be restricted to
$\bracket{\li}$ and remains surjective on this subfield. We thus have
the inclusions
\[
  \R((\omega))^{LE} \subset \R((\omega))^{EL} \subset \bracket{\omega}
  \subset \bracket{\li} \subset \no
\]
with $\R((\omega))^{LE}$, $\bracket{\li}$ and $\no$ having a surjective
derivation, while the derivation on $\R((\omega))^{EL}$ and
$\bracket{\omega}$ is not surjective. It would be interesting to study
the complete first order theories of these structures, both as
differential fields, and as differential fields with an
exponentiation. The only known result so far is that $\no$ and
$\R((\omega))^{LE}$ are elementary equivalent as differential fields
\cite{Aschenbrenner2015a}, and probably the same proof can be used to
deduce that $\bracket{\li}$ has the same first order theory as well.

\subsection{Inductive generation of transseries fields and associated ranks}

For the purposes of \prettyref{sec:Substitutions}, it is useful to
inductively construct $\bracket{\omega}$ and other subfields of
$\bracket{\li}$ with a limited use of the $\log$ function, and to
introduce a rank function reflecting the stages of the inductive
construction. We need the following definition.

\begin{defn}
  Let $\Delta\subseteq\li$ be a subclass with
  $\log(\Delta)\subseteq\Delta$ and let $\bracket{\Delta}$ be the
  smallest subfield of $\no$ containing $\R\cup\Delta$ and closed
  under $\sum$, $\exp$ and $\log$.
\end{defn}

As we shall see \prettyref{cor:bracket-delta-is-closure},
$\bracket{\Delta}$ coincides with the smallest subclass of $\no$
containing $\R\cup\Delta$ and closed under $\sum$ and $\exp$ (or even
just $\exp_{\restriction\J}$); the closure under $\log$ can be
automatically deduced. Taking $\Delta=\li$, we obtain the field
$\bracket{\li}$ seen in \prettyref{subsec:L-series}.  On the other
hand, when $\Delta=\{\log_{n}(\omega)\suchthat n\in\N\}$, we obtain
$\bracket{\omega}$ (\prettyref{def:omega-series}).

\begin{notation}
  Given a subclass $A\subseteq\M$, we denote by
  $\R((A))_{\mathrm{small}}$ (or just $\R((A))$ if $A$ is a set) the
  class of all surreal numbers with support contained in $A$. Notice
  that if $A$ is a group, $\R((A))_{\sml}$ is a field, but we
  occasionally use the notation without assuming that $A$ is a group.
\end{notation}

\begin{defn}
  \label{def:erank}Let $\log(\Delta)\subseteq\Delta\subseteq\li$.  We
  define by induction on the ordinal $\alpha\in\on$ a subclass
  $\Delta_{\alpha}\subseteq\no$ as follows: $\Delta_{0}=\emptyset$,
  $\Delta_{1}=\Delta\cup\{0\}$;
  $\Delta_{\alpha+1}=\R((e^{\Delta_{\alpha}\cap\J}))_{\sml}$ for
  $\alpha\geq1$;
  $\Delta_{\lambda}=\bigcup_{\alpha<\lambda}\Delta_{\alpha}$ for
  $\lambda$ a limit ordinal. Given
  $x\in\bigcup_{\alpha\in\on}\Delta_{\alpha}$, we define the
  \textbf{(exponential) rank} $\erank(x)$ as the least ordinal $\beta$
  such that $x\in\Delta_{\beta+1}$.
\end{defn}

\begin{rem}
  \label{rem:Delta3-field}Note that $\Delta_{1}$ is not an additive
  group. For $\alpha\geq2$, $\Delta_{\alpha}$ is an $\R$-linear
  subspace of $\no$ (and it is closed under $\sum$); for
  $\alpha\geq3$, $\Delta_{\alpha}$ is a field, and a Hahn field when
  $\alpha$ is a successor ordinal.  Moreover, all the classes
  $\Delta_{\alpha}$ are truncation closed.
\end{rem}

\begin{prop}
  \label{prop:Delta-alpha-beta}For all $\alpha<\beta$ we have
  $\Delta_{\alpha}\subseteq\Delta_{\beta}$.
\end{prop}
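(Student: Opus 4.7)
The plan is to reduce the statement, by a short transfinite induction on $\beta$, to the adjacent-step inclusion $\Delta_\alpha \subseteq \Delta_{\alpha+1}$. Once this is known for every $\alpha$, the case $\beta = \gamma + 1$ of the proposition follows by chaining $\Delta_\alpha \subseteq \Delta_\gamma \subseteq \Delta_{\gamma+1} = \Delta_\beta$ using the inductive hypothesis at $\gamma$ (and the adjacent inclusion at $\gamma$), while the case $\beta$ limit is immediate from the definition $\Delta_\beta = \bigcup_{\gamma < \beta} \Delta_\gamma$.

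To establish the adjacent-step inclusion $\Delta_\alpha \subseteq \Delta_{\alpha+1}$, I induct on $\alpha$. The case $\alpha = 0$ is vacuous. For $\alpha \geq 2$ a successor, say $\alpha = \gamma + 1$, both classes have the Hahn-type shape $\Delta_\alpha = \R((e^{\Delta_\gamma \cap \J}))_{\sml}$ and $\Delta_{\alpha+1} = \R((e^{\Delta_\alpha \cap \J}))_{\sml}$, and the operator $A \mapsto \R((e^{A \cap \J}))_{\sml}$ is clearly monotone in $A$; since the inductive hypothesis gives $\Delta_\gamma \subseteq \Delta_\alpha$, the inclusion follows. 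For $\alpha$ a limit, any $x \in \Delta_\alpha$ lies in some $\Delta_\gamma$ with $\gamma < \alpha$, and applying the already established successor case to $\gamma$ together with monotonicity places $x$ into $\Delta_{\alpha+1}$.

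The only step with genuine content is the base $\alpha = 1$: one must check that $\Delta_1 = \Delta \cup \{0\}$ is contained in $\Delta_2 = \R((e^{\Delta_1 \cap \J}))_{\sml}$. The element $0$ is the empty sum. For $\lambda \in \Delta \subseteq \li$, since $\lambda$ is log-atomic and positive infinite, $\log \lambda$ is an infinite monomial, hence in particular a purely infinite surreal number, so $\log \lambda \in \J$; moreover $\log \lambda \in \li$, and by the standing hypothesis $\log(\Delta) \subseteq \Delta$ we conclude $\log \lambda \in \Delta \cap \J = \Delta_1 \cap \J$. Therefore $\lambda = e^{\log \lambda} \in e^{\Delta_1 \cap \J} \subseteq \R((e^{\Delta_1 \cap \J}))_{\sml} = \Delta_2$. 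This is the only point where the hypothesis $\log(\Delta) \subseteq \Delta$ enters the argument, and is what drives the whole transfinite induction.
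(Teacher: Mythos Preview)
Your proof is correct and follows essentially the same route as the paper: reduce to the adjacent-step inclusion $\Delta_\alpha \subseteq \Delta_{\alpha+1}$, handle the successor and limit cases via monotonicity of $A \mapsto \R((e^{A\cap\J}))_{\sml}$, and isolate $\alpha=1$ as the one substantive base case where $\log(\Delta)\subseteq\Delta$ is used. One minor wording issue: in your limit case you invoke ``the already established successor case to $\gamma$'', but $\gamma$ need not be a successor; what you actually use is the inductive hypothesis $\Delta_\gamma\subseteq\Delta_{\gamma+1}$ (valid for all $\gamma<\alpha$), followed by $\Delta_{\gamma+1}=\R((e^{\Delta_\gamma\cap\J}))_{\sml}\subseteq\R((e^{\Delta_\alpha\cap\J}))_{\sml}=\Delta_{\alpha+1}$ via $\Delta_\gamma\subseteq\Delta_\alpha$.
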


\begin{proof}
  It suffices to prove that
  $\Delta_{\alpha}\subseteq\Delta_{\alpha+1}$ for all
  $\alpha\in\on$. This is clear for $\alpha=0$. Since
  $\log(\Delta)\subseteq\Delta$, we have
  $\Delta\subseteq e^{\Delta}\subseteq\R((e^{\Delta}))_{\sml}$, thus
  $\Delta_{1}\subseteq\Delta_{2}$, proving the case $\alpha=1$.  We then
  proceed by induction. If $\alpha=\beta+1$, then
  $\Delta_{\beta}\subseteq\Delta_{\beta+1}$ holds by inductive
  hypothesis, so
  $\Delta_{\alpha} = \R((e^{\Delta_{\beta}\cap\J}))_{\sml} \subseteq
  \R((e^{\Delta_{\beta+1}\cap\J}))_{\sml} = \Delta_{\alpha+1}$.  If
  $\alpha$ is a limit ordinal, take some $x\in\Delta_{\alpha}$.  By
  definition of $\Delta_{\alpha}$, there is some $\beta<\alpha$ such
  that $x\in\Delta_{\beta}$, so by inductive hypothesis,
  $x\in\Delta_{\beta+1} = \R((e^{\Delta_{\beta}\cap\J}))_{\sml} \subseteq
  \R((e^{\Delta_{\alpha}\cap\J}))_{\sml} = \Delta_{\alpha+1}$.  Since
  $x$ is arbitrary, we obtain
  $\Delta_{\alpha} \subseteq \Delta_{\alpha+1}$, as desired.
\end{proof}

The following corollary provides an equivalent definition of the rank.
Its proof is easy and left to the reader.

\begin{cor}
  \label{cor:erank}For $x\in\bigcup_{\alpha\in\on}\Delta_{\alpha}$ we
  have
  \begin{enumerate}
  \item if $x\in\Delta\cup\{0\}$, then $\erank(x)=0$;
  \item otherwise,
    $\erank(x)=\sup\{\erank(\gamma)+1\suchthat
    e^{\gamma}\in\supp(x)\}$.
  \end{enumerate}
  Moreover, $x\in\Delta_{\beta}$ if and only if $\erank(x)<\beta$.
\end{cor}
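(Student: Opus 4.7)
The plan is to prove the ``moreover'' clause first, and then derive the two cases (1) and (2) from it using the concrete description of $\Delta_{\beta+1}$ for $\beta\geq 1$.

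\medskip

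First I would show the equivalence $x\in\Delta_{\beta}\iff\erank(x)<\beta$. One direction is immediate: if $\erank(x)=\alpha<\beta$, then $x\in\Delta_{\alpha+1}\subseteq\Delta_{\beta}$ by \prettyref{prop:Delta-alpha-beta}. For the converse, assume $x\in\Delta_{\beta}$ and split on $\beta$: the case $\beta=0$ is impossible since $\Delta_{0}=\emptyset$; if $\beta=\delta+1$, then $x\in\Delta_{\delta+1}$ gives $\erank(x)\leq\delta<\beta$ by definition; and if $\beta$ is a limit, then $x\in\Delta_{\alpha}$ for some $\alpha<\beta$, and we reduce to the successor case. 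This also shows that $\erank(x)$ is well-defined on $\bigcup_{\alpha\in\on}\Delta_{\alpha}$.

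\medskip

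Part (1) is then trivial: if $x\in\Delta\cup\{0\}=\Delta_{1}=\Delta_{0+1}$, the minimality in the definition of rank forces $\erank(x)=0$.

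\medskip

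For part (2), suppose $x\in\bigcup_{\alpha\in\on}\Delta_{\alpha}\setminus(\Delta\cup\{0\})$ and set $\beta:=\erank(x)$. Since $x\notin\Delta_{1}$, we have $\beta\geq 1$, so the recursive clause applies: $\Delta_{\beta+1}=\R((e^{\Delta_{\beta}\cap\J}))_{\sml}$. Since $x\in\Delta_{\beta+1}$, every monomial $e^{\gamma}\in\supp(x)$ satisfies $\gamma\in\Delta_{\beta}\cap\J$, hence $\erank(\gamma)<\beta$ by the ``moreover'' clause, giving
\[
\sup\{\erank(\gamma)+1\suchthat e^{\gamma}\in\supp(x)\}\leq\beta.
\]
For the reverse inequality, let $\alpha$ denote this supremum. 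If $\alpha=0$ then $\supp(x)=\emptyset$, forcing $x=0$, contradicting our hypothesis; so $\alpha\geq 1$. For every $e^{\gamma}\in\supp(x)$ we have $\erank(\gamma)<\alpha$, so $\gamma\in\Delta_{\alpha}$ by the ``moreover'' clause, and $\gamma\in\J$ because $e^{\gamma}\in\M=\exp(\J)$ by \prettyref{fact:normal-form}(7). Hence $\supp(x)\subseteq e^{\Delta_{\alpha}\cap\J}$, which means $x\in\R((e^{\Delta_{\alpha}\cap\J}))_{\sml}=\Delta_{\alpha+1}$, and therefore $\beta=\erank(x)\leq\alpha$.

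\medskip

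There is no substantive obstacle: the only delicate point is handling the edge case $\alpha=0$ (equivalently ensuring we are in the regime $\beta\geq 1$ where the recursive clause $\Delta_{\beta+1}=\R((e^{\Delta_{\beta}\cap\J}))_{\sml}$ is available), which the hypothesis $x\notin\Delta\cup\{0\}$ rules out. Once the ``moreover'' clause is in place, both directions of (2) reduce to reading off the definition of $\Delta_{\beta+1}$ and translating membership conditions on supports into rank inequalities.
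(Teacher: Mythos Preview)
Your proof is correct. The paper itself leaves this corollary to the reader, so there is no proof to compare against; your approach of establishing the ``moreover'' clause first and then reading off (1) and (2) from the recursive description of $\Delta_{\beta+1}$ is exactly the natural route and handles the edge cases cleanly.
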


\begin{prop}
  \label{prop:unionDelta-exp-log-closed}We have:
  \begin{enumerate}
  \item for all $\alpha\geq1$,
    $\sum\Delta_{\alpha+1}\subseteq\Delta_{\alpha+1}$ (in particular,
    $\sum\Delta_{\alpha}\subseteq\Delta_{\alpha+2}$ for all $\alpha$);
  \item for all $\alpha\geq3$,
    $\log(\Delta_{\alpha}^{>0})\subseteq\Delta_{\alpha}$;
  \item for all $\alpha\in\on$,
    $e^{\Delta_{\alpha}}\subseteq\Delta_{\alpha+1}$ (in particular,
    $e^{\Delta_{\alpha}}\subseteq\Delta_{\alpha}$ for all limit
    $\alpha$).
  \end{enumerate}
  In particular, $\Delta_{\alpha}$ is a transserial subfield of $\no$
  for all $\alpha\geq3$, and $\bigcup_{\alpha\in\on}\Delta_{\alpha}$ is
  closed under $\exp$, $\log$ and infinite sums.
\end{prop}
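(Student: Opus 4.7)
I plan to prove the three parts in the order (1), (3), (2), since (3) invokes (1) and (2) invokes both (1) and the field structure that (3) helps justify. For (1), recall that $\Delta_{\alpha+1} = \R((e^{\Delta_\alpha \cap \J}))_\sml$ is by definition the class of surreal numbers with small support contained in $e^{\Delta_\alpha \cap \J}$. Given a summable family $(x_i)_{i \in I}$ inside $\Delta_{\alpha+1}$, its sum has support in $\bigcup_i \supp(x_i) \subseteq e^{\Delta_\alpha \cap \J}$; this union is reverse well-ordered by \prettyref{rem:summability-criterion} and small as a set-indexed union of small supports, so the sum lies in $\Delta_{\alpha+1}$. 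The parenthetical then follows from \prettyref{prop:Delta-alpha-beta}.

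For (3), I apply the canonical decomposition $x = x^\big + x^= + x^\small$ of \prettyref{fact:normal-form}(6). By truncation closure of $\Delta_\alpha$ (\prettyref{rem:Delta3-field}), $x^\big \in \Delta_\alpha \cap \J$, hence $e^{x^\big}$ is a monomial in $e^{\Delta_\alpha \cap \J} \subseteq \Delta_{\alpha+1}$, while $e^{x^=} \in \R \subseteq \Delta_{\alpha+1}$. The nontrivial piece is $e^{x^\small} = \sum_n (x^\small)^n/n!$: summability holds by \prettyref{cor:power-series} with $R = \R$. For $\alpha \geq 2$, the $\R$-linearity of $\Delta_\alpha$ (\prettyref{rem:Delta3-field}) makes $\Delta_\alpha \cap \J$ an additive subgroup of $\J$, hence $e^{\Delta_\alpha \cap \J}$ is a multiplicative subgroup of $\M$; this puts each $(x^\small)^n$ into $\Delta_{\alpha+1}$, and part (1) then yields $e^{x^\small} \in \Delta_{\alpha+1}$. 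Multiplying the three factors gives $e^x \in \Delta_{\alpha+1}$. The low levels are handled directly: $\Delta_0 = \emptyset$; for $\alpha = 1$ every $x \in \Delta_1$ is a log-atomic monomial (or $0$), so $x^\small = x^= = 0$; and for $\alpha = 2$ every monomial in $e^{\Delta_1 \cap \J} = e^\Delta \cup \{1\}$ is $\succeq 1$, whence $x^\small = 0$ automatically.

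For (2), I proceed by transfinite induction on $\alpha \geq 3$. Given $x \in \Delta_\alpha^{>0}$, write $x = r\m(1+\eps)$ where $r \in \R^{>0}$ is the leading coefficient, $\m$ is the leading monomial, and $\eps \prec 1$; since $r\m$ is a truncation of $x$ and $\Delta_\alpha$ is a field (\prettyref{rem:Delta3-field}), both $\m$ and $\eps$ belong to $\Delta_\alpha$. Then
\[
  \log(x) = \log(\m) + \log(r) + \sum_{n \geq 1}(-1)^{n+1}\frac{\eps^n}{n},
\]
with $\log(r) \in \R$ and the Taylor series summable by \prettyref{cor:power-series} and lying in $\Delta_\alpha$ by (1) whenever $\alpha$ is a successor $\geq 2$. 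For $\log(\m)$: if $\alpha = \beta+1$ is a successor, then $\m = e^\gamma$ with $\gamma \in \Delta_\beta \cap \J \subseteq \Delta_\alpha$, so $\log(\m) = \gamma \in \Delta_\alpha$; if $\alpha$ is a limit ordinal, then $x \in \Delta_\beta$ for some $3 \leq \beta < \alpha$, and the inductive hypothesis gives $\log(x) \in \Delta_\beta \subseteq \Delta_\alpha$.

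The main obstacle I expect is the support tracking in (3): one must rule out the Taylor expansion of $e^{x^\small}$ escaping $\Delta_{\alpha+1}$ into a higher level. The crucial observation is that $\R$-linearity of $\Delta_\alpha$ (for $\alpha \geq 2$) promotes $e^{\Delta_\alpha \cap \J}$ to a multiplicative subgroup of $\M$, which simultaneously secures the field structure of $\Delta_{\alpha+1}$ and controls the supports of products of infinitesimals. Once (1)--(3) are in hand, the final assertions — transseriality of $\Delta_\alpha$ for $\alpha \geq 3$ and closure of $\bigcup_{\alpha \in \on} \Delta_\alpha$ under $\exp$, $\log$, and $\sum$ — are immediate, the first by combining (1), (2), and the truncation closure from \prettyref{rem:Delta3-field}, the second by noting that any summable family or application of $\exp$ or $\log$ to an element in the union stays at some bounded ordinal level.
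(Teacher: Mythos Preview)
Your proposal is correct and follows essentially the same approach as the paper's proof: part~(1) is the immediate observation that $\Delta_{\alpha+1}$ is by definition a Hahn-type class closed under infinite sums; parts~(2) and~(3) both use the decomposition $x = \gamma + r + \varepsilon$ (equivalently $x = r\m(1+\varepsilon)$) together with the Taylor expansions of $\exp$ and $\log$ on the infinitesimal part, exactly as in the paper. The only cosmetic differences are that you prove (3) before (2) and handle the limit case of (2) by explicit transfinite induction rather than the paper's ``without loss of generality $\alpha=\beta+1$'' reduction, and you spell out the low-level cases $\alpha=1,2$ in (3) more carefully than the paper's one-line dismissal; none of this changes the substance. (Your remark that ``(2) invokes the field structure that (3) helps justify'' is slightly off---the field structure of $\Delta_\alpha$ for $\alpha\geq 3$ comes directly from \prettyref{rem:Delta3-field}, independent of (3)---but this does not affect the argument.)
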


\begin{proof}
  (1) Trivial, since by definition
  $\Delta_{\alpha+1}=\R((e^{\Delta_{\alpha}\cap\J}))_{\sml}$ for
  $\alpha\geq1$.

  (2) Without loss of generality, we may assume that $\alpha$ is of
  the form $\beta+1$ with $\beta\geq2$, so that $\Delta_{\alpha}$ is a
  Hahn field (see \prettyref{rem:Delta3-field}). Take any
  $x\in\Delta_{\alpha}^{>0}$.  We can write uniquely
  $x=re^{\gamma}(1+\varepsilon)$, where $r\in\R^{>0}$,
  $\gamma\in\Delta_{\beta}\cap\J$ and
  $\varepsilon\in\Delta_{\alpha}\cap o(1)$.  Then
  $\log(x) = \gamma + \log(r) +
  \sum_{n=1}^{\infty}(-1)^{n}\frac{\varepsilon^{n}}{n}\in\Delta_{\beta} +
  \Delta_{\alpha} = \Delta_{\alpha}$ by
  \prettyref{prop:Delta-alpha-beta}. It follows that
  $\log(\Delta_{\alpha}^{>0})\subseteq\Delta_{\alpha}$, as desired.

  (3) Note that the conclusion is trivially true for $\alpha=0,1$, so
  we may assume that $\alpha\geq2$. Take any $x\in\Delta_{\alpha}.$
  Since $\Delta_{\alpha}$ is closed under truncation (see again
  \prettyref{rem:Delta3-field}), we can write uniquely
  $x=\gamma+r+\varepsilon$, with $\gamma\in\Delta_{\alpha}\cap\J$,
  $r\in\R$ and $\varepsilon\in\Delta_{\alpha}\cap o(1)$. Since
  $\Delta_{\alpha+1}$ is a Hahn field, we have
  $e^{x} = e^{\gamma} \cdot e^{r} \cdot
  \sum_{n=0}^{\infty}\frac{\varepsilon^{n}}{n!} \in
  \R((e^{\Delta_{\alpha}\cap\J}))_{\sml} = \Delta_{\alpha+1}$, as desired.
\end{proof}

\begin{cor}
  $\bracket{\Delta}=\bigcup_{\alpha\in\on}\Delta_{\alpha}$.
\end{cor}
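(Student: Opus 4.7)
The statement is a double inclusion, and both directions are essentially unpacked from Proposition~\ref{prop:unionDelta-exp-log-closed}, so the proof should be short.

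For the inclusion $\bigcup_{\alpha\in\on}\Delta_{\alpha}\subseteq\bracket{\Delta}$, my plan is to proceed by transfinite induction on $\alpha$. The base cases $\Delta_{0}=\emptyset$ and $\Delta_{1}=\Delta\cup\{0\}$ are immediate from the definition of $\bracket{\Delta}$, since $\R\cup\Delta\subseteq\bracket{\Delta}$. At a limit ordinal $\lambda$ the inclusion $\Delta_{\lambda}=\bigcup_{\alpha<\lambda}\Delta_{\alpha}\subseteq\bracket{\Delta}$ follows trivially from the inductive hypothesis. At a successor step $\alpha+1$ with $\alpha\geq1$, a typical element of $\Delta_{\alpha+1}=\R((e^{\Delta_{\alpha}\cap\J}))_{\sml}$ has the form $x=\sum_{i\in I}r_{i}e^{\gamma_{i}}$ with $r_{i}\in\R$ and $\gamma_{i}\in\Delta_{\alpha}\cap\J$. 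By the inductive hypothesis each $\gamma_{i}\in\bracket{\Delta}$, so closure of $\bracket{\Delta}$ under $\exp$ gives $e^{\gamma_{i}}\in\bracket{\Delta}$, and then closure under $\sum$ gives $x\in\bracket{\Delta}$.

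For the reverse inclusion $\bracket{\Delta}\subseteq\bigcup_{\alpha\in\on}\Delta_{\alpha}$, my plan is to check that the union $U:=\bigcup_{\alpha\in\on}\Delta_{\alpha}$ is a subfield of $\no$ containing $\R\cup\Delta$ and closed under $\sum$, $\exp$ and $\log$; then by minimality of $\bracket{\Delta}$ we are done. The containment $\R\cup\Delta\subseteq\Delta_{3}\subseteq U$ and the closure of $U$ under $\sum$, $\exp$, and $\log$ are exactly the content of Proposition~\ref{prop:unionDelta-exp-log-closed}, using that $\Delta_{\alpha}\subseteq\Delta_{\beta}$ for $\alpha<\beta$ (Proposition~\ref{prop:Delta-alpha-beta}) to absorb the shifts: given any $x,y\in U$, pick $\alpha\geq3$ with $x,y\in\Delta_{\alpha}$; then $x\pm y, x\cdot y, 1/x\in\Delta_{\alpha}\subseteq U$ since $\Delta_{\alpha}$ is a field, $\exp(x)\in\Delta_{\alpha+1}\subseteq U$, $\log(x)\in\Delta_{\alpha}\subseteq U$ (when $x>0$), and any summable family in $U$ can be indexed inside a common $\Delta_{\alpha+1}$ and summed there.

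There is no real obstacle: the only mild subtlety is choosing a single $\alpha$ large enough to contain all the inputs of a given operation, which is handled by Proposition~\ref{prop:Delta-alpha-beta}, together with the fact that any small indexed family in $U$ in fact lies in some $\Delta_{\alpha}$ (its image is a set, hence bounded in the ranks by replacement), so that its summability and sum can be computed inside a single $\Delta_{\alpha+1}$ and then inherited by $U$.
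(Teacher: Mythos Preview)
Your proof is correct and follows essentially the same approach as the paper: one inclusion by transfinite induction showing each $\Delta_{\alpha}\subseteq\bracket{\Delta}$, and the other by invoking Proposition~\ref{prop:unionDelta-exp-log-closed} to see that the union contains $\R\cup\Delta$ and is closed under $\sum$, $\exp$, $\log$, hence contains $\bracket{\Delta}$ by minimality. You simply spell out more of the details (the induction steps and the replacement argument for bounding ranks of a summable family) than the paper does.
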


\begin{proof}
  By \prettyref{prop:unionDelta-exp-log-closed},
  $\bigcup_{\alpha\in\on}\Delta_{\alpha}$ contains $\R$ and $\Delta$ (as
  both are contained in $\Delta_{2}$) and it is closed under $\exp$,
  $\log$ and infinite sums. It follows that
  $\bracket{\Delta}\subseteq\bigcup_{\alpha\in\on}\Delta_{\alpha}$.  On
  the other hand, one can easily verify by induction that
  $\Delta_{\alpha}\subseteq\bracket{\Delta}$ for all $\alpha\in\on$,
  and the conclusion follows.
\end{proof}

\begin{cor}
  \label{cor:bracket-delta-is-closure}$\bigcup_{\alpha\in\on} =
  \bracket{\Delta}$ is the smallest class containing $\Delta\cup\{0\}$
  and such that whenever the exponents $\gamma_{i}\in\J$ of
  $x=\sum_{i<\alpha}r_{i}e^{\gamma_{i}}$ are in the class, then also
  $x$ is in the class. The ordinal $\erank(x)$ measures the number of
  steps needed to obtain $x$ with this inductive construction.
\end{cor}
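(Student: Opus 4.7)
The plan is to identify the smallest class $C$ described in the statement with $\bracket{\Delta} = \bigcup_{\alpha \in \on}\Delta_{\alpha}$ (the equality already being established in the previous corollary), and then read off the claim about $\erank$ from \prettyref{cor:erank}.

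First I would verify $C \subseteq \bracket{\Delta}$ by checking that $\bracket{\Delta}$ itself enjoys the stated closure property and then invoking the minimality of $C$. Since $\bracket{\Delta}$ contains $\R \cup \Delta$ and is closed under $\exp$, $\log$ and $\sum$, given any family of exponents $\gamma_{i}\in\J$ lying in $\bracket{\Delta}$ we get $e^{\gamma_{i}}\in\bracket{\Delta}$; the family $(r_{i}e^{\gamma_{i}})$ is summable automatically because it is the Ressayre normal form of a genuine surreal number $x$, so $x=\sum r_{i}e^{\gamma_{i}}\in\bracket{\Delta}$.

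Next I would establish $\bracket{\Delta} \subseteq C$ by transfinite induction showing $\Delta_{\alpha}\subseteq C$ for every $\alpha\in\on$. The cases $\alpha=0,1$ are immediate from $\Delta_{0}=\emptyset$ and $\Delta_{1}=\Delta\cup\{0\}\subseteq C$. At a successor $\alpha=\beta+1$ with $\beta\geq 1$, any $x\in\Delta_{\alpha}=\R((e^{\Delta_{\beta}\cap\J}))_{\sml}$ has Ressayre normal form $x=\sum_{i<\mu}r_{i}e^{\gamma_{i}}$ with all $\gamma_{i}\in\Delta_{\beta}\cap\J$; by the inductive hypothesis $\gamma_{i}\in C$, and the closure property of $C$ gives $x\in C$. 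The limit case is trivial as $\Delta_{\lambda}=\bigcup_{\beta<\lambda}\Delta_{\beta}$. Combining both inclusions with the previous corollary yields $C=\bracket{\Delta}=\bigcup_{\alpha\in\on}\Delta_{\alpha}$.

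For the claim about $\erank$, I would simply observe that \prettyref{cor:erank} describes $\erank$ by precisely the recursion governing the inductive generation of $C$: elements of $\Delta\cup\{0\}$ require no step and have rank $0$, while any other $x$ is produced from the family of exponents $\gamma_{i}$ of its normal form, with $\erank(x)=\sup\{\erank(\gamma_{i})+1\}$ matching the ordinal depth of this generation process. The only mild subtlety — and the one place that deserves care — is to ensure, in the induction on $\alpha$, that the decomposition $x=\sum r_{i}e^{\gamma_{i}}$ used to apply the closure property of $C$ really is the Ressayre normal form, so that the $\gamma_{i}$ lie in $\J\cap\Delta_{\beta}$ and not merely in $\Delta_{\beta}$; this is precisely what the definition $\Delta_{\alpha+1}=\R((e^{\Delta_{\alpha}\cap\J}))_{\sml}$ is designed to guarantee.
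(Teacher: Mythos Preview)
Your proposal is correct and is exactly the argument the paper leaves implicit: the corollary is stated without proof, as an immediate consequence of the preceding corollary $\bracket{\Delta}=\bigcup_{\alpha\in\on}\Delta_{\alpha}$ together with the definition of $\Delta_{\alpha}$ and \prettyref{cor:erank}. Your two inclusions and the transfinite induction on $\alpha$ are the natural way to spell this out, and your remark about the $\gamma_{i}$ lying in $\J\cap\Delta_{\beta}$ is precisely the content of the definition $\Delta_{\alpha+1}=\R((e^{\Delta_{\alpha}\cap\J}))_{\sml}$.
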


\begin{cor}
  $\bracket{\Delta}$ is truncation closed, so it is a field of
  transseries in the sense of \prettyref{def:transseries}.
\end{cor}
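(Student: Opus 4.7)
The plan is to reduce the corollary to two facts already in hand: (a) each class $\Delta_\alpha$ is truncation closed, as asserted in \prettyref{rem:Delta3-field}, and (b) $\bracket{\Delta} = \bigcup_{\alpha\in\on}\Delta_\alpha$, which was just established. Once these are assembled, the conclusion reduces to checking the clauses of \prettyref{def:transseries}.

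The first step is to verify (a) by a straightforward transfinite induction on $\alpha$. The base cases $\Delta_0=\emptyset$ and $\Delta_1=\Delta\cup\{0\}$ are trivial since their nonzero elements lie in $\li\subseteq\M$, so the only truncations are $0$ and the element itself. For a successor $\alpha+1$ with $\alpha\geq 1$, the class $\Delta_{\alpha+1}=\R((e^{\Delta_\alpha\cap\J}))_{\sml}$ is by construction closed under taking arbitrary subsums of a Ressayre-like expansion, and in particular under truncation in the sense of \prettyref{def:truncation}. For a limit $\alpha$, any $x\in\Delta_\alpha$ already lies in some $\Delta_\beta$ with $\beta<\alpha$, and then every truncation of $x$ lies in $\Delta_\beta\subseteq\Delta_\alpha$ by the inductive hypothesis and \prettyref{prop:Delta-alpha-beta}.

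Once (a) is in place, truncation closedness of $\bracket{\Delta}=\bigcup_\alpha\Delta_\alpha$ is immediate: given $x\in\bracket{\Delta}$, pick $\alpha$ with $x\in\Delta_\alpha$, and any truncation of $x$ lies in $\Delta_\alpha\subseteq\bracket{\Delta}$.

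Finally, to confirm the ``field of transseries'' assertion, I would check each clause of \prettyref{def:transseries}: $\bracket{\Delta}$ is a subfield of $\no$ because it equals $\bigcup_{\alpha\geq 3}\Delta_\alpha$, a directed union of subfields (by \prettyref{rem:Delta3-field} and \prettyref{prop:Delta-alpha-beta}); it contains $\R$ since $\R\subseteq\Delta_2$; and it satisfies $\log(\bracket{\Delta}^{>0})\subseteq\bracket{\Delta}$ by \prettyref{prop:unionDelta-exp-log-closed}(2). Combined with truncation closedness, this yields the corollary. There is no real obstacle here: the only content is the inductive verification of (a), and even that is essentially forced by the definition of $\Delta_{\alpha+1}$ as a Hahn field over $e^{\Delta_\alpha\cap\J}$.
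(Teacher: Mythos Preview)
Your proposal is correct and follows the same approach as the paper: both reduce to the equality $\bracket{\Delta}=\bigcup_{\alpha}\Delta_\alpha$ together with the fact (recorded in \prettyref{rem:Delta3-field}) that each $\Delta_\alpha$ is truncation closed. The paper's proof is a one-liner citing exactly this, whereas you additionally spell out the easy induction behind the remark and verify the remaining clauses of \prettyref{def:transseries}, but there is no substantive difference in strategy.
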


\begin{proof}
  Immediate from the equality
  $\bracket{\Delta} = \bigcup_{\alpha\in\on}\Delta_{\alpha}$.
\end{proof}

\begin{cor}
  $\bracket{\Delta}$ is a proper class. In particular,
  $\bracket{\omega}$ is a proper class.
\end{cor}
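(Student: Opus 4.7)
The plan is to prove, by transfinite recursion on $\alpha\in\on$, the existence of an element $x_{\alpha}\in\bracket{\Delta}\cap\J^{>0}$ with $\erank(x_{\alpha})\geq\alpha$. Once established, this forces $\bracket{\Delta}$ to be a proper class: if it were a set, then the image of $\erank$ on $\bracket{\Delta}$ would be a set of ordinals, hence bounded, contradicting the existence of the $x_{\alpha}$. The particular statement about $\bracket{\omega}$ then follows by specialising to $\Delta=\{\log_{n}(\omega)\suchthat n\in\N\}$. (The statement implicitly requires $\Delta\neq\emptyset$; one checks that $\bracket{\emptyset}=\R$.)

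The base case and successor step are routine. Take $x_{0}:=\omega$ (or any positive element of $\Delta$); it lies in $\J^{>0}$ and has $\erank(x_{0})=0$ by \prettyref{cor:erank}. For successors, set $x_{\alpha+1}:=e^{x_{\alpha}}$: since $x_{\alpha}\in\J^{>0}$, this is a monomial in $\M^{\succ1}\subseteq\J^{>0}$ belonging to $\bracket{\Delta}$ by \prettyref{prop:unionDelta-exp-log-closed}, and its single-monomial support gives $\erank(x_{\alpha+1})=\erank(x_{\alpha})+1\geq\alpha+1$ by \prettyref{cor:erank}.

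The main obstacle is the limit step, where $x_{\alpha}$ must itself be purely infinite and positive, not merely of rank $\geq\alpha$: the naive candidate $\sum_{\xi<\mathrm{cof}(\alpha)}e^{-x_{\beta_{\xi}}}$ along a cofinal sequence $(\beta_{\xi})$ is infinitesimal and so breaks the $\J^{>0}$-invariant needed at the next successor step. My remedy is to assemble a strictly decreasing summable family of \emph{infinite} monomials $(e^{\gamma_{\xi}})\subseteq\M^{\succ1}$ whose exponents $\gamma_{\xi}\in\bracket{\Delta}\cap\J^{>0}$ have ranks cofinal in $\alpha$. Such exponents can be built by induction, exploiting the abundance of arbitrarily small positive-infinite elements of each given rank --- e.g.\ $\log_{n}(\omega)+\log_{n+1}(\omega)$ for large $n$ is a small rank-$1$ example, and iterating $\exp$ on similarly small purely-infinite elements of lower ranks produces small ones at higher ranks --- then combining them and refining the cofinal sequence $(\beta_{\xi})$ so that the values $\gamma_{\xi}$ strictly decrease. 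Setting $x_{\alpha}:=\sum_{\xi}e^{\gamma_{\xi}}$ then yields an element of $\bracket{\Delta}\cap\J^{>0}$ with $\erank(x_{\alpha})=\sup_{\xi}(\erank(\gamma_{\xi})+1)\geq\alpha$, closing the induction.
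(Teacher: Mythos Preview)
Your approach differs from the paper's, which argues by contradiction: if $\bracket{\Delta}$ were a set, then since it is truncation closed and closed under $\sum$ it would equal the full Hahn field $\R((\Gamma))$ for $\Gamma=\M\cap\bracket{\Delta}$; being also closed under $\exp$, this Hahn field would carry a compatible exponential, contradicting \cite{Kuhlmann1997}. That is the entire proof.

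Your direct construction has a minor issue at the successor step for general $\Delta$ (the identity $\erank(e^{x_{\alpha}})=\erank(x_{\alpha})+1$ needs $e^{x_{\alpha}}\notin\Delta$; for $\alpha\geq1$ this follows since $e^{x_\alpha}\in\Delta$ would force $x_\alpha\in\log(\Delta)\subseteq\Delta$, but at $\alpha=0$ it can fail, e.g.\ when $\Delta=\li$---easily fixed by taking $x_{1}:=\lambda+\log\lambda$ directly). The real gap is the limit step. You invoke an ``abundance'' principle---arbitrarily small elements of $\bracket{\Delta}\cap\J^{>0}$ of each prescribed rank---and assert it holds by induction, but the limit case of \emph{that} induction is exactly the difficulty you are trying to resolve. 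Concretely: when building the strictly decreasing $(\gamma_{\xi})_{\xi<\kappa}$, at a limit stage $\xi<\kappa$ you must produce $\gamma_{\xi}$ below all earlier $\gamma_{\xi'}$, and nothing in your sketch prevents those from being coinitial in $\bracket{\Delta}\cap\J^{>0}$ (in $\bracket{\omega}$, every positive-infinite element exceeds some $\log_{n}(\omega)$, so a carelessly chosen decreasing sequence runs out of room). Your remark about iterating $\exp$ produces small elements of \emph{successor} rank from smaller ones of lower rank, but gives no mechanism for small elements of \emph{limit} rank. The argument may be completable with careful bookkeeping (maintaining explicit lower bounds throughout, or switching to increasing sequences and negating), but as written it is not a proof, and closing it is substantially more work than the paper's appeal to \cite{Kuhlmann1997}.
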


\begin{proof}
  Let $\Gamma=\M\cap\bracket{\Delta}$ be the class of monomials of
  $\bracket{\Delta}$. Since $\bracket{\Delta}$ is closed under $\sum$
  and truncations, we have $\bracket{\Delta}=\R((\Gamma))_{\sml}$.  If
  for a contradiction $\bracket{\Delta}$ were a set, then
  $\bracket{\Delta}=\R((\Gamma))$.  Since on the other hand
  $\bracket{\Delta}$ is an exponential subfield of $\no$,
  $\R((\Gamma))$ would then carry a compatible exponential function,
  contradicting \cite{Kuhlmann1997}.
\end{proof}

The following remark is implicit in our previous observations, but it
is worth to record it:

\begin{rem}
  \label{rem:LE-delta-n}Let
  $\Delta=\{\log_{n}(\omega)\suchthat n\in\N\}$.  Then
  $\R((\omega))^{EL}=\bigcup_{n\in\N}\Delta_{n}=\Delta_{\omega}$.
\end{rem}

\section{Substitutions\label{sec:Substitutions}}

Before defining the full notion of composition, we first define
\emph{substitutions} (also called right-compositions in
\cite{Schmeling2001}).

\begin{defn}
  Let $T$ be a field of transseries. We say that $f:T\to\no$ is
  \textbf{strongly additive} if for every summable sequence
  $(x_{i}\suchthat i\in I)$ in $T$, the sequence
  $(f(x_{i})\suchthat i\in I)$ in $\no$ is summable and
  $f(\sum_{i\in I}x_{i})=\sum_{i\in I}f(x_{i})$.
\end{defn}

\begin{defn}
  Let $T$ a field of transseries. A \textbf{substitution} $c:T\to\no$
  is a strongly additive map which is the identity on $\R$ and
  preserves $\log$, namely $c(\log(x))=\log(c(x))$ for all $x\in T$.
\end{defn}

It is fairly easy to check that the substitutions are well behaved
functions.

\begin{prop}
  \label{prop:substitution-is-hom}Let $c:T\to\no$ be a substitution.
  Then $c$ is an ordered field isomorphism fixing $\R$. In particular,
  for all $x,y\in T$ we have $x<y\to c(x)<c(y)$ and therefore
  $x\prec y\to c(x)\prec c(y)$.
\end{prop}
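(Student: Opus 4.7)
The plan is to establish, in order, the following properties of $c$: additivity and $\R$-linearity; preservation of positivity; preservation of order; multiplicativity; injectivity; and finally preservation of the domination relation $\prec$. Of these, only the step to multiplicativity requires a nontrivial manipulation, and even that is short.

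First, strong additivity applied to the (trivially summable) family $(x,y)$ yields $c(x+y)=c(x)+c(y)$; combined with the hypothesis that $c$ is the identity on $\R$, this makes $c$ an $\R$-linear map with $c(0)=0$ and $c(1)=1$. For any $x\in T^{>0}$, the equation $c(\log x)=\log(c(x))$ can make sense only if $c(x)$ lies in the domain of $\log$ in $\no$, namely $\no^{>0}$; hence $c$ preserves positivity. Order preservation then follows at once: $x<y$ in $T$ implies $y-x>0$, so $c(y)-c(x)=c(y-x)>0$.

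For multiplicativity on positives, take $x,y\in T^{>0}$ and compute
\[
    \log(c(xy)) = c(\log(xy)) = c(\log x)+c(\log y) = \log(c(x))+\log(c(y)) = \log(c(x)\,c(y)),
\]
using preservation of $\log$, additivity, and the identity $\log(ab)=\log a+\log b$ on $\no^{>0}$. Since $\log\colon\no^{>0}\to\no$ is injective, it follows that $c(xy)=c(x)c(y)$. The extension to arbitrary $x,y\in T$ is routine: $c(-z)=-c(z)$ by additivity, so the cases of mixed or zero signs reduce to the positive case. Injectivity is automatic from order preservation, since $c(x)=c(y)$ with $x\neq y$ would force either $c(x)<c(y)$ or $c(x)>c(y)$. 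Thus $c$ is an ordered ring embedding of $T$ into $\no$ fixing $\R$, i.e.\ an ordered field isomorphism of $T$ onto its image.

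For the last claim, suppose $x\prec y$. If $x=0$ there is nothing to prove, so assume $x\neq 0$; then $y\neq 0$ and $q|x|<|y|$ for every $q\in\Q$. Since $c$ fixes $\Q$, preserves order, and (being sign-preserving) satisfies $c(|z|)=|c(z)|$, applying $c$ gives $q|c(x)|<|c(y)|$ for every $q\in\Q$, whence $c(x)\prec c(y)$. The only place in the argument where something beyond formal manipulation intervenes is the passage from $\log$-preservation to multiplicativity on positives, which rests on nothing more than the injectivity of $\log$ on $\no^{>0}$; everything else is bookkeeping.
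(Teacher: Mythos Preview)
Your proof is correct and follows essentially the same route as the paper's: positivity of $c(x)$ from the well-definedness of $\log(c(x))$, multiplicativity on positives via the log/exp identity (the paper writes it as $c(e^{\log(xy)})=e^{c(\log(xy))}$, you via injectivity of $\log$, which are the same step), then order and $\prec$ preservation by unwinding the definitions. The only cosmetic difference is that you quantify over $q\in\Q$ in the $\prec$ argument while the paper uses $r\in\R$; as noted in the paper's Definition~2.1, these give the same relation.
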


\begin{proof}
  Fix some $x,y\in T$. Clearly, $c$ is additive. Moreover, if $x>0$,
  then $\log(x)\in T$, so $c(\log(x))=\log(c(x))$, so $c(x)>0$, and in
  particular, $c$ preserves the ordering. If $x,y>0$, then
  $c(xy)=c(e^{\log(xy)})=e^{c(\log(x)+\log(y))}=e^{c(\log(x))}\cdot
  e^{c(\log(y))}=c(x)c(y)$, and it follows easily that $c$ is
  multiplicative. Therefore, $c$ is an ordered field isomorphism which
  by definition fixes $\R$. In particular, if $x<y$, then
  $c(x)<c(y)$. Moreover, if $x\vless y$, then $r|x|<|y|$ for all
  $r\in\R$, so $r|c(x)|<|c(y)|$ for all $r\in\R$, so
  $c(x)\vless c(y)$.
\end{proof}

In this section, we show how to construct inductively substitutions on
fields of the form $\bracket{\Delta}$ starting from their values on
some subclass $\Delta\subseteq\li$. The proof that the construction is
well defined is fairly complicated and technical; for the sake of
readability, the proof of one of the intermediate statement, the
``summability lemma'' \ref{lem:summability}, will be postponed to
\prettyref{sec:induction}.

\subsection{Pre-substitutions}

To build a substitution on $\bracket{\Delta}$, we start with a certain
assignment of values to each element of $\Delta$ satisfying some
suitable compatibility conditions. We call such assignment a
pre-substitution.

\begin{defn}
  \label{def:pre-composition}A map $c_{0}:\Delta\to\no$ is a
  \textbf{pre-substitution} if
  \begin{enumerate}
  \item the domain $\Delta$ is a subclass of $\li$ closed under
    $\log$;
  \item $c_{0}(\lambda)>0$ and
    $c_{0}(\log(\lambda))=\log(c_{0}(\lambda))$ for all
    $\lambda\in\Delta$;
  \item for any decreasing sequence $(\lambda_{i}\in\Delta)_{i\in\N}$,
    the family $\left(c_{0}(\lambda_{i})\right)_{i\in\N}$ is summable;
  \item for any increasing sequence $(\lambda_{i}\in\Delta)_{i\in\N}$,
    the family $\left(c_{0}(\lambda_{i})^{-1}\right)_{i\in\N}$ is
    summable;
  \item for all $\lambda,\mu\in\Delta$, if $\lambda<\mu$, then
    $c_{0}(\lambda)\prec c_{0}(\mu)$.
  \end{enumerate}
\end{defn}

\begin{rem}
  By (1) and (2) it follows by induction on $n\in\N$ that
  $c_{0}(\lambda)>\exp_{n}(0)$ for every $\lambda\in\Delta$, and
  therefore for all $\lambda\in\Delta$ we have
  $1\vless c_{0}(\lambda)$. Moreover, if $\lambda<\mu$, then
  $c_{0}(\lambda)<c_{0}(\mu)$ and
  $c_{0}(\lambda)^{n}\vless c_{0}(\mu)$ for all $n\in\N$ (since
  $\log(c_{0}(\lambda))=c_{0}(\log(\lambda))\prec
  c_{0}(\log(\mu))=\log(c_{0}(\mu))$).
\end{rem}

Clearly, if $\Delta\subseteq\li$ is a class closed under $\log$ and
$c:\bracket{\Delta}\to\no$ is a substitution, then
$c_{\restriction\Delta}$ is a pre-substitution. We shall prove that the
converse holds, namely that every pre-substitution with domain
$\Delta$ extends to a (unique) substitution with domain
$\bracket{\Delta}$ (\prettyref{thm:pre-comp-extends}), and as a
corollary we shall deduce the existence of substitutions on
$\bracket{\omega}$ (\prettyref{cor:substitution-omega-series}).  We
first give an explicit example of pre-substitution on
$\Delta=\{\log_{i}(\omega)\suchthat i\in\N\}$.

\begin{prop}
  \label{prop:sum-of-logs}Let $x\in\no^{>\N}$. Then the sequence
  $(\log_{i}(x))_{i\in\N}$ is summable.
\end{prop}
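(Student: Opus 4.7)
The plan is to reduce to the case where $x$ is Archimedean-close to a log-atomic $\lambda$, decompose each iterated logarithm into a log-atomic leading part plus a small remainder, and verify summability of the two resulting families separately.

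Since $x>\N$ implies $x>\R$, the log-atomic parametrisation fact gives $N\in\N$ and $\lambda\in\li$ with $\log_N(x)\veq\lambda$. Discarding finitely many initial terms does not affect summability, so I may assume $x\veq\lambda$ and write $x=r\lambda(1+\delta)$ with $r\in\R^{>0}$ and $\delta\vless 1$. A straightforward induction on $i$ gives the decomposition
\[
\log_i(x)=\log_i(\lambda)+\tau_i,\qquad \tau_1=\log r+\log(1+\delta),
\]
and $\tau_i=\log(1+\tau_{i-1}/\log_{i-1}(\lambda))$ for $i\geq 2$; since $\log_{i-1}(\lambda)>\R$ and $\tau_{i-1}\preceq 1$, the argument of $\log(1+\cdot)$ is infinitesimal, forcing $\tau_i\vless 1$ for $i\geq 2$. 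The family $(\log_i(\lambda))_i$ is then summable: each $\log_i(\lambda)$ is a log-atomic monomial, and from the level fact one has $\log_{i+1}(\lambda)^n\vless\log_i(\lambda)$ for every $n\in\N$, so the disjoint singleton supports form a strictly $\vless$-decreasing chain (reverse well-ordered).

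The heart of the proof is to establish summability of $(\tau_i)_i$. Iterating the expansion $\log(1+\eta)=\sum_{k\geq1}(-1)^{k+1}\eta^k/k$ and the recursion above, an induction on $i$ shows that every monomial in $\supp(\tau_i)$ has the form $\m\cdot\prod_{j=1}^{i-1}\log_j(\lambda)^{-a_j}$, with $\m$ in the multiplicative sub-semigroup $\bar S\subseteq\M$ generated by $\supp(\delta)\cup\{1\}$ and integers $a_1\geq a_2\geq\cdots\geq a_{i-1}\geq 1$. Two consequences follow. First, the supports $\supp(\tau_i)$ are pairwise disjoint (the number of nontrivial $\log_j$-factors equals $i-1$), so each monomial lies in at most one such support. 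Second, $\bigcup_i\supp(\tau_i)\subseteq\bar S\cdot P$, where $P$ is the set of finite products $\prod_j\log_j(\lambda)^{-a_j}$ with $(a_j)$ a partition. By a Neumann-style argument (as in \prettyref{lem:Neumann}) applied to the infinitesimal monomials in $\supp(\delta)$, $\bar S$ is reverse well-ordered; $P$ is reverse well-ordered because partitions are well-ordered under lex and, thanks to the distinct levels of the $\log_j(\lambda)$, comparison of monomials in $P$ matches lex comparison of the associated partitions. A standard product argument then shows $\bar S\cdot P$ is reverse well-ordered, yielding summability of $(\tau_i)_i$ and hence of $(\log_i(x))_i=(\log_i(\lambda)+\tau_i)_i$.

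The main obstacle is the combinatorial verification that $\bar S\cdot P$ is reverse well-ordered: this combines the Neumann-style summability for $\bar S$ with the well-orderedness of partitions under lex and the Hahn-lexicographic structure of $P$ coming from the distinct levels of the iterated logarithms of $\lambda$.
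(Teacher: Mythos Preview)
Your strategy is the paper's strategy: reduce to $x$ in the Archimedean class of a log-atomic $\lambda$, split $\log_i(x)=\log_i(\lambda)+\tau_i$, and show that every monomial in $\supp(\tau_i)$ has the shape $\m\cdot\prod_{j=1}^{i-1}\log_j(\lambda)^{-a_j}$ with $\m\in\bar S$ and $a_1\ge\cdots\ge a_{i-1}\ge1$. The paper packages the reverse well-orderedness of your set $P$ slightly differently: it sets $\delta':=\sum_i (\log_0(\lambda)\cdots\log_i(\lambda))^{-1}\prec1$ and observes that each $\prod_j\log_j(\lambda)^{-a_j}$ lies in $\supp(\delta'^{\,a_1})$, so Neumann's lemma for the powers of $\delta'$ does the job. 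Your appeal to the lex well-order on partitions is the same fact in different dress (and is in fact equivalent to it via the level structure), though you should note that this well-ordering is not entirely trivial and deserves a line of justification.

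There is one genuine gap. Your disjointness claim for the $\supp(\tau_i)$ is not justified and can fail: the factorisation $\n=\m\cdot\prod_j\log_j(\lambda)^{-a_j}$ need not be unique, because nothing prevents $\supp(\delta)$---and hence $\bar S$---from containing monomials built out of the $\log_j(\lambda)^{-1}$ (for instance $\lambda/\log(\lambda)$ could perfectly well appear in $\supp(x)$). So you cannot read $i$ off from the monomial, and the ``each monomial in finitely many $\supp(\tau_i)$'' condition is not yet established. The repair uses only your own ingredients: if $(i_k)_k$ is injective and $\n_k\in\supp(\tau_{i_k})$ is weakly $\preceq$-increasing, pick decompositions $\n_k=\m_k p_k$ with $p_k\in P$ of partition length exactly $i_k-1$; since $\bar S$ is reverse well-ordered, pass to a subsequence with $\m_k$ weakly decreasing, forcing $p_k$ weakly increasing; but the $p_k$ are distinct (distinct lengths), hence strictly increasing in the reverse well-ordered $P$, contradiction. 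This is precisely how the paper's terse ``it suffices to prove that the family $\bigl(\prod_j\log_j(\lambda)^{-a_j}\bigr)$ is summable'' is meant to be read.
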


\begin{proof}
  By \cite[Prop.\ 5.8]{Berarducci2015}, there is an integer $k\in\N$
  and some log-atomic number $\mu\in\li$ such that
  $\log_{k}(x)=\mu+\varepsilon$ for some $\varepsilon\vless1$. Thus, it
  suffices to show that the sequence
  $(\log_{i}(\mu+\varepsilon))_{i\in\N}$ is summable. Let $P(y)$ be the
  Taylor series of $\log(1+y)$, namely
  $P(y):=\sum_{n=1}^{\infty}\frac{(-1)^{n}}{n}y^{n}$.  Then
  \begin{align*}
    \log(\mu+\varepsilon) & = \log(\mu) + \log\left(1+\frac{\varepsilon}{\mu}\right) = \log(\mu)+P\left(\frac{\varepsilon}{\mu}\right)\\
                          & = \mu_{1} + \varepsilon_{1}
  \end{align*}
  where $\mu_{1}:=\log(\mu)\in\li$ and
  $\varepsilon_{1}=P\left(\frac{\eps}{\mu}\right)\prec1$.  We define
  inductively $\mu_{0}:=\mu$, $\mu_{i+1}:=\log(\mu_{i})$,
  $\eps_{0}:=\eps$ and
  $\varepsilon_{i+1}:=P\left(\frac{\eps_{i}}{\mu_{i}}\right)$.  By
  construction, $\varepsilon_{i}\prec1$ and
  $\log_{i}(\mu+\eps)=\mu_{i}+\eps_{i}$ for all $i\in\N$. Since
  $(\mu_{i})_{i\in\N}$ is a decreasing sequence of monomials,
  $\sum_{i}\mu_{i}$ exists. To finish the proof it suffices to show that
  $\sum_{i}\eps_{i}$ exists. Let $\m$ be a monomial in the support of
  $\varepsilon_{i+1}=P\left(\frac{\eps_{i}}{\mu_{i}}\right)$.  Then there
  is an integer $m\geq1$ such that
  $\m \in \supp
  \left(\left(\frac{\varepsilon_{i}}{\mu_{i}}\right)^{m}\right) \subseteq
  \frac{1}{\mu_{i}^{m}}\supp(\eps_{i})^{m}$.  By an easy induction it
  follows that
  \[
    \m=\frac{1}{\mu_{0}^{n_{0}}\cdot\dots\cdot\mu_{i}^{n_{i}}}\cdot\o
  \]
  where $n_{0}\geq\ldots\geq n_{i}\geq1$ and $\o$ is a product of
  finitely many elements of $\supp(\eps)$. Note that $\o$ varies in
  the set $\bigcup_{m=1}^{\infty}\supp(\varepsilon)^{m}$, which is
  reverse well-ordered by \prettyref{lem:Neumann}. Therefore, it
  suffices to prove that the family
  $\left(\frac{1}{\mu_{0}^{n_{0}}\cdot\dots\cdot\mu_{i}^{n_{i}}}\suchthat
    i\in\N\right)$ is summable.

  Letting $\delta=\sum_{i\in\N}\frac{1}{\mu_{0}\mu_{1}\cdots\mu_{i}}$, we
  have that $\frac{1}{\mu_{0}^{n_{0}}\cdot\dots\cdot\mu_{i}^{n_{i}}}$ is in
  the support of $\delta^{n_{0}}$. Since $\delta\prec1$, by
  \prettyref{cor:power-series}, $(\delta^{n}\suchthat n\in\N)$ is
  summable, so
  $\left(\frac{1}{\mu_{0}^{n_{0}}\cdot\dots\cdot\mu_{i}^{n_{i}}}\suchthat
    i\in\N\right)$ is summable, hence $(\eps_{i})_{i\in\N}$ is summable,
  as desired.
\end{proof}

\begin{cor}
  \label{cor:basic-pre-substitution}Let $x\in\no^{>\R}$, let
  $\Delta=\left\{ \log_{i}(\omega)\suchthat i\in\N\right\} $ and let
  $c_{0}^{x}:\Delta\to\no$ be the map that sends $\log_{i}(\omega)$ to
  $\log_{i}(x)$. Then $c_{0}^{x}$ is a pre-substitution.
\end{cor}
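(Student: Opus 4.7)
The plan is to verify the five conditions of Definition~\ref{def:pre-composition} one by one; the analytic heavy lifting has already been done in Proposition~\ref{prop:sum-of-logs}, so what remains is largely bookkeeping.

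First I would dispose of the structural conditions (1) and (2). Since $\omega$ is a $\kappa$-number, hence log-atomic, and $\log(\li)\subseteq\li$, each $\log_i(\omega)$ lies in $\li$ and so $\Delta\subseteq\li$; closure of $\Delta$ under $\log$ is immediate from $\log(\log_i(\omega))=\log_{i+1}(\omega)$. Iterating the observation that $y>\R$ implies $\log(y)>\R$, starting from $x>\R$, gives $\log_i(x)>0$ for every $i$, and by construction $c_0^x(\log(\log_i(\omega)))=\log_{i+1}(x)=\log(\log_i(x))=\log(c_0^x(\log_i(\omega)))$.

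For condition (5), I would show $c_0^x(\lambda)\prec c_0^x(\mu)$ whenever $\lambda<\mu$ in $\Delta$. Writing $\lambda=\log_i(\omega)$ and $\mu=\log_j(\omega)$, the inequality $\lambda<\mu$ forces $i>j$, since $(\log_k(\omega))_{k\in\N}$ is strictly $\prec$-decreasing. The key input is that $\log(y)\prec y$ for every $y>\R$, which follows from the recorded fact that $y\not\lequal e^y$ combined with $\log(y)<y$. Applying this iteratively to $y=\log_j(x)$, which is itself $>\R$, yields $\log_{j+1}(x)\prec\log_j(x)$, and transitivity completes the argument.

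Condition (4) is vacuous: a strictly increasing sequence in $\Delta$ corresponds to a strictly decreasing sequence of natural-number indices, which must be finite, so no infinite strictly increasing sequence in $\Delta$ exists. That leaves (3) as the only remaining requirement. A strictly decreasing sequence $(\lambda_n)_{n\in\N}\subseteq\Delta$ has the form $(\log_{i_n}(\omega))_n$ with $(i_n)$ strictly increasing in $\N$; its image $(c_0^x(\lambda_n))_n=(\log_{i_n}(x))_n$ is a subfamily of $(\log_i(x))_{i\in\N}$. Since $x>\R\supseteq\N$, Proposition~\ref{prop:sum-of-logs} gives summability of the ambient family, and summability passes to subfamilies directly from Definition~\ref{def:summability}. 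The only real obstacle has therefore already been cleared in the preceding proposition, and the corollary follows at once from these routine verifications.
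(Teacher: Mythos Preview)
Your proof is correct and follows exactly the approach the paper intends: the corollary is stated without proof, immediately after Proposition~\ref{prop:sum-of-logs}, because the only nontrivial condition among the five in Definition~\ref{def:pre-composition} is (3), which is precisely what that proposition supplies. Your verification of the remaining conditions (including the observation that (4) is vacuous for this particular $\Delta$) is the routine check the paper leaves to the reader.
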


\subsection{Trees}

We now aim at extending each pre-substitution $c_{0}:\Delta\to\no$ to a
substitution $c:\bracket{\Delta}\to\no$. For this, we introduce the
notion of \emph{tree}, whose aim is to keep track of the monomials
that may appear in the support of $c(x)$ by expressing $c(x)$ in terms
of the values of $c_{0}$. To justify the definition of tree, consider
the following heuristic argument.

Suppose we wish to calculate $c(x)$ for some $x\in\bracket{\Delta}$.
If $\erank(x)=0$, then we simply use the equations
$c(\lambda)=c_{0}(\lambda)=\sum_{t\in\term(c_{0}(\lambda))}t$ and
$c(0)=0$. Now assume $\erank(x)>0$ and write
$x=\sum_{i<\alpha}r_{i}e^{\gamma_{i}}$ in normal form. First, we observe
that we must have $c(x)=\sum_{i<\alpha}c(r_{i}e^{\gamma_{i}})$, so our
problem reduces to calculating $c(r_{i}e^{\gamma_{i}})$ for each
$i<\alpha$. Fix one $\gamma=\gamma_{i}$ and consider the following
equation:
\[
  c(re^{\gamma}) = re^{c(\gamma)} = re^{c(\gamma)^{\bigeq}} \cdot
  \exp(c(\gamma)^{\small}) = re^{c(\gamma)^{\bigeq}} \cdot
  \sum_{n=0}^{\infty}\frac{\left(c(\gamma)^{\small}\right)^{n}}{n!}.
\]

Note that $\erank(\gamma)<\erank(x)$, so we may assume to already have
obtained $c(\gamma)$, and that $c(\gamma)^{\small}$ is presented as a
sum $c(\gamma)^{\downarrow}=\sum_{j\in J}t_{j}$ for some family
$(t_{j})_{j\in J}$ of terms (i.e. elements of $\R^{*}\M$), where
$J=J_{i}$ is some index set. Using \prettyref{prop:n-th power}, we get
\[
  c(re^{\gamma}) = re^{c(\gamma)^{\bigeq}} \cdot
  \sum_{n=0}^{\infty}\sum_{\tau:n\to J}\prod_{m<n}t_{\tau(m)} =
  \sum_{n=0}^{\infty}\sum_{\tau:n\to J}re^{c(\gamma)^{\bigeq}} \cdot
  \prod_{m<n}t_{\tau(m)}.
\]

Note that the right-hand side can be seen as a sum of \emph{terms}.
We use the above equation to present $c(re^{\gamma})$ as a sum of terms
indexed by the set $\{(n,\tau)\suchthat n\in\N,\tau:n\to J\}$.  By
taking the sum over all terms $r_{i}e^{\gamma_{i}}$, we obtain a
presentation of $c(x)$ as a sum of terms indexed by the set
$\{(r_{i}e^{\gamma_{i}},n,\tau)\suchthat i<\alpha,n\in\N,\tau:n\to
J_{i}\}$.

We then proceed inductively and assume that the index sets $J_{i}$ are
themselves constructed in the same way (unless $\erank(\gamma_{i})=0$,
in which case we use the equations
$c(\lambda)=c_{0}(\lambda)=\sum_{t\in\term(c_{0}(\lambda))}t$ and
$c(0)=0$). One can then picture the index
$(r_{i}e^{\gamma_{i}},n,\tau)$ as a tree with root
$r_{i}e^{\gamma_{i}}$ and children $\tau(0),\dots,\tau(n-1)$, as in the
following definition.

\begin{defn}
  \label{def:trees}Fix a pre-substitution $c_{0}:\Delta\to\no$. We
  define inductively the class of trees as follows. A \textbf{tree} is
  an ordered triple $T=\tree{\treeroot(T)}n{\tau}$ where
  $\treeroot(T)\in\bracket{\Delta}\cap\R^{*}\M$ is a term, called the
  \textbf{root} of $T$, and $n,\tau$ are defined as follows:
  \begin{enumerate}
  \item if $\treeroot(T)=\lambda\in\Delta$, then $n=0$ and $\tau$ is a
    term of $c_{0}(\lambda)$, so in this case $T=\tree{\lambda}0t$ with
    $t\in\term(c_{0}(\lambda))$;
  \item if $\treeroot(T)=re^{\gamma}\nin\Delta$, then $n\in\N$ and
    $\tau$ is a function with domain $n=\{0,1,\ldots,n-1\}$ such that
    $\tau(0),\ldots,\tau(n-1)$ are trees, called the \textbf{children}
    of $T$ ($n$ can be zero, in which case $T$ has no children); we
    also require that, for each $i<n$, the root $\treeroot(\tau(i))$
    of $\tau(i)$ is a term of $\gamma=\vell(\treeroot(T))$ (where
    $\vell$ is as in \prettyref{def:ell}).
  \end{enumerate}

  The \textbf{descendants} of $T$ are $T$ itself, its children, and
  the descendants of its children. The \textbf{proper descendants }are
  the descendants different from $T$ itself. The \textbf{leaves} of
  $T$ are the descendants $U$ of $T$ without children (for instance
  the descendants with root in $\Delta$).
\end{defn}

\begin{figure}[b]
  \begin{centering}
    \bigskip{}
    \begin{tikzpicture}[scale=0.8, transform shape]
      \draw (-3.5,1) node(U0) {$\sigma(0) = (\lambda, 0, t_0)$}; \draw
      (-1.5,1) node(UI) {$\vphantom{\tau(2) = }\ldots$}; \draw (1.2,1)
      node(UN) {$\sigma(m-1) = (\lambda, 0, t_{m-1})$}; \draw (-1.2, 0)
      node(U) {$\tau(0) = (se^\lambda, m, \sigma)$}; \draw (1,-1)
      node(T) {$T = (re^\gamma, n, \tau)$}; \draw (2.8,0) node(T1)
      {$\tau(1) = \ldots$}; \draw (6,0) node(TI)
      {$\vphantom{\tau(2) = }\ldots$}; \draw (5,1) node(E1)
      {$\vphantom{\tau(2) = }\ldots$}; \draw (7.5,1) node(E2)
      {$\vphantom{\tau(2) = }\ldots$}; \draw (T) -- (U); \draw (T) --
      (T1); \draw (T) -- (TI) -- (E2); \draw (U) -- (U0); \draw (U) --
      (UI); \draw (U) -- (UN); \draw (TI) -- (E1);
    \end{tikzpicture}
  \end{centering}
  \caption{An example of tree with root $R(T)=re^{\gamma}$, where
    $se^{\lambda}$ is a term of $\gamma$, $\lambda\in\Delta$,
    $t_{0},\dots,t_{m-1}$ are terms of $c_{0}(\lambda)$, and the
    contribution $\protect\ctree(T)$ of $T$ is
    \[
      \scriptscriptstyle \mkern-135mu\protect\begin{aligned}{\textstyle \protect\ctree(T)} & ={\textstyle re^{c(\gamma)^{\protect\bigeq}}\frac{1}{n!}\protect\ctree(\tau(0))\protect\ctree(\tau(1))\ldots}\protect\\
        & ={\textstyle re^{c(\gamma)^{\protect\bigeq}}\frac{1}{n!}se^{c_{0}(\lambda)^{\protect\bigeq}}\frac{1}{m!}\protect\ctree(\sigma(0))\ldots\protect\ctree(\sigma(m-1))\protect\ctree(\tau(1))\ldots}\protect\\
        & ={\textstyle
          re^{c(\gamma)^{\protect\bigeq}}\frac{1}{n!}se^{c_{0}(\lambda)^{\protect\bigeq}}\frac{1}{m!}t_{0}\dots
          t_{m-1}\protect\ctree(\tau(1))\ldots} \protect\end{aligned}
    \]
  }
\end{figure}

Note that by induction on $\erank$, the above definition of tree is
well founded.

\begin{defn}
  Let $T=\tree{\treeroot(T)}n{\tau}$ be a tree. We define
  $\size(T)\in\N$ as the number of descendants of $T$, namely:
  \begin{enumerate}
  \item $\size(T):=1$ if $T$ has no children, namely $n=0$;
  \item $\size(T):=1+\sum_{i<n}\size(\tau(i))$ otherwise.
  \end{enumerate}
\end{defn}

\subsection{\label{sub:extension}Extending pre-substitutions to substitutions}

Fix a pre-substitution $c_{0}:\Delta\to\no$. We shall now define a
substitution $c:\bracket{\Delta}\to\no$ extending the given
pre-substitution $c_{0}$. To this aim, we shall define simultaneously
by induction on $\alpha\in\on$ the following objects:
\begin{itemize}
\item the set of \textbf{admissible trees} $\admtree(x)$ of each
  $x\in\Delta_{\alpha}$ (which are trees in the sense of
  \prettyref{def:trees} with root $\treeroot(T)\in\term(x)$ and some
  further requirements);
\item the \textbf{contribution} $\ctree(T)\in\R^{*}\M$ of each
  $T\in\admtree(x)$;
\item the extension $c:\Delta_{\alpha}\to\no$ (which is obtained by
  summing the contributions of the admissible trees in $\admtree(x)$,
  that is $c(x)=\sum_{T\in\admtree(x)}\ctree(T)$).
\end{itemize}
The main difficulty will be in proving that each family
$(\ctree(T)\suchthat T\in\admtree(x))$ is summable, which is needed to
show that $c(x)=\sum_{T\in\admtree(x)}\ctree(T)$ is well defined
(\prettyref{lem:summability}).

\begin{defn}
  \label{def:inductive}Let $\alpha\in\on$ be given. Let $I(\alpha)$ be
  the hypothesis
  \begin{center}
    \emph{For all $x\in\Delta_{\alpha}$,
      $(\ctree(T)\suchthat T\in\admtree(x))$ is summable}
  \end{center}
  where $\admtree(x)$ and $\ctree(T)$ for $T\in\admtree(x)$ are
  inductively defined as in \prettyref{def:everything} (assuming
  $I(\beta)$ for $\beta<\alpha$).
\end{defn}

\begin{defn}
  \label{def:everything}First, we let $\admtree(0):=\emptyset$, and
  for $\lambda\in\Delta$, we define:
  \begin{enumerate}
  \item
    $\admtree(\lambda):=\left\{ \tree{\lambda}0t\suchthat
      t\in\term(c_{0}(\lambda))\right\} $ (namely every tree with root
    in $\Delta$ is admissible);
  \item $\ctree(\tree{\lambda}0t):=t$ (the value of a tree with root
    in $\Delta$ is its third component);
  \item $c(\lambda):=\sum_{T\in\admtree(\lambda)}\ctree(T)$.
  \end{enumerate}
  This defines $\admtree(x)$, $\ctree(T)$ and $c(x)$ for all
  $x\in\Delta_{1}$ and $T\in\admtree(x)$.

  Now let $\alpha>1$ and assume $I(\beta)$ for all $\beta<\alpha$.
  For general $x\in\Delta_{\alpha}$ we define:
  \begin{enumerate}[resume]
  \item $\admtree(x):=\bigcup_{t\in\term(x)}\admtree(t)$;
  \item $\admtreeinf(x):=\{T\in\admtree(x)\suchthat\ctree(T)\prec1\}$.
  \end{enumerate}
  When $x=t=re^{\gamma}$ is a term in
  $\Delta_{\alpha}\setminus\Delta_{1}$, let $\beta<\alpha$ be such that
  $re^{\gamma}\in\Delta_{\beta+1}\setminus\Delta_{\beta}$.  We observe
  that $\gamma\in\Delta_{\beta}$, and we define:
  \begin{enumerate}[resume]
  \item
    $\admtree(re^{\gamma}):=\left\{
      \tree{re^{\gamma}}n{\tau}\,:\,n\in\N,\tau:n\to\admtreeinf(\gamma)\right\}
    $;
  \item for $T=\tree{re^{\gamma}}n{\tau}\in\admtree(re^{\gamma})$,
    \[
      \ctree(T):=re^{c(\gamma)^{\bigeq}}\cdot\frac{1}{n!}\prod_{i<n}\ctree(\tau(i)).
    \]
  \end{enumerate}
  Finally, for any $x\in\Delta_{\alpha}$, if $I(\alpha)$ holds, we
  define:
  \begin{enumerate}[resume]
  \item $c(x):=\sum_{T\in\admtree(x)}\ctree(T)$.
  \end{enumerate}
\end{defn}

\begin{rem}
  It is important to note that points (1)-(7) only require $I(\beta)$
  for $\beta<\alpha$, while (8) does require $I(\alpha)$. The
  inductive hypothesis $I(\alpha)$ itself is defined by induction on
  $\alpha$!  We also remark that $I(0),I(1)$ are trivially true.
\end{rem}

\begin{defn}
  \label{def:substitution}Assuming that $I(\alpha)$ holds for every
  $\alpha\in\on$, we define $c:\bracket{\Delta}\to\no$ as the union of
  the functions $c:\Delta_{\alpha}\to\no$ for $\alpha\in\on$.
\end{defn}

\begin{rem}
  The present notion of tree should be compared with the similar
  notion of labeled trees in \cite{Schmeling2001}. In this comparison,
  the admissible trees play the same role as the well-labeled trees.
\end{rem}

We shall now prove that $c:\bracket{\Delta}\to\no$ is well defined and
that it is the unique substitution on $\bracket{\Delta}$ extending
$c_{0}$. The most technical and difficult part will be proving that if
$I(\alpha)$ holds, then $(\ctree(T)\suchthat T\in\admtree(x))$ is
summable for all $x\in\Delta_{\alpha+1}\setminus\Delta_{\alpha}$
(\prettyref{lem:summability}). As anticipated, the proof of this fact
is postponed to \prettyref{sec:induction}.

First, we check that $c$ is indeed an extension of $c_{0}$, and that it
fixes $\R$.

\begin{prop}
  \label{prop:clambda}For all $\lambda\in\Delta$,
  $(\ctree(T)\suchthat T\in\admtree(\lambda))$ is summable and
  \[
    c(\lambda)=\sum_{T\in\admtree(\lambda)}\ctree(T)=c_{0}(\lambda).
  \]
  In particular, $I(0)$ and $I(1)$ hold, and $c$ extends $c_{0}$.
\end{prop}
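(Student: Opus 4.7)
The statement should fall out almost immediately from unwinding \prettyref{def:everything}, so my proposal is essentially to observe that the inductive machinery collapses to the tautology that a surreal number is the sum of its own terms.

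First I would note that by clause (1) of \prettyref{def:everything}, the assignment
\[
T = \tree{\lambda}{0}{t} \longmapsto \ctree(T) = t
\]
is a bijection between $\admtree(\lambda)$ and $\term(c_0(\lambda))$. So the family $(\ctree(T) \suchthat T \in \admtree(\lambda))$ is, up to re-indexing, exactly the family of terms of the surreal number $c_0(\lambda)$. By \prettyref{def:terms} and the identification $\no = \R((\M))_{\sml}$ from \prettyref{fact:normal-form}(5), the terms of any surreal number form a summable family whose sum is the number itself. Therefore
\[
\sum_{T \in \admtree(\lambda)} \ctree(T) = \sum_{t \in \term(c_0(\lambda))} t = c_0(\lambda),
\]
which is exactly the displayed equation. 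By clause (3) of \prettyref{def:everything}, this says $c(\lambda) = c_0(\lambda)$, so $c$ extends $c_0$ on $\Delta$.

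For the hypotheses $I(0)$ and $I(1)$: the case $I(0)$ is vacuous since $\Delta_0 = \emptyset$. For $I(1)$, recall $\Delta_1 = \Delta \cup \{0\}$; for $x = 0$ we have $\admtree(0) = \emptyset$ by definition, so summability is trivial, while for $x = \lambda \in \Delta$ we have just exhibited summability above. The proof is genuinely routine — there is no real obstacle here, since the inductive rank-$1$ base case is constructed precisely so that the tree formalism reproduces the given $c_0$ on the nose. The work only begins at \prettyref{lem:summability}, which handles $\alpha \geq 2$ and is deferred to \prettyref{sec:induction}.
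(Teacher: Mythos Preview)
Your proposal is correct and follows essentially the same approach as the paper's own proof: both simply observe that the map $\tree{\lambda}{0}{t} \mapsto t$ identifies $(\ctree(T)\suchthat T\in\admtree(\lambda))$ with $(t\suchthat t\in\term(c_0(\lambda)))$, whence summability and the equality $c(\lambda)=c_0(\lambda)$ are immediate. Your treatment of $I(0)$ and the case $x=0$ in $I(1)$ is slightly more explicit than the paper's, but there is no substantive difference.
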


\begin{proof}
  For any $\lambda\in\Delta$ and $T\in\admtree(\lambda)$, we have
  $T=\tree{\lambda}0t$ for some $t\in\term(\lambda)$ and
  $\ctree(T)=t$.  Moreover,
  $(\ctree(T)\suchthat T\in\admtree(\lambda))$ coincides with
  $(t\suchthat t\in\term(c_{0}(\lambda)))$, hence it is summable and by
  definition
  \[
    c(\lambda) = \sum_{T\in\admtree(\lambda)} \bar{c}(T) = \sum\limits
    _{t\in\term(c_{0}(\lambda))}t = c_{0}(\lambda).
  \]
\end{proof}

\begin{prop}
  \label{prop:cR}If $r\in\R$, then
  $(\ctree(T)\suchthat T\in\admtree(r))$ is summable and $c(r)=r$.
\end{prop}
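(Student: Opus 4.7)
The plan is to unwind the definitions directly: for $r \in \R$, the class $\admtree(r)$ turns out to be either empty or a singleton, so summability is trivial and the sum can be computed by hand.

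First I would dispose of the case $r = 0$. By definition $\admtree(0) := \emptyset$, hence the family is (vacuously) summable and $c(0) = \sum_{T \in \emptyset} \ctree(T) = 0$ by the convention that the empty sum is $0$. This matches the claim since we want $c(0) = 0$.

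Next I would handle $r \in \R^{*}$. Here I would write $r = r \cdot e^{0}$, so that $\term(r) = \{re^{0}\}$ and $\admtree(r) = \admtree(re^{0})$ by clause~(4) of \prettyref{def:everything}. Since $0 \in \Delta_{1} \cap \J$, the term $re^{0}$ belongs to $\Delta_{2} \setminus \Delta_{1}$, so clause~(6) applies with $\gamma = 0$, giving
\[
  \admtree(re^{0}) = \{\tree{re^{0}}{n}{\tau} \suchthat n \in \N,\ \tau \colon n \to \admtreeinf(0)\}.
\]
By the previous paragraph $\admtree(0) = \emptyset$, hence $\admtreeinf(0) = \emptyset$, so the only admissible $\tau$ is the empty function with $n = 0$. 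Thus $\admtree(r) = \{\tree{re^{0}}{0}{\emptyset}\}$, a single tree.

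Finally, I would compute its contribution via clause~(7):
\[
  \ctree(\tree{re^{0}}{0}{\emptyset}) = re^{c(0)^{\bigeq}} \cdot \frac{1}{0!} \cdot \prod_{i < 0} \ctree(\tau(i)) = re^{0} = r,
\]
using $c(0) = 0$ from the previous case and the convention that an empty product is $1$. The family $(\ctree(T) \suchthat T \in \admtree(r))$ is therefore a one-element family, trivially summable, with sum $r$. No genuine obstacle arises here because the structural induction never produces any children: the whole argument is a direct, bookkeeping-level evaluation of the definitions in \prettyref{def:everything}.
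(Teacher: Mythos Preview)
Your proof is correct and follows essentially the same approach as the paper: treat $r=0$ via the empty family, then for $r\in\R^{*}$ show that $\admtree(r)$ consists of the single tree $\tree{re^{0}}{0}{\emptyset}$ with contribution $r$. If anything, your version is slightly more explicit in justifying why only $n=0$ is possible (via $\admtreeinf(0)=\emptyset$) and in writing $c(0)^{\bigeq}$ rather than $c(0)$ when applying clause~(7).
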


\begin{proof}
  Note first that $I(1)$ holds by \ref{prop:clambda}, and that
  $\R\subseteq\Delta_{2}$, so $\admtree(r)$ is well defined for each
  $r\in\R$. Now observe that $\admtree(0)=\emptyset$, so
  $c(0)=\sum_{T\in\admtree(0)}\ctree(T)$ is an empty sum (equal to
  zero) and we get $c(0)=0$. For $r\neq0$ the only admissible tree
  $T\in\admtree(r)$ is given by $T=\tree{re^{0}}0{\emptyset}$.  By
  definition, $\ctree(T)=re^{c(0)}=re^{0}=r$, hence $c(r)=r$.
\end{proof}

We now prove that assuming $I(\alpha)$, the extension
$c:\Delta_{\alpha}\to\no$ preserves $\log$ and infinite sums. For
$\alpha\geq3$, since $\Delta_{\alpha}$ is a field of transseries, this
says that $c:\Delta_{\alpha}\to\no$ is a substitution. Note that in the
following statement the hypothesis is $I(\alpha)$, but the conclusion
is about terms in $\Delta_{\alpha+1}$.

\begin{prop}
  \label{prop:cexp}Assume $I(\alpha)$. Let
  $re^{\gamma}\in\Delta_{\alpha+1}$ be a term. Then
  $(\ctree(T)\suchthat T\in\admtree(re^{\gamma}))$ is summable, so
  $c(re^{\gamma})=\sum_{T\in\admtree(re^{\gamma})}\ctree(T)$ is well
  defined and
  \[
    c(re^{\gamma})=re^{c(\gamma)}.
  \]
\end{prop}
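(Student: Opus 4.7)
The plan is to deduce both summability and the identity by splitting $c(\gamma)$ into its ``large'' (non-infinitesimal) and ``small'' parts, recognizing that the tree structure precisely encodes the Taylor expansion of $\exp$ on the small part. The argument has three stages.

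Stage 1 (setup). If $re^{\gamma}=\lambda\in\Delta$, then $r=1$ and $\gamma=\log(\lambda)\in\Delta$, and the identity $c(\lambda)=c_{0}(\lambda)=e^{c_{0}(\log(\lambda))}=e^{c(\log(\lambda))}$ is immediate from \prettyref{prop:clambda} and the pre-substitution axiom; summability is trivial. So assume $re^{\gamma}\notin\Delta_{1}$, whence by the definition of the $\Delta_{\alpha}$ we have $\gamma\in\Delta_{\alpha}\cap\J$, and $I(\alpha)$ guarantees that $c(\gamma)$ is well defined. Each contribution $\ctree(T)$ for $T\in\admtree(\gamma)$ is a term in $\R^{*}\M$, so the decomposition $\no=\J\oplus\R\oplus o(1)$ splits $\admtree(\gamma)$ into the trees contributing to $c(\gamma)^{\bigeq}$ and those contributing to $c(\gamma)^{\small}$, the latter being exactly $\admtreeinf(\gamma)$. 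Writing $\varepsilon:=c(\gamma)^{\small}=\sum_{T\in\admtreeinf(\gamma)}\ctree(T)\prec1$, we have $c(\gamma)=c(\gamma)^{\bigeq}+\varepsilon$.

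Stage 2 (summability of $\admtree(re^{\gamma})$). Each $T=\tree{re^{\gamma}}{n}{\tau}$ yields a term $\ctree(T)$ whose unique monomial factors as $e^{c(\gamma)^{\bigeq}}\cdot\prod_{i<n}\LM(\ctree(\tau(i)))$, with each factor $\LM(\ctree(\tau(i)))$ drawn from $\mathcal{S}:=\{\LM(\ctree(T')):T'\in\admtreeinf(\gamma)\}$. Since $(\ctree(T'):T'\in\admtreeinf(\gamma))$ is summable (as a subfamily of the summable family given by $I(\alpha)$), $\mathcal{S}$ is reverse well-ordered and contained in $o(1)$. To prove summability, suppose for contradiction that $(T_{k})_{k\in\N}$ is an injective sequence in $\admtree(re^{\gamma})$ producing a weakly increasing sequence of monomials; dividing by the fixed factor $e^{c(\gamma)^{\bigeq}}$ and writing $T_{k}=\tree{re^{\gamma}}{n_{k}}{\tau_{k}}$, split into two cases. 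If $(n_{k})$ has a constant subsequence $n$, then $(\tau_{k})$ is injective in $\admtreeinf(\gamma)^{n}$, and the summability of the family $\bigl(\prod_{i<n}\ctree(\tau(i)):\tau\colon n\to\admtreeinf(\gamma)\bigr)$ from \prettyref{prop:n-th power} yields a contradiction. If instead $n_{k}\to\infty$, we get a weakly increasing sequence of products of arbitrarily many factors of $\mathcal{S}$, which contradicts (the natural extension of) \prettyref{lem:Neumann} applied with $\mathcal{S}$ in place of $\supp(\varepsilon)$ — the proof of that lemma only uses that the monomials are drawn from a reverse well-ordered subset of $o(1)$.

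Stage 3 (the identity). With summability in hand, \prettyref{cor:exchange-of-sums} lets us group admissible trees by their branching number $n$, then apply \prettyref{prop:n-th power} to the inner sum:
\[
  c(re^{\gamma})=re^{c(\gamma)^{\bigeq}}\sum_{n\in\N}\frac{1}{n!}\sum_{\tau\colon n\to\admtreeinf(\gamma)}\prod_{i<n}\ctree(\tau(i))=re^{c(\gamma)^{\bigeq}}\sum_{n\in\N}\frac{\varepsilon^{n}}{n!}.
\]
By \prettyref{fact:normal-form}(8), the last series equals $e^{\varepsilon}$, whence $c(re^{\gamma})=re^{c(\gamma)^{\bigeq}+\varepsilon}=re^{c(\gamma)}$, as required. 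The main obstacle is Stage 2: the bounded-$n$ subcase is a direct appeal to \prettyref{prop:n-th power}, but the unbounded-$n$ subcase requires a careful Neumann-type argument, reflecting the fact that although each factor in $\mathcal{S}$ is infinitesimal, ruling out weakly increasing products of growing length is nontrivial.
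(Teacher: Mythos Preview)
Your proof is correct and follows essentially the same route as the paper: split $c(\gamma)$ into its finite part and its infinitesimal part $\varepsilon=c(\gamma)^{\small}=\sum_{T'\in\admtreeinf(\gamma)}\ctree(T')$, expand $e^{\varepsilon}$ as a Taylor series, and use \prettyref{prop:n-th power} to unfold each power into a sum over $\tau:n\to\admtreeinf(\gamma)$. The only difference is organizational: the paper runs the computation top-down from $re^{c(\gamma)}$ and covers summability with a one-line reference to \prettyref{prop:n-th power}, whereas you separate out summability (Stage~2) and give an explicit case split on whether $(n_k)$ is bounded, invoking a Neumann-type argument for the unbounded case. That extra care is warranted---your observation that one should apply \prettyref{lem:Neumann} with the reverse well-ordered set $\mathcal{S}=\{\LM(\ctree(T')):T'\in\admtreeinf(\gamma)\}$ (equivalently, with $\varepsilon''=\sum_{\m\in\mathcal{S}}\m$ in place of $\varepsilon$) is exactly what is needed, and is a legitimate reading of what the paper leaves implicit.
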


\begin{proof}
  The result is clear if $re^{\gamma}\in\Delta_{1}=\Delta\cup\{0\}$, for
  in that case $c$ coincides with $c_{0}$ by \prettyref{prop:clambda},
  so we can assume $re^{\gamma}\nin\Delta_{1}$. Then
  $\erank(\gamma)<\erank(re^{\gamma})$, and by the inductive
  hypothesis,
  $c(\gamma)=\sum\limits _{T'\in\admtree(\gamma)}\overline{c}(T')$.  By
  definition of $\admtreeinf(\gamma)$ we have
  \[
    c(\gamma)^{\small}=\sum\limits
    _{T'\in\admtreeinf(\gamma)}\overline{c}(T').
  \]
  Unraveling the definitions we have:
  \begin{align*}
    re^{c(\gamma)} & = re^{c(\gamma)^{\bigeq}}e^{c(\gamma)^{\small}}\\
                  & = \sum_{n\in\N}\,re^{c(\gamma)^{\bigeq}}\frac{1}{n!}\left(c(\gamma)^{\small}\right)^{n}\\
                  & = \sum_{n\in\N}\,re^{c(\gamma)^{\bigeq}}\frac{1}{n!}\left(\sum_{T'\in\admtreeinf(\gamma)}\bar{c}(T')\right)^{n}\\
                  & = \sum_{n\in\N}\,re^{c(\gamma)^{\bigeq}}\frac{1}{n!}\sum_{\tau:n\to\admtreeinf(\gamma)}\,\prod_{i<n}\overline{c}(\tau(i))\\
                  & = \sum_{T\in\admtree(re^{\gamma})}\,\bar{c}(T)=c(re^{\gamma}),
  \end{align*}
  where in the fourth line we used \prettyref{prop:n-th power} (which
  also shows the summability of the relevant sequences) and in the
  fifth line we used the definition of $\ctree(T)$ for
  $T\in\admtree(re^{\gamma})$.
\end{proof}

\begin{prop}
  \label{prop:csum}Assume $I(\alpha)$. Let
  $\sum_{i<\beta}r_{i}e^{\gamma_{i}}\in\Delta_{\alpha}$.  Then
  \[
    c\left(\sum_{i<\beta}r_{i}e^{\gamma_{i}}\right)=\sum_{i<\beta}r_{i}e^{c(\gamma_{i})}.
  \]
\end{prop}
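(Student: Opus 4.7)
The plan is to unfold the tree-based definition of $c$, reorganise the admissible trees of $x = \sum_{i<\beta}r_{i}e^{\gamma_{i}}$ according to which term $r_{i}e^{\gamma_{i}}$ their root belongs to, apply the partition lemma that precedes \prettyref{cor:exchange-of-sums}, and finish by invoking \prettyref{prop:cexp} on each individual term.

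First, since $\sum_{i<\beta}r_{i}e^{\gamma_{i}}$ is in Ressayre normal form, the exponents $\gamma_{i}$ are pairwise distinct, so the monomials $e^{\gamma_{i}}$ are distinct and the terms $r_{i}e^{\gamma_{i}}$ are distinct terms of $x$. By \prettyref{def:everything}(4) this gives the disjoint decomposition
\[
  \admtree\!\left(\sum_{i<\beta}r_{i}e^{\gamma_{i}}\right) \;=\; \bigsqcup_{i<\beta}\admtree(r_{i}e^{\gamma_{i}}).
\]
The hypothesis $I(\alpha)$ guarantees that the family $(\ctree(T)\,:\,T\in\admtree(x))$ is summable. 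Applying the partition lemma stated just before \prettyref{cor:exchange-of-sums} along the above decomposition, each restricted subfamily $(\ctree(T)\,:\,T\in\admtree(r_{i}e^{\gamma_{i}}))$ is summable (so $c(r_{i}e^{\gamma_{i}})$ is well defined by \prettyref{def:everything}(8)), and
\[
  c\!\left(\sum_{i<\beta}r_{i}e^{\gamma_{i}}\right)\;=\;\sum_{i<\beta}\,\sum_{T\in\admtree(r_{i}e^{\gamma_{i}})}\!\ctree(T)\;=\;\sum_{i<\beta}c(r_{i}e^{\gamma_{i}}).
\]

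Next, each $r_{i}e^{\gamma_{i}}$ lies in $\Delta_{\alpha}$: indeed $\Delta_{\alpha}$ is truncation-closed by \prettyref{rem:Delta3-field}, so each of $\sum_{j\leq i}r_{j}e^{\gamma_{j}}$ lies in $\Delta_{\alpha}$, and $r_{i}e^{\gamma_{i}}$ is their difference. Equivalently, by \prettyref{cor:erank}, $\erank(r_{i}e^{\gamma_{i}})\le\erank(x)$. Since $I(\alpha)$ immediately implies $I(\beta)$ for all $\beta\le\alpha$ (every $\Delta_{\beta}$ sits inside $\Delta_{\alpha}$, so summability is inherited), I may invoke \prettyref{prop:cexp} on each term $r_{i}e^{\gamma_{i}}$ to obtain $c(r_{i}e^{\gamma_{i}})=r_{i}e^{c(\gamma_{i})}$. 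Substituting this into the displayed identity yields the desired formula.

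There is no serious obstacle here; the entire argument is bookkeeping once the correct summability lemma is in hand. The only point requiring a little care is the verification that each $r_{i}e^{\gamma_{i}}$ genuinely belongs to some $\Delta_{\alpha'+1}$ with $I(\alpha')$ available, which, as noted, reduces to truncation-closedness and the monotonicity of the hypothesis $I(\cdot)$ in its ordinal argument.
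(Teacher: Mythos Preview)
Your proof is correct and follows the same approach as the paper: both use the disjoint decomposition $\admtree(x)=\bigsqcup_{i<\beta}\admtree(r_{i}e^{\gamma_{i}})$ from \prettyref{def:everything}(4) together with \prettyref{prop:cexp}. The paper's proof is a one-liner, while you spell out the bookkeeping; in particular, your detour through ``each $r_{i}e^{\gamma_{i}}$ lies in some $\Delta_{\alpha'+1}$ with $I(\alpha')$ available'' is slightly more careful than necessary, since $r_{i}e^{\gamma_{i}}\in\Delta_{\alpha}\subseteq\Delta_{\alpha+1}$ and \prettyref{prop:cexp} applies directly under the standing hypothesis $I(\alpha)$.
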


\begin{proof}
  It follows at once from \prettyref{prop:cexp} and the equality
  $\admtree(\sum_{i<\beta}r_{i}e^{\gamma_{i}})=\bigcup_{i<\beta}\admtree(r_{i}e^{\gamma_{i}})$.
\end{proof}

\begin{cor}
  \label{cor:c-alpha-3-substitution}Assume $I(\alpha)$, with
  $\alpha\geq3$.  Then $c:\Delta_{\alpha}\to\no$ is a substitution.
\end{cor}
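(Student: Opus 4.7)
The plan is to verify the three defining conditions of a substitution on $\Delta_{\alpha}$: identity on $\R$, strong additivity, and preservation of $\log$. \prettyref{prop:cR} already gives the first, so only the other two require work. The main tools are \prettyref{prop:cexp} and \prettyref{prop:csum}, which identify $c(re^{\gamma})=re^{c(\gamma)}$ and $c(\sum_{i}r_{i}e^{\gamma_{i}})=\sum_{i}r_{i}e^{c(\gamma_{i})}$ for terms and Ressayre normal forms respectively.

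For strong additivity, I take a summable family $(x_{i})_{i\in I}$ in $\Delta_{\alpha}$ and set $y:=\sum_{i}x_{i}\in\Delta_{\alpha}$ (using closure under $\sum$, \prettyref{prop:unionDelta-exp-log-closed}). The key observation is that $\ctree$ is linear in the root coefficient: $\ctree((re^{\delta},n,\tau))=r\cdot K_{n,\tau,\delta}$ where $K_{n,\tau,\delta}=e^{c(\delta)^{\bigeq}}\frac{1}{n!}\prod_{k<n}\ctree(\tau(k))$ depends only on the ``shape'' $(n,\tau,\delta)$. Summability of $(x_{i})$ forces $y_{\m}=\sum_{i}(x_{i})_{\m}$ to be a finite sum, so every shape-$(n,\tau,\delta)$ tree in $\admtree(y)$ has contribution equal to the sum (over $i$ with $e^{\delta}\in\supp(x_{i})$) of the same-shape contributions from $\admtree(x_{i})$. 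To establish summability of the combined family $(\ctree(T):i\in I,\,T\in\admtree(x_{i}))$, I introduce the auxiliary $z:=\sum_{i}|x_{i}|\in\Delta_{\alpha}$, whose support equals $\bigcup_{i}\supp(x_{i})$: any injective sequence of such trees with weakly increasing monomials, after extracting a subsequence with distinct shapes (which exists because each shape occurs only finitely often by summability of $(x_{i})$), yields a corresponding injective sequence in $\admtree(z)$ with the same monomials, contradicting $I(\alpha)$ applied to $z$. \prettyref{cor:exchange-of-sums} then rearranges $\sum_{i}c(x_{i})=c(y)$.

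For $\log$ preservation, strong additivity and \prettyref{prop:cexp}, combined with the Hahn-product formula, yield multiplicativity of $c$: for terms, $c((re^{\gamma})(se^{\delta}))=c(rs\cdot e^{\gamma+\delta})=rs\cdot e^{c(\gamma)+c(\delta)}=c(re^{\gamma})c(se^{\delta})$, using additivity of $c$ on $\J\cap\Delta_{\alpha}$ (a special case of strong additivity). For $x\in\Delta_{\alpha}^{>0}$ I decompose $x=re^{\gamma}(1+\varepsilon)$ with $r\in\R^{>0}$, $\gamma=\vell(x)\in\J\cap\Delta_{\alpha}$, and $\varepsilon\prec1$ in $\Delta_{\alpha}$, so that $\log x=\gamma+\log r+\sum_{n\geq1}(-1)^{n+1}\varepsilon^{n}/n\in\Delta_{\alpha}$ by closure under $\log$. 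Applying $c$ with strong additivity, multiplicativity, and $c_{\restriction\R}=\mathrm{id}$, I obtain $c(\log x)=c(\gamma)+\log r+\sum_{n\geq1}(-1)^{n+1}c(\varepsilon)^{n}/n=\log(re^{c(\gamma)}(1+c(\varepsilon)))=\log c(x)$.

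The main obstacle is that the Taylor-series step requires $c(\varepsilon)\prec1$, which needs order preservation of $c$, apparently circular with log preservation. I break the circle by a secondary induction on the exponential rank $\erank$: for $x=\sum_{i}r_{i}e^{\gamma_{i}}\in\Delta_{\alpha}^{>0}$ (so $r_{0}>0$), the exponents $\gamma_{i}\in\J$ all have rank strictly less than $\erank(x)$, so the inductive hypothesis gives that $c$ preserves the sign of the positive purely infinite differences $\gamma_{0}-\gamma_{i}>\R$, whence $c(\gamma_{0})-c(\gamma_{i})>\R$ and $c(\gamma_{0})^{\big}>c(\gamma_{i})^{\big}$. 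The Ressayre leading monomial of $c(x)=\sum_{i}r_{i}e^{c(\gamma_{i})}$ is therefore $e^{c(\gamma_{0})^{\big}}$, with coefficient $r_{0}e^{c(\gamma_{0})^{=}}$ of the same sign as $r_{0}$, so $c(x)>0$. The base case uses pre-substitution property (2) giving $c_{0}(\lambda)>0$ for $\lambda\in\Delta$. Sign preservation together with additivity and identity on $\R$ then extends to order preservation on all of $\Delta_{\alpha}$, from which $c(\varepsilon)\prec1$ follows.
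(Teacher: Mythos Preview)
Your argument is correct and follows the same three-property verification as the paper's proof. You are in fact more careful than the paper on two points: the paper simply invokes \prettyref{prop:csum} for strong additivity and silently uses $c(\varepsilon)\prec1$ in the $\log$ step (order preservation is only established later in \prettyref{prop:cmonotone}, whose proof for $\alpha\geq3$ circularly cites this very corollary via \prettyref{prop:substitution-is-hom}), whereas you supply the auxiliary $z=\sum_{i}|x_{i}|$ for summability and an explicit $\erank$-induction for sign preservation.

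One small repair is needed in that secondary induction. You apply the inductive hypothesis to the difference $\gamma_{0}-\gamma_{i}$, implicitly assuming $\erank(\gamma_{0}-\gamma_{i})<\erank(x)$; but when $\erank(x)=1$ the exponents $\gamma_{j}$ lie in $\Delta\cup\{0\}$, and a difference of two distinct elements of $\Delta$ has rank $1=\erank(x)$, so the hypothesis does not apply. This case is handled directly: pre-substitution axiom~(5) gives $c_{0}(\gamma_{i})\prec c_{0}(\gamma_{0})$ whenever $\gamma_{i}<\gamma_{0}$ in $\Delta$, hence $c(\gamma_{0})-c(\gamma_{i})\sim c_{0}(\gamma_{0})>\R$ (and if $\gamma_{i}=0$ one uses $c(\gamma_{0})=c_{0}(\gamma_{0})>\R$ directly). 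With this base extended to $\erank(x)\leq1$, your induction goes through.
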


\begin{proof}
  Since $\alpha\geq3$, $\Delta_{\alpha}$ is a transserial subfield of
  $\no$ by \prettyref{prop:unionDelta-exp-log-closed}. By
  \prettyref{prop:cR}, $c$ fixes $\R$, and by \prettyref{prop:csum},
  it is strongly additive.  Moreover, $c$ preserves $\log$. Indeed,
  let $x=re^{\gamma}(1+\varepsilon)\in\Delta_{\alpha}$, where
  $r\in\R$, $\gamma\in\J$ and $\varepsilon\prec1$. We have
  \[
    c(\log(x)) = c\left(\gamma + \log(r) +
      \sum_{i=1}^{\infty}(-1)^{n}\frac{\varepsilon^{n}}{n}\right) =
    c(\gamma)+\log(r) +
    \sum_{i=1}^{\infty}(-1)^{n}\frac{c(\varepsilon)^{n}}{n}.
  \]
  By \prettyref{prop:cexp}, $c(\gamma)=\log(c(e^{\gamma}))$, so the
  right hand side is $\log(c(x))$, as desired.
\end{proof}

\begin{cor}
  \label{cor:csubstitution}Assume $I(\alpha)$ for all $\alpha\in\on$.
  Then $c:\bracket{\Delta}\to\no$ is a substitution extending $c_{0}$.
\end{cor}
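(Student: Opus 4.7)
The plan is to glue together the substitutions $c : \Delta_\alpha \to \no$ for $\alpha \geq 3$, whose existence is already given by \prettyref{cor:c-alpha-3-substitution} under the hypothesis $I(\alpha)$. Since $\bracket{\Delta} = \bigcup_{\alpha \in \on} \Delta_\alpha$ and the $\Delta_\alpha$ form an increasing chain (\prettyref{prop:Delta-alpha-beta}), once we verify that the definitions are compatible across the stages, the function $c$ of \prettyref{def:substitution} is automatically well defined on $\bracket{\Delta}$. Compatibility is, however, essentially built in: for any $x \in \Delta_\alpha$ with $\alpha \geq 3$, the value $c(x) = \sum_{T \in \admtree(x)} \ctree(T)$ depends only on the set $\admtree(x)$ of admissible trees with root a term of $x$, which is an intrinsic notion not referring to $\alpha$. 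The extension property $c \restriction \Delta = c_0$ is just \prettyref{prop:clambda}.

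Next, the three remaining conditions for being a substitution (fixing $\R$, strong additivity, preservation of $\log$) will all be reduced to the corresponding property for some $c : \Delta_\alpha \to \no$ with $\alpha \geq 3$, for which \prettyref{cor:c-alpha-3-substitution} applies. The reduction is immediate for fixing $\R$ (use $\R \subseteq \Delta_3$) and for preservation of $\log$ (given $x \in \bracket{\Delta}^{>0}$, pick $\alpha \geq 3$ with $x \in \Delta_\alpha$; by \prettyref{prop:unionDelta-exp-log-closed}(2), $\log(x) \in \Delta_\alpha$ as well, and the substitution property on $\Delta_\alpha$ gives $c(\log(x)) = \log(c(x))$).

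The only step that requires a genuine argument is strong additivity for a family $(x_i)_{i \in I}$ summable in $\bracket{\Delta}$ with $I$ a set. Here I would use the fact that $\{\erank(x_i) : i \in I\}$ is a set of ordinals (since $I$ is a set), hence has a supremum $\beta$, so that all $x_i$ lie in $\Delta_{\beta+1}$. By \prettyref{prop:unionDelta-exp-log-closed}(1), $\sum_{i} x_i \in \sum \Delta_{\beta+1} \subseteq \Delta_{\beta+1}$ as well (up to passing to $\max(\beta+1, 3)$ so that the stage is $\geq 3$). At this stage $c$ is already known to be a substitution, so both the summability of $(c(x_i))_{i \in I}$ and the identity $c(\sum_i x_i) = \sum_i c(x_i)$ follow.

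The only mild obstacle is a sanity check that the family $\admtree(x)$, and hence $c(x)$, really does not depend on which $\Delta_\alpha$ one views $x$ as living in; this follows directly from the inductive form of \prettyref{def:everything} together with \prettyref{cor:erank}, since the recursion only unfolds on the subterms of $x$, whose ranks are strictly smaller. No further technical work is needed once the summability lemma has been established in \prettyref{sec:induction}; the corollary is essentially a formal assembly of the pieces already in place.
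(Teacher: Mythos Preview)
Your proposal is correct and follows the same approach as the paper, which leaves the corollary without explicit proof as an immediate consequence of \prettyref{cor:c-alpha-3-substitution}; you have simply spelled out in detail the gluing argument (well-definedness of $c$ across stages, and the reduction of each substitution axiom to a fixed $\Delta_{\alpha}$ via bounding ranks of a set-indexed family) that the paper takes for granted.
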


Finally, we need to prove inductively that $I(\alpha)$ holds for all
$\alpha\in\on$. The main difficulty is in proving the successor stage,
namely that $I(\alpha)$ implies $I(\alpha+1)$. This is contained in
the following lemma, the proof of which is postponed to
\prettyref{sec:induction}.

\begin{lem}[Summability]
  \label{lem:summability}Assume $I(\alpha)$. Then
  $(\ctree(T)\suchthat T\in\admtree(x))$ is summable for all
  $x\in\Delta_{\alpha+1}\setminus\Delta_{\alpha}$.  In particular,
  $I(\alpha)$ implies $I(\alpha+1)$.
\end{lem}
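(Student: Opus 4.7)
The plan is to argue by contradiction via the summability criterion of \prettyref{rem:summability-criterion}. Suppose $I(\alpha)$ holds but for some $x \in \Delta_{\alpha+1} \setminus \Delta_{\alpha}$ the family $(\ctree(T))_{T \in \admtree(x)}$ is not summable. Each contribution is a single nonzero term by construction, so non-summability yields an injective sequence $(T_k)_{k \in \N}$ of admissible trees with $\m_k := \LM(\ctree(T_k))$ weakly increasing. My aim is to produce successive extractions whose terminal behaviour forces either $\m_k$ to strictly decrease or the sequence of trees to take only finitely many values, in both cases yielding a contradiction.

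The strategy splits according to how the roots $\treeroot(T_k) = r_{i_k} e^{\gamma_{i_k}}$ behave. Since $\supp(x)$ is reverse well-ordered and $\admtree(x) = \bigcup_{t \in \term(x)} \admtree(t)$, after extraction either the roots stabilise at a single term $re^\gamma$ (with $\gamma \in \Delta_\alpha \cap \J$), or the exponents $\gamma_{i_k}$ strictly decrease in $\J$. In the varying-roots case, $\gamma_{i_k} - \gamma_{i_{k+1}} \in \J^{>0}$ is positive purely infinite, so by \prettyref{prop:substitution-is-hom} the difference $c(\gamma_{i_k}) - c(\gamma_{i_{k+1}})$ is positive infinite in $\no$ and its purely infinite part $c(\gamma_{i_k})^{\big} - c(\gamma_{i_{k+1}})^{\big}$ lies in $\J^{>0}$. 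One then compares $\m_k = e^{c(\gamma_{i_k})^{\big}} \cdot A_k$ (where $A_k := \prod_{j<n_k} \LM(\ctree(\tau_k(j))) \preceq 1$) across different roots: the growth of $e^{c(\gamma_{i_k})^{\big}}$ should outstrip any residual ratio $A_k/A_{k+1}$ coming from the subtrees, forcing $\m_k$ to strictly decrease and contradicting the weakly increasing assumption.

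In the fixed-root case, write $T_k = \tree{re^\gamma}{n_k}{\tau_k}$, so that the weakly increasing condition on $\m_k$ translates to the products $A_k$ being weakly increasing in $\M$. Each factor $\LM(\ctree(\tau_k(j)))$ lies in $\supp(c(\gamma)^{\small})$, which is reverse well-ordered by $I(\alpha)$. If $n_k$ is unbounded, \prettyref{lem:Neumann} directly yields that products of unboundedly many infinitesimal factors cannot be weakly increasing, a contradiction. If $n_k$ is bounded, I would extract to a constant $n_k = n$, then diagonalize so that for each $j<n$ the sequence $(\LM(\ctree(\tau_k(j))))_{k \in \N}$ is either strictly decreasing or constant (as permitted by the reverse well-ordering). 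A strictly decreasing coordinate gives strictly decreasing $A_k$, contradicting weakly increasing; constancy in every coordinate, combined with the finite-multiplicity part of $I(\alpha)$ (only finitely many $\tau \in \admtreeinf(\gamma)$ share a given leading monomial), forces $\tau_k$ to take only finitely many values, contradicting the injectivity of $(T_k)$.

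The main obstacle is the varying-roots step, where one must quantify how small $A_{k+1}/A_k$ can be relative to the gap $e^{c(\gamma_{i_k})^{\big}}/e^{c(\gamma_{i_{k+1}})^{\big}}$: the $A_k$ can arise from subtrees of arbitrary complexity inside $\admtreeinf(\gamma_{i_k})$ for varying $\gamma_{i_k}$, so a uniform control is non-trivial and cannot be deduced from $I(\alpha)$ alone in a single pass. This step, together with the nested induction on tree size and the combinatorial factorization arguments in the spirit of \prettyref{lem:Neumann}, is presumably the content deferred to \prettyref{sec:induction}.
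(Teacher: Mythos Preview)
Your fixed-root case is correct but re-proves \prettyref{prop:cexp}, which the paper already establishes under $I(\alpha)$ before embarking on \prettyref{lem:summability}; so that case carries no new content. You are right that the entire difficulty lies in the varying-roots case and that some form of induction on tree size is needed.

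However, the paper's argument differs substantially from what you sketch, and your heuristic that the exponential gap $e^{c(\gamma_{i_k})^{\big}-c(\gamma_{i_{k+1}})^{\big}}$ ``should outstrip'' $A_{k+1}/A_k$ is simply false without further work: nothing bounds $A_k$ from below, since $A_k$ is a product of arbitrarily many infinitesimal subtree contributions living in $\admtreeinf(\gamma_{i_k})$ for \emph{varying} $\gamma_{i_k}$, and $I(\alpha)$ gives no uniform control across different $\gamma$'s. The paper makes three moves you do not identify. First, it strengthens the target from ``no weakly increasing sequence of contributions'' to ``no \emph{bad sequence}'', meaning $\left(\ctree(T_i)/\ctree(T_{i+1})\right)^n \preceq c(\treeroot(T_i))/c(\treeroot(T_{i+1}))$ for all $n$; this stronger condition is what survives the inductive step. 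Second, the operative dichotomy is not fixed versus varying roots but whether every child's root dominates $\gamma_0-\gamma_i$ (type~A) or some child's root is $\preceq\gamma_{i-1}-\gamma_i$ (type~B); type~A reduces to the single-term case via a root-replacement trick, while type~B is where the size reduction happens. Third, the size reduction is a \emph{pruning} operation removing one leaf per tree, and the estimate showing the pruned sequence is still bad (\prettyref{prop:prune-branch} combined with \prettyref{lem:finer-large-gaps}) hinges on a quantitative bound $s_{j+1}/s_j\prec c_0(\lambda_{j+1})^2$ for leaf contributions. Without the strengthened ``bad sequence'' hypothesis the pruned sequence need not remain a counterexample, so a size induction on the bare summability criterion does not close.
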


\begin{proof}
  Postponed to \prettyref{sec:induction}.
\end{proof}

\begin{thm}
  \label{thm:pre-comp-extends}Any pre-substitution
  $c_{0}:\Delta\to\no$ extends uniquely to a substitution
  $c:\bracket{\Delta}\to\no$.
\end{thm}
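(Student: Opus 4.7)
My plan is to reduce the theorem to the technical Summability Lemma (\prettyref{lem:summability}) and an assembly of the previously established results. The construction in \prettyref{def:everything} already defines $c : \Delta_\alpha \to \no$ conditional on the hypothesis $I(\alpha)$; combined with \prettyref{cor:csubstitution}, the existence part of the theorem reduces entirely to verifying $I(\alpha)$ for every ordinal $\alpha \in \on$.

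For existence, I would proceed by transfinite induction on $\alpha$. The base cases $I(0)$ and $I(1)$ are immediate from \prettyref{prop:clambda}, which shows that the admissible trees at $\lambda \in \Delta$ are in bijection with $\term(c_0(\lambda))$. The successor step $I(\alpha) \Rightarrow I(\alpha+1)$ is precisely the Summability Lemma. For a limit ordinal $\alpha$, note that $\Delta_\alpha = \bigcup_{\beta < \alpha}\Delta_\beta$, so any $x \in \Delta_\alpha$ already lies in some $\Delta_\beta$ with $\beta < \alpha$; since the recursive definition of $\admtree(x)$ and $\ctree(T)$ only references objects whose rank is strictly less than that of $x$, summability at $x$ is already provided by $I(\beta)$. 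With $I(\alpha)$ established for all $\alpha$, \prettyref{cor:csubstitution} yields a substitution $c : \bracket{\Delta} \to \no$, and \prettyref{prop:clambda} shows it extends $c_0$.

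For uniqueness, let $c' : \bracket{\Delta} \to \no$ be any substitution extending $c_0$. By \prettyref{prop:substitution-is-hom} it is an ordered field embedding fixing $\R$; applying the identity $c'(\log y) = \log c'(y)$ with $y = e^x$ gives $c'(e^x) = e^{c'(x)}$, so $c'$ preserves $\exp$ as well. I would then argue by transfinite induction on $\erank(x)$: for $x \in \Delta \cup \{0\}$ we have $c'(x) = c_0(x) = c(x)$; for higher rank, writing $x = \sum_{i<\beta} r_i e^{\gamma_i}$ in Ressayre normal form with $\erank(\gamma_i) < \erank(x)$, the strong additivity of $c'$ together with preservation of $\exp$ yields
\[
  c'(x) = \sum_{i<\beta} r_i e^{c'(\gamma_i)} = \sum_{i<\beta} r_i e^{c(\gamma_i)} = c(x),
\]
where the last equality uses \prettyref{prop:csum}. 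The only genuine obstacle in this plan is the Summability Lemma itself, whose proof is postponed to \prettyref{sec:induction}; all the remaining ingredients amount to a careful but routine assembly of the machinery established above.
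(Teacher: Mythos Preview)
Your proposal is correct and follows essentially the same approach as the paper: transfinite induction on $\alpha$ to establish $I(\alpha)$ (base cases via \prettyref{prop:clambda}, successor via \prettyref{lem:summability}, limit trivially from $\Delta_{\alpha}=\bigcup_{\beta<\alpha}\Delta_{\beta}$), then \prettyref{cor:csubstitution} for existence and an induction on $\erank$ for uniqueness. You spell out the uniqueness argument in more detail than the paper (which simply says ``an easy induction on $\erank$''), correctly deriving that any substitution preserves $\exp$ from the $\log$-axiom and then using strong additivity together with \prettyref{prop:csum}; this is exactly what the paper intends.
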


\begin{proof}
  Fix a pre-substitution $c_{0}:\Delta\to\no$. By
  \prettyref{prop:clambda}, $I(0)$ and $I(1)$ hold. It is also clear
  by the definition of $I(\alpha)$ that whenever $\alpha$ is a limit
  ordinal, $I(\alpha)$ is implied by, and in fact equivalent to,
  $\bigwedge_{\beta<\alpha}I(\beta)$.  Moreover, by
  \prettyref{lem:summability}, $I(\alpha)$ implies $I(\alpha+1)$.
  Therefore, $I(\alpha)$ holds for all $\alpha\in\on$. By
  \prettyref{cor:csubstitution}, $c$ is a substitution extending
  $c_{0}$. The uniqueness follows by an easy induction on $\erank$.
\end{proof}

\begin{cor}
  \label{cor:substitution-omega-series}Given $x\in\no^{>0}$, there is a
  unique substitution $c^{x}:\bracket{\omega}\to\no$ sending $\omega$
  to $x$.
\end{cor}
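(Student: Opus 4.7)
The plan is to reduce the corollary to the main extension theorem \prettyref{thm:pre-comp-extends} by producing an appropriate pre-substitution on the log-atomic generators of $\bracket{\omega}$. Specifically, I would take $\Delta := \{\log_n(\omega) \suchthat n \in \N\}$, which is a subclass of $\li$ closed under $\log$, and note that $\bracket{\Delta} = \bracket{\omega}$: the inclusion $\bracket{\Delta} \subseteq \bracket{\omega}$ is immediate from the definition, while the reverse follows because $\bracket{\Delta}$ contains $\R \cup \{\omega\}$ and is closed under $\sum$, $\exp$, and $\log$, so it contains the smallest such subfield. (This is exactly the content of \prettyref{rem:LE-delta-n} for successor stages, passed to the union over $\on$.)

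For existence, given a positive infinite $x$, I would apply \prettyref{cor:basic-pre-substitution} to obtain the pre-substitution $c_{0}^{x} : \Delta \to \no$ sending $\log_n(\omega) \mapsto \log_n(x)$. Then \prettyref{thm:pre-comp-extends} produces a unique substitution $c^{x} : \bracket{\Delta} = \bracket{\omega} \to \no$ extending $c_{0}^{x}$. In particular $c^{x}(\omega) = c_{0}^{x}(\omega) = x$, so $c^x$ has the required property.

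For uniqueness, suppose $c : \bracket{\omega} \to \no$ is any substitution with $c(\omega) = x$. Since substitutions preserve $\log$ by definition, an easy induction on $n$ gives $c(\log_n(\omega)) = \log_n(c(\omega)) = \log_n(x) = c_{0}^{x}(\log_n(\omega))$, so $c$ agrees with $c_{0}^{x}$ on $\Delta$. The uniqueness clause of \prettyref{thm:pre-comp-extends} then forces $c = c^{x}$.

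There is no real obstacle here: all the technical work (summability of admissible-tree contributions, preservation of $\log$ and $\sum$ by the extension, etc.) is already packaged into \prettyref{thm:pre-comp-extends}, and the verification that $c_{0}^{x}$ is a pre-substitution is handled by \prettyref{cor:basic-pre-substitution}, which in turn rests on \prettyref{prop:sum-of-logs}. The only thing to double-check is the side condition that $x$ be sufficiently large for $\log_n(x)$ to be defined for every $n \in \N$, i.e. that $x > \R$ (so that iterated logarithms never exit the domain of $\log$); one should read the hypothesis $x \in \no^{>0}$ in the corollary as $x \in \no^{>\R}$, in accordance with \prettyref{cor:basic-pre-substitution}.
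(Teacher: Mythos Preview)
Your proof is correct and follows exactly the paper's approach: define $\Delta=\{\log_{n}(\omega)\suchthat n\in\N\}$, invoke \prettyref{cor:basic-pre-substitution} to get the pre-substitution $c_{0}^{x}$, and apply \prettyref{thm:pre-comp-extends} on $\bracket{\Delta}=\bracket{\omega}$. You in fact supply slightly more detail than the paper does---justifying $\bracket{\Delta}=\bracket{\omega}$ and spelling out why any substitution with $c(\omega)=x$ must restrict to $c_{0}^{x}$ on $\Delta$---and you are right to flag that the hypothesis should read $x\in\no^{>\R}$ rather than $x\in\no^{>0}$, consistent with \prettyref{cor:basic-pre-substitution}.
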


\begin{proof}
  Let $\Delta=\left\{ \log_{i}(\omega)\suchthat i\in\N\right\} $ and
  let $c_{0}^{x}:\Delta\to\no$ be the map that sends
  $\log_{i}(\omega)$ to $\log_{i}(x)$. Then $c_{0}^{x}$ is a
  pre-substitution by \prettyref{cor:basic-pre-substitution}.  By
  \prettyref{thm:pre-comp-extends}, there is a unique substitution
  $c^{x}:\bracket{\Delta}=\bracket{\omega}\to\no$ extending $c_{0}^{x}$.
\end{proof}

\section{Composition}

We prove that omega-series can be composed in a meaningful
way. Intuitively, for $f,g\in\bracket{\omega}$, with $g>\R$,
$f\circ g$ is the result of substituting $g$ for $\omega$ in $f$. For
instance, we will have
\[
  \left(\sum_{i\in\N}\log_{i}(\omega)\right) \circ
  \left(\sum_{i\in\N}\log_{i}(\omega)\right) =
  \sum_{i\in\N}\log_{i}\left(\sum_{j\in\N}\log_{j}(\omega)\right).
\]
Note that the right-hand side exists in $\no$ by the results in
\prettyref{sec:Substitutions} and it is in fact an element of
$\bracket{\omega}$.

\begin{defn}
  Let $T\subseteq\no$ be a transserial subfield containing $\omega$.
  A \textbf{composition} on $T$ is a function
  $\circ:T\times\no^{>\R}\to\no$ which satisfies the following axioms:
  \begin{enumerate}
  \item for all $x\in\no^{>\R}$, the map $f\mapsto f\circ x$ is a
    substitution, namely:
    \begin{enumerate}
    \item for any summable $(f_{i})_{i\in I}$ in $T$, the family
      $(f_{i}\circ x)_{i\in I}$ is summable and
      \[
        \left(\sum_{i\in I}f_{i}\right)\circ x=\sum_{i\in I}(f_{i}\circ
        x);
      \]
    \item $r\circ x=r$ for all $r\in\R$;
    \item $\log(f)\circ x=\log(f\circ x)$ for all $f\in T$;
    \end{enumerate}
  \item $T$ is closed under composition: for all $f\in T$,
    $g\in T^{>\R}$ we have $f\circ g\in T$;
  \item associativity: $(f\circ g)\circ x=f\circ(g\circ x)$ for all
    $f\in T$, $g\in T^{>\R}$, $x\in\no^{>\R}$;
  \item $\omega$ is the identity: for all $x\in\no^{>\R}$ and $f\in T$
    we have $\omega\circ x=x$, $f\circ\omega=f$.
  \end{enumerate}
\end{defn}

The axioms are modeled on the usual composition of real valued
functions, where we interpret $\omega$ as the identity function. The
restriction on the second argument to be positive infinite is
necessary for a composition to exist; for instance we cannot hope to
define $\sum_{n\in\N}\omega^{-n}\circ(1/2)$ in any reasonable way, as
the axioms imply that the result should be $\sum_{n\in\N}2^{n}$. Recall
that by \prettyref{prop:substitution-is-hom}, for all $x\in\no^{>\N}$,
the map $f\circ x$ is increasing and it preserves the dominance
relation $\vleq$.

When $T\subseteq\bracket{\omega}$, the list of axioms can be
shortened. More precisely, we have:

\begin{prop}
  \label{prop:uniqueness-composition}If $T$ is a transserial field
  included in $\bracket{\omega}$, there is at most one function
  $\circ:T\times\no^{>\R}\to\no$ satisfying the following conditions:
  \begin{enumerate}
  \item for all $x\in\no^{>\R}$, the map $f\mapsto f\circ x$ is a
    substitution;
  \item for all $x\in\no^{>\R}$, $\omega\circ x=x$.
  \end{enumerate}
  If any such function $\circ$ exists, it satisfies $f\circ\omega=f$
  for any $f\in T$. If moreover $T$ is closed under $\circ$, then
  $\circ$ is associative, so it is a composition.
\end{prop}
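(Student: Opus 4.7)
The plan is to reduce every claim to the uniqueness of extensions of pre-substitutions contained in \prettyref{thm:pre-comp-extends}. Since $T$ is a transserial subfield of $\bracket{\omega}$ and condition (2) implicitly requires $\omega\in T$, the field $T$ contains $\Delta:=\{\log_{n}(\omega)\suchthat n\in\N\}$. The key preliminary observation is that any substitution $c:T\to\no$ is determined by its restriction to $\Delta$: indeed, since log-preservation forces $c(e^{\gamma})=e^{c(\gamma)}$, strong additivity yields $c(f)=\sum_{i}r_{i}e^{c(\gamma_{i})}$ whenever $f=\sum_{i}r_{i}e^{\gamma_{i}}$ is in Ressayre normal form, and an induction on $\erank(f)$ reduces the computation of $c(f)$ to that of $c$ on elements of strictly smaller rank, with base case $\Delta\cup\{0\}$.

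Uniqueness of $\circ$ follows at once: for fixed $x\in\no^{>\R}$, condition (2) forces $\omega\circ x=x$, so log-preservation forces $\log_{n}(\omega)\circ x=\log_{n}(x)$ for every $n\in\N$, pinning down the substitution $f\mapsto f\circ x$ on all of $T$. For the claim that $f\circ\omega=f$, apply the same principle to the substitution $f\mapsto f\circ\omega$: by (2) it sends each $\log_{n}(\omega)$ to $\log_{n}(\omega)$, so it agrees with $\mathrm{id}_{T}$ on $\Delta$ and therefore equals $\mathrm{id}_{T}$.

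For associativity, assume $T$ is closed under $\circ$ and fix $g\in T^{>\R}$ and $x\in\no^{>\R}$. First observe that $g\circ x>\R$: by \prettyref{prop:substitution-is-hom} the substitution $\cdot\circ x$ fixes $\R$ and preserves the order, so $g>\R$ gives $g\circ x>\R$. Now consider the two maps $\phi_{1},\phi_{2}:T\to\no$ given by $\phi_{1}(f):=(f\circ g)\circ x$ and $\phi_{2}(f):=f\circ(g\circ x)$. The map $\phi_{1}$ is the composite of the substitutions $\cdot\circ g:T\to T$ and $\cdot\circ x:T\to\no$, and a routine check shows that the composite of two substitutions is again a substitution (strong additivity, fixing $\R$, and log-preservation all descend through the composite). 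The map $\phi_{2}$ is a substitution by (1) applied to $g\circ x\in\no^{>\R}$. Finally, $\phi_{1}(\omega)=g\circ x=\phi_{2}(\omega)$ by two applications of (2), so by the uniqueness principle above $\phi_{1}=\phi_{2}$ on $T$, which is precisely associativity; together with (1), (2), (4) this also verifies all four clauses in the definition of composition.

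The main obstacle is the opening claim that a substitution on $T$ is determined by its values on $\Delta$: this is essentially the uniqueness clause of \prettyref{thm:pre-comp-extends}, but stated here for $T$ rather than for all of $\bracket{\omega}$, so one must check that the induction on $\erank$ goes through using only the intrinsic Ressayre normal-form decomposition of $f\in T$, which does not depend on the ambient field. Once this is in place, uniqueness of $\circ$, the identity-at-$\omega$ property, and associativity are all immediate formal consequences.
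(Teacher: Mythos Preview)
Your proof is correct and follows essentially the same approach as the paper: both reduce to $\Delta=\{\log_{n}(\omega)\}$ and argue by induction on $\erank$. The only organizational difference is that you first isolate the principle ``a substitution on $T$ is determined by its values on $\Delta$'' and then apply it three times (for uniqueness, for $f\circ\omega=f$, and---via the observation that composites of substitutions are substitutions---for associativity), whereas the paper runs a single simultaneous induction on $\erank(f)$ establishing all three claims at once; your packaging is slightly more modular, but the content is the same.
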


\begin{proof}
  Suppose that $\circ$ is a function satisfying the above properties.
  Let
  $\Delta=\left\{ \log_{i}(\omega)\suchthat i\in\N\right\} \subseteq
  T$, and fix some $x\in\no^{>\R}$. We claim that the values of the
  substitution $f\mapsto f\circ x$ for $f\in\Delta$ are uniquely
  determined by the requirement $\omega\circ x=x$. We shall prove this
  by induction on $\erank(f)$; at the same time, we will also verify
  associativity when $T$ is closed under $\circ$.

  Note first that $\log_{i}(\omega)\circ x=\log_{i}(x)$ by definition of
  substitution. Moreover,
  \[
    \log_{i}(\omega)\circ(g\circ x)=\log_{i}(g\circ x)=\log_{i}(g)\circ
    x=(\log_{i}(\omega)\circ g)\circ x
  \]
  for any $g\in T^{>\N}$, and also
  $\log_{i}(\omega)\circ\omega=\log_{i}(\omega)$.  It now follows by
  induction on $\erank(f)$ that the value of $f\circ x$ is also
  uniquely determined, $f\circ\omega=f$, and if $T$ is closed under
  $\circ$, then $f\circ(g\circ x)=(f\circ g)\circ x$ for any
  $g\in T^{>\R}$. Indeed, if $f=\sum_{i<\alpha}r_{i}e^{\gamma_{i}}$, where
  $\erank(f)>0$, then we must have
  \[
    f\circ x=\sum_{i<\alpha}r_{i}e^{\gamma_{i}\circ x}
  \]
  where $\erank(\gamma_{i})<\erank(f)$. The value of $f\circ x$ is then
  uniquely determined by the values $\gamma_{i}\circ x$, which are
  themselves uniquely determined by inductive hypothesis, and clearly
  $f\circ\omega=f$ as again by induction
  $\gamma_{i}\circ\omega=\gamma_{i}$.  Moreover, if $T$ is closed under
  $\circ$, then
  \[
    f\circ(g\circ x)=\sum_{i<\alpha}r_{i}e^{\gamma_{i}\circ(g\circ
      x)}=\sum_{i<\alpha}r_{i}e^{(\gamma_{i}\circ g)\circ
      x}=\left(\sum_{i<\alpha}r_{i}e^{\gamma_{i}\circ g}\right)\circ
    x=(f\circ g)\circ x.
  \]
  Therefore, $\circ$ is unique, $f\circ\omega=f$ for any $f\in T$, and
  if $T$ is closed under $\circ$, then it is associative, so it is a
  composition.
\end{proof}

\begin{thm}
  \label{thm:composition-omega-series}There is a unique composition
  $\circ:\bracket{\omega}\times\no^{>\R}\to\no$.
\end{thm}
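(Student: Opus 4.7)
The plan is to bootstrap the composition $\circ$ directly from the substitutions $c^{x}\colon\bracket{\omega}\to\no$ produced by \prettyref{cor:substitution-omega-series}. I would set $f\circ x:=c^{x}(f)$ for $f\in\bracket{\omega}$ and $x\in\no^{>\R}$. With this definition, axiom (1) (that $f\mapsto f\circ x$ is a substitution) and half of axiom (4) (namely $\omega\circ x=x$) are immediate, because $c^{x}$ is a substitution by construction and sends $\omega$ to $x$.

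The identity $f\circ\omega=f$ would then be extracted from the uniqueness clause of \prettyref{cor:substitution-omega-series}: the identity map on $\bracket{\omega}$ is itself a substitution sending $\omega$ to $\omega$, so by uniqueness it must coincide with $c^{\omega}$, giving $f\circ\omega=c^{\omega}(f)=f$.

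The substantive step is closure, i.e.\ showing $f\circ g\in\bracket{\omega}$ whenever $f\in\bracket{\omega}$ and $g\in\bracket{\omega}^{>\R}$. I would argue by transfinite induction on $\erank(f)$, invoking the inductive description of $\bracket{\omega}$ as $\bigcup_{\alpha\in\on}\Delta_{\alpha}$ with $\Delta=\{\log_{i}(\omega)\suchthat i\in\N\}$ from \prettyref{cor:bracket-delta-is-closure}. For the base case, since $\bracket{\omega}$ is closed under $\log$, one has $c^{g}(\log_{i}(\omega))=\log_{i}(g)\in\bracket{\omega}$ for all $i$. For the inductive step, writing $f=\sum_{i<\alpha}r_{i}e^{\gamma_{i}}$ with $\erank(\gamma_{i})<\erank(f)$, \prettyref{prop:csum} gives $c^{g}(f)=\sum_{i<\alpha}r_{i}e^{c^{g}(\gamma_{i})}$; the inductive hypothesis places each $c^{g}(\gamma_{i})$ in $\bracket{\omega}$, and closure of $\bracket{\omega}$ under $\sum$ and $\exp$ then puts $c^{g}(f)$ in $\bracket{\omega}$ as well.

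With closure in hand, \prettyref{prop:uniqueness-composition} supplies associativity and uniqueness simultaneously, completing the verification of all composition axioms. The genuinely hard work (the \emph{summability lemma} feeding into \prettyref{cor:substitution-omega-series}) has already been carried out in \prettyref{sec:Substitutions}, so the present theorem is essentially an assembly of existing pieces; I expect the only mildly delicate point to be the transfinite induction establishing closure, and even that reduces immediately to the closure properties of $\bracket{\omega}$ under $\sum$, $\exp$ and $\log$ already built into its definition.
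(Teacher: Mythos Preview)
Your proposal is correct and follows essentially the same route as the paper: define $f\circ x:=c^{x}(f)$ via \prettyref{cor:substitution-omega-series}, verify closure of $\bracket{\omega}$ under $\circ$ by induction on $\erank$, and then invoke \prettyref{prop:uniqueness-composition} for associativity and uniqueness. You have simply spelled out in more detail the steps the paper compresses into ``one can easily verify by induction on $\erank$''.
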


\begin{proof}
  Let $\Delta=\left\{ \log_{i}(\omega)\suchthat i\in\N\right\} $. Fix
  $x\in\no^{>\R}$ and $f\in\bracket{\omega}$. By
  \prettyref{cor:substitution-omega-series}, there exists a unique
  substitution $c^{x}$ on $\bracket{\Delta}=\bracket{\omega}$ such that
  $c^{x}(\log_{i}(\omega))=\log_{i}(x)$ for all $i\in\N$.  We then define
  $f\circ x:=c^{x}(f)$. Clearly, this function is the unique one
  satisfying the hypothesis of
  \prettyref{prop:uniqueness-composition}.  One can easily verify by
  induction on $\erank$ that $\bracket{\omega}$ is closed under
  $\circ$, so it is a composition.
\end{proof}

\section{Taylor expansions}

In this section, let $\circ$ be the unique composition on
$\bracket{\omega}$.  We shall now prove that for every
$f\in\bracket{\omega}$, the function $x\mapsto f\circ x$ is surreal
analytic in the sense of \prettyref{def:surreal-analytic}.  Moreover,
the coefficients will coincide with the iterated derivatives of $f$
divided by $n!$, when using the unique surreal derivation on
$\bracket{\omega}$.

\subsection{\label{subsec:Transserial-derivations}Transserial
  derivations}

Recall the notion of derivation from
\cite{Schmeling2001,Berarducci2015}.

\begin{defn}
  Given a field $T$, we recall that a map $\partial:T\to T$ is a
  \textbf{derivation} if it is additive
  ($\partial(x+y)=\partial x+\partial y$) and satisfies the Leibniz
  rule ($\partial(xy)=x\cdot\partial y+\partial x\cdot y)$.  If $T$ is
  a field of transseries we say that $\partial:T\to T$ is a
  \textbf{transserial derivation} if it is a derivation satisfying the
  following additional properties:
  \begin{enumerate}
  \item $\partial$ is strongly additive;
  \item $\partial e^{x}=e^{x}\cdot\partial x$;
  \item $\partial\omega=1$;
  \item $\partial r=0$ if $r\in\R$.
  \end{enumerate}
  As in \cite{Berarducci2015}, we call \textbf{surreal derivation} a
  transserial derivation with $\ker\partial=\R$.
\end{defn}

In \cite{Berarducci2015}, the authors proved that there exist surreal
derivations on $\no$, and in fact several of them. However, just like
we proved that there is a unique composition on $\bracket{\omega}$, we
can easily verify that there exists a unique transserial derivation on
$\bracket{\omega}$.
\begin{prop}
  \label{prop:standard-derivation}The field of omega-series admits a
  unique transserial derivation
  $\partial:\bracket{\omega}\to\bracket{\omega}$, which is in fact a
  surreal derivation.
\end{prop}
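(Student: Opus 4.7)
The plan is to deduce uniqueness from the axioms and then obtain existence by restricting one of the surreal derivations on $\no$ constructed in \cite{Berarducci2015}.

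For uniqueness, I would argue by induction on the exponential rank $\erank(x)$ that the value $\partial x$ is forced by the axioms. The base case is $x \in \Delta := \{\log_{n}(\omega)\suchthat n \in \N\}$: from $\partial e^{y} = e^{y}\partial y$ with $y = \log z$ one derives $\partial \log z = \partial z / z$, so the axiom $\partial\omega = 1$ forces
\[
  \partial \log_{n}(\omega) = \frac{1}{\omega \cdot \log(\omega) \cdots \log_{n-1}(\omega)},
\]
which is a well-defined element of $\bracket{\omega}$ since this field contains $\omega$ and all its iterated logarithms, hence their products and reciprocals. For the inductive step, if $x = \sum_{i<\alpha} r_{i} e^{\gamma_{i}}$ is in Ressayre normal form, then $\erank(\gamma_{i}) < \erank(x)$, and strong additivity combined with $\partial e^{\gamma_{i}} = e^{\gamma_{i}}\partial\gamma_{i}$ and $\partial r_{i} = 0$ force
\[
  \partial x = \sum_{i<\alpha} r_{i} e^{\gamma_{i}} \partial\gamma_{i},
\]
where each $\partial\gamma_{i}$ is determined by the inductive hypothesis. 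Hence at most one transserial derivation on $\bracket{\omega}$ can exist.

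For existence, I would fix a surreal derivation $\partial \colon \no \to \no$ provided by \cite[Thm.\ 6.30]{Berarducci2015}. As recalled in the introduction, every such derivation coincides on $\bracket{\omega}$ with the map just characterised by uniqueness; what remains is to verify that the restriction of $\partial$ to $\bracket{\omega}$ actually takes values in $\bracket{\omega}$. I would prove this by induction on $\erank$ using the explicit formula $\partial x = \sum_{i<\alpha} r_{i} e^{\gamma_{i}} \partial\gamma_{i}$: the base case is handled by the above computation of $\partial \log_{n}(\omega)$; in the inductive step each $\partial\gamma_{i} \in \bracket{\omega}$ by hypothesis, so each term $r_{i} e^{\gamma_{i}} \partial\gamma_{i}$ lies in $\bracket{\omega}$, and the family is summable in $\no$ because the global $\partial$ is strongly additive and $(r_{i}e^{\gamma_{i}})_{i<\alpha}$ is summable; since $\bracket{\omega}$ is closed under $\sum$, the sum lies in $\bracket{\omega}$.

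Finally, to see that the resulting $\partial \colon \bracket{\omega} \to \bracket{\omega}$ is a surreal derivation, I would note that $\ker(\partial) = \R$ already holds for $\partial$ on $\no$, and hence $\ker(\partial_{\restriction \bracket{\omega}}) = \R \cap \bracket{\omega} = \R$. The main obstacle I anticipate is precisely the summability verification in the inductive step: a priori, multiplying each $r_{i}e^{\gamma_{i}}$ by $\partial\gamma_{i}$ could destroy summability, but this is ruled out because the ambient derivation on $\no$ is, by construction, strongly additive on the whole of $\no$. Everything else is a routine combination of the Ressayre normal form, strong additivity, and the chain-rule identities for $\exp$ and $\log$.
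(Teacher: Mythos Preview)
Your proposal is correct and follows essentially the same strategy as the paper: uniqueness by induction on $\erank$ using the Ressayre normal form and the defining identities, existence by restricting a surreal derivation on $\no$ from \cite{Berarducci2015} and checking by the same induction that $\bracket{\omega}$ is preserved. The only cosmetic difference is that the paper extracts $\ker(\partial)=\R$ directly from the uniqueness induction rather than inheriting it from the ambient derivation, but both arguments work.
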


\begin{proof}
  Suppose first that there exists a transserial derivation
  $\partial:\bracket{\omega}\to\bracket{\omega}$.  Since
  $\partial\omega=1$, an easy induction on $\erank$ shows that in fact
  the values of $\partial$ are uniquely determined, and that
  $\ker(\partial)=\R$. Therefore, if there is one such derivation, it
  is unique, and it is a surreal derivation.

  For the existence, let $\partial$ be any surreal derivation, which
  exists by the results of \cite{Berarducci2015}. By the same argument
  as above, since $\partial\omega=1\in\bracket{\omega}$, an easy
  induction on $\erank$ shows that
  $\partial(\bracket{\omega})\subseteq\bracket{\omega}$.  Therefore,
  the restriction of $\partial$ to $\bracket{\omega}$ is the unique
  transserial derivation on $\bracket{\omega}$.
\end{proof}

\begin{rem}
  Unlike the subfield $\R((\omega))^{LE}$, but like
  $\R((\omega))^{EL}$, the field of omega-series $\bracket{\omega}$ is
  not closed under anti-derivatives. For instance, it contains no
  integral for the monomial $\exp(-\sum_{n\in\N}\log_{n}(\omega))$.
\end{rem}

\subsection{A Taylor theorem}

From now on, let $\partial:\bracket{\omega}\to\bracket{\omega}$ be the
unique transserial derivation on $\bracket{\omega}$. Recall that for
any $x\prec1$ we have $\exp(x)=\sum_{n\in\N}\frac{x^{n}}{n!}$.  When
$x\succ1$, the equality does not hold, as the right hand side clearly
does not exist. However, we can still approximate $\exp(x)$ with
Taylor polynomials. In particular we have the following:

\begin{prop}
  Given $x\in\no$, there are $A\in\no$ and
  $\varepsilon_{0}\in\no^{>0}$ (depending on $x$) such that, for every
  $\varepsilon\in\no$ smaller in modulus than $\varepsilon_{0}$, we
  have
  \[
    \exp(x+\eps)=\exp(x)+\exp'(x)\eps+\mathcal{O}(A\eps^{2})
  \]
  where $\exp'(x):=\exp(x)$ and $\mathcal{O}(A\varepsilon^{2})$ is a
  surreal number $\vleq A\varepsilon^{2}$. Similarly, we can write
  \[
    \log(x+\eps)=\log(x)+\log'(x)\eps+\mathcal{O}(A\eps^{2})
  \]
  where $\log'(x):=\frac{1}{x}$.
\end{prop}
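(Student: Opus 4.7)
The plan is to reduce both statements to the known Taylor series for $\exp(\eps)$ and $\log(1+\eps)$ at infinitesimal argument, as recalled in \prettyref{fact:normal-form}(8). The key functional identities are
\[
  \exp(x+\eps) = \exp(x)\exp(\eps), \qquad \log(x+\eps) = \log(x) + \log(1 + \eps/x),
\]
so everything reduces to the task of choosing $\eps_0$ small enough that the ``inner'' argument (namely $\eps$ in the first case, $\eps/x$ in the second) becomes strictly dominated by $1$, and then isolating the first two terms of the corresponding convergent series.

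For the exp case, I would take $\eps_0$ to be any positive infinitesimal (for instance $\eps_0 := 1/\omega$), so that $|\eps| < \eps_0$ forces $\eps \vless 1$. Then $\exp(\eps) = \sum_{n=0}^{\infty}\eps^{n}/n!$ exists, and
\[
  \exp(x+\eps) = \exp(x) + \exp(x)\eps + \exp(x)\eps^{2}\cdot S(\eps), \qquad S(\eps):=\sum_{n=0}^{\infty}\frac{\eps^{n}}{(n+2)!}.
\]
Since $\eps \vless 1$ and the coefficients of $S$ are real, \prettyref{cor:power-series} gives that $S(\eps)$ exists in $\no$, and because its leading term is $1/2$ and the rest is infinitesimal we have $S(\eps)\veq 1$, in particular $S(\eps)\preceq 1$. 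Thus the remainder is $\preceq \exp(x)\eps^{2}$, and $A := \exp(x)$ works.

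For the log case, I would assume $x>0$ (necessary for $\log(x)$ to make sense) and choose $\eps_0$ to be a positive infinitesimal multiple of $x$, for instance $\eps_0 := x/\omega$, so that $|\eps|<\eps_0$ implies $\eps/x \vless 1$. Applying \prettyref{fact:normal-form}(8) to $\log(1+\eps/x)$ yields
\[
  \log(x+\eps) = \log(x) + \frac{\eps}{x} + \eps^{2}\cdot L(\eps/x),\qquad L(y) := \sum_{n=0}^{\infty}\frac{(-1)^{n+1}}{n+2}\,\frac{y^{n}}{x^{2}}\cdot\frac{1}{1},
\]
where, more precisely, $L(\eps/x) = (1/x^{2})\sum_{n=0}^{\infty}(-1)^{n+1}(\eps/x)^{n}/(n+2)$. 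The same argument via \prettyref{cor:power-series} shows this inner sum exists and is $\veq 1$, so the remainder is $\preceq \eps^{2}/x^{2}$, and $A := 1/x^{2}$ works.

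The only subtlety is the verification that the tail sums $S(\eps)$ and $L(\eps/x)$ lie in $\mathcal{O}(1)$, but this is immediate from the fact that, with real coefficients and strictly dominated infinitesimal argument, the surreal evaluation is surreal analytic at $0$ (\prettyref{cor:power-series-are-surreal-analytic}) and its leading term is a nonzero real constant, hence the value is $\veq 1$. No genuine obstacle arises: once $\eps_0$ is chosen to guarantee $\eps \vless 1$ (respectively $\eps/x \vless 1$), the summability results of Section~3 do the work, and the remainder bound follows by factoring out $\eps^{2}$.
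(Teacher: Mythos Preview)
Your argument is correct, but it is not the route the paper takes. The paper's proof is a one-line appeal to \prettyref{fact:normal-form}(1): since $(\no,<,+,\cdot,\exp)$ is an elementary extension of $(\R,<,+,\cdot,\exp)$, and the statement ``for every $x$ there exist $A$ and $\eps_{0}>0$ such that for all $\eps$ with $|\eps|<\eps_{0}$ one has $|\exp(x+\eps)-\exp(x)-\exp(x)\eps|\le A\eps^{2}$'' (and the analogous one for $\log$) is first-order in that language and holds in $\R$, it transfers to $\no$ verbatim.

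Your approach instead works internally with the surreal Taylor expansions of \prettyref{fact:normal-form}(8), choosing $\eps_{0}$ infinitesimal (resp.\ infinitesimal relative to $x$) so that the series for $\exp(\eps)$ (resp.\ $\log(1+\eps/x)$) converges, and then bounding the tail after factoring out $\eps^{2}$. This is perfectly valid and has the virtue of giving explicit values $A=\exp(x)$ and $A=1/x^{2}$; it is essentially the computation carried out later in Propositions~\ref{prop:exp-analytic} and~\ref{prop:log-analytic}. The paper's transfer argument is shorter and does not pin $\eps_{0}$ down to be infinitesimal, but for the purposes of this proposition either method suffices. (Minor point: your displayed formula for $L$ is garbled, though you immediately clarify it; you might clean that up.)
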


\begin{proof}
  Immediate from the fact that $\no$ is an elementary extension of
  $\R_{\exp}$.
\end{proof}

The next theorem extends the above remark to a much larger class of
functions.

\begin{thm}
  \label{thm:taylor}Given $f\in\bracket{\omega}$ and $x\in\no^{>\R}$,
  there are $A\in\no$ and $\eps_{0}\in\text{\ensuremath{\no}}^{>0}$
  (both depending on $f$ and $x$) such that, for every $\eps\in\no$
  smaller in modulus than $\eps_{0}$, we have
  \[
    f\circ(x+\eps)=f\circ x+(\partial f\circ
    x)\cdot\eps+\mathcal{O}(A\eps^{2}),
  \]
  where $\mathcal{O}(A\varepsilon^{2})$ is a surreal number
  $\vleq A\varepsilon^{2}$.
\end{thm}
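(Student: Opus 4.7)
The plan is to prove the statement by transfinite induction on $\erank(f)$.

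\emph{Base case} ($\erank(f) = 0$): $f \in \{0\} \cup \{\log_k(\omega) \suchthat k \in \N\}$. The case $f = 0$ is trivial; for $f = \log_k(\omega)$ I iterate the preceding proposition on $\log$ exactly $k$ times, centering at $\log_{j-1}(x)$ for $j=1,\dots,k$. The chain rule built into the transserial derivation of $\bracket{\omega}$ ensures the resulting coefficient of $\eps$ matches $(\partial \log_k(\omega)) \circ x = \prod_{j<k}\log_j(x)^{-1}$.

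\emph{Inductive step} ($\erank(f) > 0$): Write $f = \sum_{i<\alpha} r_i e^{\gamma_i}$ in Ressayre normal form; each $\gamma_i \in \J$ has $\erank(\gamma_i) < \erank(f)$, so by the inductive hypothesis there exist $A_i \in \no$ and $\eps_i \in \no^{>0}$ with $\gamma_i \circ (x+\eps) = \gamma_i \circ x + (\partial \gamma_i \circ x)\eps + \mathcal{O}(A_i \eps^2)$ whenever $|\eps| < \eps_i$. I first treat a single exponential term $r e^\gamma$: setting $\delta := (\partial \gamma \circ x)\eps + \mathcal{O}(A_\gamma \eps^2)$, I take $\eps$ small enough that $\delta \vless 1$, then apply the preceding proposition to $\exp$ at the base point $\gamma \circ x$ to obtain $e^{\gamma\circ(x+\eps)} = e^{\gamma\circ x}(1 + \delta + \mathcal{O}(\delta^2))$; collecting $\mathcal{O}(\eps^2)$ contributions yields a first-order expansion of $r e^\gamma \circ (x+\eps)$ with coefficient of $\eps$ equal to $r e^{\gamma \circ x}(\partial\gamma \circ x) = (\partial(r e^\gamma)) \circ x$.

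For the full sum, strong additivity of $g \mapsto g \circ (x+\eps)$ and of $\partial$ gives
\[
R(\eps) := f\circ(x+\eps) - f\circ x - (\partial f\circ x)\eps = \sum_{i<\alpha} r_i\bigl[e^{\gamma_i\circ(x+\eps)} - e^{\gamma_i\circ x} - e^{\gamma_i\circ x}(\partial \gamma_i\circ x)\eps\bigr],
\]
and each bracketed summand is $\mathcal{O}(C_i \eps^2)$ by the single-term analysis. The main obstacle will be establishing uniformity: choosing one threshold $\eps_0$ and one dominating surreal $A$ that handles \emph{all} $i$ at once. The key idea is that the ``quadratic coefficient'' of $R(\eps)$ is controlled by the omega-series $\sum_i r_i e^{\gamma_i}(\partial \gamma_i)^2 = \partial^2 f - \sum_i r_i e^{\gamma_i}\partial^2 \gamma_i$, which lies in $\bracket{\omega}$ by the Leibniz rule; composing with $x$ supplies a candidate $A$. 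Similarly $\eps_0$ should be chosen small relative to the monomials in $\supp(\partial f \circ x) \cup \supp(\partial^2 f \circ x)$, forcing every $\delta_i$ to be simultaneously infinitesimal so that the local Taylor expansions of $\exp$ all apply in parallel. The rigorous justification of these uniform bounds is the main technical hurdle, and will be handled using the summability machinery developed in the previous sections.
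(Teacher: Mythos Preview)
Your overall architecture matches the paper's: transfinite induction on $\erank(f)$, with the base case handled via iterated application of the second-order expansion of $\log$, and the inductive step by first treating a single term $re^{\gamma}$ via the expansion of $\exp$ and then summing. The paper organizes this into five cases (constants/$\omega$, $\log(g)$, $\log_n(\omega)$, $\exp(g)$, infinite sums), but the content is the same.

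Where you diverge is in the uniformity step, and here you are making it much harder than it needs to be. You propose controlling the quadratic remainder via the omega-series $\partial^{2}f$ and the summability machinery, and you flag this as ``the main technical hurdle''. The paper sidesteps all of this with a one-line observation: the index set $I$ (your $\alpha$) is a \emph{set}, not a proper class. Hence the collections $\{\eps_{i}\suchthat i\in I\}$ and $\{A_{i}\suchthat i\in I\}$ are small, so in $\no$ one can simply pick some $\eps_{0}\in\no^{>0}$ with $\eps_{0}<\eps_{i}$ for every $i$, and some $A\in\no$ with $A\vgeq A_{i}$ for every $i$. No relation to $\partial^{2}f$ is needed; existence of such $\eps_{0}$ and $A$ is just cofinality of $\no$ over any set of surreals.

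Once you have this uniform $A$, the bound $R(\eps)\vleq A\eps^{2}$ is immediate: the family $(R_{i}(\eps))_{i<\alpha}$ is summable (since $f\circ(x+\eps)$, $f\circ x$, and $\partial f\circ x$ all exist as sums over $i$), so every monomial in $\supp(R(\eps))$ lies in some $\supp(R_{i}(\eps))$ and is therefore $\vleq A\eps^{2}$. Your proposed route through $\partial^{2}f$ might be made to work, but it is unnecessary and you have not actually carried it out.
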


\begin{proof}
  We reason by induction on the ordinal $\erank(f)$, where
  $\Delta=\{\log_{i}(\omega)\,:\,i\in\N\}$.

  Case 1. The theorem is clear if $f\in\R$ or $f=\omega$, as in this
  case $f\circ(x+\eps)=f\circ x+(\partial(f)\circ x)\eps$ for every
  $\eps$ and we can take $A=0$.

  Case 2. Now consider the case when $f=\log(g)$ where $g>0$, and
  assume that conclusion holds for $g$. Then there are $B\in\no$ and
  $\eps_{1}\in\no^{>0}$ (depending on $g,x$) such that
  \[
    g\circ(x+\eps)=g\circ x+(\partial(g)\circ
    x)\eps+\mathcal{O}(B\eps^{2})
  \]
  whenever $|\eps|\leq|\eps_{1}|$. Taking the $\log$ of both sides, and
  recalling that
  $\log(g\circ(x+\eps))=\log(g)\circ(x+\eps)=f\circ(x+\eps)$, we
  obtain
  \begin{align*}
    f\circ(x+\eps) & =\log(g\circ x+(\partial(g)\circ x)\eps+\mathcal{O}(B\eps^{2})).
  \end{align*}
  Using the second order Taylor expansion of $\log$ at $g\circ x$, we
  can find $A\in\no$, depending on $g$ and $x$, such that, for all
  sufficiently small $\eps$,
  \begin{align*}
    \log(g\circ x+(\partial(g)\circ x)\eps+\mathcal{O}(B\eps^{2})) & =\log(g\circ x)+\frac{1}{g\circ x}(\partial(g)\circ x)\eps+\mathcal{O}(A\eps^{2})\\
                                                                  & =\log(g)\circ x+\left(\frac{\partial(g)}{g}\circ x\right)\eps+\mathcal{O}(A\eps^{2})\\
                                                                  & =f\circ x+\left(\partial(f)\circ x\right)\eps+\mathcal{O}(A\eps^{2}).
  \end{align*}
  Combining the equations we obtain
  $f\circ(x+\eps)=f\circ x+\left(\partial(f)\circ
    x\right)\eps+\mathcal{O}(A\eps^{2})$, as desired.

  Case 3. When $f=\log_{n}(\omega)$ for some $n\in\N$, the desired
  result follows from the previous cases by induction on $n$. We have
  thus established the conclusion when $\erank(f)=0$, namely
  $f\in\Delta_{1}=\Delta\cup\{0\}$.

  Case 4. Consider now the case when $f=\exp(g)$ and assume that the
  conclusion holds for $g$. We can then proceed as in case 2 using the
  second order Taylor expansion of $\exp$ at $g\circ x$.

  Case 5. Consider the case when $f=\sum_{i\in I}f_{i}$ and assume by
  induction that the result holds for each $f_{i}$. By definition
  $f\circ(x+\eps)=\sum_{i\in I}(f_{i}\circ(x+\eps))$. By induction there
  are $\eps_{i,x}\in\no^{>0}$ and $A_{i,x}\in\no$ such that
  \[
    f_{i}\circ(x+\eps)=f_{i}\circ x+(\partial(f_{i})\circ
    x)\eps+\mathcal{O}(A_{i,x}\eps^{2})
  \]
  for all $\eps<\eps_{i,x}$. Now let $\eps_{0}\in\no^{>0}$ be smaller
  than $\eps_{i,x}$ for every $i\in I$ and let $A\succeq A_{i,x}$ for
  every $i\in I$. Then for every $\eps$ smaller in modulus than
  $\eps_{0}$ we have
  $f\circ(x+\eps)=f\circ x+(\partial(f)\circ
  x)\cdot\eps+\mathcal{O}(A\eps^{2})$, as desired.

  Finally, observe that the above cases suffices to establish
  inductively the theorem for every $f\in\bracket{\omega}$.
\end{proof}

\begin{cor}
  \label{cor:limit-def}For every $f\in\bracket{\omega}$ and every
  $x\in\no^{>\R}$ we have
  \[
    \partial f\circ x=\lim_{\eps\to0}\frac{f\circ(x+\eps)-f\circ
      x}{\eps}
  \]
  In particular, taking $x=\omega$, we obtain
  $\partial
  f=\lim_{\eps\to0}\frac{f\circ(\omega+\eps)-f\circ\omega}{\eps}$, so
  the derivative is definable in terms of the composition.
\end{cor}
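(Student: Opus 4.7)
The plan is to obtain this corollary as an essentially immediate consequence of \prettyref{thm:taylor}, so the proof will be short. My job is mainly to show that the Landau remainder term in the Taylor expansion is small enough, once divided by $\eps$, to vanish in the appropriate surreal sense.

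First, I would fix $f \in \bracket{\omega}$ and $x \in \no^{>\R}$ and apply \prettyref{thm:taylor} to obtain $A \in \no$ and $\eps_{0} \in \no^{>0}$ such that, for every $\eps \in \no$ with $0 < |\eps| < \eps_0$,
\[
  f\circ(x+\eps) = f\circ x + (\partial f \circ x)\cdot\eps + R(\eps),
\]
where $R(\eps) \vleq A\eps^{2}$, that is, $|R(\eps)| \leq c\,|A|\eps^{2}$ for some $c \in \R^{>0}$. Dividing through by $\eps$ (which is legal as $\eps \neq 0$) gives
\[
  \frac{f\circ(x+\eps) - f\circ x}{\eps} - \partial f \circ x = \frac{R(\eps)}{\eps},
\]
and the remainder satisfies $|R(\eps)/\eps| \leq c\,|A|\cdot|\eps|$.

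Second, I would verify that the right-hand side tends to $0$ in the sense of limit used in the paper (as in \prettyref{prop:surreal-analytic-is-differentiable}): given any $\delta \in \no^{>0}$, I take $\eps_{1} \in \no^{>0}$ with $\eps_{1} \leq \eps_{0}$ and $\eps_{1} < \delta/(c|A|+1)$; then for every $\eps$ with $0 < |\eps| < \eps_{1}$ we have $|R(\eps)/\eps| < \delta$. This is exactly the statement that $\lim_{\eps\to 0}\bigl(f\circ(x+\eps)-f\circ x\bigr)/\eps = \partial f \circ x$.

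Finally, the last assertion follows by specializing $x = \omega$: by \prettyref{prop:uniqueness-composition} (or directly from the composition axioms) we have $f\circ\omega = f$, and since $\partial f \in \bracket{\omega}$ the same proposition gives $\partial f \circ \omega = \partial f$, so the identity becomes $\partial f = \lim_{\eps\to 0}\bigl(f\circ(\omega+\eps) - f\bigr)/\eps$, exhibiting $\partial$ as definable from $\circ$. I do not expect a real obstacle here: the only substantive input is \prettyref{thm:taylor}, and the remainder of the argument is bookkeeping around the definition of surreal limits.
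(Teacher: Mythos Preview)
Your proposal is correct and matches the paper's approach: the corollary is stated without proof immediately after \prettyref{thm:taylor}, so it is intended as a direct consequence of that theorem, exactly as you argue. The only cosmetic point is that the real constant $c$ in $R(\eps)\preceq A\eps^{2}$ should be read as uniform in $\eps$ (it can be absorbed into the surreal $A$), which is how the paper uses the $\mathcal{O}$-notation throughout; with that reading your $\eps$--$\delta$ computation goes through verbatim.
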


\begin{cor}
  \label{cor:chain-rule}The unique composition on $\bracket{\omega}$
  satisfies $\partial(f\circ g)=(\partial f\circ g)\cdot\partial g$.
\end{cor}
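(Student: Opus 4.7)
The plan is to derive the chain rule directly from the Taylor expansion in \prettyref{thm:taylor} combined with the limit characterization of the derivative in \prettyref{cor:limit-def}. Since $\bracket{\omega}$ is closed under $\circ$ by \prettyref{thm:composition-omega-series}, the element $f\circ g$ lies in $\bracket{\omega}$ and we may apply \prettyref{cor:limit-def} to it at $x = \omega$:
\[
  \partial(f\circ g) = \lim_{\eps\to 0}\frac{(f\circ g)\circ(\omega+\eps) - (f\circ g)\circ\omega}{\eps}.
\]
By the associativity axiom and $h\circ\omega = h$ (both part of the composition axioms), this equals $\lim_{\eps\to 0}\eps^{-1}\bigl(f\circ(g\circ(\omega+\eps)) - f\circ g\bigr)$.

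Next I would apply \prettyref{thm:taylor} twice, in cascade. First to $g$ at $x=\omega$: there exist $B\in\no$ and $\eps_{1}\in\no^{>0}$ such that for all $|\eps|\le|\eps_{1}|$,
\[
  g\circ(\omega+\eps) = g + (\partial g)\cdot\eps + \mathcal{O}(B\eps^{2}).
\]
Set $\delta := g\circ(\omega+\eps) - g = (\partial g)\cdot\eps + \mathcal{O}(B\eps^{2})$. For $\eps$ sufficiently small, $\delta$ is infinitesimal, and since $g\in\no^{>\R}$, the argument $g+\delta$ lies in $\no^{>\R}$ and $\delta$ falls within the range of validity of the Taylor expansion of $f$ at $g$. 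Applying \prettyref{thm:taylor} a second time with $f$ at $x=g$, we obtain constants $A\in\no$ and $\eps_{2}>0$ such that, whenever $|\delta|\le\eps_{2}$,
\[
  f\circ(g+\delta) = f\circ g + (\partial f\circ g)\cdot\delta + \mathcal{O}(A\delta^{2}).
\]

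Substituting the value of $\delta$ gives
\[
  f\circ(g\circ(\omega+\eps)) - f\circ g
  = (\partial f\circ g)\cdot(\partial g)\cdot\eps + (\partial f\circ g)\cdot\mathcal{O}(B\eps^{2}) + \mathcal{O}(A\delta^{2}).
\]
Since $\delta = \mathcal{O}(C\eps)$ for some $C\in\no$ depending on $\partial g$ and $B$, we have $\delta^{2} = \mathcal{O}(C^{2}\eps^{2})$, so both error terms on the right are $\mathcal{O}(D\eps^{2})$ for a suitable $D\in\no$. Dividing by $\eps$ and passing to the limit $\eps\to 0$ (in the sense used in \prettyref{cor:limit-def}, i.e.\ quotients with error $\mathcal{O}(D\eps)$ which goes to $0$), we conclude
\[
  \partial(f\circ g) = (\partial f\circ g)\cdot \partial g.
\]

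The only mildly delicate point is purely bookkeeping: checking that the ``big-$\mathcal{O}$'' terms arising from plugging one Taylor expansion into another are genuinely of the form $\mathcal{O}(D\eps^{2})$ in the sense of \prettyref{thm:taylor}, so that they vanish after division by $\eps$ and taking the surreal limit. Once one trusts the validity of the two-step Taylor substitution, everything else is formal; the only structural ingredient used beyond \prettyref{thm:taylor} and \prettyref{cor:limit-def} is the associativity of $\circ$ and the closure $f\circ g\in\bracket{\omega}$ guaranteed by \prettyref{thm:composition-omega-series}.
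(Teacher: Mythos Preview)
Your proof is correct and follows essentially the same route as the paper: both apply \prettyref{thm:taylor} first to $g$ and then to $f$ in cascade, collect the resulting $\mathcal{O}(\eps^{2})$ error, and conclude via \prettyref{cor:limit-def}. The only cosmetic difference is that the paper carries out the computation at a general $x\in\no^{>\R}$ and then specializes, whereas you fix $x=\omega$ from the start; the substance is identical.
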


\begin{proof}
  Thanks to \prettyref{cor:limit-def}, it suffices to show that that
  for all sufficiently small $\eps$ we have
  \[
    (f\circ g)\circ(x+\eps)=(f\circ g)\circ x+\left((\partial f\circ
      g)\cdot\partial g\right)\eps+\mathcal{O}(A\eps^{2})
  \]
  where $A\in\no$ depends on $f$,$g$, $x$ but not on $\eps$. Applying
  \prettyref{thm:taylor} first to $g$ and then to $f$, there are
  $C,D\in\no$, not depending on $\eps$, such that
  \begin{eqnarray*}
    (f\circ g)\circ(x+\eps) & = & f\circ(g\circ(x+\eps))\\
                            & = & f\circ(g\circ x+(\partial g\circ x)\eps+\mathcal{O}(C\cdot\eps^{2}))\\
                            & = & f\circ(g\circ x)+(\partial f\circ(g\circ x))\cdot(\partial g\circ x)\eps+\mathcal{O}(D\cdot\eps^{2}),
  \end{eqnarray*}
  and we conclude by noting that
  $(\partial f\circ(g\circ x))\cdot(\partial g\circ x)=((\partial
  f\circ g)\cdot\partial g)\circ x$.
\end{proof}

\subsection{Surreal analyticity}

We now extend in the obvious way the notion of surreal analyticity of
\prettyref{def:surreal-analytic} to the numbers in $\bracket{\omega}$.

\begin{defn}
  \label{def:analytic}Let $f\in\bracket{\omega}$. We say that $f$ is
  \textbf{surreal analytic} \textbf{at $x\in\no^{>\R}$} if the function
  $y\mapsto f\circ y$ is surreal analytic in a neighborhood of $x$ is
  the sense of \prettyref{def:surreal-analytic}. We say that $f$ is
  \textbf{surreal analytic} if $y\mapsto f\circ y$ is surreal analytic
  at every $x\in\no^{>\R}$.
\end{defn}

For instance, $\exp(\omega)$ and $\log(\omega)$ are surreal analytic.

\begin{prop}
  \label{prop:exp-analytic}Let $x\in\no^{>\R}$. Then for every
  $\eps\prec1$ we have
  $\exp(x+\eps)=\sum_{i=0}^{\infty}\frac{e^{x}}{i!}\eps^{i}$.  In
  particular, $\exp(\omega)$ is surreal analytic.
\end{prop}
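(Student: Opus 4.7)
The plan is to reduce to the factorization $\exp(x+\eps)=\exp(x)\exp(\eps)$ and then invoke the infinitesimal Taylor expansion of Fact~\ref{fact:normal-form}(8). Since $\no$ is an elementary extension of the real exponential field by Fact~\ref{fact:normal-form}(1), the identity $\exp(x+\eps)=\exp(x)\exp(\eps)$ holds for all $x,\eps\in\no$. Next, because $\eps\prec 1$, Fact~\ref{fact:normal-form}(8) gives
\[
  \exp(\eps)=\sum_{n=0}^{\infty}\frac{\eps^{n}}{n!},
\]
and the family $(\eps^{n}/n!)_{n\in\N}$ is summable. Multiplying a summable family by a single surreal number preserves summability and distributes over the sum (the one-sided case of Remark~\ref{rem:product}), so $(\exp(x)\eps^{n}/n!)_{n\in\N}$ is summable and its sum equals $\exp(x)\exp(\eps)=\exp(x+\eps)$, which is the claimed identity.

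For the ``in particular'' part, the plan is to observe that substitutions preserve $\exp$, so that $\exp(\omega)\circ y=\exp(y)$ for every $y\in\no^{>\R}$. Indeed, by Proposition~\ref{prop:substitution-is-hom} each substitution is an ordered field isomorphism fixing $\R$, and it preserves $\log$ by definition; applying this to $c(\log(\exp(z)))=\log(c(\exp(z)))$ yields $c(\exp(z))=\exp(c(z))$, so substitutions also preserve $\exp=\log^{-1}$. Taking $c$ to be the substitution $f\mapsto f\circ y$ and $z=\omega$ gives $\exp(\omega)\circ y=\exp(\omega\circ y)=\exp(y)$.

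Fixing any $x\in\no^{>\R}$ and applying the Taylor identity to $y=x+\eps$ with $\eps\prec 1$ then exhibits the function $y\mapsto\exp(\omega)\circ y$ as equal to $P(y-x)$ on a neighborhood of $x$, where $P(X)=\sum_{i=0}^{\infty}\frac{\exp(x)}{i!}X^{i}\in\no[[X]]$. This is exactly surreal analyticity at $x$ in the sense of Definition~\ref{def:surreal-analytic}, and since $x$ was arbitrary, $\exp(\omega)$ is surreal analytic as in Definition~\ref{def:analytic}. I do not expect a serious obstacle anywhere in this argument; the only subtlety is that the real Taylor series for $\exp$ is available in $\no$ only on infinitesimal inputs, which is exactly what Fact~\ref{fact:normal-form}(8) supplies, and what forces us to do the split $\exp(x+\eps)=\exp(x)\exp(\eps)$ first rather than trying to expand $\exp$ directly at $x$.
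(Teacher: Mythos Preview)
Your proof is correct and follows essentially the same approach as the paper: factor $\exp(x+\eps)=\exp(x)\exp(\eps)$ and apply the infinitesimal Taylor expansion of $\exp$. You are more explicit than the paper about the ``in particular'' clause (verifying $\exp(\omega)\circ y=\exp(y)$ via the fact that substitutions commute with $\exp$), which the paper leaves implicit, but this is the intended reasoning.
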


\begin{proof}
  Indeed,
  $\exp(x+\varepsilon)=\exp(x)\cdot\exp(\varepsilon)=\exp(x)\cdot\sum_{i=0}^{\infty}\frac{\varepsilon^{i}}{i!}$.
\end{proof}

\begin{prop}
  \label{prop:log-analytic}Let $x\in\no^{>\R}$. Then for every
  $\eps\prec x$ we have
  $\log(x+\eps)=\log(x)+\sum_{i=1}^{\infty}\frac{(-1)^{i+1}}{ix^{i}}\eps^{i}$.
  In particular, $\log(\omega)$ is surreal analytic.
\end{prop}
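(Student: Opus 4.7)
The plan is to reduce the identity to the Taylor expansion of $\log(1+\cdot)$ on infinitesimals, stated in \prettyref{fact:normal-form}(8). Factor $x+\eps = x\cdot(1+\eps/x)$; since $\eps\prec x$ and $x\succ 1$, we have $\eps/x\prec 1$, so $\eps/x$ is infinitesimal. Applying \prettyref{fact:normal-form}(8) gives
\[
  \log\!\left(1+\frac{\eps}{x}\right)=\sum_{n=1}^{\infty}(-1)^{n+1}\frac{(\eps/x)^{n}}{n}.
\]
Adding $\log(x)$ to both sides and observing that $(-1)^{n+1}(\eps/x)^{n}/n$ coincides term by term with $(-1)^{n+1}\eps^{n}/(nx^{n})$ as a surreal number, the two indexed families are literally equal, so summability and sum transfer immediately, proving the displayed formula.

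For surreal analyticity of $\log(\omega)$, by \prettyref{def:analytic} I must show that for every $x\in\no^{>\R}$ the function $y\mapsto\log(\omega)\circ y=\log(y)$ is surreal analytic at $x$ in the sense of \prettyref{def:surreal-analytic}. Fix such an $x$ and set
\[
  P(X):=\log(x)+\sum_{i=1}^{\infty}\frac{(-1)^{i+1}}{ix^{i}}X^{i}\in\no[[X]].
\]
Take the open neighborhood $V:=(x-1,x+1)$ of $x$; since $x-1>\R$, we have $V\subseteq\no^{>0}$, so $\log$ is defined on $V$. For any $y\in V$, set $\delta:=y-x$; then $|\delta|<1$, and because $x$ is positive infinite this forces $c|\delta|<x$ for every $c\in\R$, i.e.\ $\delta\prec x$. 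The first part of the proposition therefore yields $\log(y)=P(\delta)=P(y-x)$, which is exactly the defining condition of \prettyref{def:surreal-analytic} for $\log$ at $x$. Since $x$ was arbitrary, \prettyref{def:analytic} gives that $\log(\omega)$ is surreal analytic.

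There is no serious obstacle here: the analytic content is already packaged in \prettyref{fact:normal-form}(8) (the infinitesimal Taylor series of $\log(1+\cdot)$) and \prettyref{cor:power-series-are-surreal-analytic} (every surreal formal power series defines a surreal analytic function at $0$). What remains is only the algebraic factorization $x+\eps=x(1+\eps/x)$ together with the choice of a classical open neighborhood on which the perturbation is automatically $\prec x$.
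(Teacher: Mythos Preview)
Your proof is correct and follows the same approach as the paper: factor $x+\eps=x(1+\eps/x)$, observe $\eps/x\prec 1$, and apply the infinitesimal Taylor series of $\log(1+\cdot)$ from \prettyref{fact:normal-form}(8). Your treatment of the ``in particular'' clause is in fact more detailed than the paper's, which leaves that verification implicit; your explicit choice of the neighborhood $V=(x-1,x+1)$ and the check that $|\delta|<1$ forces $\delta\prec x$ are exactly what is needed to match \prettyref{def:surreal-analytic}. (Minor remark: the hypothesis $x\succ1$ is not needed to conclude $\eps/x\prec1$, which follows directly from $\eps\prec x$.)
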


\begin{proof}
  It suffices to write $x+\eps=x\left(1+\frac{\eps}{x}\right)$, so
  that $\delta:=\frac{\eps}{x}\prec1$, and recall that
  \[
    \log(x+\varepsilon) = \log\left(x\left(1 +
        \frac{\varepsilon}{x}\right)\right) = \log(x) + \log(1+\delta)
    = \log(x) + \sum_{i=1}^{\infty}\frac{(-1)^{i+1}}{i}\delta^{i}.
  \]
\end{proof}

Moreover, surreal analyticity is preserved under compositions.

\begin{lem}
  \label{lem:composition-analytic}If $g\in\bracket{\omega}$ is surreal
  analytic at $x\in\no^{>\R}$ and $f\in\bracket{\omega}$ is surreal
  analytic at $y:=g\circ x$, then $f\circ g$ is surreal analytic at
  $x$.
\end{lem}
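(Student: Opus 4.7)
The plan is to reduce the statement to the composition of formal power series (\prettyref{prop:composition-of-power-series}) via the associativity of the composition $\circ$ on $\bracket{\omega}$. First I will record power series expansions for both ingredients: by surreal analyticity of $g$ at $x$ there exists $P(X)=\sum_{n\in\N}a_nX^n\in\no[[X]]$ with $a_0=y$ such that $g\circ(x+\eps)=P(\eps)$ for all $\eps$ in a suitable neighborhood of $0$, and by surreal analyticity of $f$ at $y$ there exists $Q(X)=\sum_{m\in\N}b_mX^m\in\no[[X]]$ such that $f\circ(y+\delta)=Q(\delta)$ for all $\delta$ in a suitable neighborhood of $0$. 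I then set $P^{*}(X):=P(X)-y=\sum_{n\geq 1}a_nX^n$, a power series with no constant term.

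The main preparatory step is to choose a small truncation closed subfield $R\subseteq\no$ containing $\R$ and all the coefficients $a_n, b_m$, large enough that $\eps\prec_R 1$ implies $g\circ(x+\eps)=P(\eps)$ while $\delta\prec_R 1$ implies $f\circ(y+\delta)=Q(\delta)$. Such an $R$ exists because the relevant coefficients together form a set, not a proper class, exactly as in the argument of \prettyref{cor:power-series-are-surreal-analytic}. With $R$ fixed, the essential subsidiary claim I must verify is that $\eps\prec_R 1$ forces $P^{*}(\eps)\prec_R 1$, so that $P^{*}(\eps)$ lies in the region where $Q$ represents $f$. Each monomial in $\supp(P^{*}(\eps))$ has the form $\n\m_1\cdots\m_n$ with $n\geq 1$, $\n\in\supp(a_n)$, and $\m_i\in\supp(\eps)$; truncation-closedness of $R$ and $\R\subseteq R$ force $\n\in R$, while $\m_i\prec_R 1$ for each $i$, and a direct calculation (using $\m_i<1$ and $c\n\in R^{>0}$) then gives $\n\m_1\cdots\m_n\prec_R 1$. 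Applied to the leading monomial of $P^{*}(\eps)$ this shows $P^{*}(\eps)\prec_R 1$, as desired.

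With these preparations done, the conclusion is essentially immediate. For $\eps\prec_R 1$ one has $x+\eps\in\no^{>\R}$, so by the associativity axiom of the composition from \prettyref{thm:composition-omega-series},
\[
(f\circ g)\circ(x+\eps) = f\circ(g\circ(x+\eps)) = f\circ(y+P^{*}(\eps)) = Q(P^{*}(\eps)).
\]
Since $P^{*},Q\in R[[X]]$ and $P^{*}$ has no constant term, \prettyref{prop:composition-of-power-series} yields $Q(P^{*}(\eps))=(Q\circ P^{*})(\eps)$, a power series evaluation in $\eps$. Hence $y\mapsto (f\circ g)\circ y$ coincides on a neighborhood of $x$ with the evaluation of a single formal power series in $y-x$, proving surreal analyticity of $f\circ g$ at $x$. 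The only non-routine step in the whole argument is the estimate $P^{*}(\eps)\prec_R 1$; all other ingredients are formal manipulations.
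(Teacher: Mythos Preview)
Your proof is correct and follows essentially the same approach as the paper: both arguments use associativity of $\circ$ to write $(f\circ g)\circ(x+\eps)=f\circ\bigl(y+\sum_{j\geq 1}b_j\eps^j\bigr)$ and then invoke \prettyref{prop:composition-of-power-series} to collapse the nested power series into a single one. The paper simply says ``for every sufficiently small $\eps$'' at each step, whereas you make the choice of the truncation closed subfield $R$ explicit and carefully verify the intermediate estimate $P^{*}(\eps)\prec_R 1$; this is a point the paper leaves implicit, so your treatment is in fact slightly more detailed than the original.
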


\begin{proof}
  Fix $f,g,x,y$ as in the hypothesis. By assumption there are two
  sequences $(a_{i})_{i\in\N}$ and $(b_{j})_{j\in\N}$ in $\no$ such that,
  for every sufficiently small $\eps,\delta$ we have
  \[
    g\circ(x+\eps)=g\circ x+\sum_{j=1}^{\infty}b_{j}\eps^{j}
  \]
  and
  \[
    f\circ(y+\delta)=\sum_{i\in\N}a_{i}\delta^{i}.
  \]
  Note that
  $(f\circ g)\circ(x+\eps) = f\circ(y+\sum_{j=1}^{\infty}b_{j}\eps^{j}) =
  \sum_{i\in\N}a_{i}(\sum_{j=1}^{\infty}b_{j}\eps^{j})^{i}$ for every
  sufficiently small $\eps$. To finish the proof it suffices to
  observe that, by \prettyref{prop:composition-of-power-series}, there
  is a sequence $(c_{m})_{m\in\N}$ in $\no$ such that, for every
  sufficiently small $\eps$, we have
  \[
    \sum_{k\in\N}a_{k}(\sum_{n=1}^{\infty}b_{n}\eps^{n})^{k}=\sum_{m\in\N}c_{m}\eps^{m}.
  \]
\end{proof}

\begin{cor}
  \label{cor:log-i-analytic}For all $i\in\N$, $\log_{i}(\omega)$ is
  surreal analytic.
\end{cor}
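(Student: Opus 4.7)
The plan is a straightforward induction on $i\in\N$, using \prettyref{prop:log-analytic} and \prettyref{lem:composition-analytic} as the main ingredients.

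For the base case $i=0$, we have $\log_{0}(\omega)=\omega$, and the axiom $\omega\circ x=x$ shows that $y\mapsto\omega\circ y$ is the identity function, which is trivially surreal analytic at every point (its power series at any $x$ is $P(X)=x+X$).

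For the inductive step, assume $\log_{i}(\omega)$ is surreal analytic. Using the axiom $\log(f)\circ x=\log(f\circ x)$ with $f=\omega$ and the identity $\omega\circ x=x$, we get $\log(\omega)\circ y=\log(y)$ for every $y\in\no^{>\R}$; applied with $y=\log_{i}(\omega)\circ x=\log_{i}(x)$ and using associativity, we obtain
\[
  \log_{i+1}(\omega)\circ x = \log(\log_{i}(x)) = \log(\omega)\circ\log_{i}(x) = (\log(\omega)\circ\log_{i}(\omega))\circ x,
\]
so that, as elements of $\bracket{\omega}$, $\log_{i+1}(\omega)=\log(\omega)\circ\log_{i}(\omega)$. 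Now fix any $x\in\no^{>\R}$. Since $x>e^{n}$ for every $n\in\N$, we get $\log(x)>n$ for every $n$, so by iteration $\log_{i}(x)\in\no^{>\R}$. The inductive hypothesis says that $\log_{i}(\omega)$ is surreal analytic at $x$, and \prettyref{prop:log-analytic} gives that $\log(\omega)$ is surreal analytic at the point $y=\log_{i}(\omega)\circ x=\log_{i}(x)\in\no^{>\R}$. Applying \prettyref{lem:composition-analytic} to this data yields that $\log(\omega)\circ\log_{i}(\omega)=\log_{i+1}(\omega)$ is surreal analytic at $x$. Since $x\in\no^{>\R}$ was arbitrary, this completes the induction.

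There is no genuine obstacle here; the only point to verify carefully is that $\log_{i}(x)\in\no^{>\R}$ whenever $x\in\no^{>\R}$, so that the hypotheses of \prettyref{lem:composition-analytic} and \prettyref{prop:log-analytic} are met, and that the identification $\log_{i+1}(\omega)=\log(\omega)\circ\log_{i}(\omega)$ inside $\bracket{\omega}$ follows cleanly from the composition axioms. Both are immediate from the material already developed.
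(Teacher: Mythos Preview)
Your proof is correct and follows exactly the approach the paper intends: the corollary is stated without proof because it is the evident induction on $i$ using \prettyref{prop:log-analytic} for the analyticity of $\log(\omega)$ and \prettyref{lem:composition-analytic} for closure under composition. Your verification that $\log_{i}(x)\in\no^{>\R}$ and your derivation of the identity $\log_{i+1}(\omega)=\log(\omega)\circ\log_{i}(\omega)$ from the composition axioms are the only details one needs to spell out, and you handle both correctly.
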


We can also verify that if $f\in\bracket{\omega}$ is surreal analytic,
the coefficients of its Taylor expansions can be calculated using the
derivation $\partial$ just like with classical analytic functions.

\begin{prop}
  \label{prop:analytic}If $f\in\bracket{\omega}$ is surreal analytic
  at $x\in\no^{>\R}$, then for every sufficiently small $\eps\in\no$ we
  have
  \[
    f\circ(x+\eps) = \sum_{n\in\N}\frac{1}{n!}(\partial^{n}\!f\circ
    x)\cdot\eps^{n}
  \]
  where $\partial^{0}\!f=f$ and
  $\partial^{n+1}\!f=\partial(\partial^{n}\!f)$.
\end{prop}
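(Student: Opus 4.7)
The plan is to reduce the identity to Proposition \ref{prop:surreal-analytic-is-differentiable}, which for any surreal-analytic function identifies the coefficients of its power series expansion with its iterated derivatives divided by factorials; the only remaining task is to recognize that the iterated derivatives of $y \mapsto f \circ y$ coincide with the compositions $\partial^n f \circ y$. Write $F \colon \no^{>\R} \to \no$ for the function $F(y) := f \circ y$. Since $f$ is surreal analytic at $x$ by hypothesis, Proposition \ref{prop:surreal-analytic-is-differentiable} applied to $F$ yields infinite differentiability at $x$ (in the sense of limits of surreal difference quotients) together with an expansion
\[
F(x+\eps) = \sum_{n \in \N} \frac{F^{(n)}(x)}{n!} \eps^n
\]
valid for every sufficiently small $\eps \in \no$.

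It remains to show that $F^{(n)}(y) = \partial^n f \circ y$ for every $y \in \no^{>\R}$ and every $n \in \N$, which I prove by induction on $n$. The base case $n = 0$ is the definition of $F$. For the inductive step, note that $\partial^n f \in \bracket{\omega}$ by Proposition \ref{prop:standard-derivation}, so Corollary \ref{cor:limit-def} applied to the omega-series $\partial^n f$ yields
\[
\partial^{n+1} f \circ y = \lim_{\eps \to 0} \frac{\partial^n f \circ (y + \eps) - \partial^n f \circ y}{\eps}
\]
for every $y \in \no^{>\R}$. By the inductive hypothesis the right hand side is the limit of surreal difference quotients of $F^{(n)}$ at $y$, which is $F^{(n+1)}(y)$ by definition of the iterated derivative. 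Substituting $F^{(n)}(x) = \partial^n f \circ x$ into the expansion above produces the desired formula.

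No serious obstacle arises here: the surreal analyticity hypothesis simultaneously delivers convergence of the Taylor expansion at $x$ and the identification of its coefficients with iterated derivatives, while Corollary \ref{cor:limit-def} (a consequence of the Taylor-type estimate in Theorem \ref{thm:taylor}) is precisely the bridge that converts the analytic derivatives of $F$ into the algebraic derivatives $\partial^n f$ composed with $x$. The most delicate conceptual point is merely ensuring that the two a priori different notions of derivative agree, and this is immediate because both are defined as limits of the same difference quotients.
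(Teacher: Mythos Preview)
Your proof is correct and follows essentially the same approach as the paper: apply \prettyref{prop:surreal-analytic-is-differentiable} to the function $y\mapsto f\circ y$ to obtain the Taylor expansion with coefficients $F^{(n)}(x)/n!$, and then use \prettyref{cor:limit-def} in an induction on $n$ to identify $F^{(n)}(y)$ with $\partial^{n}\!f\circ y$. Your version is slightly more explicit in justifying that $\partial^{n}\!f\in\bracket{\omega}$ (so that \prettyref{cor:limit-def} applies at each step), but the argument is otherwise the same.
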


\begin{proof}
  Let $f\in\bracket{\omega}$ be analytic at $x\in\no^{>\R}$. Let
  $\hat{f}$ be associated function
  $x+\varepsilon\mapsto f\circ(x+\varepsilon)$, which by assumption is
  also surreal analytic (in the sense of
  \prettyref{def:surreal-analytic}).  By
  \prettyref{prop:surreal-analytic-is-differentiable}, we know that
  \[
    f\circ(x+\varepsilon) = \hat{f}(x+\varepsilon) =
    \sum_{i=0}^{\infty}\frac{\hat{f}^{(i)}(x)}{i!}\varepsilon^{i}.
  \]
  By \prettyref{cor:limit-def}, it follows by induction on $i$ that in
  fact $\hat{f}^{(i)}(x)=\partial^{i}\!f\circ x$, proving the desired
  conclusion.
\end{proof}

We can then conclude that \emph{every} omega-series is surreal
analytic.

\begin{thm}
  \label{thm:analytic}Every $f\in\bracket{\omega}$ is surreal
  analytic, and for every $x\in\no^{>\R}$ and every sufficiently small
  $\eps\in\no$ we have
  \[
    f\circ(x+\eps) = \sum_{i\in\N}\frac{1}{i!}(\partial^{i}\!f\circ
    x)\cdot\eps^{i}.
  \]
\end{thm}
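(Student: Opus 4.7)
The plan is to induct on the exponential rank $\erank(f)$ (\prettyref{def:erank}); once surreal analyticity of $f$ at $x$ is secured, the Taylor coefficient identification $c_n = \frac{1}{n!}(\partial^n f \circ x)$ is forced by \prettyref{prop:analytic}.

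For the base case $\erank(f) = 0$, we have $f \in \Delta \cup \{0\}$ with $\Delta = \{\log_i(\omega) \suchthat i \in \N\}$. The case $f=0$ is trivial and the case $f = \log_i(\omega)$ is \prettyref{cor:log-i-analytic}.

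For the inductive step, write $f = \sum_{i<\alpha} r_i e^{\gamma_i}$ in Ressayre normal form, so that each $\gamma_i \in \J$ satisfies $\erank(\gamma_i) < \erank(f)$. By the inductive hypothesis each $\gamma_i$ is surreal analytic at $x$, so for sufficiently small $\eps$ we have $\gamma_i \circ (x+\eps) = \gamma_i \circ x + \delta_i(\eps)$ with $\delta_i(\eps) := \sum_{n \geq 1} b_{i,n} \eps^n$ infinitesimal. Because $g \mapsto g \circ (x+\eps)$ is a substitution preserving $\log$ (hence $\exp$), we get $e^{\gamma_i} \circ (x+\eps) = \exp(\gamma_i \circ x) \cdot \exp(\delta_i(\eps)) = \exp(\gamma_i \circ x) \sum_{k\in\N} \delta_i(\eps)^k/k!$, which \prettyref{prop:composition-of-power-series} lets us reorganize as a genuine power series in $\eps$. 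Thus each $h_i := r_i e^{\gamma_i}$ is surreal analytic at $x$, with coefficients $a_{i,n} := \frac{1}{n!}(\partial^n h_i \circ x)$ identified via \prettyref{prop:analytic}.

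The main step is to pass from the individual $h_i$ to the infinite sum $f = \sum_{i<\alpha} h_i$. Fix a small truncation closed subfield $R \subseteq \no$ containing every $a_{i,n}$ (a set-sized collection). Since $\partial$ is strongly additive, so is $\partial^n$ for every $n$, and composition with $x$ is strongly additive as a substitution; hence the row sums $\sum_{i<\alpha} a_{i,n}$ exist and equal $c_n := \frac{1}{n!}(\partial^n f \circ x)$. For $\eps \prec_R 1$, \prettyref{cor:power-series} yields summability of each individual $\sum_n a_{i,n}\eps^n$ and \prettyref{lem:double-index-sum} yields summability of the joint family $(a_{i,n}\eps^n)_{(i,n) \in \alpha \times \N}$. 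Two applications of \prettyref{cor:exchange-of-sums} then give
\[
f \circ (x+\eps) = \sum_i h_i \circ (x+\eps) = \sum_{(i,n)} a_{i,n} \eps^n = \sum_n c_n \eps^n,
\]
which is the desired analyticity and Taylor formula.

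The main obstacle is obtaining a single $\eps$ for which $h_i \circ (x+\eps) = \sum_n a_{i,n}\eps^n$ holds \emph{simultaneously} for all $i < \alpha$: the inductive hypothesis alone only provides an $i$-dependent neighborhood. I would address this by strengthening the inductive statement to assert the existence, for each $f$ and $x$, of a single small truncation closed $R_{f,x}$ on whose infinitesimals $o_{R_{f,x}}(1)$ the Taylor expansion is valid, and then build $R_{f,x}$ in the inductive step so as to contain $\bigcup_{i<\alpha} R_{h_i,x}$ together with the $R$ above. With this uniformization in place the three chains of equalities align and the induction closes.
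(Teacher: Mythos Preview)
Your argument is essentially identical to the paper's: both induct on $\erank$, handle the base case via \prettyref{cor:log-i-analytic}, show each term $r_i e^{\gamma_i}$ is surreal analytic (you via \prettyref{prop:composition-of-power-series} directly, the paper via \prettyref{lem:composition-analytic} applied to $\exp(\omega)\circ\gamma_i$, which unwinds to the same computation), and then pass to the infinite sum using strong additivity of $\partial$ and of $\,\cdot\circ x$, \prettyref{lem:double-index-sum}, and \prettyref{cor:exchange-of-sums}.

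The ``main obstacle'' you flag at the end is not a genuine obstacle, and the proposed strengthening of the induction is unnecessary. The index $\alpha$ in the normal form $f=\sum_{i<\alpha}r_i e^{\gamma_i}$ is an ordinal, hence a set, so $\{\eps_i : i<\alpha\}$ is a \emph{set} of positive surreals and therefore admits a positive lower bound in $\no$. Intersecting all the individual neighborhoods $(-\eps_i,\eps_i)$ with the class $o_R(1)$ coming from \prettyref{lem:double-index-sum} (where $R$ is small) still yields a neighborhood of $0$: just take any $\delta>0$ with $\delta\prec_R 1$ and $\delta<\eps_i$ for all $i<\alpha$. The paper handles this in exactly this implicit way, recording the bounds $\eps_j$ and then simply asserting the final display ``for every sufficiently small $\eps$''. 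So you may drop the last paragraph of your proposal entirely; what precedes it is already a complete proof.
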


\begin{proof}
  Let $f\in\bracket{\omega}$. We reason by induction on $\erank(f)$,
  where $\Delta=\{\log_{i}(\omega)\suchthat i\in\N\}$.

  The case $f=0$ is trivial, while the case $f=\log_{n}(\omega)$
  follows from \prettyref{cor:log-i-analytic} and
  \prettyref{prop:analytic}.  This shows the conclusion for
  $\erank(f)=0$, namely for $f\in\Delta_{1}=\Delta\cup\{0\}$.

  Now suppose $\erank(f)>0$. Write
  $f=\sum_{j<\alpha}r_{j}e^{\gamma_{j}}$, and recall that by definition
  $\erank(\gamma_{j})<\erank(f)$ for all $j<\alpha$. Therefore, by
  inductive hypothesis, we can assume that $\gamma_{j}$ is surreal
  analytic for every $j<\alpha$. Since $\exp(\omega)$ is surreal
  analytic by \prettyref{prop:exp-analytic}, it follows that
  $\exp(\omega)\circ\gamma_{j}=\exp(\gamma_{j})$ is surreal analytic by
  \prettyref{lem:composition-analytic}, hence so is
  $f_{j}:=r_{j}e^{\gamma_{j}}$. This means that for each $x$, there is
  some $\eps_{j}>0$ such that for all $\eps$ smaller than $\eps_{j}$ in
  absolute value, we have
  $f_{j}\circ(x+\eps)=\sum_{i\in\N}\frac{1}{i!}(\partial^{i}\!f_{j}\circ
  x)\cdot\eps^{i}$.

  Since $\partial$ is strongly additive, and
  $(f_{j}\suchthat j<\alpha)$ is summable, the family
  $(\partial f_{j}\suchthat j<\alpha)$ is also summable and
  $\sum_{j}\partial f_{j}=\partial\left(\sum_{j}f_{j}\right)$.  In turn,
  $(\partial f_{j}\circ x\suchthat j<\alpha)$ must be summable, and
  $\sum_{j}(\partial f_{j}\circ x)=(\sum_{j}\partial f_{j})\circ
  x=\partial\left(\sum_{j}f_{j}\right)\circ x=\partial f\circ x$.
  Similarly, by induction on $i\in\N$,
  $(\partial^{i}\!f_{j}\circ x\suchthat j<\alpha)$ is summable and
  $\sum_{j}(\partial^{i}\!f_{j}\circ x)=\partial^{i}\!f\circ x$.  By
  \prettyref{lem:double-index-sum}, for every sufficiently small
  $\eps$,
  $\left((\partial^{i}\!f_{j}\circ
    x)\cdot\eps^{i}\suchthat(i,j)\in\N\times\alpha\right)$ is summable
  and therefore, by \prettyref{cor:exchange-of-sums}, we have
  \[
    \sum_{j}\sum_{i}\frac{1}{i!}(\partial^{i}\!f_{j}\circ
    x)\cdot\eps^{i}=\sum_{i}\sum_{j}\frac{1}{i!}(\partial^{i}\!f_{j}\circ
    x)\cdot\eps^{i}=\sum_{i}\frac{1}{i!}\left(\partial^{i}\!f\circ
      x\right)\cdot\varepsilon^{i}.
  \]
  Recalling that
  $f_{j}\circ(x+\eps) = \sum_{i\in\N}\frac{1}{i!}(\partial^{i}\!f_{j}\circ
  x)\cdot\eps^{i}$, it follows that
  \[
    f\circ(x+\eps) =
    \sum_{j}(f_{j}\circ(x+\eps))=\sum_{j}\sum_{i}\frac{1}{i!}(\partial^{i}\!f_{j}\circ
    x)\cdot\eps^{i} = \sum_{n}\frac{1}{i!}(\partial^{i}\!f\circ
    x)\cdot\eps^{n}
  \]
  thus proving that $f$ is surreal analytic.
\end{proof}

\begin{rem}
  \label{rem:convergence-radius-bracket}When $f\in\R((\omega))^{LE}$
  and $x\in\R((\omega))^{LE}$, one can verify that there exists an
  $n\in\N$ such that the equation of \prettyref{thm:analytic} holds
  for any $\varepsilon\vleq e^{-\exp_{n}(\omega)}$. Indeed, note that
  the subfields $K_{m,\log_{i}(\omega)}$ (see \prettyref{def:LE}) are
  closed under the derivation $\partial$, and that there is some
  $k\in\N$ such that $g\circ x\in K_{m+k,\log_{i+k}(\omega)}$ for any
  $g\in K_{m,\log_{i}(\omega)}$. Then all the coefficients
  $\partial^{i}\!f\circ x/i!$ live in some fixed
  $K_{n,\log_{n}(\omega)}$, and it suffices to apply
  \prettyref{cor:power-series} to get the desired conclusion. In
  particular, one can give a meaningful definition of analyticity for
  LE-series by staying inside the field of LE-series, without
  resorting to $\no$.

  In full generality, \prettyref{cor:power-series} guarantees that the
  equation of \prettyref{thm:analytic} holds for any $\varepsilon$
  that is infinitesimal with respect to any non-zero omega-series
  $g\in\bracket{\omega}$.  In some cases, this is the best we can
  do. Take for instance $f=\sum_{n=0}^{\infty}e^{-\exp_{n}(\omega)}$.
  Then one can easily verify that
  $(\partial^{i}\!f\circ\omega)_{i\in\N}=(\partial^{i}\!f)_{i\in\N}$ is
  not summable, and in fact that
  $(\partial^{i}\!f\cdot\varepsilon)_{i\in\N}$ is not summable for any
  $\varepsilon$ such that $\varepsilon\vgeq e^{-\exp_{n}(\omega)}$ for
  some $n\in\N$, and in particular for any
  $\varepsilon\in\bracket{\omega}^{*}$.  Therefore, the expansion of
  $f\circ(\omega+\varepsilon)$ given by \prettyref{thm:analytic} only
  exists for the numbers $\varepsilon$ with absolute value smaller
  than any omega-series.
\end{rem}

\begin{cor}
  \label{cor:taylor-refined}Given $f\in\bracket{\omega}$ and
  $x\in\no^{>\R}$, we have
  \[
    f\circ(x+\eps)=f\circ x+(\partial f\circ
    x)\cdot\eps+\mathcal{O}((\partial^{2}f\circ x)\cdot\eps^{2})
  \]
  whenever $\varepsilon\in\no$ satisfies
  $(\partial^{i+2}\!f\circ x)\cdot\varepsilon^{i}\vleq\partial^{2}f\circ
  x$ for all $i\in\N$.
\end{cor}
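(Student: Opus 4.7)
The strategy is to proceed by structural induction on $\erank(f)$, paralleling the proof of \prettyref{thm:taylor} but tracking carefully how the remainder constant depends on $\partial^2 f\circ x$. The hypothesis on $\varepsilon$ is calibrated so that, at each inductive step, the error constant $A$ of \prettyref{thm:taylor} can be taken $\vleq \partial^2 f\circ x$. I emphasize that a direct application of \prettyref{thm:analytic} is not available, since under the hypothesis the formal Taylor series $\sum_i (\partial^i f\circ x)\varepsilon^i/i!$ need not be summable in the sense of \prettyref{def:summability} (consider $f = \exp(\omega)$, $x = \omega$, $\varepsilon = 1$: the hypothesis $\varepsilon^i \vleq 1$ is satisfied, but $\sum_i \exp(\omega)/i!$ fails surreal summability because the monomial $e^{\omega}$ appears in every term, even though $\exp(\omega)\cdot e$ is a perfectly well-defined surreal number).

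For the base case $f\in\Delta\cup\{0\}$, the sub-cases $f = 0$ and $f = \omega$ are trivial because $\partial^2 f = 0$ and the hypothesis collapses the remainder. For $f = \log_n(\omega)$ I perform a secondary induction on $n$: given the statement for $h = \log_{n-1}(\omega)$, I write
\[
f\circ(x+\varepsilon) = \log(h\circ x) + \log(1+\delta), \qquad \delta := \frac{(\partial h\circ x)\varepsilon + R_h}{h\circ x},
\]
observe that the hypothesis on $\varepsilon$ forces $\delta\vleq 1$, and combine the standard estimate $\log(1+\delta) - \delta \vleq \delta^2$ (valid for $\delta\vleq 1$) with the Leibniz identity $\partial^2 f = \partial^2 h / h - (\partial h / h)^2$ to deliver the desired bound. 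The inductive step splits into three cases: for $f = \sum_i f_i$ I invoke strong additivity of $\partial$ and $\partial^2$ together with the observation that a summable family of surreals each $\vleq M$ sums to a surreal $\vleq M$ (since the leading monomial of the sum is bounded by $\LM(M)$); for $f = \exp(g)$ and $f = \log(g)$ I apply the inductive hypothesis to $g$ to obtain $g\circ(x+\varepsilon) = g\circ x + (\partial g\circ x)\varepsilon + R_g$ with $R_g \vleq (\partial^2 g\circ x)\varepsilon^2$, then combine the second-order Taylor expansions of $\exp$/$\log$ with the chain-rule identities $\partial^2 \exp(g) = \exp(g)((\partial g)^2 + \partial^2 g)$ and $\partial^2 \log(g) = \partial^2 g / g - (\partial g/g)^2$ to fold the cross terms into a single bound of the form $(\partial^2 f\circ x)\varepsilon^2$.

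The main obstacle is the bookkeeping in the $\exp$ and $\log$ inductive cases. First, I must verify that the hypothesis on $\varepsilon$ for $f$ implies a corresponding hypothesis for $g$, which requires a Faà di Bruno-type combinatorial identity relating the higher derivatives of $\exp(g)$ or $\log(g)$ to those of $g$. Second, I need to check that every cross term arising from expanding $\exp$ or $\log$ applied to $(\partial g\circ x)\varepsilon + R_g$, in particular the mixed products of $(\partial g\circ x)\varepsilon$ with $R_g$ and higher powers, is dominated by $(\partial^2 f\circ x)\varepsilon^2$. A further subtlety is the additive case, where the hypothesis for $\sum_i f_i$ does not immediately specialize to a hypothesis for each $f_i$; here I must use that the dominance bound on the sum $\partial^j f\circ x = \sum_i \partial^j f_i\circ x$ propagates to the contributions of the individual terms via strong additivity of the composition.
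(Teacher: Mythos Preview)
The paper gives no proof; the corollary is placed immediately after \prettyref{thm:analytic} and is evidently meant to follow from it in one line: when $\varepsilon$ lies in the radius where the Taylor expansion holds, the remainder equals $\sum_{i\ge 2}\tfrac{1}{i!}(\partial^{i}f\circ x)\varepsilon^{i}$, each summand is $\preceq(\partial^{2}f\circ x)\varepsilon^{2}$ by hypothesis, and since the family is summable the same bound passes to the sum (the support of the sum lies below $\LM((\partial^{2}f\circ x)\varepsilon^{2})$).

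You are right that this argument does not cover the full range of $\varepsilon$ allowed by the stated hypothesis: your example $f=\exp(\omega)$, $x=\omega$, $\varepsilon=1$ satisfies the hypothesis while the series $\sum_{i} e^{\omega}/i!$ is not summable. So the paper's intended one-line proof tacitly assumes $\varepsilon$ small enough for \prettyref{thm:analytic} to apply; if the corollary is read literally for all $\varepsilon$ satisfying the displayed condition, further argument is needed.

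Your alternative, however, has a genuine gap of its own, exactly at the ``subtlety'' you flag but do not resolve. The induction on $\erank$ requires, at the step $f=\sum_{j}r_{j}e^{\gamma_{j}}$ (or already for a single term $f=e^{\gamma}$), that the hypothesis descend to the exponents: to invoke the inductive statement for $\gamma_{j}$ you need $(\partial^{i+2}\gamma_{j}\circ x)\varepsilon^{i}\preceq\partial^{2}\gamma_{j}\circ x$. This is \emph{not} a consequence of the hypothesis for $f$. Fa\`a di Bruno gives $\partial^{n}(e^{\gamma})=e^{\gamma}B_{n}(\partial\gamma,\dots,\partial^{n}\gamma)$, and the dominant contribution to $B_{n}$ may be $(\partial\gamma)^{n}$ rather than $\partial^{n}\gamma$; a bound $B_{i+2}\varepsilon^{i}\preceq B_{2}=(\partial\gamma)^{2}+\partial^{2}\gamma$ then says nothing about $(\partial^{i+2}\gamma)\varepsilon^{i}$ versus $\partial^{2}\gamma$ (and $B_{2}$ can even vanish by cancellation while $\partial^{2}\gamma\neq 0$). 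The additive descent is no better: $\partial^{n}f\circ x=\sum_{j}\partial^{n}(r_{j}e^{\gamma_{j}})\circ x$ may be much smaller than the individual summands through cancellation, so a bound on the sum cannot ``propagate to individual terms'' via strong additivity as you suggest. Without a workable descent the inductive hypothesis cannot be applied, and the argument stalls.
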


\section{A negative result}

The interaction between the unique composition $\circ$ on
$\bracket{\omega}$ and the unique transserial derivation on
$\bracket{\omega}$ suggests looking for compositions that are
compatible with a transserial derivation.

\begin{defn}
  \label{def:compatible}Given a transserial subfield $T\subseteq\no$,
  a transserial derivation $\partial:T\to T$, and a composition
  $\circ\suchthat T\times\no^{>\R}\to\no$, we say that $\partial$ and
  $\circ$ are \textbf{compatible} if the following holds:
  \begin{enumerate}
  \item if $\partial f=0$, then $f\circ x=f$ for every $x$;
  \item $\partial f>0\implies f\circ x<f\circ y$ whenever $x<y$;
  \item $\partial(f\circ g)=(\partial f\circ g)\cdot\partial g$.
  \end{enumerate}
\end{defn}

\begin{thm}
  The unique surreal derivation $\partial$ on $\bracket{\omega}$ is
  compatible with the unique composition on $\bracket{\omega}$.
\end{thm}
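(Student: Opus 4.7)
The plan is to verify the three compatibility conditions of \prettyref{def:compatible}. Two of them are essentially already available: condition~(3), the chain rule, is \prettyref{cor:chain-rule} (applied with $g\in\bracket{\omega}^{>\R}$, which is the natural setting, since $\partial(f\circ g)$ requires $f\circ g\in\bracket{\omega}$), and condition~(1) follows because $\partial f=0$ combined with $\ker\partial=\R$ (from \prettyref{prop:standard-derivation}) forces $f\in\R$, whence $f\circ x=f$ by the substitution axiom.

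The substance is condition~(2). I proceed by induction on $\erank(f)$, proving simultaneously that $\partial f>0$ implies $f\circ x<f\circ y$ for all $x<y$ in $\no^{>\R}$ and that $\partial f<0$ yields the reverse inequality (both signs are needed because the induction will be applied to $\partial\gamma_k$, which may have either sign). The base case $\erank(f)=0$ is immediate: $f=0$ makes the hypothesis vacuous, while $f=\log_n(\omega)$ gives $f\circ x=\log_n(x)$, which is strictly increasing on $\no^{>\R}$, with $\partial f>0$.

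For the inductive step, I write $f=\sum_{j<\alpha}r_je^{\gamma_j}$ in Ressayre normal form, so that
\[
  \partial f=\sum_j r_j\,(\partial\gamma_j)\,e^{\gamma_j}, \qquad f\circ y-f\circ x=\sum_j r_j\,\bigl(e^{\gamma_j\circ y}-e^{\gamma_j\circ x}\bigr),
\]
with $\erank(\gamma_j)<\erank(f)$ for each $j$. Let $k$ be the smallest index with $\gamma_k\neq 0$, which exists because $\partial f\neq 0$ and $\J\cap\ker\partial=\{0\}$. The core of the argument is a pair of parallel leading-term analyses showing that (a)~$\LT(\partial f)=r_k\cdot\LT(\partial\gamma_k)\cdot e^{\gamma_k}$, and (b)~the dominant contribution to $f\circ y-f\circ x$ is $r_k\,e^{\gamma_k\circ x}\,(e^{\gamma_k\circ y-\gamma_k\circ x}-1)$. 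Granting these, the sign of $f\circ y-f\circ x$ equals that of $r_k\,(\gamma_k\circ y-\gamma_k\circ x)$ (since $e^z-1$ shares the sign of $z$), which by the inductive hypothesis applied to $\gamma_k$ equals the sign of $r_k\,\partial\gamma_k$, hence the sign of $\partial f$ by~(a).

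The main obstacle is the leading-term analysis. For~(a), the required estimate $e^{\gamma_j}\,\LM(\partial\gamma_j)\prec e^{\gamma_k}\,\LM(\partial\gamma_k)$ for $j>k$ with $\gamma_j\neq 0$ asserts that the unique transserial derivation on $\bracket{\omega}$ preserves strict dominance of purely exponential monomials, a standard feature of the H-field structure. For~(b), the parallel fact uses that substitutions are ordered field isomorphisms preserving dominance (\prettyref{prop:substitution-is-hom}), so that $(\gamma_j-\gamma_k)\circ x<-\R$ for $j>k$, making each $e^{(\gamma_j-\gamma_k)\circ x}$ infinitesimal; the remaining work is to show that this infinitesimal factor dominates the ratios of the $(e^z-1)$ terms coming from the different $\gamma_j$. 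Packaging these estimates cleanly is the most delicate part of the argument.
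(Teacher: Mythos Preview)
Your approach matches the paper's: both argue condition~(2) by induction on $\erank(f)$, reducing to the leading nonconstant term and using $\partial f \sim r_0 e^{\gamma_0}\partial\gamma_0$ (your~(a)) together with the inductive hypothesis applied to $\gamma_0$. The paper frames the inductive step contrapositively---assuming $f\circ x\ge f\circ y$ for some $x<y$ and deducing the same inequality for the leading term from the fact that substitutions preserve $\preceq$---which repackages your delicate point~(b) rather than carrying out the direct estimate, but the underlying content is the same.
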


\begin{proof}
  Condition (1) follow at once from $\ker(\partial)=\R$.

  For condition (2), let $f\in\bracket{\omega}$. We reason by
  induction on $\erank(f)$, where
  $\Delta=\{\log_{i}(\omega)\suchthat i\in\N\}$.  If $\erank(f)=0$,
  then the conclusion is easy: for instance if $f=\log_{i}(\omega)$,
  then $f\circ g=\log_{i}(g)$ and the chain rule in (3) can be verified
  as in the classical case, recalling also \prettyref{cor:limit-def}.
  Now suppose that $\erank(f)>0$. Write
  $f=\sum_{i<\alpha}r_{i}e^{\gamma_{i}}$, where
  $\erank(\gamma_{i})<\erank(f)$ for all $i<\alpha$. Suppose that
  $f\circ x\geq f\circ y$ for some $x<y$. Since the maps
  $g\mapsto g\circ x$, $g\mapsto g\circ y$ are substitutions, they
  preserve the relation $\vleq$
  (\prettyref{prop:substitution-is-hom}), so we must have
  $(r_{0}e^{\gamma_{0}})\circ x\geq(r_{0}e^{\gamma_{0}})\circ y$, so
  $r_{0}e^{\gamma_{0}\circ x}\geq r_{0}e^{\gamma_{0}\circ y}$.

  Without loss of generality, we may assume that $\gamma_{0}\neq0$ (by
  replacing $f$ with $f-r_{0}$) and that $r_{0}>0$ (by replacing $f$
  with $-f$). Under these assumptions, we must have
  $\gamma_{0}\circ x\geq\gamma_{0}\circ y$, so by inductive hypothesis
  $\partial\gamma_{0}\leq0$. Note moreover that since
  $\gamma_{0}\in\J^{\neq0}$, we must have $\partial\gamma_{0}\neq0$.  In
  turn, since $\partial f\sim r_{0}e^{\gamma_{0}}\partial\gamma_{0}$, it
  follows that $\partial f\geq0$, as desired.

  Point (3) is \prettyref{cor:chain-rule}.
\end{proof}

\begin{question}
  \label{que:compatible}We do not know whether there is a composition
  and a compatible transserial derivation (possibly with
  $\ker(\partial)$ bigger than $\R$) on the whole of $\no$.
\end{question}

Note that the present notion of compatibility is rather weak, and for
instance it does not require the conclusion of
\prettyref{thm:analytic} to hold, or even just
\prettyref{thm:taylor}. However, even such a weak notion does not
allow the ``simplest'' derivation $\partial:\no\to\no$ of
\cite{Berarducci2015} to be compatible with a composition.

\begin{thm}
  \label{thm:negative}The ``simplest'' surreal derivation
  $\partial:\no\to\no$ in \cite{Berarducci2015} cannot be compatible
  with a composition $\circ:\no\times\no^{>\R}\to\no$.
\end{thm}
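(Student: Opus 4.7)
Plan: I argue by contradiction. Suppose there exists a composition $\circ\colon\no\times\no^{>\R}\to\no$ compatible with the simplest surreal derivation $\partial$ of \cite{Berarducci2015}. The obstruction will come from the log-atomic number $\kappa_{-1}\in\li\setminus\bracket{\omega}$, which is the simplest log-atomic number strictly smaller than $\log_{n}(\omega)$ for every $n\in\N$, whose derivative under the simplest derivation is, as recalled in the paper, the specific element $\partial\kappa_{-1}=\exp(-\sum_{n\in\N}\log_{n}(\omega))\in\bracket{\omega}$. The point is that this value lives in $\bracket{\omega}$ even though $\kappa_{-1}$ does not, so its composition with any $x\in\no^{>\R}$ is rigidly forced by \prettyref{thm:composition-omega-series}.

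First I would exploit the chain rule (compatibility condition (3)) to write, for every $x\in\no^{>\R}$,
\[
  \partial(\kappa_{-1}\circ x) = (\partial\kappa_{-1}\circ x)\cdot\partial x = \exp\left(-\sum_{n\in\N}\log_{n}(x)\right)\cdot\partial x,
\]
using that $\partial\kappa_{-1}\in\bracket{\omega}$ and that composition on $\bracket{\omega}$ is unique. Next I would specialise to $x=\kappa_{-1}$, obtaining
\[
  \partial(\kappa_{-1}\circ\kappa_{-1}) = \exp\left(-\sum_{n\in\N}\log_{n}(\kappa_{-1})\right)\cdot\exp\left(-\sum_{n\in\N}\log_{n}(\omega)\right),
\]
which is a perfectly explicit element of $\no$. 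At the same time, axiom (1c) of composition applied iteratively gives $\log_{n}(\kappa_{-1}\circ\kappa_{-1})=\log_{n}(\kappa_{-1})\circ\kappa_{-1}$ for every $n$, and since the substitution $f\mapsto f\circ\kappa_{-1}$ is order-preserving by \prettyref{prop:substitution-is-hom} and $\kappa_{-1}<\log_{n}(\omega)$, one gets $\kappa_{-1}\circ\kappa_{-1}<\log_{n}(\omega)\circ\kappa_{-1}=\log_{n}(\kappa_{-1})$ for every $n$. Hence $\kappa_{-1}\circ\kappa_{-1}$ sits below every $\log_{n}(\kappa_{-1})$ in $\no$, which places it into a very constrained part of the simplicity tree.

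The conclusion is obtained by comparing the value forced by the chain rule with the value that the simplest derivation is compelled to assign by its ``path rule'' on this region of log-atomic (or near-log-atomic) numbers. The simplest derivation assigns to any surreal in that region a derivative of the shape $\exp(-\sum_{n}\log_{n}(\mu))$ for a \emph{single} log-atomic $\mu$ selected by the simplicity tree, and no choice of $\mu$ can reproduce the ``compound'' product $\exp(-\sum_{n}\log_{n}(\kappa_{-1}))\cdot\exp(-\sum_{n}\log_{n}(\omega))$ produced by the chain rule; concretely, equality would force $\exp(-\sum_{n}\log_{n}(\omega))=1$, i.e.\ $\sum_{n}\log_{n}(\omega)=0$, which is absurd. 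The main obstacle, which is the technical heart of the proof, is precisely this last verification: one must unpack the parametrisation $\{\lambda_{x}\mid x\in\no\}$ of $\li$ and the explicit formula for $\partial\lambda_{x}$ from \cite{Berarducci2015} to show that the derivative of the simplest derivation on any surreal obtained as $\kappa_{-1}\circ\kappa_{-1}$ is rigid enough to exclude the compound value forced by compatibility.
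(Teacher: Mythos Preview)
Your approach shares the paper's opening move---using the chain rule together with the fact that $\partial\kappa_{-1}=\exp(-\sum_{n}\log_{n}(\omega))$ lies in $\bracket{\omega}$, so that $(\partial\kappa_{-1})\circ x$ is forced by the unique composition on $\bracket{\omega}$---but the final step has a genuine gap. You specialise to $x=\kappa_{-1}$ and then assert that the simplest derivation must assign to $\kappa_{-1}\circ\kappa_{-1}$ a derivative of the shape $\exp(-\sum_{n}\log_{n}(\mu))$ for a single log-atomic $\mu$. But $\kappa_{-1}\circ\kappa_{-1}$ is defined only via the hypothetical composition: you have no access to its sign sequence, its normal form, or its position in the $\lambda_{x}$-parametrisation of $\li$, so you cannot invoke any explicit formula for $\partial$ on it. The inequality $\kappa_{-1}\circ\kappa_{-1}<\log_{n}(\kappa_{-1})$ is far too weak to force the element to be log-atomic, let alone to pin down its derivative. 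Even granting your claimed shape, equating $\exp(-\sum_{n}\log_{n}(\mu))$ with the product $\exp(-\sum_{n}\log_{n}(\kappa_{-1}))\cdot\exp(-\sum_{n}\log_{n}(\omega))$ does not obviously force $\sum_{n}\log_{n}(\omega)=0$; it only gives an additive relation among three infinite sums, which you would still need to analyse. You yourself flag this step as ``the technical heart,'' but there is no visible path to completing it.

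The paper sidesteps this obstacle entirely. Instead of composing $\kappa_{-1}$ (denoted $\lambda_{-\omega}$ there) with itself, it composes with the proper class of log-atomic $\lambda$ satisfying $\lambda>\exp_{n}(\omega)$ for all $n$, for which \cite{Berarducci2015} gives the explicit value $\partial\lambda=\prod_{n}\log_{n}(\lambda)$. The chain rule then yields a clean cancellation:
\[
  \partial(\lambda_{-\omega}\circ\lambda)=\frac{1}{\prod_{n}\log_{n}(\lambda)}\cdot\prod_{n}\log_{n}(\lambda)=1.
\]
Since $\partial\lambda_{-\omega}>0$, compatibility condition (2) makes $x\mapsto\lambda_{-\omega}\circ x$ strictly increasing, so the values $\lambda_{-\omega}\circ\lambda$ form a proper class of distinct elements all with derivative $1$; any two differ by an element of $\ker\partial=\R$, which is a set---contradiction. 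No computation of $\partial$ on the composed element is ever needed.
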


\begin{proof}
  Let $y\in\no$, and observe that the rules of transserial derivations
  yield $\partial(\log_{n}(y))=\frac{1}{\prod_{i<n}\log_{i}(y)}$. Taking
  $y=\omega$ we obtain
  $\partial(\lambda_{-n})=\frac{1}{\prod_{i<n}\lambda_{-i}}$, where
  $\lambda_{-n}=\log_{n}(\omega)$. Now let $\partial:\no\to\no$ be the
  ``simplest derivation'' in \cite{Berarducci2015}. In that paper we
  showed that $\partial$ is surjective, so in particular there is an
  anti-derivative of $\frac{1}{\prod_{n\in\N}\lambda_{-n}}$.  In fact we
  proved that there is a log-atomic number $\lambda_{-\omega}\in\li$
  such that
  $\partial(\lambda_{-\omega})=\frac{1}{\prod_{n\in\N}\lambda_{-n}}$.
  With a suggestive notation $\lambda_{-\omega}$ is denoted
  $\log_{\omega}(\omega)$ in \cite{Aschenbrenner2015a}, suggesting that
  it should be considered as an infinite compositional iterate of
  $\log(\omega)$. In \cite{Berarducci2015} we showed that, if
  $\lambda$ is a log-atomic number bigger than $\exp_{n}(\omega)$ for
  every $n\in\N$, then
  \[
    \partial(\lambda)=\prod_{n\in\N}\log_{n}(\lambda).
  \]
  Note that there is a proper class of log-atomic numbers $\lambda$
  satisfying $\lambda>\exp_{n}(\omega)$ for all $n\in\N$, so the above
  differential equation has a proper class of solutions. Now fix such
  a solution $\lambda$ and suppose for a contradiction that $\partial$
  is compatible with a composition on the whole of $\no$. By the rules
  for $\partial$ and $\circ$ we obtain
  \begin{align*}
    \partial(\lambda_{-\omega}\circ\lambda) & =(\text{\ensuremath{\partial}}(\lambda_{-\omega})\circ\lambda)\cdot\partial(\lambda)\\
                                           & =\left(\frac{1}{\prod_{n}\log_{n}(\omega)}\circ\lambda\right)\cdot\partial(\lambda)\\
                                           & =\left(\frac{1}{\prod_{n}\log_{n}(\lambda)}\right)\cdot\partial(\lambda)=1.
  \end{align*}
  Since $\partial(\lambda_{-\omega})>0$, by the compatibility
  conditions the function $x\mapsto\lambda_{-\omega}\circ x$ is
  strictly increasing, so there is a proper class of elements of the
  form $\lambda_{-\omega}\circ x$ with derivative $1$. This however
  contradicts the fact that $\ker(\partial)=\R$ is a set.
\end{proof}

\begin{rem}
  The above result can be interpreted in different ways. The first is
  that there could be no reasonable composition on the whole of $\no$.
  The second is that, despite the positive results in
  \cite{Aschenbrenner2015a,Berarducci2015}, the simplest derivation in
  \cite{Berarducci2015} may have some shortcomings.  It is conceivable
  that, in order to be able to give positive solution to
  \prettyref{que:compatible}, we should allow a proper class as the
  kernel of $\partial$.
\end{rem}

\section{\label{sec:induction}Proof of the summability lemma
  (\prettyref{lem:summability})}

We will now give a proof of \prettyref{lem:summability}. We work under
the notations of \prettyref{sec:Substitutions}. Suppose that
$c_{0}:\Delta\to\no$ is a given pre-substitution. Then we wish to prove
the following:

\begin{namedthm}[\prettyref{lem:summability}]
  Assume $I(\alpha)$. Then $(\ctree(T)\suchthat T\in\admtree(x))$ is
  summable for all $x\in\Delta_{\alpha+1}\setminus\Delta_{\alpha}$.  In
  particular, $I(\alpha)$ implies $I(\alpha+1)$.
\end{namedthm}

For the rest of this section, \emph{let $c_{0}:\Delta\to\no$ be a
  pre-substitution, and assume that the inductive hypothesis
  $I(\alpha)$ holds.} Then $c:\Delta_{\alpha}\to\no$ is well defined,
and the objects $\admtree(x)$, $\ctree(T)$ and $\admtreeinf(x)$ are
clearly well defined for all $x\in\Delta_{\alpha+1}$ and all
$T\in\admtree(x)$.  Moreover, recall that by \prettyref{prop:cexp},
$c(t)$ is also well defined for all terms
$t\in\Delta_{\alpha+1}\cap\T$.

\subsection{A property of pre-substitutions}

We start by observing a rather technical, but crucial fact on
pre-substitutions.

\begin{lem}
  \label{lem:supp-inverse}Let $x\in\no$ and $\m$ be the leading
  monomial of $x$. Then
  \[
    \supp(x)\subseteq\bigcup_{n=0}^{\infty}\m^{n+1}\cdot\supp(x^{-1})^{n}.
  \]
\end{lem}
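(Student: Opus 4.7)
The plan is to reduce the claim to a formal geometric series expansion. First I would write $x = c\m(1+\varepsilon)$ with $c \in \R^{*}$ and $\varepsilon \prec 1$ (this is possible since $\m$ is the leading monomial of $x$). Inverting, $x^{-1} = c^{-1}\m^{-1}(1+\varepsilon)^{-1}$, so the leading monomial of $x^{-1}$ is $\m^{-1}$, and we can write symmetrically
\[
x^{-1} = c^{-1}\m^{-1}(1+\delta)
\]
for some $\delta \prec 1$. By inspection of the definition, the monomials appearing in $\supp(\delta)$ are exactly $\m \cdot \n$ for $\n \in \supp(x^{-1}) \setminus \{\m^{-1}\}$, so
\[
\supp(\delta) \subseteq \m \cdot \supp(x^{-1}).
\]

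Now I would invert a second time, using the geometric series for $(1+\delta)^{-1}$:
\[
x = c\m(1+\delta)^{-1} = c\m\sum_{n=0}^{\infty}(-\delta)^{n}.
\]
Each monomial in $\supp(x)$ therefore lies in $\m \cdot \supp(\delta^{n})$ for some $n \in \N$, with the convention $\supp(\delta^{0}) = \{1\}$. Since $\supp(\delta^{n}) \subseteq \supp(\delta)^{n}$, combining with the inclusion above yields
\[
\supp(x) \subseteq \bigcup_{n=0}^{\infty} \m \cdot \big(\m \cdot \supp(x^{-1})\big)^{n} = \bigcup_{n=0}^{\infty} \m^{n+1} \cdot \supp(x^{-1})^{n},
\]
which is the desired inclusion.

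There is no real obstacle here; the only point requiring mild care is justifying the passage from $\supp(\delta^n)$ to $\supp(\delta)^n$, which follows from the convolution formula defining multiplication in a Hahn field, and keeping track of the fact that the $n=0$ term recovers the leading monomial $\m$ itself.
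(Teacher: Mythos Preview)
Your proof is correct and follows essentially the same approach as the paper: write $x^{-1}$ in the form $c^{-1}\m^{-1}(1+\delta)$ with $\delta\prec 1$, observe that $\supp(\delta)\subseteq\m\cdot\supp(x^{-1})$, and then expand $x=c\m(1+\delta)^{-1}$ as a geometric series. The only difference is cosmetic: you first write $x=c\m(1+\varepsilon)$ and invert to discover that $\m^{-1}$ is the leading monomial of $x^{-1}$, whereas the paper starts directly from $x^{-1}=t^{-1}(1+\varepsilon)$ (your $\delta$ is the paper's $\varepsilon$), making your initial step a harmless but unnecessary detour.
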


\begin{proof}
  Let $t=r\m$ be the leading term of $x$. Write
  $x^{-1}=t^{-1}(1+\varepsilon)$, where $\varepsilon\vless1$. Then
  \[
    x=\frac{t}{(1+\varepsilon)}=t\cdot\sum_{n=0}^{\infty}(-1)^{n}\varepsilon^{n},
  \]
  hence every element in the support of $x$ has the form
  $\m\cdot\n_{1}\cdot\ldots\cdot\n_{n}$ with $n\geq0$ and
  $\n_{i}\in\supp(\eps)$. On the other hand, since
  $\varepsilon=tx^{-1}-1=r\m x^{-1}-1$ and $\eps\prec1$, we have
  $\supp(\eps)\subseteq\m\cdot\supp(x^{-1})$, and the conclusion
  follows.
\end{proof}

\begin{lem}
  \label{lem:finer-large-gaps}Let $c_{0}:\Delta\to\no$ be a
  pre-substitution.  Let $(\lambda_{i})_{i\in\N}$, $(\m_{i})_{i\in\N}$ be
  two sequences such that $\lambda_{i}\in\Delta$ and
  $\m_{i}\in\supp(c_{0}(\lambda_{i}))$ for all $i\in\N$. Then there is an
  increasing sequence of indexes $(i_{j})_{j\in\N}$ such that one of the
  following holds:
  \begin{enumerate}
  \item the subsequence $(\lambda_{i_{j}})_{j\in\N}$ is decreasing and
    for all $j\in\N$
    \[
      \frac{\m_{i_{j+1}}}{\m_{i_{j}}} \vless
      \frac{c_{0}(\lambda_{i_{j+1}})}{c_{0}(\lambda_{i_{j}})} \vless 1;
    \]
  \item the subsequence $(\lambda_{i_{j}})_{j\in\N}$ is increasing and
    for all $j\in\N$
    \[
      \frac{\m_{i_{j+1}}}{\m_{i_{j}}} \vless c_{0}(\lambda_{i_{j+1}})^{2};
    \]
  \item the subsequence $(\lambda_{i_{j}})_{j\in\N}$ is constant and for
    all $j\in\N$
    \[
      \frac{\m_{i_{j+1}}}{\m_{i_{j}}} \vleq 1.
    \]
  \end{enumerate}
  Note that in all three cases we have
  $\frac{\m_{i_{j+1}}}{\m_{i_{j}}} \vless c_{0}(\lambda_{i_{j+1}})^{2}$.
\end{lem}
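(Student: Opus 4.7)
The proof is an iterated Ramsey-type extraction leveraging the summability properties~(3) and~(4) of the pre-substitution. I would first apply the infinite Ramsey theorem to the $3$-coloring of pairs $i<j$ by the sign of $\lambda_j-\lambda_i$ to extract an infinite monochromatic subset. Reindexing along this subset reduces the problem to the three alternatives of the conclusion: $(\lambda_i)$ strictly decreasing, strictly increasing, or constant.

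The constant case is immediate: all the $\m_i$ lie in the reverse well-ordered set $\supp(c_0(\lambda))$, so thinning to a weakly decreasing subsequence yields $\m_{i_{j+1}}/\m_{i_j}\vleq 1$. For the decreasing case, property~(3) makes $(c_0(\lambda_i))$ summable, so the union $\bigcup_i\supp(c_0(\lambda_i))$ is reverse well-ordered and each of its monomials belongs to only finitely many supports. Writing $L_i$ for the leading monomial of $c_0(\lambda_i)$, property~(5) yields $L_{i+1}\vless L_i$ and $c_0(\lambda_{i+1})/c_0(\lambda_i)\veq L_{i+1}/L_i$. A reverse-well-ordering extraction, followed by a case split according to whether $\m_{i_j}\veq L_{i_j}$ or $\m_{i_j}\vless L_{i_j}$ along a further subsequence, then gives the inequality of case~(1).

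The increasing case is the main obstacle and uses the just-proved \prettyref{lem:supp-inverse}. By property~(4), $(c_0(\lambda_i)^{-1})$ is summable, so $S':=\bigcup_i\supp(c_0(\lambda_i)^{-1})$ is reverse well-ordered. \prettyref{lem:supp-inverse} presents each $\m_i$ as $L_i^{n_i+1}\prod_{k=1}^{n_i}\n_{i,k}$ with $\n_{i,k}\in S'$. I would apply Ramsey once more to make $(n_i)$ either eventually constant or strictly increasing, and then iteratively thin so that each ``column'' of $\n_{i,k}$'s (for fixed $k$) is weakly decreasing in $S'$. The decisive fact is that property~(5) combined with the remark after \prettyref{def:pre-composition} yields $L_{i_{j+1}}\vgreater L_{i_j}^n$ for every $n\in\N$, so $L_{i_{j+1}}^2$ dominates any polynomial in $L_{i_j}$ coming from $\m_{i_j}^{-1}$, producing $\m_{i_{j+1}}/\m_{i_j}\vless L_{i_{j+1}}^2\veq c_0(\lambda_{i_{j+1}})^2$. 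The closing ``Note that'' follows by transitivity: in cases~(1) and~(3) the unified bound reduces to $c_0(\lambda_{i_{j+1}})^2\vgreater 1$ combined with the case-specific inequality.
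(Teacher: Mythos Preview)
Your overall plan (Ramsey extraction into the three monotonicity cases) matches the paper, and the constant case is fine. However, both non-trivial cases miss the decisive move of the paper's proof: \emph{descend one level} by writing $\lambda_i=e^{\mu_i}$ with $\mu_i=\log(\lambda_i)\in\Delta$, and apply properties~(3),~(4) and \prettyref{lem:supp-inverse} to $c_0(\mu_i)$ rather than to $c_0(\lambda_i)$. The point is that $\m_i=e^{c_0(\mu_i)^{\uparrow}}\cdot\n_i$ with $\n_i\in\supp\bigl(\exp(c_0(\mu_i)^{\downarrow})\bigr)$, and it is the \emph{relative} quantity $\n_i=\m_i/L_i$ (not $\m_i$ itself) whose behaviour one controls via summability.

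Your increasing-case argument breaks at the final step. You apply \prettyref{lem:supp-inverse} to $c_0(\lambda_i)$ and claim that ``$L_{i_{j+1}}^2$ dominates any polynomial in $L_{i_j}$ coming from $\m_{i_j}^{-1}$''. The domination $L_{i_j}^n\prec L_{i_{j+1}}$ is correct, but $\m_{i_j}^{-1}$ is \emph{not} bounded by any power of $L_{i_j}$: the factors $\n_{i_j,k}\in\supp(c_0(\lambda_{i_j})^{-1})$ can be arbitrarily small, and your column-thinning only compares them row to row, not to a fixed power of $L_{i_j}$. Concretely, if $n_i=n\geq 2$ is constant, your extraction yields only $\m_{i+1}/\m_i\preceq(L_{i+1}/L_i)^{n+1}$, and this is \emph{not} $\prec L_{i+1}^2$: that would require $L_{i+1}^{n-1}\prec L_i^{n+1}$, whereas in fact $L_i^m\prec L_{i+1}$ for every $m$. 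The paper avoids this by applying \prettyref{lem:supp-inverse} to $c_0(\mu_i)$ instead; the correction factor is then a power of $\o_i=\LM(c_0(\mu_i))$, and the key inequality $\o_i^n\prec e^{c_0(\mu_i)^{\uparrow}}=L_i$ (valid because $c_0(\mu_i)$ is positive infinite) absorbs that power into a single extra factor of $L_{i+1}$. Your decreasing-case sketch has the analogous gap: summability of $(c_0(\lambda_i))$ lets you extract $\m_{i_{j+1}}\prec\m_{i_j}$, but this alone does not give $\m_{i_{j+1}}/\m_{i_j}\prec L_{i_{j+1}}/L_{i_j}$; one must control the ratio $\m_i/L_i$, which again requires the summability of $(c_0(\mu_i))$ at the lower level.
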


\begin{proof}
  Let $\lambda_{i}=:e^{\mu_{i}}$. Note that $\mu_{i}\in\Delta$. We have
  \[
    c_{0}(\lambda_{i}) = e^{c_{0}(\mu_{i})} =
    e^{c_{0}(\mu_{i})^{\uparrow=}}e^{c_{0}(\mu_{i})^{\downarrow}}.
  \]
  Thus
  $\n_{i} := \m_{i}e^{-c_{0}(\mu_{i})^{\uparrow}} \in
  \supp\left(\exp(c_{0}(\mu_{i})^{\small=})\right) =
  \supp(\exp(c_{0}(\mu_{i})^{\small})$, and therefore there is some
  $n_{i}\in\N$ such that
  $\n_{i}\in\supp((c_{0}(\mu_{i})^{\small})^{n_{i}})$.  After extracting a
  subsequence we may assume that $(\lambda_{i})_{i\in\N}$ is monotone,
  so either increasing, decreasing, or constant.

  (1) Suppose that $(\lambda_{i})_{i\in\N}$ is decreasing. Then
  $(\mu_{i})_{i\in\N}$ is also decreasing, hence the family
  $(c_{0}(\mu_{i})\suchthat i\in\N)$ is summable. In particular,
  $(c_{0}(\mu_{i})^{\downarrow}\suchthat i\in\N)$ is summable, and by
  \prettyref{cor:power-series-summable},
  $(\exp(c_{0}(\mu_{i})^{\downarrow})\suchthat i\in\N)$ is summable. We
  may therefore extract a subsequence and assume that
  $(\n_{i})_{i\in\N}$ is decreasing, so that
  \[
    \m_{i+1}e^{-c_{0}(\mu_{i+1})^{\uparrow}} \vless
    \m_{i}e^{-c_{0}(\mu_{i})^{\uparrow}}.
  \]
  Since $c_{0}(\lambda_{i})=e^{c_{0}(\mu_{i})}$, it follows that
  \[
    \frac{\m_{i+1}}{\m_{i}} \vless
    \frac{e^{c_{0}(\mu_{i+1})^{\uparrow}}}{e^{c_{0}(\mu_{i})^{\uparrow}}} \veq
    \frac{c_{0}(\lambda_{i+1})}{c_{0}(\lambda_{i})} \vless 1.
  \]

  (2) Consider now the case when $(\lambda_{i})_{i\in\N}$ is increasing.
  Let $\o_{i}:=\LM(c_{0}(\mu_{i}))$. By \prettyref{lem:supp-inverse},
  applied with $x=c_{0}(\mu_{i})$, we deduce that
  \[
    \supp(c_{0}(\mu_{i})) \subseteq \bigcup_{m=0}^{\infty}\o_{i}^{m+1} \cdot
    \supp(c_{0}(\mu_{i})^{-1})^{m}.
  \]
  Since
  $\n_{i} = \frac{\m_{i}}{e^{c_{0}(\mu_{i})^{\big}}} \in
  \supp((c_{0}(\mu_{i})^{\small})^{n_{i}})$, it follows that there is an
  $m_{i}\in\N$ such that
  \[
    \frac{\m_{i}}{e^{c_{0}(\mu_{i})^{\big}}} \in \o_{i}^{n_{i}(m_{i}+1)} \cdot
    \supp(c_{0}(\mu_{i})^{-1})^{n_{i}m_{i}}
  \]
  and therefore
  $\m_{i}\cdot e^{-c_{0}(\mu_{i})^{\big}} \cdot \o_{i}^{-n_{i}(m_{i}+1)} \in
  \supp(c_{0}(\mu_{i})^{-1})^{n_{i}m_{i}}$.

  Now observe that $c_{0}(\mu_{i})^{-1}\vless1$ and that the family
  $(c_{0}(\mu_{i})^{-1}\suchthat i\in\N)$ is summable because
  $(\mu_{i}^{-1})_{i\in\N}$ is decreasing. By
  \prettyref{cor:power-series-summable}, applied with
  $\eps_{i}=c_{0}(\mu_{i})^{-1}$, the family
  $\left(\m_{i}\cdot
    e^{-c_{0}(\mu_{i})^{\big}}\cdot\o_{i}^{-n_{i}(m_{i}+1)}\suchthat
    i\in\N\right)$ is summable. We may therefore extract a subsequence
  and assume that
  \[
    \frac{\m_{i}}{e^{c_{0}(\mu_{i})^{\big}}\cdot\o_{i}^{n_{i}(m_{i}+1)}}
    \vgreater
    \frac{\m_{i+1}}{e^{c_{0}(\mu_{i+1})^{\big}}\cdot\o_{i+1}^{n_{i+1}(m_{i+1}+1)}}.
  \]
  Since $c_{0}(\mu_{i})$ is positive infinite,
  $e^{c_{0}(\mu_{i})^{\big}}\vgreater c_{0}(\mu_{i})^{n}\veq\o_{i}^{n}$ for any
  $n\in\N$, so
  \[
    \frac{\m_{i+1}}{e^{2c_{0}(\mu_{i+1})^{\big}}} \prec
    \frac{\m_{i+1}}{e^{c_{0}(\mu_{i+1})^{\big}} \cdot
      \o_{i+1}^{n_{i+1}(m_{i+1}+1)}} \prec
    \frac{\m_{i}}{e^{c_{0}(\mu_{i})^{\big}}}.
  \]
  Therefore,
  \[
    \frac{\m_{i+1}}{\m_{i}} \prec
    \frac{e^{2c_{0}(\mu_{i+1})^{\big}}}{e^{c_{0}(\mu_{i})^{\big}}} \veq
    \frac{c_{0}(\lambda_{i+1})^{2}}{c_{0}(\lambda_{i})} \preceq
    c_{0}(\lambda_{i+1})^{2}.
  \]

  (3) Finally, suppose that there is a $\lambda\in\Delta$ such that
  $\lambda_{i}=\lambda$ for all $i\in\N$. In this case all the
  monomials $\m_{i}$ are in the support of $c_{0}(\lambda)\in\no$, hence
  obviously we may extract a subsequence and assume that
  $\m_{i+1}\vleq\m_{i}$ for all $i\in\N$.
\end{proof}

\subsection{Further properties of the extensions}

Recall that $I(\alpha)$ implies that $c:\Delta_{\alpha}\to\no$ is a
substitution when $\alpha\geq3$ (\prettyref{cor:csubstitution}).  In
particular, $c$ preserves the ordering and the dominance relation
$\vless$ by \prettyref{prop:substitution-is-hom}. We observe that
$I(\alpha)$ implies similar monotonicity properties for $\alpha<3$,
and also for terms in $\Delta_{\alpha+1}$.

\begin{prop}
  \label{prop:cmonotone}For all $x,y\in\Delta_{\alpha}$, and for all
  $x,y\in\Delta_{\alpha+1}\cap\R^{*}\M$, we have $x<y\to c(x)<c(y)$ and
  $x\prec y\to c(x)\vless c(y)$.
\end{prop}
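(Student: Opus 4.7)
My plan is to split the statement into two parts---monotonicity on $\Delta_\alpha$ and on $\Delta_{\alpha+1} \cap \R^{*}\M$---and handle each by an induction on $\alpha$. The first part, when $\alpha \geq 3$, follows immediately from \prettyref{cor:c-alpha-3-substitution} combined with \prettyref{prop:substitution-is-hom}. The case $\alpha = 0$ is vacuous, and for $\alpha = 1$ the statement is immediate from axiom (5) of the pre-substitution together with the positivity of $c_0(\lambda)$. Only $\alpha = 2$ requires genuine work, and I will close it using the terms analysis below.

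For the second part, every term $t = re^\gamma \in \Delta_{\alpha+1} \cap \R^{*}\M$ has $\gamma \in \Delta_\alpha \cap \J$ and $c(t) = re^{c(\gamma)}$ by \prettyref{prop:cexp}. Given two such terms $t_1 = r_1 e^{\gamma_1}$ and $t_2 = r_2 e^{\gamma_2}$, the case $\gamma_1 = \gamma_2$ is immediate, so assume $\gamma_1 < \gamma_2$. I aim to establish $c(\gamma_2) - c(\gamma_1) > \R$, which forces $e^{c(\gamma_1)} \prec e^{c(\gamma_2)}$ in $\no$, hence $c(t_1) \prec c(t_2)$; the sign of $c(t_2) - c(t_1)$ then agrees with that of $r_2$, matching the sign of $t_2 - t_1$. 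For $\alpha \leq 1$, the bound $c(\gamma_2) - c(\gamma_1) > \R$ falls out of axiom (5) alone: the dominance $c_0(\gamma_1) \prec c_0(\gamma_2)$ gives $c_0(\gamma_1) < c_0(\gamma_2)/2$, and $c_0(\gamma_2) > \R$ by the remark following the pre-substitution axioms. For $\alpha \geq 2$, I use that $\gamma_2 - \gamma_1$ lies in $\Delta_\alpha \cap \J$, is positive and purely infinite, hence $> \R$; the first-part monotonicity on $\Delta_\alpha$ combined with $c(r) = r$ (\prettyref{prop:cR}) yields $c(\gamma_2 - \gamma_1) > \R$, and strong additivity (\prettyref{prop:csum}) then gives $c(\gamma_2) - c(\gamma_1) > \R$.

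The remaining case $\alpha = 2$ of the first part can now be closed using the terms analysis at level $\alpha = 1$, which is in hand by the previous paragraph. By strong additivity it suffices to show $z > 0 \Rightarrow c(z) > 0$ for $z \in \Delta_2$. Writing $z = \sum_i r_i e^{\gamma_i}$ in Ressayre normal form yields $r_0 > 0$ and $\gamma_i \in \Delta \cup \{0\}$. The terms argument shows that each summand $r_i e^{c(\gamma_i)}$ for $i > 0$ is strictly $\prec$-dominated in $\no$ by $r_0 e^{c(\gamma_0)}$. Hence the leading monomial of $c(z) = \sum_i r_i e^{c(\gamma_i)}$ coincides with that of $r_0 e^{c(\gamma_0)}$, and its leading coefficient is the positive real $r_0 \cdot e^{c(\gamma_0)^{=}}$; so $c(z) > 0$. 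The corresponding $\prec$-preservation on $\Delta_2$ is obtained by the same leading-monomial comparison applied to the leading terms of two elements $x \prec y$.

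The main delicate point is the implication ``$\gamma_2 - \gamma_1 > \R \Longrightarrow c(\gamma_2) - c(\gamma_1) > \R$''. Axiom (5) only supplies the $\prec$-dominance of the $c_0$-values, and this must be converted into a genuine ``larger than every real'' bound on the exponent difference, since only such a bound ensures that $e^{c(\gamma_2) - c(\gamma_1)}$ outgrows every real and yields the $\prec$-domination after exponentiation. Strong additivity of $c$ and the identity $c{\restriction}\R = \mathrm{id}$ are exactly what enable this passage.
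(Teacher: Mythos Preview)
Your proof is correct and follows essentially the same approach as the paper: case analysis on $\alpha$ for the $\Delta_\alpha$ part (using \prettyref{cor:c-alpha-3-substitution} for $\alpha\geq 3$, direct verification for small $\alpha$), and then treating terms in $\Delta_{\alpha+1}\cap\R^{*}\M$ via $c(re^{\gamma})=re^{c(\gamma)}$ from \prettyref{prop:cexp}. Your handling of the terms case is in fact more explicit than the paper's: you carefully establish $c(\gamma_{2})-c(\gamma_{1})>\R$ by splitting into $\alpha\leq 1$ (using axiom~(5) and positivity of $c_{0}$) and $\alpha\geq 2$ (using additivity on the linear space $\Delta_{\alpha}$, the fact that $\gamma_{2}-\gamma_{1}\in\J^{>0}$ is infinite, and $c\restriction\R=\mathrm{id}$), whereas the paper compresses this into a single sentence.
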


\begin{proof}
  If $\alpha$ is $0$ or $1$, then for all $x,y\in\Delta_{\alpha}$ we
  have $x<y\to c(x)<c(y)$ and $x\prec y\to c(x)\vless c(y)$ by
  definition of pre-substitution. The same conclusion holds for
  $\alpha\geq3$ by \prettyref{cor:c-alpha-3-substitution} and
  \prettyref{prop:substitution-is-hom}.  For $\alpha=2$, note that by
  \prettyref{prop:csum}, if we expand some $x\in\Delta_{2}\setminus\R$
  as
  $x=r_{0}e^{\lambda_{0}}+\sum_{1\leq i<\beta}r_{i}e^{\lambda_{i}}+s$ (where
  $r_{i},s\in\R$, $\lambda_{i}\in\Delta$, and
  $\lambda_{i}>\lambda_{j}$ for all $i\leq j<\beta$), we have
  \[
    c(x) = r_{0}e^{c_{0}(\lambda_{0})} + \sum_{1\leq
      i<\beta}r_{i}e^{c_{0}(\lambda_{i})} + s,
  \]
  while $c(r)=r$ for all $r\in\R$ by \prettyref{prop:cR}. By
  definition of pre-substitution, it follows at once that
  $c(x)\sim r_{0}e^{c_{0}(\lambda_{0})}$, and in turn, that $c(x)>0$ if
  and only if $x>0$ (and obviously $c(r)>0$ if and only if
  $r>0$). Since $\Delta_{2}$ is an additive group, we have
  $x<y\to c(x)<c(y)$ for all $x,y\in\Delta_{2}$. By the same argument,
  it also follows that $x\prec y\to c(x)\vless c(y)$ for all
  $x,y\in\Delta_{2}$.

  Now take some $x,y\in\Delta_{\alpha+1}\cap\R^{*}\M$. Write
  $x=re^{\gamma},y=se^{\delta}$, with $r,s\in\R^{*}$ and
  $\gamma,\delta\in\J$. By \prettyref{prop:cexp}, $c(re^{\gamma})$ and
  $c(se^{\delta})$ are well defined and equal to respectively
  $re^{c(\gamma)}$, $se^{c(\delta)}$. We observe that if
  $\gamma<\delta$, then $c(\gamma)<c(\delta)$, and if moreover
  $0<\gamma$, then $0<c(\gamma)$ and $\gamma\prec\delta$, so
  $c(\gamma)\prec c(\delta)$.  This easily implies that
  $x<y\to c(x)<c(y)$ and $x\vless y\to c(x)\vless c(y)$.
\end{proof}

We also need the following properties of admissible trees.

\begin{lem}
  \label{lem:children}Let $x\in\Delta_{\alpha+1}$ and
  $T=\tree{re^{\gamma}}n{\tau}\in\admtree(x)$.  We have:
  \begin{enumerate}
  \item
    $re^{c(\gamma)^{\bigeq}}\veq
    re^{c(\gamma)}=c(re^{\gamma})=c(\treeroot(T))$;
  \item if $re^{\gamma}=\treeroot(T)\nin\Delta$, then
    $\ctree(T)\veq c(\treeroot(T))\cdot\prod_{i<n}\ctree(\tau(i))$;
  \item if $U=\tau(i)$ is a child of $T$, then $\ctree(U)$ is
    infinitesimal;
  \item if $U$ is a proper descendant of $T$, then $\ctree(U)$ is
    infinitesimal;
  \item $\ctree(T)\preceq c(\treeroot(T))$;
  \item if $\size(T)>1$, then all the leaves of $T$ have root in
    $\Delta$.
  \end{enumerate}
\end{lem}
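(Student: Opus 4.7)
The plan is to prove the six items essentially in order. Items (1)--(5) are short manipulations of the recursive definitions in \prettyref{def:everything}, using only \prettyref{prop:cexp} and \prettyref{prop:cmonotone} as inputs. Item (6), where the combinatorial structure of admissible trees really enters, will be the main obstacle.

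For (1), I write $c(\gamma) = c(\gamma)^{\bigeq} + c(\gamma)^{\small}$; since $c(\gamma)^{\small} \prec 1$ we have $e^{c(\gamma)^{\small}} \veq 1$, so $e^{c(\gamma)} = e^{c(\gamma)^{\bigeq}} \cdot e^{c(\gamma)^{\small}} \veq e^{c(\gamma)^{\bigeq}}$; the equalities $re^{c(\gamma)} = c(re^{\gamma}) = c(\treeroot(T))$ are just \prettyref{prop:cexp} (valid both when $re^{\gamma} \nin \Delta$ and, via $\lambda = 1\cdot e^{\log \lambda}$, when $re^{\gamma} = \lambda \in \Delta$). For (2), I substitute (1) into the defining formula $\ctree(T) = re^{c(\gamma)^{\bigeq}} \cdot \frac{1}{n!} \prod_{i<n} \ctree(\tau(i))$ and use $\frac{1}{n!} \in \R^{*}$. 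Item (3) is immediate: admissibility forces $\tau(i) \in \admtreeinf(\gamma)$, so $\ctree(\tau(i)) \prec 1$. For (4), I would induct on the well-founded tree structure: a proper descendant $U$ is either a child of $T$, which is (3), or a proper descendant of some child $V$, which is the inductive hypothesis at $V$. For (5), I split cases: if $\treeroot(T) = \lambda \in \Delta$ then $\ctree(T)$ is a term of $c_0(\lambda) = c(\lambda)$, hence $\preceq c(\lambda)$; otherwise I combine (2) with (3), noting that $\prod_{i<n}\ctree(\tau(i))$ is either the empty product $1$ or a product of infinitesimals, so in any case $\preceq 1$, and therefore $\ctree(T) \preceq c(\treeroot(T))$.

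The main step is (6). Suppose $\size(T) > 1$ and let $U$ be a leaf of $T$. Since $\size(T) > 1$, $T$ has at least one child, so $U \neq T$, and hence $U$ lies at the end of a descent path $T = T_0, T_1, \ldots, T_k = U$ in which each $T_{j+1}$ is a child of $T_j$; in particular $U$ is a child of $T' := T_{k-1}$. Since $T'$ has at least one child, $\treeroot(T') \nin \Delta$ (trees rooted in $\Delta$ have no children), so $T' = \tree{r'e^{\gamma'}}{n'}{\tau'}$ with $\gamma' \in \J$, and admissibility forces $U \in \admtreeinf(\gamma')$, i.e.\ $\ctree(U) \prec 1$. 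Assume for contradiction that $\treeroot(U) = se^{\delta} \nin \Delta$: since $U$ has no children, $U = \tree{se^{\delta}}{0}{\emptyset}$ and $\ctree(U) = se^{c(\delta)^{\bigeq}}$. Now $se^{\delta}$ is a term of $\gamma' \in \J$, and every monomial in the support of a purely infinite surreal has the form $e^{\eta}$ with $\eta \in \J^{>0}$; in particular $\delta \in \J^{>0}$, so $\delta \succ 1$. Applying \prettyref{prop:cmonotone} yields $c(\delta) \succ c(1) = 1$, so $c(\delta)^{\big} > 0$, and therefore $e^{c(\delta)^{\bigeq}}$ is positive infinite, contradicting $\ctree(U) \prec 1$. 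Hence $\treeroot(U) \in \Delta$. The point that needs verification is that \prettyref{prop:cmonotone} genuinely applies to $\delta$, but this is fine because every exponent encountered inside $T$ lies in some $\Delta_{\beta}$ with $\beta \leq \alpha$, the range covered by the inductive hypothesis $I(\alpha)$.
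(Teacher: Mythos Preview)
Your proof is correct and follows essentially the same route as the paper's. Items (1)--(5) match almost verbatim. For (6) there is only a cosmetic difference: the paper reduces (by induction on size) to the case where the leaf $L$ is a child of $T$, then invokes item~(2) to get $\ctree(L)\veq c(\treeroot(L))$ and applies \prettyref{prop:cmonotone} to the \emph{term} $\treeroot(L)=se^{\delta}\in\Delta_{\alpha+1}\cap\R^{*}\M$ to obtain $c(\treeroot(L))\succ1$; you instead descend to the parent $T'$, compute $\ctree(U)=se^{c(\delta)^{\bigeq}}$ directly, and apply \prettyref{prop:cmonotone} to the \emph{exponent} $\delta\in\Delta_{\alpha}$. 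Both are valid; the paper's version has the tiny advantage that the comparison element $1$ automatically sits in $\Delta_{\alpha+1}\cap\R^{*}\M$ for every $\alpha\geq1$, whereas your appeal to $1\in\Delta_{\alpha}$ tacitly needs $\alpha\geq2$ (the cases $\alpha\leq1$ being vacuous anyway).
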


\begin{proof}
  (1) follows from \prettyref{prop:cexp}.

  (2), (3), (4) follow at once from the definitions and (1).

  (5) If $\lambda=\treeroot(T)\in\Delta$, then
  $\ctree(T)\in\term(c_{0}(\lambda))$, so
  $\ctree(T)\preceq c_{0}(\lambda)=c(\treeroot(T))$ as desired.  If
  $\treeroot(T)\nin\Delta$, then
  $\ctree(T)\veq c(\treeroot(T))\cdot\prod_{i<n}\ctree(\tau(i))$ by
  (2), and since $\ctree(\tau(i))\prec1$ for each $i<n$ by (3), we
  reach the same conclusion.

  (6) Assume $\size(T)>1$, and let $L$ be a leaf of $T$. Then $L$ is a
  leaf of some child of $T$. Reasoning by induction, we may directly
  assume, without loss of generality, that $L$ is a child of $T$.
  Write $L=\tree{se^{\delta}}0{\sigma}$. Note that $se^{\delta}$ is a
  term of $\gamma=\vell(\treeroot(T))\in\J$, so
  $\treeroot(L)=se^{\delta}\succ1$.  By \prettyref{prop:cmonotone}, it
  follows that $c(\treeroot(L))\succ1$.  Now suppose by contradiction
  that $se^{\delta}\notin\Delta$. Then (2) implies that
  $\ctree(L)\veq c(\treeroot(T))\vgreater1$, but by (4) we must have
  $\ctree(L)\prec1$. Therefore, $se^{\delta}\in\Delta$, as desired.
\end{proof}

\subsection{Bad sequences}

In order to prove that the family
$(\ctree(T)\suchthat T\in\admtree(x))$ is summable for any
$x\in\Delta_{\alpha+1}$, by \prettyref{rem:summability-criterion}, one
could try to verify that there is no injective sequence
$(T_{i})_{i\in\N}$ of trees in $\admtree(x)$ such that
$\ctree(T_{i})\vleq\ctree(T_{i+1})$ for all $i\in\N$. However, we will
actually prove the stronger statement that there are \emph{no bad
  sequences}, which are defined as follows:

\begin{defn}
  Let $x\in\Delta_{\alpha+1}$ and let $(T_{i})_{i\in\N}$ be a sequence of
  trees in $\admtree(x)$. We say that the sequence is \textbf{bad} if
  it is injective, $\treeroot(T_{i})\succeq\treeroot(T_{i+1})$ for each
  $i\in\N$, and
  \[
    \left(\frac{\ctree(T_{i})}{\ctree(T_{i+1})}\right)^{n} \vleq
    \frac{c(\treeroot(T_{i}))}{c(\treeroot(T_{i+1}))}
  \]
  for all $i,n\in\N$.
\end{defn}

For instance, \prettyref{lem:finer-large-gaps}(1) and (3) immediately
imply that there are no bad sequences in $\admtree(x)$ for any
$x\in\Delta_{2}$.  The non-existence of bad sequences in a given
$\admtree(x)$ quickly implies the desired summability.

\begin{prop}
  \label{prop:no-bad-implies-summability}Let
  $x \in \Delta_{\alpha+1}$.  If there are no bad sequences in
  $\admtree(x)$, then $(\ctree(T)\suchthat T\in\admtree(x))$ is
  summable.
\end{prop}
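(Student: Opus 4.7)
The plan is to argue by contrapositive: assume $(\ctree(T)\suchthat T\in\admtree(x))$ is not summable and extract a bad sequence, contradicting the hypothesis.

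By the summability criterion of \prettyref{rem:summability-criterion}, non-summability yields an injective sequence $(T_{n})_{n\in\N}$ in $\admtree(x)$ together with monomials $\m_{n}\in\supp(\ctree(T_{n}))$ satisfying $\m_{n}\vleq\m_{n+1}$ for all $n$. Since each $\ctree(T_{n})$ is a single term (an element of $\R^{*}\M$), its support is the singleton $\{\LM(\ctree(T_{n}))\}$, and the condition becomes simply $\ctree(T_{n})\vleq\ctree(T_{n+1})$ for every $n$.

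Each tree $T_{n}$ lies in $\admtree(t_{n})$ for some $t_{n}=\treeroot(T_{n})\in\term(x)$. Since $x\in\Delta_{\alpha+1}$ has reverse well-ordered support, the standard ``peaks'' argument lets us extract a subsequence along which $(\treeroot(T_{n}))$ is weakly $\vgeq$-decreasing: if the set of indices $n$ such that $t_{m}\vleq t_{n}$ for all $m\geq n$ is infinite, these indices give the desired subsequence, and if it is finite, after the last such index we can build an infinite $\vgreater$-ascending chain in $\supp(x)$, contradicting reverse well-ordering. Pass to such a subsequence, still denoted $(T_{n})$; injectivity is preserved and the relation $\ctree(T_{n})\vleq\ctree(T_{n+1})$ survives by transitivity of $\vleq$.

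It remains to check that the extracted sequence is bad. Since $\treeroot(T_{n})\vgeq\treeroot(T_{n+1})$ are terms of $x\in\Delta_{\alpha+1}$, \prettyref{prop:cmonotone} gives $c(\treeroot(T_{n}))\vgeq c(\treeroot(T_{n+1}))$, hence
\[
\frac{c(\treeroot(T_{n}))}{c(\treeroot(T_{n+1}))}\vgeq 1.
\]
On the other hand, $\ctree(T_{n})/\ctree(T_{n+1})\vleq 1$, so $(\ctree(T_{n})/\ctree(T_{n+1}))^{k}\vleq 1$ for every $k\in\N$ (bounding by a real to the $k$-th power still produces a real bound). Combining,
\[
\left(\frac{\ctree(T_{n})}{\ctree(T_{n+1})}\right)^{k}\vleq\frac{c(\treeroot(T_{n}))}{c(\treeroot(T_{n+1}))}
\]
for all $n,k\in\N$, exhibiting $(T_{n})$ as a bad sequence. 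This contradicts the hypothesis, completing the proof. There is no serious obstacle here: the argument only needs the fact that contributions are single terms, the combinatorics of reverse well-ordered supports, and the monotonicity of $c$ on terms established earlier. The genuine difficulty in the whole induction lies in ruling out bad sequences, not in this bridge result.
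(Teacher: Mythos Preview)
Your proof is correct and follows essentially the same route as the paper's: assume non-summability, obtain an injective sequence with $\ctree(T_n)\vleq\ctree(T_{n+1})$, extract a subsequence with weakly $\vgeq$-decreasing roots using that $\supp(x)$ is reverse well-ordered, and then observe that the bad-sequence inequality holds trivially with both sides compared to $1$. You merely spell out in more detail two points the paper leaves implicit, namely that $\ctree(T)$ is a single term (so the summability criterion reduces to comparing the $\ctree(T_n)$ directly) and the ``peaks'' argument for extracting the monotone subsequence.
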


\begin{proof}
  Suppose that $(\ctree(T)\suchthat T\in\admtree(x))$ is not summable.
  Then there is an injective sequence of trees $(T_{i})_{i\in\N}$ in
  $\admtree(x)$ such that
  \[
    \ctree(T_{i})\vleq\ctree(T_{i+1})
  \]
  for all $i\in\N$. After extracting a subsequence, we may assume that
  $\treeroot(T_{i})\succeq\treeroot(T_{i+1})$ for every $i\in\N$, as all
  these roots are terms of $x$. Therefore,
  $c(\treeroot(T_{i}))\succeq c(\treeroot(T_{i+1}))$ for all $i\in\N$ by
  \prettyref{prop:cmonotone}. It follows that for all $i,n\in\N$ we
  have
  \[
    \left(\frac{\ctree(T_{i})}{\ctree(T_{i+1})}\right)^{n}\vleq1\vleq\frac{c(\treeroot(T_{i}))}{c(\treeroot(T_{i+1}))},
  \]
  so the sequence $(T_{i})_{i\in\N}$ is bad.
\end{proof}

\begin{rem}
  If $(T_{i})_{i\in\N}$ is a bad sequence, then all its subsequences are
  bad. This follows from the fact that for all $i,k,n\in\N$ we have
  \[
    \left(\frac{\ctree(T_{i})}{\ctree(T_{i+k+1})}\right)^{n} =
    \left(\prod_{j=0}^{k}\frac{\ctree(T_{i+j})}{\ctree(T_{i+j+1})}\right)^{n}
    \vleq
    \prod_{j=0}^{k}\frac{c(\treeroot(T_{i+j}))}{c(\treeroot(T_{i+j+1}))} =
    \frac{c(\treeroot(T_{i}))}{c(\treeroot(T_{i+k+1}))}.
  \]
\end{rem}

We start with a few special cases in which it is easy to prove that
sequences of trees are not bad.

\begin{prop}
  \label{prop:root-in-delta}Let $x\in\Delta_{\alpha+1}$. Let
  $(T_{i})_{i\in\N}$ be a sequence of distinct trees in
  $\admtree(x)$. If $\treeroot(T_{i})\in\Delta$ for all $i\in\N$, then
  $(T_{i})_{i\in\N}$ is not bad.
\end{prop}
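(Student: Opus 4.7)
The plan is to argue by contradiction: assume $(T_i)_{i\in\N}$ is a bad sequence of distinct admissible trees with all roots in $\Delta$. By the definition of $\admtree(\lambda)$ for $\lambda\in\Delta$, each $T_i$ has the form $\tree{\lambda_i}{0}{t_i}$ with $\lambda_i\in\Delta$ and $t_i\in\term(c_0(\lambda_i))$; writing $t_i=r_i\m_i$ with $r_i\in\R^{*}$ and $\m_i\in\supp(c_0(\lambda_i))$, I will use throughout that $\ctree(T_i)=t_i$ and $c(\treeroot(T_i))=c_0(\lambda_i)$. The goal is then to reduce the statement to \prettyref{lem:finer-large-gaps} applied to the two sequences $(\lambda_i)$ and $(\m_i)$.

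First I would observe that the bad-sequence requirement $\treeroot(T_i)\succeq\treeroot(T_{i+1})$ forces $\lambda_i\geq\lambda_{i+1}$ in $\li$: since log-atomic numbers are monomials (so distinct ones are never $\veq$) and strict inequality between log-atomics already entails strict domination, the relations $\succeq$ and $\geq$ agree on $\Delta$. Hence $(\lambda_i)_{i\in\N}$ is weakly decreasing.

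Next I would apply \prettyref{lem:finer-large-gaps} to pass to a subsequence lying in one of its three cases. Since $(\lambda_i)$ is weakly decreasing, case~(2) (strictly increasing $\lambda_{i_j}$) is impossible, so only cases~(1) and~(3) can occur. In case~(1) the lemma gives $\m_{i+1}/\m_i\vless c_0(\lambda_{i+1})/c_0(\lambda_i)$, i.e.\ $\m_i/\m_{i+1}\succ c_0(\lambda_i)/c_0(\lambda_{i+1})$. Because real scalars do not affect the $\preceq$ relation, this yields $\ctree(T_i)/\ctree(T_{i+1})\succ c(\treeroot(T_i))/c(\treeroot(T_{i+1}))$, already contradicting the bad condition at $n=1$.

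The case~(3) handling is where the only genuinely delicate bookkeeping lives: here $\lambda_i$ is constant, so $c(\treeroot(T_i))/c(\treeroot(T_{i+1}))=1$, and the bad condition at $n=1$ forces $\m_i\preceq\m_{i+1}$, while \prettyref{lem:finer-large-gaps} gives $\m_{i+1}\preceq\m_i$. The two combine to $\m_i\veq\m_{i+1}$, and since distinct elements of $\M$ are never asymptotic, $\m_i=\m_{i+1}$. But then $r_i$ is forced to coincide with the coefficient of $\m_i$ in $c_0(\lambda_i)=c_0(\lambda_{i+1})$, which is also $r_{i+1}$, so $T_i=T_{i+1}$, contradicting distinctness. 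The main obstacle is really only this last bookkeeping; all the analytic work has already been packaged into \prettyref{lem:finer-large-gaps}, and the present proposition is essentially a translation of that lemma into the language of admissible trees.
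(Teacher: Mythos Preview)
Your proof is correct and follows essentially the same approach as the paper: both reduce to \prettyref{lem:finer-large-gaps} for the strictly decreasing case and handle the constant-$\lambda$ case separately. The only minor differences are that the paper obtains the monotonicity of $(\lambda_i)$ by extracting a subsequence from $\term(x)$ rather than reading it off the bad condition, and in the constant case it argues more directly (distinct terms of a fixed $c_0(\lambda)$ admit a strictly decreasing subsequence, immediately violating badness) instead of routing through case~(3) of the lemma.
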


\begin{proof}
  Write $T_{i}=\tree{\lambda_{i}}0{t_{i}}$, where
  $t_{i}=\ctree(T_{i})$ is a term of $c_{0}(\lambda_{i})$. Since
  $\lambda_{i}\in\term(x)$ for each $i\in\N$, after extracting a
  subsequence, we may assume that $(\lambda_{i}\suchthat i\in\N)$ is
  either constant or decreasing.  In the former case, all the
  contributions $\ctree(T_{i})$ are distinct elements of
  $\term(c_{0}(\lambda))$ for some fixed $\lambda\in\Delta$, so after
  extracting a subsequence we may assume
  $\ctree(T_{i})\succ\ctree(T_{i+1})$ for all $i\in\N$, so the sequence
  is not bad. In the latter case, by \prettyref{lem:finer-large-gaps},
  we may extract a further subsequence and assume that
  \[
    \frac{\ctree(T_{i})}{\ctree(T_{i+1})} = \frac{t_{i}}{t_{i+1}} \succ
    \frac{c_{0}(\lambda_{i})}{c_{0}(\lambda_{i+1})} =
    \frac{c(\treeroot(T_{i}))}{c(\treeroot(T_{i+1}))}.
  \]
  Therefore, $(T_{i})_{i\in\N}$ is not bad.
\end{proof}

\begin{prop}
  \label{prop:cterm}Let $t$ be a term in $\Delta_{\alpha+1}$. Then
  there are no bad sequences in $\admtree(t)$.
\end{prop}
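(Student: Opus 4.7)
The proof splits on whether $\treeroot(t)\in\Delta$. If $t=\lambda\in\Delta$, every admissible tree in $\admtree(\lambda)$ has root in $\Delta$, so \prettyref{prop:root-in-delta} immediately rules out bad sequences. Suppose instead $t=re^{\gamma}\notin\Delta$: every $T\in\admtree(re^{\gamma})$ then has the form $\tree{re^{\gamma}}{n}{\tau}$ with $\tau:n\to\admtreeinf(\gamma)$, so all such trees share the root $re^{\gamma}$; hence along any sequence the ratio $c(\treeroot(T_{i}))/c(\treeroot(T_{i+1}))$ equals $1$, and the bad-sequence condition collapses to $\ctree(T_{i})\preceq\ctree(T_{i+1})$ for every $i$. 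It suffices to show that no injective sequence $(T_{i})_{i\in\N}$ in $\admtree(re^{\gamma})$ satisfies this.

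Writing $T_{i}=\tree{re^{\gamma}}{n_{i}}{\tau_{i}}$ and $\m_{i,j}:=\LM(\ctree(\tau_{i}(j)))$, the leading monomial of $\ctree(T_{i})=\frac{r}{n_{i}!}\,e^{c(\gamma)^{\bigeq}}\prod_{j<n_{i}}\ctree(\tau_{i}(j))$ equals $e^{c(\gamma)^{\big}}\prod_{j<n_{i}}\m_{i,j}$, the positive real factors playing no role in $\preceq$-comparisons. After passing to a subsequence I may assume $(n_{i})$ is monotone, and I treat the two resulting cases separately.

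In the case where $n_{i}$ is eventually equal to a constant $n$, necessarily $n\geq1$ (otherwise $(T_{i})$ collapses to the unique tree with no children). I then perform a finite coordinatewise Ramsey extraction: for each $j<n$, the sequence $(\tau_{i}(j))_{i}$ is either eventually constant or admits an injective subsequence, and in the latter case, summability of $(\ctree(S))_{S\in\admtreeinf(\gamma)}$ from $I(\alpha)$, combined with \prettyref{rem:summability-criterion}, allows a further extraction making $(\m_{i,j})_{i}$ strictly $\succ$-decreasing. Injectivity of $(T_{i})$ forces at least one of the $n$ coordinates to be non-constant, so the product $\prod_{j}\m_{i,j}$ ends up strictly $\succ$-decreasing along the final subsequence, yielding $\ctree(T_{i})\succ\ctree(T_{i+1})$ in contradiction to the bad condition.

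In the case $n_{i}\to\infty$, I invoke \prettyref{lem:Neumann} with $R=\R$, $\n_{i}=1$, $k_{i}=n_{i}-1$, and $\varepsilon$ chosen as a fixed infinitesimal whose support contains $\bigcup_{S\in\admtreeinf(\gamma)}\supp(\ctree(S))$—for instance $\varepsilon:=\sum_{S\in\admtreeinf(\gamma)}\lvert\ctree(S)\rvert$, summable and infinitesimal by $I(\alpha)$. The lemma delivers summability of $(\prod_{j<n_{i}}\m_{i,j})_{i\in\N}$, and \prettyref{rem:summability-criterion} then extracts an injective subsequence along which these products are strictly $\succ$-decreasing, again contradicting $\ctree(T_{i})\preceq\ctree(T_{i+1})$. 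The delicate point in the argument is the simultaneous Ramsey extraction in the constant-$n$ case: each coordinate must be made either constant or $\succ$-decreasing while at least one remains non-constant, and it is this combinatorial bookkeeping that forces the product of coordinatewise ratios to be strictly $\succ1$ rather than merely $\succeq1$.
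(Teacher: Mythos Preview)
Your argument is correct, but it is considerably longer than needed because you are re-deriving a fact that the paper has already established under the standing hypothesis $I(\alpha)$. Recall \prettyref{prop:cexp}: for any term $t\in\Delta_{\alpha+1}$, the family $(\ctree(T)\suchthat T\in\admtree(t))$ is summable (this covers $t\in\Delta$ as well, via \prettyref{prop:clambda}). The paper's proof of \prettyref{prop:cterm} simply quotes this summability, extracts a subsequence with $\ctree(T_i)\succ\ctree(T_{i+1})$ by \prettyref{rem:summability-criterion}, and observes that the common root forces $c(\treeroot(T_i))/c(\treeroot(T_{i+1}))=1$, so the sequence cannot be bad. That is the whole argument.

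Your route instead reproves the summability part of \prettyref{prop:cexp} by hand: the constant-$n$ case is a finite coordinatewise Ramsey extraction using $I(\alpha)$ on $\admtree(\gamma)$, and the $n_i\to\infty$ case is a direct appeal to \prettyref{lem:Neumann} with the auxiliary infinitesimal $\varepsilon=\sum_{S\in\admtreeinf(\gamma)}|\ctree(S)|$. Both steps are sound (in particular, taking absolute values guarantees no cancellation, so each $\m_{i,j}$ genuinely lies in $\supp(\varepsilon)$), and the case split on $t\in\Delta$ via \prettyref{prop:root-in-delta} is fine. What you gain is a self-contained combinatorial picture of why summability holds for a single term; what you lose is brevity, since this picture is already packaged in the proof of \prettyref{prop:cexp} through \prettyref{prop:n-th power} and the Taylor expansion of $\exp$.
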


\begin{proof}
  Let $(T_{i})_{i\in\N}$ be a sequence of distinct trees in
  $\admtree(t)$.  We want to prove that $(T_{i})_{i\in\N}$ is not
  bad. Since $t$ is a term, by \prettyref{prop:cexp}
  $(\ctree(T)\suchthat T\in\admtree(t))$ is summable. Thus, extracting
  a subsequence, we can assume that
  $\ctree(T_{i})\succ\ctree(T_{i+1})$ for every $i\in\N$. Observing that
  $\treeroot(T_{i})=t$ for every $i\in\N$, it follows that
  $\frac{\overline{c}(T_{i})}{\overline{c}(T_{i+1})} \succ 1 =
  \frac{c(\treeroot(T_{i}))}{c(\treeroot(T_{i+1}))}$, and therefore
  $(T_{i})_{i\in\N}$ is not bad.
\end{proof}

\subsection{Two types of sequences of trees}

We now distinguish two special types of sequences of trees, and verify
that every injective sequences of trees in some given $\admtree(x)$
has at least one subsequence of one of the two types.

\begin{defn}
  Let $x\in\Delta_{\alpha+1}$ and let
  $T_{i}=\tree{r_{i}e^{\gamma_{i}}}{n_{i}}{\tau_{i}}\in\admtree(x)$ be
  distinct trees for $i\in\N$ such that $(\gamma_{i})_{i\in\N}$ is
  weakly decreasing.

  We say that the sequence $(T_{i})_{i\in\N}$ has type:
  \begin{itemize}
  \item[\textbf{(A)}] if
    $\treeroot(\tau_{i}(j))\vgreater\gamma_{0}-\gamma_{i}$ for all
    $i\in\N$, $j<n_{i}$;
  \item[\textbf{(B)}] if $n_{0}\geq1$ and for all $i\in\N^{>0}$ there is
    $k<n_{i}$ such that
    $\treeroot(\tau_{i}(k))\vleq\gamma_{i-1}-\gamma_{i}$.
  \end{itemize}
\end{defn}

Note that a sequence $(T_{i})_{i\in\N}$ may be of neither type. A
sequence with $n_{i}=0$ for all $i\in\N$, or with
$(\gamma_{i})_{i\in\N}$ constant, is vacuously of type (A). Moreover,
for a sequence of type (B), $(\gamma_{i})_{i\in\N}$ is necessarily
strictly decreasing and $n_{i}\geq1$ for all $i\in\N$.

\begin{lem}
  \label{lem:AB-stable}If $(T_{i})_{i\in\N}$ is a sequence of type (A)
  or (B), then all its subsequences have type (A) or (B) respectively.
\end{lem}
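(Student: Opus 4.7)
The plan is to exploit the basic fact that, for positive elements $0 \leq a \leq b$ in an ordered abelian group, one has $a \vleq b$ (since $|a| \leq 1 \cdot |b|$). Both type (A) and type (B) involve comparisons of the form $\treeroot(\tau_i(j))$ versus $\gamma_0 - \gamma_i$ or $\gamma_{i-1} - \gamma_i$, and passing to a subsequence only \emph{shrinks} the relevant $\gamma$-differences (thanks to the weakly decreasing hypothesis on $(\gamma_i)_{i \in \N}$), preserving the required dominance relations.

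For type (A), let $(T_{i_k})_{k \in \N}$ be a subsequence. Since $0 \leq i_0 \leq i_k$ and $(\gamma_i)_{i\in\N}$ is weakly decreasing, $\gamma_0 \geq \gamma_{i_0} \geq \gamma_{i_k}$, so $0 \leq \gamma_{i_0} - \gamma_{i_k} \leq \gamma_0 - \gamma_{i_k}$, whence $\gamma_{i_0} - \gamma_{i_k} \vleq \gamma_0 - \gamma_{i_k}$. The type (A) hypothesis gives $\treeroot(\tau_{i_k}(j)) \vgreater \gamma_0 - \gamma_{i_k} \vgeq \gamma_{i_0} - \gamma_{i_k}$ for every $k \in \N$ and every $j < n_{i_k}$, showing that the subsequence is again of type (A).

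For type (B), I first check that $n_{i_0} \geq 1$: this is immediate if $i_0 = 0$ by hypothesis, and otherwise the original type (B) condition applied at $i = i_0 > 0$ already furnishes a child of $T_{i_0}$, forcing $n_{i_0} \geq 1$. Next, for $l > 0$, the original type (B) condition at $i = i_l$ (which satisfies $i_l > 0$ since $i_l > i_{l-1} \geq 0$) produces some $k < n_{i_l}$ with $\treeroot(\tau_{i_l}(k)) \vleq \gamma_{i_l - 1} - \gamma_{i_l}$. Since $i_{l-1} \leq i_l - 1$, the weakly decreasing hypothesis gives $\gamma_{i_{l-1}} \geq \gamma_{i_l - 1} \geq \gamma_{i_l}$, hence $0 \leq \gamma_{i_l - 1} - \gamma_{i_l} \leq \gamma_{i_{l-1}} - \gamma_{i_l}$, i.e., $\gamma_{i_l - 1} - \gamma_{i_l} \vleq \gamma_{i_{l-1}} - \gamma_{i_l}$. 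Chaining the two comparisons yields $\treeroot(\tau_{i_l}(k)) \vleq \gamma_{i_{l-1}} - \gamma_{i_l}$, as required. There is no substantive obstacle here: the argument is pure bookkeeping, and the only point to verify is that the various $\gamma$-differences are non-negative, so that the usual order comparison upgrades to a dominance comparison.
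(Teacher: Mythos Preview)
Your proof is correct and follows essentially the same route as the paper: both arguments reduce to the observation that passing to a subsequence only shrinks the relevant nonnegative differences $\gamma_{0}-\gamma_{i}$ (for type (A)) and enlarges $\gamma_{i-1}-\gamma_{i}$ to $\gamma_{i_{l-1}}-\gamma_{i_l}$ (for type (B)), and that $0\le a\le b$ implies $a\vleq b$. If anything, you are slightly more careful than the paper in explicitly verifying the condition $n_{i_0}\ge 1$ for type (B), which the paper leaves implicit.
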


\begin{proof}
  Suppose $(T_{i})_{i\in\N}$ is of type (A) and let
  $(T_{i_{j}})_{j\in\N}$ be a subsequence. Since
  $(\gamma_{i})_{i\in\N}$ is weakly decreasing, for all $k<n_{i_{j}}$ we
  have
  \[
    \treeroot(\tau_{i_{j}}(k)) \vgreater \gamma_{0}-\gamma_{i_{j}} \vgeq
    \gamma_{i_{0}}-\gamma_{i_{j}},
  \]
  so the subsequence is of type (A).

  Now let $(T_{i})_{i\in\N}$ be a sequence of type (B). Write
  $T_{i}=\tree{r_{i}e^{\gamma_{i}}}{n_{i}}{\tau_{i}}$.  Using again the fact
  that $(\gamma_{i})_{i\in\N}$ is weakly decreasing, if $k$ is such that
  $\treeroot(\tau_{i}(k))\vleq\gamma_{i-1}-\gamma_{i}$, then
  $\treeroot(\tau_{i}(k))\vleq\gamma_{j}-\gamma_{i}$ for all $j<i$, so
  any any subsequence of $(T_{i})_{i\in\N}$ is of type (B).
\end{proof}

\begin{prop}
  \label{prop:type-A-or-B}Let $x\in\Delta_{\alpha+1}$ and let
  $T_{i}=\tree{r_{i}e^{\gamma_{i}}}{n_{i}}{\tau_{i}}\in\admtree(x)$ be
  distinct trees for $i\in\N$. Then $(T_{i})_{i\in\N}$ has a subsequence
  of type (A) or (B).
\end{prop}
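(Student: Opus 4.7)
My plan is to use an extraction-and-dichotomy argument. First, I want to reduce to the situation where the hypothesis that $(\gamma_i)_{i\in\N}$ is weakly decreasing (required by the definitions of types (A) and (B)) holds. Since each $r_i e^{\gamma_i}$ is a term of $x$, the monomials $e^{\gamma_i}$ lie in the reverse well-ordered set $\supp(x)$; by a standard extraction I may pass to a subsequence on which $(\gamma_i)_{i\in\N}$ is weakly decreasing, and by one further refinement either strictly decreasing or constant. The constant case and the case in which $n_i = 0$ for infinitely many $i$ are both immediately of type (A): if $(\gamma_i)_{i\in\N}$ is constant, then for any $j < n_i$ the root $\treeroot(\tau_i(j))$ is a term of $\gamma_i\in\J$, hence $\vgreater 1$ and in particular $\vgreater 0 = \gamma_0 - \gamma_i$; if instead $n_i = 0$ infinitely often, the condition defining type (A) is vacuous on the corresponding subsequence. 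So I am reduced to the case where $(\gamma_i)_{i\in\N}$ is strictly decreasing and $n_i \geq 1$ for all $i$.

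The core step is then a dichotomy on the statement $(A^{*})$: \emph{there exists $m\in\N$ such that for every $i \geq m$ and every $j < n_i$, $\treeroot(\tau_i(j)) \vgreater \gamma_m - \gamma_i$}. If $(A^{*})$ holds, the tail $(T_i)_{i \geq m}$, reindexed with $m$ as the new initial index, is of type (A). If $(A^{*})$ fails, then for every $m\in\N$ there exist $i > m$ and $j < n_i$ with $\treeroot(\tau_i(j)) \vleq \gamma_m - \gamma_i$ (here $i = m$ is excluded: $\gamma_m - \gamma_m = 0$ would force $\treeroot(\tau_i(j)) = 0$, impossible since roots are nonzero terms). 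Starting from $i_0 := 0$, I iteratively apply this with $m := i_k$ to pick $i_{k+1} > i_k$ and some $j < n_{i_{k+1}}$ with $\treeroot(\tau_{i_{k+1}}(j)) \vleq \gamma_{i_k} - \gamma_{i_{k+1}}$; the resulting subsequence $(T_{i_k})_{k\in\N}$ has $n_{i_0} \geq 1$ and satisfies the defining condition of type (B).

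The main obstacle is that both the definition of type (A) (via the reference $\gamma_0$) and of type (B) (via the paired indices $\gamma_{i-1},\gamma_i$) interact nontrivially with subsequence extraction: the ``$\gamma_0$'' shifts when one reindexes, and the ``$\gamma_{i-1}$'' in type (B) depends on the immediately preceding chosen index, so one cannot naively compose arbitrary reductions. The degenerate subcases isolated in the first paragraph (constant $\gamma_i$; infinitely many $n_i = 0$) must be separated off precisely because they would break the inductive step that produces the type (B) subsequence in the failure branch of the dichotomy; once they are removed, the strict decrease of $(\gamma_i)_{i\in\N}$ together with the nonzero-root property guarantees the existence of a strictly larger index at each inductive step, and the rest of the argument goes through.
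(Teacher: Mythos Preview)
Your proof is correct and follows essentially the same approach as the paper: extract a weakly decreasing subsequence of $(\gamma_i)$, then attempt to build a type (B) subsequence inductively, with failure at any stage producing a type (A) tail. The only differences are cosmetic: you isolate the constant-$\gamma$ case and the ``$n_i=0$ infinitely often'' case explicitly before the main step, and you phrase the dichotomy as a single statement $(A^*)$ rather than as a step-by-step check inside the inductive construction, but the underlying argument is the same.
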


\begin{proof}
  After extracting a subsequence, we may assume that
  $(\gamma_{i})_{i\in\N}$ is weakly decreasing. If $n_{i}=0$ for every
  $i\in\N$, then $(T_{i})_{i\in\N}$ is of type (A) and we are done. We
  can therefore suppose without loss of generality that $n_{0}\geq1$.

  We proceed by trying to construct a subsequence
  $(T_{i_{j}})_{j\in\N}$ of type (B), and check that when the
  construction fail we find a subsequence of type (A). We define
  $T_{i_{j}}$ by induction on $j\in\N$. For $j=0$, we let
  $T_{i_{j}}=T_{i_{0}}:=T_{0}$.

  Assuming that $T_{i_{j}}$ has been defined, we have two cases. If
  $\treeroot(\tau_{i}(k))\vgreater\gamma_{i_{j}}-\gamma_{i}$ for all
  $i>i_{j}$, $k<n_{i}$, then the sequence
  $T_{i_{j}},T_{i_{j}+1},T_{i_{j}+2},\ldots$, has type (A), and we are
  done. Otherwise, we let $i_{j+1}$ be the minimum $i$ for which there
  exists $k$ such that
  $\treeroot(\tau_{i}(k))\vleq\gamma_{i_{j}}-\gamma_{i}$.

  Clearly, either the procedure fails after a finite number of steps,
  and we find a subsequence of type (A), or it defines a subsequence
  $(T_{i_{j}})_{j\in\N}$ of type (B), as desired.
\end{proof}

\subsection{No bad sequences of type (A)}

As a start, it is fairly easy to see that bad sequences of type (A) do
not exist.

\begin{prop}
  \label{prop:type-A-not-bad}Let $x\in\Delta_{\alpha+1}$. Then
  $\admtree(x)$ contains no bad sequences of type (A).
\end{prop}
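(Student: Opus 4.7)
The plan is to argue by contradiction: suppose $(T_i)_{i\in\N}$ with $T_i = \tree{r_i e^{\gamma_i}}{n_i}{\tau_i}$ is a bad sequence of type (A) in $\admtree(x)$, with $(\gamma_i)_{i\in\N}$ weakly decreasing. First I would reduce to the case where $(\gamma_i)$ is strictly decreasing: if it is eventually constant, then after extraction the coefficients $r_i$ are also eventually constant (each exponent occurs with a unique coefficient in the normal form of $x$), so all $\treeroot(T_i)$ coincide and \prettyref{prop:cterm} forbids such a bad subsequence.

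The key observation is that $\gamma_i \in \J$ forces $c(\gamma_i) \in \J$, since $c$ sends purely infinite positives to purely infinite positives (by strong additivity combined with the identity $c(e^\delta) = e^{c(\delta)}$ and sign preservation). Hence $c(\gamma_i)^{\bigeq} = c(\gamma_i)$ and the contribution simplifies cleanly to
\[
  \ctree(T_i) = c(\treeroot(T_i))\cdot\frac{1}{n_i!}\prod_{k<n_i}\ctree(\tau_i(k)).
\]
Up to a positive real factor we therefore get $\ctree(T_i)/\ctree(T_{i+1}) \veq A_i B_i$, where $A_i := c(\treeroot(T_i))/c(\treeroot(T_{i+1})) \vgreater 1$ and $B_i := \prod_k\ctree(\tau_i(k))/\prod_k\ctree(\tau_{i+1}(k))$. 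The bad inequality $(A_i B_i)^n \vleq A_i$ for every $n$ then rearranges to $A_i^{n-1} B_i^n \vleq 1$ for every $n$.

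The final step exploits the type (A) hypothesis: each $\treeroot(\tau_i(k))$ is a term of $\gamma_i$ dominating $\gamma_0 - \gamma_i$, so it is in fact a term common to $\gamma_i$ and $\gamma_0$. After applying \prettyref{lem:finer-large-gaps} to the monomials in the contributions $\ctree(\tau_i(k))$ and extracting a subsequence, one obtains precise magnitude comparisons between these monomials and the leading monomial of $c(\gamma_i - \gamma_{i+1})$, which governs $A_i$. The type (A) constraint forces the children's leading monomials (bounded in terms of the children's roots via \prettyref{lem:children}(5)) to stay large compared to $\gamma_0 - \gamma_i$; this prevents $B_i^n$ from decaying fast enough to compensate $A_i^{n-1}$ for all $n$ simultaneously, producing the desired contradiction with $A_i^{n-1} B_i^n \vleq 1$.

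The main obstacle I anticipate is making this last monomial comparison quantitatively precise: the $\ctree(\tau_i(k))$ are terms rather than single monomials, so one must track how their leading monomials combine multiplicatively inside $B_i$ across the telescoped differences $\gamma_i - \gamma_{i+1}$. My strategy is to apply \prettyref{lem:finer-large-gaps} diagonally to a carefully chosen sequence of monomials extracted from the children's contributions, then use the resulting subsequence to pinpoint a specific $n$ at which the inequality $A_i^{n-1} B_i^n \vleq 1$ provably fails.
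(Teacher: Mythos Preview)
Your reduction to strictly decreasing $(\gamma_i)$ via \prettyref{prop:cterm} is fine, and the decomposition $\ctree(T_i)/\ctree(T_{i+1}) \asymp A_i B_i$ is essentially \prettyref{lem:children}(2). (The claim that $c(\gamma_i)\in\J$ is false --- e.g.\ $c^{\omega+1/\omega}(\omega)=\omega+1/\omega\notin\J$ --- but this is harmless: you only need the $\asymp$ version, which \prettyref{lem:children}(1)--(2) already give.) You also correctly observe that each child's root, being a term of $\gamma_i$ that strictly dominates $\gamma_0-\gamma_i$, must already be a term of $\gamma_0$.

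The gap is in your final step. \prettyref{lem:finer-large-gaps} compares monomials lying in $\supp(c_0(\lambda_i))$ for $\lambda_i\in\Delta$; it says nothing about the contributions $\ctree(\tau_i(k))$ of arbitrary subtrees, which are of that form only when the child happens to be a leaf with root in $\Delta$. Furthermore, the type~(A) hypothesis together with \prettyref{lem:children}(5) yields only the \emph{upper} bound $\ctree(\tau_i(k))\preceq c(\treeroot(\tau_i(k)))$, whereas ``preventing $B_i^n$ from decaying'' requires a \emph{lower} bound on the numerator of $B_i$, and none is available --- each $\ctree(\tau_i(k))\prec 1$ can be arbitrarily small. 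Finally, there is no mechanism to compare $\prod_{k<n_i}$ against $\prod_{k<n_{i+1}}$ when the $n_i$ vary.

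The idea you are missing is to \emph{use} the common-root observation rather than just state it: since every child $\tau_i(k)$ lies in $\admtreeinf(\gamma_0)$, you may relabel the root of each $T_i$ to obtain $T_i':=\tree{e^{\gamma_0}}{n_i}{\tau_i}\in\admtree(e^{\gamma_0})$. Now $e^{\gamma_0}$ is a single term, so $(\ctree(T))_{T\in\admtree(e^{\gamma_0})}$ is already summable by \prettyref{prop:cexp}; after extraction one gets $\ctree(T_i')/\ctree(T_{i+1}')\succeq 1$, which is precisely $B_i\succeq 1$. Then $\ctree(T_i)/\ctree(T_{i+1})\asymp A_iB_i\succeq A_i\succ 1$, and taking $n=2$ in the bad inequality gives the contradiction.
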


\begin{proof}
  For a contradiction let $(T_{i})_{i\in\N}$ be a bad sequence in
  $\admtree(x)$ of type (A). By \prettyref{prop:cterm} the sequence of
  terms $(\treeroot(T_{i})\suchthat i\in\N)$ cannot be constant, so by
  taking a subsequence we can assume that the terms $\treeroot(T_{i})$
  are distinct, and since they are all terms of $x$, we may also
  assume (taking another subsequence) that
  $\treeroot(T_{0})\succ\treeroot(T_{1})\succ\treeroot(T_{2})\succ\ldots$.
  By \prettyref{prop:cmonotone} it then follows that
  $c(\treeroot(T_{n}))\succ c(\treeroot(T_{n+1}))$ for every $n\in\N$.

  Let $i\in\N$ and write
  $T_{i}=\tree{r_{i}e^{\gamma_{i}}}{n_{i}}{\tau_{i}}$.  By assumption, for
  any child $U=\tau_{i}(j)$ of $T_{i}$ we have
  $\treeroot(U)\vgreater\gamma_{0}-\gamma_{i}$ (this holds vacuously if
  $T_{i}$ has no children). We claim that for any such $U$ we must have
  $\treeroot(U)\in\term(\gamma_{0})$. Indeed by construction
  $\treeroot(U)\in\term(\gamma_{i})$; therefore, if
  $\treeroot(U)\nin\term(\gamma_{0})$, then $\treeroot(U)$ would be a
  term of the difference $\gamma_{0}-\gamma_{i}$, contradicting the
  assumption $\treeroot(U)\vgreater\gamma_{0}-\gamma_{i}$.

  We have thus proved that all the roots of the children of the trees
  $T_{i}$ are terms of $\gamma_{0}=\vell(\treeroot(T_{0}))$; hence, we
  can replace the root of each $T_{i}$ with $e^{\gamma_{0}}$ obtaining a
  new sequence $T_{i}':=\tree{e^{\gamma_{0}}}{n_{i}}{\tau_{i}}$ in
  $\admtree(e^{\gamma_{0}})$. Since $T_{i}$ and $T_{i}'$ have the same
  children, by \prettyref{lem:children}(2) we have:
  \[
    \frac{\ctree(T_{i})}{\ctree(T_{i}')} \veq
    \frac{c(\treeroot(T_{i}))}{c(\treeroot(T_{i}'))} \veq
    \frac{c(e^{\gamma_{i}})}{c(e^{\gamma_{0}})}.
  \]
  By \prettyref{prop:cexp}, the family
  $(\ctree(T')\suchthat T'\in\admtree(e^{\gamma_{0}}))$ is
  summable. Therefore, after extracting a subsequence we may assume
  that $\frac{\ctree(T_{i}')}{\ctree(T_{i+1}')}\succeq1$ (note that the
  inequality is not necessarily strict, because the trees $T_{i}'$
  might not be distinct). It follows that
  \[
    \frac{\ctree(T_{i})}{\ctree(T_{i+1})} \veq
    \frac{c(\treeroot(T_{i}))}{c(\treeroot(T_{i+1}))} \cdot
    \frac{\ctree(T_{i}')}{\ctree(T_{i+1}')} \succeq
    \frac{c(\treeroot(T_{i}))}{c(\treeroot(T_{i+1}))} \succ 1.
  \]
  Therefore, $(T_{i})_{i\in\N}$ is not bad.
\end{proof}

\subsection{Pruning trees}

In the sequel we consider trees in $\admtree(x)$ for some
$x\in\Delta_{\alpha+1}$.  We establish a procedure to ``prune'' a tree
$T$, that is, to remove some descendants, in such a way that its
contribution $\ctree(T)$ changes only by a small amount.

\begin{defn}
  Let $T=\tree{re^{\gamma}}n{\tau}$ be an admissible tree
  (i.e.~$T\in\admtree(re^{\gamma})$), $U$ be a child of $T$
  (necessarily admissible), and $U'$ be an admissible tree with the
  same root as $U$. Let $j$ be the \emph{minimum} integer such that
  $\tau(j)=U$.
  \begin{enumerate}
  \item We define $T[U'/U]$ as $T$ with $U$ replaced by $U'$. More
    precisely,
    \[
      T[U'/U]:=\tree{re^{\gamma}}n{\tau^{*}}
    \]
    where $\tau^{*}(i):=\tau(i)$ for $i\neq j$ and $\tau^{*}(j):=U'$.
    Note that if $\ctree(U')\prec1$, then $\replace T{U'}U$ is again
    an admissible tree.
  \item We define $\remove TU$ as the admissible tree obtained from
    $T$ by removing the child $U$. More precisely,
    \[
      \remove TU:=\tree{re^{\gamma}}{n-1}{\tau^{*}}
    \]
    where $\tau^{*}(i):=\tau(i)$ for $i<j$ and
    $\tau^{*}(i):=\tau(i+1)$ for $i\geq j$.
  \end{enumerate}
\end{defn}

\begin{defn}
  Let $T=\langle re^{\gamma},n,\tau\rangle\in\admtree(x)$ with
  $\size(T)>1$.  If $L$ is a leaf of $T$, we define the
  \textbf{minimal child of $T$ with leaf $L$} to be the child
  $U=\tau(j)$ of $T$ such that:
  \begin{enumerate}
  \item $L$ is a leaf of $U$ (possibly $L=U$);
  \item among such children, $\treeroot(U)$ is minimal with respect to
    $\preceq$;
  \item among such children, $j$ is minimal.
  \end{enumerate}
\end{defn}

\begin{defn}
  Let $T=\tree{re^{\gamma}}n{\tau}\in\admtree(x)$ with $\size(T)>1$ and
  let $L$ be a leaf of $T$. We define $\prune TL$ by induction on
  $\size(T)$ as follows. Let $U$ be the minimal child of $T$ with leaf
  $L$. We define:
  \begin{enumerate}
  \item if $\size(U)=1$ (namely $L=U$), let $\prune TL:=\remove TL$;
  \item if $\size(U)>1$ and $\overline{c}(\prune UL)\prec1$, let
    $\prune TL:=T[\prune UL/U]$;
  \item if $\size(U)>1$ and $\ctree(\prune UL)\succeq1$, let
    $\prune TL:=\remove TU$.
  \end{enumerate}
\end{defn}

\begin{rem}
  \label{rem:prune-admissible}Note that in all three cases,
  $\prune TL$ is still an admissible tree; in particular, in (2) this
  is guaranteed by the condition $\ctree(\prune UL)\prec1$, as for all
  children $S$ of an admissible tree the contribution $\ctree(S)$ must
  be infinitesimal.
\end{rem}

\begin{lem}
  \label{lem:prune}Let $L$ be a leaf in $T\in\admtree(x)$, with
  $\size(T)>1$, and let $U$ be the minimal child of $T$ with leaf
  $L$. We have:
  \begin{enumerate}
  \item $\size(\prune TL)<\size(T)$ and
    $\treeroot(\prune TL)=\treeroot(T)$;
  \item $\prune TL\in\admtree(x)$;
  \item if $\prune TL=\remove TU$, then
    $\ctree(T)\veq\ctree(\prune TL)\cdot\ctree(U)$;
  \item if $\prune TL:=T[\prune UL/U]$, then
    $\ctree(T)=\ctree(\prune TL)\cdot\frac{\ctree(U)}{\ctree(\prune
      UL)}$;
  \item $\ctree(\prune TL)\vgreater\ctree(T)$;
  \end{enumerate}
\end{lem}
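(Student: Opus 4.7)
The plan is to prove all five items simultaneously by induction on $\size(T)$, cleanly matching the three-case definition of $\prune{T}{L}$. Since $\size(T)>1$, we have $\treeroot(T)\notin\Delta$ (as any tree with root in $\Delta$ has size $1$), so we may write $T=\tree{re^{\gamma}}n{\tau}$ and invoke \prettyref{lem:children}(2) to get $\ctree(T)\veq c(\treeroot(T))\cdot\prod_{i<n}\ctree(\tau(i))$, with $\ctree(\tau(i))\prec1$ for each child.

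For (1) and (2), I would just walk through the three defining cases: in case~(1) we drop a single leaf; in case~(2) we replace a child by an (inductively smaller) admissible tree whose contribution is explicitly required to be $\prec1$, so the result is still admissible by \prettyref{rem:prune-admissible}; in case~(3) we drop an entire child. In all three, the root of $T$ is untouched and a strictly positive number of descendants is removed (one leaf, at least one descendant, or an entire subtree), giving $\size(\prune{T}{L})<\size(T)$ and $\treeroot(\prune{T}{L})=\treeroot(T)$.

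For (3) and (4), the arithmetic is driven by the same product formula from \prettyref{lem:children}(2). When $\prune{T}{L}=\remove{T}{U}$, this formula differs from the one for $\ctree(T)$ only by the omission of the factor $\ctree(U)$, so $\ctree(T)\veq\ctree(\prune{T}{L})\cdot\ctree(U)$. When $\prune{T}{L}=T[\prune{U}{L}/U]$, the explicit definition of $\ctree$ (not merely the $\veq$-statement) shows the two contributions are literally identical except that the factor $\ctree(U)$ is replaced by $\ctree(\prune{U}{L})$; this gives the genuine equality in (4).

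For (5), I would combine (3), (4) and the inductive hypothesis applied to $U$. In the first defining case, $L=U$ is itself a child of $T$, so $\ctree(U)\prec1$ by \prettyref{lem:children}(3) and then (3) yields $\ctree(\prune{T}{L})\vgreater\ctree(T)$. In the third defining case, the same bound $\ctree(U)\prec1$ together with (3) gives the same conclusion. In the second defining case, we use (4) together with the inductive claim $\ctree(\prune{U}{L})\vgreater\ctree(U)$, which gives $\ctree(U)/\ctree(\prune{U}{L})\prec1$ and thus $\ctree(T)\prec\ctree(\prune{T}{L})$. There is no real obstacle here: once the induction is properly set up, each case reduces to the formula in \prettyref{lem:children}(2) and the observation that the removed or replaced factor is $\prec1$.
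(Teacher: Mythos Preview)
Your proposal is correct and follows essentially the same approach as the paper: induction on $\size(T)$, with (1)--(2) dispatched by inspection of the three defining cases (the paper simply cites \prettyref{rem:prune-admissible} for (2)), (3)--(4) obtained by comparing the explicit product formulas for $\ctree(T)$ and $\ctree(\prune TL)$ (noting the $1/n!$ versus $1/(n-1)!$ discrepancy is absorbed into $\veq$), and (5) derived from (3), (4), $\ctree(U)\prec1$, and the inductive bound $\ctree(\prune UL)\succ\ctree(U)$. The only cosmetic difference is that the paper collapses your defining cases~1 and~3 into the single case $\prune TL=\remove TU$ when proving (5).
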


\begin{proof}
  We work by induction on $\size(T)$. Point (1) is straightforward and
  point (2) is \prettyref{rem:prune-admissible}.

  For (3), let $T=:\tree{re^{\gamma}}n{\tau}$ and let $j<n$ be minimal
  such that $U=\tau(j)$. By definition we have
  \[
    \ctree(T) = re^{c(\gamma)^{\bigeq}} \cdot \ctree(U) \cdot \frac{1}{n!} \prod_{\substack{i<n\\
        i\neq j } }\ctree(\tau(i))
  \]
  while
  \[
    \ctree(\remove TU) = re^{c(\gamma)^{\bigeq}} \cdot \frac{1}{(n-1)!} \cdot \prod_{\substack{i<n\\
        i\neq j } }\ctree(\tau(i))
  \]
  Thus clearly $\ctree(\remove T{U)\veq\frac{\ctree(T)}{\ctree(U)}}$
  and (3) follows.

  A similar argument shows that if $\prune TL=\replace T{\prune UL}U$,
  then
  $\ctree(\prune TL)=\ctree(T)\cdot\frac{\ctree(\prune
    UL)}{\ctree(U)}$ and we obtain (4).

  For (5), just note that if $\prune TL=T\setminus U$, then
  $\ctree(T)\veq\ctree(\prune TL)\ctree(U)$, and since
  $\ctree(U)\prec1$ we obtain $\ctree(T)\prec\ctree(\prune TL)$; if
  instead $\prune TL=T[\prune UL/U],$ by induction we have
  $\ctree(U)\prec\ctree(\prune UL)$ and we reach the same conclusion
  using (4).
\end{proof}

\begin{lem}
  \label{lem:RUless}Let $T$ be an admissible tree and $U$ be a proper
  descendant of $T$. Then $\treeroot(U)\succ1$, and if $U'$ is a
  proper descendant of $U$ we have
  $1\prec\treeroot(U')^{n}\prec\treeroot(U)$ for every $n\in\N$.
\end{lem}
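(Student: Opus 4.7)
The plan is to prove both assertions by unpacking the recursive structure of trees (\prettyref{def:trees}) and using induction on the depth of the descendant.

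For the first claim, any proper descendant $U$ of $T$ is an immediate child of some ancestor $S$. Since $S$ has children, \prettyref{def:trees} forces $\treeroot(S) = re^\gamma \nin \Delta$ with $\gamma = \vell(\treeroot(S)) \in \J$, and $\treeroot(U)$ is a term of $\gamma$, say $\treeroot(U) = s\m$ with $\m \in \supp(\gamma)$. Since $\gamma$ is purely infinite, $\m \in \M^{\succ 1}$, and therefore $\treeroot(U) \succ 1$.

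For the second claim I will reduce to the case where $U'$ is an immediate child of $U$: if $U'$ is a deeper descendant, then there is a child $V$ of $U$ such that $U'$ is a descendant (possibly improper) of $V$, and applying the second claim inductively to $V$ yields $\treeroot(U')^n \vless \treeroot(V)$ for all $n$ (or equality when $U' = V$), while the immediate-child case applied to the pair $(U,V)$ gives $\treeroot(V)^n \vless \treeroot(U)$ for all $n$; combining these yields $\treeroot(U')^n \vless \treeroot(U)$. For the immediate-child case, I write $\treeroot(U) = re^\gamma$ with $\gamma \in \J$ and $\treeroot(U') = s\m$ with $\m \in \supp(\gamma)$. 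The key estimate is $\log \m \vless \m$ for every $\m \in \M^{\succ 1}$, which follows by writing $\m = e^\delta$ with $\delta \in \J^{>0}$ and using the inequality $\delta \vless e^\delta$ for positive infinite $\delta$ (a consequence of the first-order bound $e^x \geq 1 + x + x^2/2$ together with the elementary equivalence of $\no$ with $\R_{\exp}$, \prettyref{fact:normal-form}(1)). Combined with $\m \vleq \LM(\gamma) \veq \gamma$, this gives $n \log \m \vleq n \log \LM(\gamma) \vless \LM(\gamma) \veq \gamma$ for every $n \in \N$, so $\m^n \vless e^\gamma$. Absorbing the real constants $s^n$ and $r$ into the $\veq$-relation yields $\treeroot(U')^n \veq \m^n \vless e^\gamma \veq \treeroot(U)$, while the lower bound $1 \vless \treeroot(U')^n$ follows from the first claim applied to $U'$, which is also a proper descendant of $T$.

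The main obstacle will be isolating the estimate $\log \m \vless \m$ for infinite monomials; this exploits the exponential structure of $\no$ through its elementary equivalence with the real exponential field, while the rest of the argument is a direct unpacking of the recursive definition of admissible trees and a telescoping through the descendant chain.
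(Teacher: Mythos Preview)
Your proof is correct and follows essentially the same route as the paper: reduce to the immediate-child case, observe that the root of a child is a term of the purely infinite exponent $\gamma=\vell(\treeroot(U))$, and use that a positive infinite element is $\prec$ than its exponential. The paper phrases the key estimate as $\delta^{n}\prec e^{\delta}$ for $\delta\in\J^{>0}$ (writing $\treeroot(U)=se^{\delta}$ and noting $\treeroot(U')\preceq\delta$ directly), whereas you take the logarithmic form $\log\m\prec\m$ and pass through $\LM(\gamma)$; this detour is harmless but unnecessary, since $\m\in\supp(\gamma)$ already gives $\m\preceq\gamma$ and hence $\m^{n}\preceq\gamma^{n}\prec e^{\gamma}$.
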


\begin{proof}
  Suppose first that $U$ is a child of $T$. Write
  $\treeroot(T)=re^{\gamma}$, so that $\treeroot(U)$ is a term of
  $\gamma=\vell(\treeroot(T))$.  Since $\gamma\in\J,$ $\treeroot(U)$
  is of the form $se^{\delta}$ with $0<\delta\in\J$, so
  $\treeroot(U)\vgreater1$, proving the first conclusion. Moreover, it
  follows that $\delta^{n}\prec e^{\delta}\veq\treeroot(U)$ for all
  $n\in\N$. If now $U'$ is a child of $U$, then $\treeroot(U')$ is a
  term of $\delta$, so
  $\treeroot(U')^{n}\vleq\delta^{n}\vless\treeroot(U)$, while by the
  previous argument $\treeroot(U')\vgreater1$. The general conclusion
  with $U$ a descendant of $T$ and $U'$ a descendant of $U$ now
  follows by transitivity of $\vleq$.
\end{proof}

\begin{prop}
  \label{prop:prune-branch}Let $L$ be a leaf in a tree $T$ of size
  $>1$ and let $U$ be the minimal child of $T$ with leaf $L$. Then
  \[
    \ctree(T)\veq\ctree(\prune TL)\cdot\ctree(L)\cdot
    t\quad\textrm{where}\quad 1 \vleq t\preceq c(\treeroot(U))^{2}.
  \]
\end{prop}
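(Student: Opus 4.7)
The plan is to induct on $\size(T)$, proceeding by the three clauses in the definition of $\prune TL$. The base case is $\size(T)=2$: the single child of $T$ is necessarily $U=L$ with $\size(U)=1$, which falls into Case 1 below. For the inductive step, observe that $U$ is a strictly smaller tree, so when $\size(U)>1$ we may apply the inductive hypothesis to the pair $(U,L)$, obtaining a minimal child $U'$ of $U$ with leaf $L$ and an element $t'$ with $1\vleq t'\preceq c(\treeroot(U'))^2$ such that $\ctree(U)\veq\ctree(\prune UL)\cdot\ctree(L)\cdot t'$.

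Case 1 ($L=U$): Here $\prune TL=\remove TL=\remove TU$, so by \prettyref{lem:prune}(3), $\ctree(T)\veq\ctree(\prune TL)\cdot\ctree(U)=\ctree(\prune TL)\cdot\ctree(L)$, and we take $t=1$.

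Case 2 ($\size(U)>1$ and $\ctree(\prune UL)\succeq1$): Here $\prune TL=\remove TU$, so by \prettyref{lem:prune}(3), $\ctree(T)\veq\ctree(\prune TL)\cdot\ctree(U)$. Combining with the inductive hypothesis, we set $t:=\ctree(\prune UL)\cdot t'$ to obtain the required decomposition. Both factors are $\succeq1$, so $t\succeq1$. For the upper bound, \prettyref{lem:children}(5) applied to $\prune UL$ (admissible by \prettyref{rem:prune-admissible}) with $\treeroot(\prune UL)=\treeroot(U)$ from \prettyref{lem:prune}(1) gives $\ctree(\prune UL)\preceq c(\treeroot(U))$; and \prettyref{lem:RUless} gives $\treeroot(U')^2\prec\treeroot(U)$, which via the monotonicity of $c$ (\prettyref{prop:cmonotone}) yields $t'\preceq c(\treeroot(U'))^2\prec c(\treeroot(U))$. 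Multiplying gives $t\preceq c(\treeroot(U))^2$, as required.

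Case 3 ($\size(U)>1$ and $\ctree(\prune UL)\prec1$): Here $\prune TL=T[\prune UL/U]$, and \prettyref{lem:prune}(4) gives $\ctree(T)=\ctree(\prune TL)\cdot\ctree(U)/\ctree(\prune UL)$. Dividing the inductive identity for $\ctree(U)$ by $\ctree(\prune UL)$ yields $\ctree(U)/\ctree(\prune UL)\veq\ctree(L)\cdot t'$, so $t:=t'$ works; the bound $t'\preceq c(\treeroot(U'))^2\prec c(\treeroot(U))\preceq c(\treeroot(U))^2$ follows from \prettyref{lem:RUless} and \prettyref{prop:cmonotone} exactly as in Case 2.

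The main obstacle is that the bound $t\preceq c(\treeroot(U))^2$ must be maintained across the inductive step in Case 2, where the factor $\ctree(\prune UL)$ from the pruned sibling contributes an extra multiple of $c(\treeroot(U))$. The quadratic slack in the statement is exactly what is needed: one factor of $c(\treeroot(U))$ absorbs $\ctree(\prune UL)$ via \prettyref{lem:children}(5), while the second absorbs $t'$, using the key fact from \prettyref{lem:RUless} that $c(\treeroot(U'))^2\prec c(\treeroot(U))$ strictly.
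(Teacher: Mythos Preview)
Your proposal is correct and follows the paper's proof essentially verbatim: the same induction on $\size(T)$, the same three-case split according to the definition of $\prune TL$, and the same choices $t=1$, $t=\ctree(\prune UL)\cdot t'$, and $t=t'$ in the respective cases, with the upper bounds obtained exactly as you describe via \prettyref{lem:children}(5), \prettyref{lem:RUless}, and \prettyref{prop:cmonotone}.
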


\begin{proof}
  We work by induction on $\size(T)$.

  Case 1. If $\size(U)=1$ (namely $U=L$), then $\prune TL=\remove TL$
  and $\ctree(T)\veq\ctree(\prune TL)\cdot\ctree(L)$, so it suffices
  to take $t=1$.

  Case 2. Assume $\size(U)>1$ and $\ctree(\prune UL)\succeq1$. Then
  $\prune TL=\remove TU$, and therefore
  $\ctree(T)\veq\ctree(\prune TL)\cdot\ctree(U)$.  We may assume by
  induction that
  $\ctree(U)\veq\ctree(\prune UL)\cdot\ctree(L)\cdot u$, where
  $1\vleq u\preceq c(\treeroot(U'))^{2}$ and $U'$ is the minimal child
  of $U$ with leaf $L$. Substituting we obtain
  \[
    \ctree(T)\veq\ctree(\prune TL)\cdot\ctree(L)\cdot\ctree(\prune
    UL)\cdot u.
  \]
  By \prettyref{lem:children} we have
  $\ctree(\prune UL)\preceq c(\treeroot(\prune UL))=c(\treeroot(U))$,
  and by \prettyref{lem:RUless}
  $u\preceq c(\treeroot(U'))^{2}\prec c(\treeroot(U))$, hence we can
  take $t:=\ctree(\prune UL)\cdot u$.

  Case 3. Finally, assume $\size(U)>1$ and $\ctree(\prune UL)\prec1$.
  Then $\prune TL=\replace T{\prune UL}U$, and by
  \prettyref{lem:prune} we have
  $\ctree(T)=\ctree(\prune TL)\cdot\frac{\ctree(U)}{\ctree(\prune
    UL)}$.  By inductive hypothesis we have
  $\ctree(U)\veq\ctree(\prune UL)\cdot\ctree(L)\cdot u$, where
  reasoning as above we have $1\vleq u\vless c(\treeroot(U))$.
  Substituting we get
  \[
    \ctree(T)\veq\ctree(\prune TL)\cdot\ctree(L)\cdot u,
  \]
  hence we can take $t=u$.
\end{proof}

\subsection{No bad sequences}

We can finally prove that there are no bad sequences at all in any
$\admtree(x)$.

\begin{prop}
  \label{prop:no-bad-seq-1}Let $x\in\Delta_{\alpha+1}$. If
  $(T_{i})_{i\in\N}$ is a bad sequence in $\admtree(x)$, then there are
  a bad sequence $(S_{j})_{j\in\N}$ in $\admtree(x)$ and some $k\in\N$
  such that $\size(S_{0})<\size(T_{k})$,
  $\treeroot(S_{0})=\treeroot(T_{k})$ and
  $\ctree(S_{0})\succ\ctree(T_{k})$.
\end{prop}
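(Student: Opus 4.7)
The plan is to reduce, via Propositions~\ref{prop:type-A-or-B} and~\ref{prop:type-A-not-bad}, to the case of a bad sequence of type (B), and then to obtain $(S_{j})_{j\in\N}$ by pruning each $T_i$ (for $i\geq 1$) at a carefully chosen leaf, exploiting the type (B) bound on $\treeroot(\tau_i(k_i))$ to control the effect of the pruning on contributions.

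First, by \prettyref{prop:type-A-or-B} and \prettyref{lem:AB-stable}, after passing to a subsequence---which is still bad---we may assume that $(T_i)$ is of type (A) or (B), and by \prettyref{prop:type-A-not-bad} the type (A) possibility is excluded. Writing $T_{i}=\tree{r_{i}e^{\gamma_{i}}}{n_{i}}{\tau_{i}}$, in the remaining case $(\gamma_{i})$ is strictly decreasing, $n_{0}\geq 1$, and for every $i\geq 1$ there exists $k_{i}<n_{i}$ such that the child $U_{i}:=\tau_{i}(k_{i})$ satisfies $\treeroot(U_{i})\vleq\gamma_{i-1}-\gamma_{i}$. For each $i\geq 1$ I would pick a leaf $L_{i}$ of $U_{i}$ and set $S_{i-1}:=\prune{T_{i}}{L_{i}}$. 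By \prettyref{lem:prune}(1), (2) and (5), each $S_{i-1}$ is admissible, $\treeroot(S_{i-1})=\treeroot(T_{i})$, $\size(S_{i-1})<\size(T_{i})$, and $\ctree(S_{i-1})\vgreater\ctree(T_{i})$; in particular, taking $k=1$, the tree $S_{0}=\prune{T_{1}}{L_{1}}$ automatically satisfies the three inequalities required by the statement.

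It remains to verify that $(S_{j})_{j\in\N}$, possibly after a further subsequence extraction, is bad. Injectivity and the weak decrease of the roots $\treeroot(S_{j})=\treeroot(T_{j+1})$ are clear, so the task reduces to the bad inequality $(\ctree(S_{j})/\ctree(S_{j+1}))^{n}\vleq c(\treeroot(S_{j}))/c(\treeroot(S_{j+1}))$. Applying \prettyref{prop:prune-branch}, one writes $\ctree(T_{i})\veq\ctree(S_{i-1})\cdot\ctree(L_{i})\cdot t_{i}$ with $1\vleq t_{i}\preceq c(\treeroot(U'_{i}))^{2}$, where $U'_{i}$ is the minimal child of $T_{i}$ leading to $L_{i}$ (so $\treeroot(U'_{i})\vleq\treeroot(U_{i})\vleq\gamma_{i-1}-\gamma_{i}$), and then
\[
\frac{\ctree(S_{j})}{\ctree(S_{j+1})}\veq \frac{\ctree(T_{j+1})}{\ctree(T_{j+2})}\cdot\frac{\ctree(L_{j+2})\,t_{j+2}}{\ctree(L_{j+1})\,t_{j+1}}.
\]
The first factor is controlled directly by the badness of $(T_i)$, and the plan is to extract a further subsequence---using \prettyref{lem:children}(5), the bound $\treeroot(U'_{i})\vleq\gamma_{i-1}-\gamma_{i}$, and a \prettyref{lem:finer-large-gaps}-style argument applied to the correction terms $\ctree(L_{i})\,t_{i}$---so that the second factor becomes $\vleq 1$, whence raising the full ratio to any $n$-th power preserves the bad inequality.

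The main obstacle is precisely the extraction in the last step: arranging that the pruning corrections $\ctree(L_{i})\,t_{i}$ are weakly $\vleq$-decreasing along a subsequence, so that the extra factor introduced by pruning stays finite and does not swamp the estimate coming from $(T_i)$. This is exactly where the type (B) hypothesis intervenes, as the bound $\treeroot(U_{i})\vleq\gamma_{i-1}-\gamma_{i}$ combined with the strict decrease of $(\gamma_{i})$ in $\J$ provides the required control. Once such $(S_{j})$ is constructed, an iteration of this proposition will force an infinite strictly decreasing sequence $\size(S_{0})>\size(S'_{0})>\cdots$ of natural numbers, an impossibility ruling out the existence of bad sequences in $\admtree(x)$.
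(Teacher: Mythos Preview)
Your overall structure is the same as the paper's: reduce to a type (B) subsequence and prune each tree at a leaf lying below the distinguished type (B) child. The index shift (you prune $T_i$ for $i\geq 1$ and set $S_{i-1}:=\prune{T_i}{L_i}$, whereas the paper also prunes $P_0$ at an arbitrary leaf $L_0$) is cosmetic.

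The genuine gap is in your final step. You aim to make the correction factor $\dfrac{\ctree(L_{j+2})\,t_{j+2}}{\ctree(L_{j+1})\,t_{j+1}}\vleq 1$, but this is not what the paper achieves, and it is not clear it can be achieved: by \prettyref{lem:finer-large-gaps} the ratios $\ctree(L_{j+1})/\ctree(L_j)$ are only bounded by $c_0(\lambda_{j+1})^2$, which may well be $\vgreater 1$ in the increasing case, and likewise $t_{j+1}/t_j\vleq t_{j+1}$ is only bounded by $c(\treeroot(U_{j+1}))^2$. What the paper does instead is the following. Write $L_j=\tree{\lambda_j}{0}{s_j}$ with $\lambda_j\in\Delta$; by \prettyref{lem:RUless} one has $\lambda_j\vleq\treeroot(U_j)\vleq\gamma_{j-1}-\gamma_j$, so applying \prettyref{lem:finer-large-gaps} to the sequences $(\lambda_j)$ and $(s_j)$ yields, after a subsequence, $s_{j+1}/s_j\vleq c(\gamma_j-\gamma_{j+1})^2$, and similarly $t_{j+1}/t_j\vleq t_{j+1}\vleq c(\gamma_j-\gamma_{j+1})^2$. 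Thus the total correction is only $\vleq c(\gamma_j-\gamma_{j+1})^4$, not $\vleq 1$.

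The key idea you are missing is how to \emph{absorb} this surviving factor. Since $\gamma_j-\gamma_{j+1}\in\J^{>0}$, one has $c(\gamma_j-\gamma_{j+1})\vgreater 1$ and hence
\[
c(\gamma_j-\gamma_{j+1})^{n}\vleq e^{c(\gamma_j-\gamma_{j+1})}\veq\frac{c(\treeroot(T_{j+1}))}{c(\treeroot(T_{j+2}))}
\]
for every $n\in\N$. So any fixed power of $c(\gamma_j-\gamma_{j+1})$ is swallowed by the right-hand side of the bad inequality, and combining this with the badness of $(T_i)$ gives the bad inequality for $(S_j)$. This exponential absorption, not a reduction of the correction to $\vleq 1$, is the pivot of the argument.
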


\begin{proof}
  By \prettyref{prop:type-A-or-B} and \prettyref{prop:type-A-not-bad},
  there is a subsequence $(P_{j})_{j\in\N}$ of $(T_{i})_{i\in\N}$ of type
  (B). Recall that by definition of type (B), $\size(P_{j})>0$ for all
  $j\in\N$.

  Write $P_{j}=\tree{r_{j}e^{\gamma_{j}}}{n_{j}}{\tau_{j}}$. Let
  $L_{0}$ be a leaf of $P_{0}$. For $j\geq1$, let $U_{j}$ be a child of
  $P_{j}$ with $\treeroot(U_{j})\vleq\gamma_{j-1}-\gamma_{j}$, which
  exists by definition of type (B), and let $L_{j}$ be a leaf of
  $U_{j}$.  We may then assume that $U_{j}$ is the \emph{minimal} child
  with leaf $L_{j}$ (if not, just replace $U_{j}$ with the minimal child
  $U$ with leaf $L_{j}$, and observe that the condition
  $\treeroot(U)\preceq\gamma_{j-1}-\gamma_{j}$ is still satisfied
  because $\treeroot(U)\vleq\treeroot(U_{j})$).

  We can write $L_{j}=\tree{\lambda_{j}}0{s_{j}}$, where
  $\lambda_{j}\in\Delta$ and
  $s_{j}=\ctree(L_{j})\in\term(c_{0}(\lambda_{j}))$. By
  \prettyref{lem:RUless} we have $\lambda_{j}\vleq\treeroot(U_{j})$;
  therefore, since $c$ preserves $\preceq$ by
  \prettyref{prop:cmonotone},
  \[
    c(\lambda_{j})\preceq c(\treeroot(U_{j}))\preceq
    c(\gamma_{j-1}-\gamma_{j})
  \]
  for all $j\geq1$.

  By \prettyref{lem:finer-large-gaps}, we may extract a further
  subsequence of $(P_{j})_{j\in\N}$ and assume that for all $j\in\N$ we
  have $\left(\frac{s_{j+1}}{s_{j}}\right)\prec c(\lambda_{j+1})^{2}$, so
  \[
    \left(\frac{s_{j+1}}{s_{j}}\right)\preceq
    c(\gamma_{j}-\gamma_{j+1})^{2}.
  \]
  Now let $S_{j}:=\prune{P_{j}}{L_{j}}$, which is well defined since
  $\size(P_{j})>0$ for all $j\in\N$. We shall prove that
  $(S_{j})_{j\in\N}$ has the desired properties.

  By \prettyref{prop:prune-branch}, for all $j\in\N$ we have
  \[
    \ctree(P_{j})=\ctree(\prune{P_{j}}{Lj})\cdot\ctree(L_{j})\cdot
    t_{j}=\ctree(\prune{P_{j}}{L_{j}})\cdot s_{j}\cdot t_{j}
  \]
  where $1\vleq t_{j}\vleq c(\treeroot(U_{j}))^{2}$ for all $j\in\N$.  In
  particular,
  $\frac{t_{j+1}}{t_{j}}\preceq t_{j+1}\vleq c(\treeroot(U_{j+1}))^{2}$, so
  \[
    \frac{t_{j+1}}{t_{j}}\preceq c(\gamma_{j}-\gamma_{j+1})^{2}.
  \]
  It follows that
  \[
    \frac{\ctree(\prune{P_{j}}{L_{j}})}{\ctree(\prune{P_{j+1}}{L_{j+1}})}
    =
    \frac{\ctree(P_{j})}{\ctree(P_{j+1})}\cdot\frac{s_{j+1}}{s_{j}}\cdot\frac{t_{j+1}}{t_{j}}
    \vleq c(\gamma_{j}-\gamma_{j+1})^{4} \cdot
    \frac{\ctree(P_{j})}{\ctree(P_{j+1})}.
  \]
  Since $(P_{j})_{j\in\N}$ is bad, for all $j,n\in\N$ we have
  \[
    \left(\frac{\ctree(P_{j})}{\ctree(P_{j+1})}\right)^{n} \preceq
    \frac{c(\treeroot(P_{j}))}{c(\treeroot(P_{j+1}))}.
  \]
  Likewise, for all $j,n\in\N$ we also have
  \[
    \left(c(\gamma_{j}-\gamma_{j+1})\right)^{n} \preceq
    e^{c(\gamma_{j}-\gamma_{j+1})}\veq\frac{c(\treeroot(P_{j}))}{c(R(P_{j+1}))}
  \]
  using \prettyref{lem:children}, \prettyref{prop:cmonotone} and the
  fact that $\gamma_{j}-\gamma_{j+1}\vgreater1$. It follows that for all
  $j,n\in\N$ we have
  \[
    \left(\frac{\ctree(\prune{P_{j}}{L_{j}})}{\ctree(\prune{P_{j+1}}{L_{j+1}})}\right)^{n}
    \vleq \frac{c(\treeroot(P_{j}))}{c(\treeroot(P_{j+1}))}.
  \]
  Recalling that $\treeroot(\prune{P_{j}}{L_{j}})=\treeroot(P_{j})$ for
  all $j\in\N$, it follows that
  $(S_{j})_{j\in\N}=(\prune{P_{j}}{L_{j}})_{j\in\N}$ is another bad
  sequence in $\admtree(x)$.

  To conclude, let $k\in\N$ be such that $T_{k}=P_{0}$. By construction,
  $\size(S_{0})=\size(\prune{P_{0}}{L_{0}})<\size(P_{0})=\size(T_{k})$, and
  by \prettyref{lem:prune},
  $\ctree(S_{0})=\ctree(\prune{P_{0}}{L_{0}})\succ\ctree(P_{0})=\ctree(T_{k})$,
  as desired.
\end{proof}

\begin{prop}
  \label{prop:no-bad-seq}Let $x\in\Delta_{\alpha+1}$. Then
  $\admtree(x)$ contains no bad sequences.
\end{prop}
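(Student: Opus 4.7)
The plan is to prove \prettyref{prop:no-bad-seq} by contradiction via a Nash-Williams-style minimal bad sequence argument, leveraging \prettyref{prop:no-bad-seq-1} as the replacement step. Suppose $\admtree(x)$ contains a bad sequence. First, I will observe that by \prettyref{prop:cterm} no single $\admtree(t)$ contains a bad sequence; hence for any bad sequence $(T_i)_{i\in\N}$ in $\admtree(x)$, any given root can be shared by only finitely many of the $T_i$ (otherwise, an infinite subsequence with constant root would be bad in $\admtree(t)$, contradiction). Combined with $\treeroot(T_i) \succeq \treeroot(T_{i+1})$, this lets us extract a bad subsequence in which the roots are strictly $\succ$-decreasing, so I can restrict from the start to the (still nonempty) subclass of bad sequences with strictly decreasing roots.

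Within this subclass, I construct a ``minimal'' bad sequence $(y_i)_{i\in\N}$ recursively: having chosen a bad prefix $(y_0,\dots,y_{k-1})$ with strictly decreasing roots that extends to an infinite bad sequence of this form, choose $y_k$ of minimum $\size$ among all admissible trees $T$ with $\treeroot(T)\vless\treeroot(y_{k-1})$ such that $(y_0,\dots,y_{k-1},T)$ extends to a bad sequence with strictly decreasing roots. Such a $y_k$ exists by well-foundedness of $\N$. The sequence $(y_i)$ so constructed is itself bad with strictly decreasing roots, since each finite prefix satisfies all the bad-sequence conditions by construction.

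Next I apply \prettyref{prop:no-bad-seq-1} to $(y_i)$, obtaining a bad sequence $(S_j)_{j\in\N}$ and an index $k\in\N$ with $\size(S_0)<\size(y_k)$, $\treeroot(S_0)=\treeroot(y_k)$, and $\ctree(S_0)\vgreater\ctree(y_k)$. By the same extraction argument as in the first paragraph (applied to $(S_j)$ while retaining $S_0$ as the first element), I may further assume the $S_j$ have strictly decreasing roots. I then form the concatenation
\[
  (y_0,\,y_1,\,\dots,\,y_{k-1},\,S_0,\,S_1,\,S_2,\,\dots)
\]
and verify it is a bad sequence with strictly decreasing roots. The transition condition at $y_{k-1}\to S_0$ follows from $\treeroot(S_0)=\treeroot(y_k)$, $\ctree(S_0)\vgreater\ctree(y_k)$, and the original bad condition for $(y_i)$ between $y_{k-1}$ and $y_k$. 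Injectivity is automatic: strict decrease in both parts, joined by the strict inequality $\treeroot(y_{k-1})\vgreater\treeroot(y_k)=\treeroot(S_0)$, forces $\treeroot(y_i)\vgreater\treeroot(S_j)$ for all $i<k$ and $j\ge 0$, hence $y_i\neq S_j$.

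The concatenated sequence then witnesses that $S_0$ is an eligible candidate for the $k$-th position in the minimal construction: it extends the prefix $(y_0,\dots,y_{k-1})$ to an infinite bad sequence with strictly decreasing roots, and has root strictly $\vless$ than $\treeroot(y_{k-1})$. Since $\size(S_0)<\size(y_k)$, this contradicts the minimal choice of $y_k$. The main obstacle is precisely ensuring that the concatenation is injective and preserves the chosen class structure; the preemptive restriction to strictly-decreasing-root bad sequences (justified by \prettyref{prop:cterm}) is exactly what makes injectivity automatic and sidesteps the need for any additional case analysis when $S_0$ might otherwise accidentally coincide with some earlier $y_i$.
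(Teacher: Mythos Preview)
Your proof is correct and follows essentially the same minimal-bad-sequence strategy as the paper: apply \prettyref{prop:no-bad-seq-1} to a size-minimal bad sequence and splice the resulting $(S_j)$ onto the prefix $(y_0,\dots,y_{k-1})$ to contradict minimality. Your additional restriction to bad sequences with strictly $\succ$-decreasing roots (justified via \prettyref{prop:cterm}) is not in the paper's version, but it is a clean way to guarantee injectivity of the concatenated sequence, a point the paper leaves implicit.
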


\begin{proof}
  Suppose by contradiction that there is a bad sequence of trees in
  $\admtree(x)$. Among all such bad sequences, let $(T_{i})_{i\in\N}$ be
  the one such that $\size(T_{0})$ is minimal, and fixed $T_{0}$,
  $\size(T_{1})$ is minimal, and so on. By
  \prettyref{prop:no-bad-seq-1}, there is another bad sequence
  $(S_{j})_{j\in\N}$ in $\admtree(x)$ and some $k\in\N$ such that
  $\size(S_{0})<\size(T_{k})$, $\treeroot(S_{0})=\treeroot(T_{k})$ and
  $\ctree(S_{0})\vgreater\ctree(T_{k})$.

  We observe that
  \[
    T_{0},T_{1},\dots,T_{k-1},S_{0},S_{1},\dots
  \]
  is again a bad sequence in $\admtree(x)$. Indeed, it suffices to
  note that for all $n\in\N$ we have
  \[
    \left(\frac{\ctree(T_{k-1})}{\ctree(S_{0})}\right)^{n} \vless
    \left(\frac{\ctree(T_{k-1})}{\ctree(T_{k})}\right)^{n} \vleq
    \frac{c(\treeroot(T_{k-1}))}{c(\treeroot(T_{k}))} =
    \frac{c(\treeroot(T_{k-1}))}{c(\treeroot(S_{0}))}.
  \]
  However, since $\size(S_{0})<\size(T_{k})$, this contradicts our
  minimality assumption. Therefore, there are no bad sequences in
  $\admtree(x)$, as desired.
\end{proof}

By \prettyref{prop:no-bad-implies-summability}, this completes the
proof of \prettyref{lem:summability}, as desired.

\subsection*{Acknowledgments}

We thank the anonymous referee for the very careful report.

\end{document}